\documentclass[letterpaper,11pt,leqno]{amsart} 
\usepackage[T1]{fontenc}

\usepackage[T1]{fontenc}
\usepackage{amssymb}
\usepackage{amsthm}
\usepackage{amsmath}
\usepackage[utf8]{inputenc}
\usepackage{bera}

\newtheorem{knowntheorem}{Theorem}[section]

\usepackage{graphicx}
\usepackage{amssymb}                       
\usepackage{amsmath}
\usepackage{color}
\usepackage{times}

\usepackage[unicode,bookmarks,colorlinks]{hyperref}
\hypersetup{
  linkcolor=brickred,
}

\definecolor{mahogany}{cmyk}{0, 0.77, 0.87, 0}
\definecolor{salmon}{cmyk}{0, 0.53, 0.38, 0}
\definecolor{melon}{cmyk}{0, 0.46, 0.50, 0}
\definecolor{yellowgreen}{cmyk}{0.44, 0, 0.74, 0}
\definecolor{brickred}{cmyk}{0, 0.89, 0.94, 0.28}
\definecolor{OliveGreen}{cmyk}{0.64, 0, 0.95, 0.40}
\definecolor{RawSienna}{cmyk}{0, 0.72, 1.0, 0.45}
\definecolor{ZurichRed}{rgb}{1, 0, 0} 

\usepackage{amsmath,amssymb,amsthm,amsfonts,amsbsy,latexsym,dsfont,color,graphicx,enumitem, upgreek, xcolor, xfrac, times}
\usepackage[numeric,initials,nobysame]{amsrefs}

\usepackage[mathscr]{eucal}

\newcommand{\Sone}{\mathbb{S}^1}

\newcommand{\Soned}{\mathbb{S}^{d-1}}

\newtheorem{theorem}{Theorem}[section]
\newtheorem{corollary}[theorem]{Corollary}
\newtheorem{proposition}[theorem]{Proposition}
\newtheorem{lemma}[theorem]{Lemma}
\newtheorem{problem}[theorem]{Problem}
\newtheorem{definition}[theorem]{Definition}
\newtheorem{remark}[theorem]{Remark}

\newtheorem{example}[theorem]{Example}

\theoremstyle{remark}

\numberwithin{equation}{section}

\newcommand{\brk}[1]{\langle #1 \rangle}

\newcommand{\N}{\mathbb{N}}

\newcommand{\R}{\mathbb{R}}

\renewcommand{\P}{\mathbb{P}}

\newcommand{\p}{\mathsf{p}}

\newcommand{\bB}{\mathbb{B}}
\newcommand{\bC}{\mathbb{C}}

\newcommand{\bE}{\mathbb{E}}

\newcommand{\bM}{\mathbb{M}}
\newcommand{\bN}{\mathbb{N}}

\newcommand{\bP}{\mathbb{P}}

\newcommand{\bR}{\mathbb{R}}

\newcommand{\cA}{\mathcal{A}}
\newcommand{\cB}{\mathcal{B}}

\newcommand{\cE}{\mathcal{E}}
\newcommand{\cF}{\mathcal{F}}

\newcommand{\cT}{\mathcal{T}}

\newcommand{\BH}{\bold{H}}

\newcommand{\sF}{\mathscr{F}}

\newcommand{\wh}{\widehat}

\DeclareMathOperator{\sgn}{sgn}

\DeclareMathOperator{\sign}{sign}

\newcommand{\ip}[1]{\langle #1\rangle}

\newcommand{\BR}{\bold{R}}

\date{\today}

\begin{document}

\title{Cotlar martingale transforms and related singular integrals}
\author{Rodrigo Ba\~nuelos}
\address{Department of mathematics. Purdue University}
\email{banuelos@purdue.edu}

\begin{abstract} The ``magical" identity discovered by M.~Cotlar in 1955 for the Hilbert transform is established here in the setting of martingale transforms and, in particular, for conformal martingales. This, together with the probabilistic representation of the Riesz transforms, shows that, at the level of martingale transforms and in odd dimensions, they exhibit the same analytic-type structure as the Hilbert transform on the real line. Consequently, Cotlar’s proof of the sharp $L^p$ inequality for powers of $2$ applies. The significance of the martingale Cotlar identity, whose proof is entirely elementary, does not lie in providing an alternative proof of this well-known and relatively simple estimate, but rather in the structural viewpoint it reveals. This structure is explored further.

Independent of Cotlar’s identity, asymptotic bounds for the $L^p$ norm of the vector of Riesz transforms are investigated. It is shown that, in the limit as $p \to \infty$, this norm coincides asymptotically with that of the Hilbert transform on the real line.

The study of the Cotlar identity in the martingale setting is motivated by the desire to gain new insight into two longstanding open problems: T.~Iwaniec’s 1983 conjecture on the norm of the Beurling-Ahlfors operator and the problem of determining the sharp constant in E.~M.~Stein’s 1984 inequality for the vector of Riesz transforms. Related problems are also discussed.

The paper contains both a survey of known results and new contributions. An effort has been made to keep the exposition as self-contained as possible and to present the material in an accessible, largely expository style.

\end{abstract}

\maketitle
\tableofcontents 

\section{Introduction}  
The purpose of this work  is to present a Cotlar identity for martingales and to describe its relationship to the classical Riesz transforms on $\mathbb{R}^d$. Although, as we shall see,  a Cotlar identity does not hold  for the Riesz transforms, their representation \`a la Gundy-Varopoulos \cite{GV79} shows that, at the level of martingale transforms and in odd dimensions, a conformal Cotlar-type structure is nevertheless present. More precisely, when $d$ is odd, the Riesz transforms arise as conditional expectations of martingale transforms associated with certain $(d+1)\times(d+1)$ matrices which we call \emph{Cotlar matrices} and for which the Cotlar martingale identity holds.  

These matrices naturally extend to $\mathbb{C}^n$, with $n=d+1$, the structural features underlying the Hilbert transform and its connection with analytic functions in the complex plane; see Remark~\ref{Mart:CotlarHilbertProjection}, Example~\ref{Cot:Proj4D}, and equation~\eqref{THE:CID}. As a consequence of this connection, we obtain the known sharp $L^p$ bounds for the Riesz transforms when $p=2^n$, $n\in\mathbb{N}$, exactly as in Cotlar’s classical result for the Hilbert transform and without appealing to the method of rotations, the Pichorides inequality~\cite{Pic72}, martingale inequalities from~\cite{BW95} or Bellman function techniques \cite{Volberg1}.  These ideas also apply to other geometric settings where no method of rotations is available, including the discrete operators studied in~\cite{BanKwa, BanKwa1, BanKimKwa2026, BanKim2026} which use the inequalities form \cite{BW95}.

The now extensive literature, see for example  \cite{BanKwa, BanKimKwa, BanOsc15, BanBau13, BanBauLiYan, BanKimKwa, Permic24} and the many reference contained therein, on applications of martingale techniques to  Riesz transform and related singular integrals motivates the following questions. Is there a martingale version of Cotlar's identity? Could such a martingale identity, together with the probabilistic representation of the Riesz transforms, lead to applications in higher dimensions? In particular, might a martingale Cotlar formula shed new light on the over 40-year old unsolved problems:
\begin{enumerate}
\item the sharp $L^p$ bounds for the vector of Riesz transforms (E.\,M.~Stein~\cite{SteSome}), and 
\item the conjecture of T.~Iwaniec~\cite{Iwa82} concerning the $L^p$ norm of the Beurling-Ahlfors operator on $\bC$,  
\end{enumerate}
even for special values of $p$ such as, for example, powers of $2$?

Both problems, and especially~(2), have been studied extensively over the past four decades, with martingale inequalities playing a central role in many of the key developments. From the martingale point of view, which  fails to yield even the simplest case of $p=2$  that follows trivially from Fourier transform, the level of abstraction required to make further progress on these problems seems  remarkably similar as discussed below.

\subsection{Notation} 
Throughout  we will mainly work with smooth real-valued functions of compact support. That is, we assume that $f:\R^d\to \R$ and  $f\in C^{\infty}_0(\R^d)$.  Once the $L^p$  inequalities of interest are proved for this class of functions the usual density arguments give the results for all functions in $L^p$. There will also be occasions when $f:\R^d\to \bC$ and this will be explicitly stated.

The standard notion $\|f\|_{L^p(\R^d)}$ is used for the $p$-norm of functions in $L^p(\R^d)$. By abuse of notation,  $\|T\|_{L^p(\R^d)}$ will denote the norm of an operator $T:L^p(\R^d) \to L^p(\R^d)$. A notable exception to this will be the norm of the Hilbert transform $H$ which will always be denoted by $\|H\|_p$. For functions $f:\R^d\to \bC$ we will write $\|f\|_{L^p(\R^d;\, \bC)}$ and similarly for operator norms.  

For a  random variable $X$ defined on a probability space $(\Omega, \bP, \sF)$ we will use the notation $\|X\|_p$ for its $p$-norm. 
 
In what follows $1<p<\infty$ and  $p^*=\max\{p, q\}$, where $q$ is the conjugate exponent of $p$. Define 

\begin{equation}\label{BurkCons}
    p^*-1= \begin{cases}
    \frac{1}{p-1}, &1<p\leq 2,\\
    p-1, & 2\leq p<\infty.
    \end{cases}
\end{equation}
  The constant $p^*-1$,  which is the sharp constant  for martingale transforms \cite{Burk84},   is often called the Burkholder constant.  Similarly the constant 
  \begin{align}\label{pichcole}
  \|H\|_p=\cot\left(\frac{\pi}{2 p^*}\right)=
    \begin{cases}
        \tan\left(\frac{\pi}{2p}\right), & 1<p\le 2 \\
        \cot\left(\frac{\pi}{2p}\right), & 2\leq p<\infty,   
    \end{cases}
   \end{align} 
   which is the p-norm of the Hilbert transform  is often called the Pichorides constant \cite{Pic72} (independently found by B. Cole \cite{Gam}).

Our normalization for the Fourier transform is
\[
\widehat{f}(\xi)=\int_{\R^d} f(x) e^{2\pi x\cdot \xi} dx, \quad  f(x)=\int_{\R^d} \widehat{f}(\xi) e^{-2\pi x\cdot \xi} d\xi. 
\]

\section{Cotlar's "magical"  identity} 

The Hilbert transform $H$ is the most basic singular integral defined on  $\R$ by 
 \begin{align*}
Hf(x)=\frac{1}{\pi}\, p.v.\int_{\R} \frac{f(x-y)}{y} dy.
\end{align*} The following well-known and "magical" identity
was proved by M. Cotlar in  \cite[pg 159]{Cot55}: 
\begin{equation}\label{CotlarId} 
|Hf|^2=2H(f Hf)+|f|^2.
\end{equation}
There are other variants of this identity in the literature including those for discrete and free Hilbert transforms. See for example \cite{BanKwa, BanKwa1, GonPerXia, MeiRic} and references given there. 

Cotlar's  proof of \eqref{CotlarId} is simple using either analytic function arising from  the harmonic extensions of $f$ and $Hf$ to the upper half-space, or by taking Fourier transforms of both sides  and using the fact that 
\begin{equation}\label{Fouier:MulH}
\widehat{Hf}(\xi)=i\sign(\xi)\widehat{f}(\xi)
\end{equation}   

For completeness, and since this argument will be adapted to the setting of conformal martingales, we outline the  proof based on analytic functions.  Let $u_f(x, y)$ be the harmonic extension of $f$ to the upper half-space--its convolution with the Poisson kernel $P_y(x)$. Set $z=x+iy$ and  consider the analytic function $F(z)=u_f(z)+iv_f(z)$, with $v_f(z)$ the conjugate harmonic function of $u_f$. Since $F$ is analytic so is $F^2=u_f^2 - v_f^2+2iu_fv_f$. Set 
\[U=u_f^2-u_{Hf}^2, \quad  \text{and} \quad  V=2u_fu_{Hf}.
\] 
Recall that $v_f(z)=u_{Hf}(z)$,  or equivalently that $v_{Hf}=u_{-f}$,  using the fact that $H^2=-I$.
Taking boundary values we obtain $|f|^2-|Hf|^2=-2H(f\,Hf)$ and rearranging gives \eqref{CotlarId}.

An application of this identity gives a simple proof of the $L^p$ boundedness of $H$ with the optimal constant for $p=2^k$, $k\in \N$.   Indeed,   applying  the Minkowski  and Cauchy-Schwarz  inequalities, it follows from  \eqref{CotlarId} that  
\begin{align*}
 \| H f\|_{2 p}^2 = \|(H f)^2\|_p & \le \|f^2 \|_p + 2 \|H\left(f\cdot H f\right) \|_p \\
&\le \|f \|_{2 p}^2 + 2 \|H \|_p\, \|f \cdot H f \|_p \\
& \le \|f \|_{2 p}^2 + 2 \|H \|_p\,\| f \|_{2 p}\, \|H f \|_{2 p}\\
 & \le \|f \|_{2 p}^2 + 2 \|H \|_p \, \|H \|_{2 p}\, \|f \|_{2 p}^2.\\
 \end{align*} 
This gives the inequality 
\begin{align*}
\|H \|_{2 p}  \le \|H \|_p + \sqrt{1 + \|H \|_p^2}. 
\end{align*} 
Note that the last inequality is in fact valid for any $1<p<\infty$. 
From $\|H\|_2=1$, induction on $n$ and the trigonometric identity 
\[
 \cot \left(\frac{\alpha}{2}\right)= \cot (\alpha) + \sqrt{1 + \cot^2 (\alpha)},
 \]
it follows that
\begin{align}\label{sharpH} \|H \|_p \le \cot \left(\frac{\pi}{2 p}\right), \quad p = 2^k, \,\,\, k=1, 2, \dots. 
\end{align}
By duality, a similar bound holds for $p=\frac{2^k}{2^k-1}$ with $p$ in   $\cot(\cdot)$  replaced by its conjugate exponent.  That is, this argument gives Pichorides bound for powers of $2$. 

For some history of the use of the trigonometric identity in the context of Hilbert transforms in other settings,  see \cite{BanKwa1, GohbergKrein1970, GohbergKrupnik1968} and references therein. 

Except for the best constant that arises from the (clever) connection to the trigonometric identity the above argument and the Marcinkiewicz interpolation theorem is what Cotlar used to give a proof of the boundedness of the Hilbert transform on $L^p$, for  $1<p<\infty$.  

\begin{remark} It may be of interest to note that in his original proof of the boundedness of the Hilbert transform (conjugate function), Riesz~\cite{Riesz} first established the result for $p=2m$ by using the fact that if $f$ is analytic so is $f^p$,  and then applying the Marcinkiewicz interpolation theorem;  see \cite[p.~213]{Gra}.  
\end{remark}

For $j=1, 2, \dots, d$ and $f\in L^p(\R^d)$, the classical Riesz transforms on $\R^d$ are the singular integrals defined   by 
\begin{align}\label{RieszT}
    R_{j}f(x) = c_d\,\, p.v.\int_{\R^d}\frac{y_j}{|y|^{d+1}}f(x-y)  dy,\qquad c_d=\pi^{-\tfrac{d+1}{2}}\Gamma(\tfrac{d+1}{2}), 
    \end{align}
 with their  Fourier transform given by
 \begin{equation}\label{RieszForierM}
\widehat{{R_{j}f}}(\xi)=\frac{i\xi_j}{|\xi|}\widehat{f}(\xi).  
\end{equation}
When $d=1$ both formulations agree with the Hilbert transform.  Since the Riesz transforms or $\R^d$, $d>1$,  are Fourier multipliers extensions of the Hilbert transform, it follows from de Leeuw's extension theorem, \cite{DeL}, \cite[Theorem 2.5.15]{Gra},  that $\|H\|_p\leq \|R_j\|_{L^p(\R^d)}$. On the other hand, the Calder\'on-Zygmund method of rotations gives  that $\|R_j\|_{L^p(\R^d)}\leq  \|H\|_p$. Thus, 
\begin{equation}\label{IwMa2}
\|R_j\|_{L^p(\R^d)}=\|H\|_p=\cot\Big(\frac{\pi}{2p^*}\Big), \quad 1<p<\infty.
\end{equation} 
The equality \eqref{IwMa2} was first verified in  \cite{IwaMar} with this  argument.   This fact  (i.e., \eqref{IwMa2}) will be use several times below.

In \cite{BW95}, sharp  inequalities for orthogonal martingales are proved from which the  upper bound inequality \eqref{IwMa2} follows.  A fair questions that one may ask here is:~given the simplicity of the above proof based on the method of rotation, why the need for martingale inequalities? One advantage of the martingale approach is their applications to  Riesz transforms on different geometric settings beyond $\R^d$, as the literature already cited shows.

It follows from Cotlar's Fourier transform proof of \eqref{CotlarId}, \cite[p.~155]{Cot55},    that if $T_m$ is Fourier multiplier with the function $m\in L^{\infty}(\R^d)$ then the identity for $T_m$ replacing $H$ is equivalent to 
\begin{align}\label{CotFourier} \left(m(\xi+\gamma)-m(\xi)\right)\left(m(-\xi)-m(\gamma)\right)=0, \,\,\, \text{a.e.}\,\, \xi, \gamma\in \R^d. 
\end{align}
This imposes rigid conditions on the multiplier that are rarely satisfied outside of simple variations of the Hilbert transform. With $m_j(\xi)=\frac{i\xi_j}{|\xi|}$,  it is straightforward to verify that Cotlar’s identity fails for the Riesz transforms. For example, take  
$j=1$, $\xi=e_1$, $\eta=e_2$ (the standard unit vectors) and  use continuity at $(\xi,\eta )$ to find a neighborhood of this point where the identity does not hold. Similarly, there is no Cotlar identity for the Beurling-Ahlfors operator or various other classical Fourier multiplier operators. 

Despite the failure of Cotlar’s identity for Riesz transforms, it is shown here that a version of the identity does hold in odd dimensions  for a special subclass  of orthogonal martingales transforms. Moreover, this identity, together with their probabilistic representation, can be used to prove the sharp $L^p$-bound for the Riesz transforms when $p=2^n$ without appealing to the method or rotations, sharp martingale inequalities or other Burkholder-Bellman function techniques. 

The proof of the martingale Cotlar identity is entirely elementary, relying only on the basic form of Itô’s formula applied to the product of two martingales. Nevertheless, applying the martingale identity  to the Riesz transform requires viewing the martingale transform in a different light than what has typically been used in previous applications. In particular, the martingale Cotlar identity developed here isolates the role of conformality  at the martingale level for Riesz transforms in odd dimensions. 

Since the application treated here, the sharp $L^p$ bound for $p=2^k$ 
 for the Riesz transforms, is a special case of \eqref{IwMa2} which admits a simple analytic proof via the method of rotations and also follows from the martingale inequalities in \cite{BW95}, the question raised above on the need for martingale for the upper bound in \eqref{IwMa2} becomes even more compelling.
The value of the martingale Cotlar identity therefore does not lie in providing an alternative proof for the special case of this estimate, but rather in the structural viewpoint it reveals. Such identities expose hidden algebraic relationships at the martingale level and may extend to situations where analytic methods are unavailable. This perspective may be particularly relevant in settings such as in \cite{BanKwa}, where Cotlar-type identities and inequalities play a role in identifying the norm of the discrete Riesz–Titchmarsh Hilbert transform for $p=2k$, $k\in \N$.

\section{Cotlar's identity for martingales}

Let us first recall two inequalities for martingales with continuous paths. Fix $d\geq 2$  and let 
$B_t=(B_t^1,\dots,B_t^{d})$ 
be the standard d-dimensional Brownian motion equipped with its standard Brownian filtration $\mathcal F_t=\sigma(B_s:0\le s\le t)$ on the probability space $(\Omega,\mathcal F,\mathbb P)$.  Let  
$K_t=(K_t^1,\dots,K_t^{d})\in \R^d$ be predictable with
\[
\bE\int_0^t |K_s|^2\,ds<\infty,\,\,\, \text{for all }\,\,\, t>0. 
\]
For any two vectors $u, v\in \R^d$, we use  $u\cdot v=\ip{u, v}$ to denote  the  standard Euclidean inner product on $\R^d$. 
Define the  martingale  given by the stochastic integral 
\begin{align}\label{StochasticMartin}
M_t =M_0+\int_0^t K_s\cdot dB_s,\,\,\text{with quadrant variation}\,\, \ip{M}_t=\int_0^t |K_s|^2ds.
\end{align} 
We will often refer to martingales of this form as {\it Brownian martingales.} If $N_t$ is another martingale given by a predictable  process $H_t\in \R^d$,  the covariation  process of $M$ and $N$ is given by 
\[\ip{M, N}_t=\int_0^t M_s\cdot K_s ds.
\]
\begin{definition}
Consider two martingales $N_t$ and $M_t$ as above.  The martingale $M_t$ is said to be subordinate to $N_t$ if for all $t$, $\ip{M}_t\leq \ip{N}_t$.  They are said to be  orthogonal if  $\ip{M, N}_t=0$, for all $t$.  Throughout  the paper, $\|M\|_p=\sup_{t>0}\|M_t\|_p$. 
\end{definition} 

 \begin{knowntheorem}[\cite{Burk84, BW95}]\label{thm:A}
Suppose $M$ and $N$ are two martingales as above.    
\begin{enumerate}
\item Suppose $M$ is subordinate to $N$.  Then 
\begin{align}\label{Burkholder}
\|M\|_p\leq (p^*-1)\|N\|_p, \quad 1<p<\infty.
\end{align} 
\item Suppose $M$ and $N$ are orthogonal martingales and $M$ is subordinate to $N$.  Then 
\begin{align}\label{BanuelosWang}
\|M\|_p\leq \cot\left(\frac{\pi}{2 p^*}\right)\|N\|_p, \quad 1<p<\infty.
\end{align} 
\end{enumerate} 
Both inequalities are sharp. 
\end{knowntheorem}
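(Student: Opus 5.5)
Part (1) is Burkholder's sharp inequality for differentially subordinate martingales, and part (2) is the orthogonal refinement of Ba\~nuelos--Wang. The plan is the Burkholder--Bellman method in the continuous-path setting. For each inequality I would exhibit a function $U(x,y)$ on $\R^2$ with three properties: $U(x,y)\ge |y|^p - c_p^p|x|^p$, where $c_p$ is the target constant; $U(x,y)\le 0$ whenever $|y|\le |x|$, which covers the starting point since subordination forces $|M_0|\le|N_0|$; and a concavity property matched to the covariation structure.

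\textbf{Key steps.} By Itô's formula,
\[
dU(N_t,M_t)=\text{(martingale)}+\tfrac12\big(U_{xx}\,d\ip{N}_t+2U_{xy}\,d\ip{M,N}_t+U_{yy}\,d\ip{M}_t\big).
\]
So the whole argument reduces to showing that this drift is nonpositive.

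For (1) I take Burkholder's function
\[
U(x,y)=\alpha_p\big(|y|-(p^*-1)|x|\big)\big(|x|+|y|\big)^{p-1}.
\]
It satisfies the majorization above, and it has the property that $U_{xx}h^2+2U_{xy}hk+U_{yy}k^2\le 0$ whenever $|k|\le|h|$. This pointwise property is exactly what is needed when $d\ip{M}\le d\ip{N}$. In the vector form one writes $d\ip{N}=|H|^2dt$ and $d\ip{M}=|K|^2dt$, with $|K|\le|H|$, and reduces to the scalar statement after a Cauchy--Schwarz step. The convexity is verified by the usual homogeneity reduction to $|x|+|y|=1$.

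For (2) orthogonality kills the cross term, so the drift is $\tfrac12(U_{xx}|H|^2+U_{yy}|K|^2)$. I would use the harmonic-type function of Pichorides/Essén: write $(x,y)$ in polar form $R e^{i\theta}$ and take
\[
U=-\tan\big(\tfrac{\pi}{2p}\big)\,R^p\cos(p\theta)
\]
on a suitable sector, suitably modified (for $p\ge2$ the roles of $x,y$ are swapped). This function satisfies $U_{xx}+U_{yy}=0$ in the relevant region and $U_{yy}\le 0$. Then $U_{xx}|H|^2+U_{yy}|K|^2=U_{yy}(|K|^2-|H|^2)\le 0$.

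\textbf{Conclusion and main obstacle.} Taking expectations, with localization by stopping times to justify the martingale term, gives $\bE\, U(N_t,M_t)\le \bE\, U(N_0,M_0)\le0$. Hence $\bE|M_t|^p\le c_p^p\,\bE|N_t|^p$. The main obstacle is step (2): the Pichorides function is harmonic only away from certain rays and must be glued to a majorant, so the sign of $U_{yy}$ has to be checked on each region. The lack of smoothness across those rays also has to be handled, by mollifying $U$ in both variables; this preserves the needed inequalities, since $U_{yy}\le0$ and superharmonicity are convolution-stable. Sharpness would be shown by the classical extremals: for (1) Burkholder's examples, and for (2) conformal martingales $F(B_{\tau})$ with $F$ a conformal map onto a sector of angle $\pi/p$.
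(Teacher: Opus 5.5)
The paper does not prove this statement; it quotes it from \cite{Burk84, BW95}. Your plan (special function, It\^o's formula, localization, mollification) is the route taken there. Part (1) is fine. The Cauchy--Schwarz reduction works, since $2U_{xy}\,H\cdot K\le 2U_{xy}hk$ with $h=|H|$, $k=\sgn(U_{xy})|K|$, and $|k|\le h$. The mollification you mention only for (2) is already needed here: Burkholder's $U$ is not $C^1$ across $\{y=0\}$ when $p>2$, nor across $\{x=0\}$ and at the origin when $p<2$. It goes through because $t\mapsto U(x+th,y+tk)$ is concave whenever $|k|\le|h|$, and this property survives convolution.

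The genuine problem is the key step of (2), where your sign is reversed. With orthogonality the drift is $\frac12\big(U_{xx}|H|^2+U_{yy}|K|^2\big)$. Taking $K\equiv 0$ shows that $U_{xx}\le 0$ is necessary. For harmonic $U$ the drift equals $\frac12 U_{yy}\big(|K|^2-|H|^2\big)$, which is $\le 0$ only if $U_{yy}\ge 0$; under your condition $U_{yy}\le 0$ it is $\ge 0$, so the step fails as written. The correct requirements are $U_{xx}\le 0$ and $\Delta U\le 0$ as distributions, i.e.\ concavity in the \emph{dominating} variable, as for $V=|y|^p-c_p^p|x|^p$ itself. Then $U_{xx}|H|^2+U_{yy}|K|^2=U_{xx}\big(|H|^2-|K|^2\big)+\Delta U\,|K|^2\le 0$. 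These conditions, not ``$U_{yy}\le 0$'', are what the mollification must preserve, and they must hold across the non-smooth rays, not only in each open region. The right function for $1<p\le 2$ does satisfy them. With $|x|+iy=Re^{i\theta}$, $|\theta|\le\pi/2$, and $U=-B_pR^p\cos(p\theta)$, one has $U_{xx}=-B_p\,p(p-1)R^{p-2}\cos((p-2)\theta)\le 0$, hence $U_{yy}\ge 0$. The kink on $\{x=0\}$ adds a nonpositive measure to $U_{xx}$ because $\cos((p-1)\pi/2)\ge 0$.

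The function needs two more corrections. First, its coefficient is not $\tan(\pi/(2p))$. Both $U$ and $V=|y|^p-\tan^p(\pi/(2p))|x|^p$ vanish on the interior ray $\theta=\pi/(2p)$, so $U\ge V$ forces tangency there. Tangency gives Pichorides' constant $B_p=\sin^{p-1}(\pi/(2p))/\cos(\pi/(2p))$. Since $B_p>\tan(\pi/(2p))$ for $1<p<2$, your choice violates $U\ge V$ just beyond that ray. Second, for $p\ge 2$ interchanging $x$ and $y$ cannot work, because the inequality is not symmetric in the two martingales: the swapped function is negative on the $y$-axis, where $V>0$. One function that does work is the following. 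Let $V=|y|^p-\cot^p(\pi/(2p))|x|^p$ and let $\phi$ be the angle of $(x,|y|)$ measured from the $y$-axis. Take $U=\frac{\cos^{p-1}(\pi/(2p))}{\sin(\pi/(2p))}R^p\cos(p\phi)$ for $|\phi|\le\pi/(2p)$ and $U=V$ outside this cone. This $U$ is $C^1$ and harmonic in the cone with $U_{xx}\le 0$ there. Outside the cone it equals $V$, where $V_{xx}\le 0$ and $\Delta V\le 0$, and $U=V\le 0$ on $\{|y|\le|x|\}$. Alternatively, obtain $p\ge 2$ from the case $q\le 2$ by duality. Write $K_s=A_sH_s$ with the skew-symmetric predictable matrix $A_s=(K_sH_s^{T}-H_sK_s^{T})/|H_s|^2$ (and $A_s=0$ where $H_s=0$), of norm at most $1$; its adjoint $-A_s$ turns the integrand of the dual martingale into an orthogonal, subordinate one.

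Finally, your argument uses $|K_s|\le|H_s|$ for a.e.\ $s$ together with $|M_0|\le|N_0|$, i.e.\ differential subordination as in \cite{Burk84, BW95}. That is stronger than the paper's literal ``$\ip{M}_t\le\ip{N}_t$''. Some initial condition is indispensable: $N\equiv 0$, $M\equiv 1$ satisfies the paper's condition but violates the inequality. You should state this reading of the hypothesis explicitly.
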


Inequality \eqref{Burkholder} is a case of the celebrate inequalities of Burkholder \cite{Burk84} which have  had multiple applications in different areas of analysis, probability and related fields. Burkholder inequality  hold for Hilbert space-valued martingalesn with the same cosnatnt. The notion of subordinate orthogonal martingales was introduced in \cite{BW95}  where  the inequality is  proved.  This too  has  Hilbert space-valued versions \cite[Theorem A]{BanuelosWang1996}.  The main application in that paper is to prove the sharp inequality for Reisz transforms in \eqref{IwMa2}. An advantage of the martingale approach is their applications to  Riesz transforms on different geometric settings beyond $\R^d$, as the now large subsequent literature has shown. For parallel weak-type $(1, 1)$ inequalities, we refer the reader to \cite{BanWanDavis} and references therein.  

In addition to the literature already mention above, the interested reader is highly  encouraged to consult  \cite{Osekowski2012} where many extensions and applications are presented as well as the survey article \cite{Osekowski2013BellmanSurvey} where the Burkholder-Bellman function method, which is at the heart of the sharp martingale inequalities and their applications, is discussed.  

In what follows, our goal is to present a martingale analogue of Cotlar’s identity and to use it to derive \eqref{BanuelosWang} in the special case where $\ip{M}_t=\ip{N}_t$ and $p=2^k$, $k\in \N$,  paralleling Cotlar’s original argument for the Hilbert transform. 

With $M$ and $N$ as above, by It\^o's formula 
\begin{align}
N_tM_t&=\int_0^t M_s dN_s+\int_0^t N_s dM_s+\langle N, M\rangle_t\label{product}\\
&=\int_0^t M_s\,H_s\cdot dB_s + \int_0^t N_s K_s\cdot dB_s+\int_0^t H_s\cdot K_s ds.\nonumber
\end{align}
Taking   $M_t=N_t$ gives 
\begin{align}\label{square} 
M_t^2 &= 2\int_0^t M_s dM_s+\langle M\rangle_t =
2\int_0^t M_s K_s\cdot dB_s+\int_0^t |K_s|^2ds.
\end{align}

Let  $A=(a_{ij})$ be a $d\times d$ matrix with real coefficients.  Set $\|A\|^2=\sup\{|Av|^2: v\in \R^d, \, |v|=1\}$.  The martingale transform of $M_t$ by $A$ is defined as 
\begin{align}\label{stochasticMarinTras} 
A*M_t = \int_0^t AK_s\cdot dB_s. 
\end{align}

\begin{theorem}\label{CorlarMain} Suppose $A$ is a $d\times d$ matrix  such that $Av\bold{\cdot} v=0$  for all $v\in \R^d$.    Then 
\begin{align}
|A*M_t|^2=2A*(M(A*M))_t+\int_0^t |AK_s|^2 ds-2\int_0^t [M_sA^2K_s]\cdot dB_s\label{main1}
\end{align} 
\end{theorem}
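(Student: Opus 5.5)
The plan is to expand both sides with Itô's formula \eqref{product}–\eqref{square} and compare the stochastic integrands term by term. The hypothesis $Av\cdot v=0$ enters at exactly one point: it removes the drift from the product $M\,(A*M)$. Without that, $M\,(A*M)$ would not be a Brownian martingale, and the transform $A*\bigl(M(A*M)\bigr)$ could not be defined through \eqref{stochasticMarinTras}.

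Set $X_t=A*M_t=\int_0^t AK_s\cdot dB_s$, so that $X_0=0$ and $dX_t=AK_t\cdot dB_t$. Applying \eqref{square} to $X$ gives
\begin{align*}
X_t^2=2\int_0^t X_s\,AK_s\cdot dB_s+\int_0^t |AK_s|^2\,ds .
\end{align*}
Next apply \eqref{product} to $M$ (with integrand $K$) and $X$ (with integrand $AK$):
\begin{align*}
M_tX_t=\int_0^t \bigl(M_s AK_s+X_s K_s\bigr)\cdot dB_s+\int_0^t K_s\cdot AK_s\,ds .
\end{align*}
Since $AK_s\cdot K_s=0$ pointwise by hypothesis, the bounded-variation term vanishes. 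Hence $MX$ is a Brownian martingale of the form \eqref{StochasticMartin}, with initial value $M_0X_0=0$ and predictable integrand $M_sAK_s+X_sK_s$.

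By definition \eqref{stochasticMarinTras},
\begin{align*}
A*(MX)_t=\int_0^t A\bigl(M_sAK_s+X_sK_s\bigr)\cdot dB_s=\int_0^t M_sA^2K_s\cdot dB_s+\int_0^t X_s\,AK_s\cdot dB_s .
\end{align*}
Doubling this and subtracting $2\int_0^t M_sA^2K_s\cdot dB_s$ leaves exactly $2\int_0^t X_sAK_s\cdot dB_s$. Adding $\int_0^t|AK_s|^2ds$ then reproduces the expression for $X_t^2$ above, which is \eqref{main1}.

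The algebra is immediate, so the main obstacle is technical rather than conceptual. One must justify that the new integrand $M_sAK_s+X_sK_s$ (and likewise $M_sA^2K_s$) satisfies $\bE\int_0^t|\cdot|^2ds<\infty$, so that the martingale transform is legitimately defined. I would handle this in one of two ways. The first is to note that \eqref{main1} is an identity between continuous local martingales plus a finite-variation term, valid pathwise, and argue by localization with stopping times $\tau_n=\inf\{t:|M_t|+|X_t|\ge n\}$. The second is to assume, as in the intended applications where $M$ comes from harmonic extensions of $f\in C_0^\infty$, that $M$ and $X$ are bounded, so that square integrability follows from that of $K$ and $AK$.
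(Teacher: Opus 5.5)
Your proposal is correct and follows the paper's proof essentially step for step. You apply \eqref{product} to $M$ and $A*M$, use $AK_s\cdot K_s=0$ to remove the drift, transform the resulting integrand by $A$, and then compare with \eqref{square} applied to $A*M$. Your remark about localization, needed so that the new integrands $M_sAK_s+(A*M)_sK_s$ and $M_sA^2K_s$ are legitimately square-integrable, addresses a technical point that the paper passes over without comment.
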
 

\begin{proof} 

Applying  \eqref{product} to the product of  $M_t$ and $(A*M)_t$ gives 
\begin{align*}
M_tA*M_t
&=\int_0^t A*M_s K_s\cdot dB_s+\int_0^t M_s AK_s\cdot dB_s+\int_0^t AK_s\cdot K_s ds\\
&=\int_0^t \tilde{K}\cdot dB_s, 
\end{align*} 
where $\tilde{K_s}=[A*M_s K_s+ M_sAK_s]$. The  assumption on $A$ gives $AK_s\cdot K_s=0$ and  
$\langle M, A*M\rangle_t, =0$.  Thus $M_t(A*M)_t$ is a martingale of the above form.  Its martingale transform by $A$ is 
\begin{align}\label{product2}
A*(M(A*M)_t=\int_0^t A\tilde{K}_s\cdot dB_s=\int_0^t [(A*M)_sAK_s + M_sA^2K_s]\cdot dB_s.
\end{align}

Similarly applying \eqref{square}  to  $(A*M)_t$ gives 
\begin{align}\label{square2}
|A*M)_t|^2=2\int_0^t [A*M_sAK_s]\cdot dB_s+\int_0^t |AK_s|^2 ds. 
\end{align} 
It follows  from \eqref{product2} and \eqref{square2} that 
\begin{align*}
2A*(M(A*M)_t=[(A*M_t]^2+2\int_0^t [M_sA^2K_s]\cdot dB_s-\int_0^t |AK_s|^2 ds.
\end{align*} 
Equivalently, 
\begin{align}\label{square3}
|A*M_t|^2=2A*M(A*M)_t-2\int_0^t [M_sA^2K_s]\cdot dB_s+\int_0^t |AK_s|^2 ds, 
\end{align} 
which is the claimed identity.  
\end{proof} 

\begin{corollary}\label{CotlarIneq} Suppose $A$ is as in the statement of the Theorem.   Then,  
\begin{align*}
|A*M_t|^2 &\leq 2A*M(A*M)_t+\|A\|^2|M_t|^2\label{main2}\\
&-2\int_0^t [M_sA^2K_s+\|A\|^2M_s K_s]\cdot dB_s.
\end{align*} 
\end{corollary}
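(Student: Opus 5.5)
The plan is to start from the identity \eqref{main1} of Theorem~\ref{CorlarMain} and bound only the drift term $\int_0^t |AK_s|^2\,ds$ from above. All the stochastic-integral terms will be kept exactly as they are. First, by the definition of the operator norm, $|AK_s|^2\le \|A\|^2|K_s|^2$ for every $s$. Therefore
\[
\int_0^t |AK_s|^2\,ds\le \|A\|^2\int_0^t |K_s|^2\,ds=\|A\|^2\langle M\rangle_t .
\]

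Next, the quadratic variation is rewritten using \eqref{square} applied to $M$ itself. Since $M_t^2=M_0^2+2\int_0^t M_sK_s\cdot dB_s+\langle M\rangle_t$, we have
\[
\langle M\rangle_t = M_t^2 - M_0^2 - 2\int_0^t M_sK_s\cdot dB_s\le |M_t|^2-2\int_0^t M_sK_s\cdot dB_s .
\]
The term $M_0^2\ge 0$ is simply dropped; when $M_0=0$, as implicitly in \eqref{square}, this step is an equality. Multiplying by $\|A\|^2\ge0$ gives
\[
\int_0^t|AK_s|^2\,ds\le \|A\|^2|M_t|^2-2\int_0^t \|A\|^2M_sK_s\cdot dB_s .
\]

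Substituting this into \eqref{main1} and combining the two stochastic integrals against $dB_s$ produces exactly the stated bound:
\[
|A*M_t|^2\le 2A*(M(A*M))_t+\|A\|^2|M_t|^2-2\int_0^t\big[M_sA^2K_s+\|A\|^2M_sK_s\big]\cdot dB_s .
\]
The inequality holds pathwise, almost surely. The argument is entirely routine. The only point needing care is the treatment of $M_0$, that is, checking whether \eqref{square} carries an $M_0^2$ term under the paper's conventions. Either way the sign works in our favor, so I expect no real obstacle.
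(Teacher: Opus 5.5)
Your proposal is correct and is essentially the paper's proof: bound $\int_0^t|AK_s|^2\,ds\le\|A\|^2\int_0^t|K_s|^2\,ds$ in the identity of Theorem~\ref{CorlarMain}, then eliminate $\int_0^t|K_s|^2\,ds$ using \eqref{square} applied to $M$, and combine the stochastic integrals. Your treatment of $M_0$ is a small but welcome refinement: the paper's \eqref{square} tacitly takes $M_0=0$; as you note, the dropped $-M_0^2$ only helps the inequality, and the identity of Theorem~\ref{CorlarMain} does not depend on $M_0$ because $(A*M)_0=0$.
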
 
\begin{proof}
Notice that by \eqref{square3}, 
\begin{align*}
|(A*M)_t|^2&\leq 2(A*(M(A*M))_t\\
&-2\int_0^t [M_sA^2K_s]\cdot dB_s
+\|A\|^2 \int_0^t |K_s|^2 ds
\end{align*}
Applying \eqref{square} with $Y_t=M_t$ gives  
\begin{equation*} 
 \int_0^t |K_s|^2 ds=M_t^2-2\int_0^t M_s K_s\cdot dB_s
\end{equation*} 
and it follows that 
\begin{align*}
|(A*M)_t|^2 &\leq 2A*(M(A*M))_t-2\int_0^t [M_sA^2K_s]\cdot dB_s+\|A\|^2[M_t^2\\
&-2\int_0^t M_s K_s\cdot dB_s]\nonumber\\
&=2A*(M(A*M))_t+\|A\|^2M_t^2\\
&-2\int_0^t [M_sA^2K_s+\|A\|^2M_s K_s]\cdot dB_s,\nonumber
\end{align*} 
which gives competes the proof.  
\end{proof} 

\section{Martingale transforms} 

\subsection{Cotlar  martingale transforms}\label{TheKeyConnection}

\begin{definition} Supposed $d=2n$ is even and $A=(a_{ij})$ is a $d\times d$ real matrix.    We will say $A$ is a Cotlar matrix if it has the following properties: (1) $Av\cdot v=0$ for all $v\in \R^d$ and (2) $A^2=-I$, where $I$ denotes the identity matrix.  A  martingale transform by a Cotlar matrix will be called a {\it Cotlar  martingale transform}.
\end{definition} 
Note that since $\det(A^2)=(\det(A))^2=(-1)^d$, there are no such $d\times d$ matrices when $d$ is odd. 

The Cotlar martingale identity  now follows from Theorem \ref{CorlarMain}. 
 \begin{corollary}[Cotlar's martingale identity]\label{MarcCotlar1}
 Suppose $d$ is even and $A$ is a Cotlar matrix. Let $M_t$ be a martingale as in \eqref{StochasticMartin}. 
Then
\begin{align}\label{marcot1}
|A*M_t|^2=2A*(M\left(A*M)\right)_t+M_t^2. 
\end{align} 
\end{corollary}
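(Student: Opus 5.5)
The plan is to specialize Theorem \ref{CorlarMain} to a Cotlar matrix and then simplify the two correction terms. A Cotlar matrix satisfies $Av\cdot v=0$ for all $v$, so Theorem \ref{CorlarMain} applies directly and yields
\begin{align*}
|A*M_t|^2=2A*(M(A*M))_t+\int_0^t |AK_s|^2\,ds-2\int_0^t [M_sA^2K_s]\cdot dB_s .
\end{align*}
It then suffices to show that the last two terms together equal $M_t^2$.

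The first step is to show that a Cotlar matrix is orthogonal, so that $|AK_s|=|K_s|$. Polarizing $Av\cdot v=0$ gives $Au\cdot v=-u\cdot Av$ for all $u,v$, that is, $A^T=-A$. Combined with $A^2=-I$ this gives $A^TA=-A^2=I$. Hence $|AK_s|^2=K_s\cdot A^TAK_s=|K_s|^2$, and therefore
\[
\int_0^t|AK_s|^2\,ds=\int_0^t|K_s|^2\,ds=\ip{M}_t .
\]

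The second step uses $A^2=-I$ directly: $-2\int_0^t M_sA^2K_s\cdot dB_s=2\int_0^t M_sK_s\cdot dB_s$. The sum of the two correction terms is then
\[
2\int_0^t M_s K_s\cdot dB_s+\int_0^t|K_s|^2\,ds,
\]
which by It\^o's formula \eqref{square} equals $M_t^2-M_0^2$.

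The main obstacle is the initial value $M_0$. Formula \eqref{square} is written with $M_0=0$, and the stochastic integral $A*M$ in \eqref{stochasticMarinTras} carries no initial term. I would therefore either assume $M_0=0$, as is implicit in the derivation of Theorem \ref{CorlarMain}, or verify that the theorem's proof used \eqref{product} for $M_t$ in the same normalized form. With that convention the identity \eqref{marcot1} follows immediately. The only genuinely nontrivial input is the observation that skew-symmetry together with $A^2=-I$ forces $A$ to be an isometry.
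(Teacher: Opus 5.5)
Your proposal is correct and follows the paper's proof: specialize Theorem \ref{CorlarMain}, use $A^2=-I$ and $|AK_s|=|K_s|$ to rewrite the two correction terms, and recognize $M_t^2$ via \eqref{square}. You also make explicit two points the paper leaves implicit, and both are right: skew-symmetry together with $A^2=-I$ forces $A^TA=I$, and the final step needs $M_0=0$. Theorem \ref{CorlarMain} itself does not need this normalization, since $(A*M)_0=0$; for general $M_0$ the term $M_t^2$ in \eqref{marcot1} should read $M_t^2-M_0^2$.
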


\begin{proof} Since $A$ is a Cotlar matrix, 
\begin{align*}
\int_0^t |AK_s|^2 ds-2\int_0^t [M_sA^2K_s]\cdot dB_s =\int_0^t |K_s|^2 ds+2\int_0^t [M_sK_s]\cdot dB_s=M_t^2, 
\end{align*}
by \eqref{square}.   This completes the proof. 
\end{proof} 

Since $2A*(M\left(A*M)\right)_t$ is a martingale, its expectation is $0$.  Thus $\bE|A*M_t|^2=\bE|M_t|^2$.  From this and \eqref{marcot1} the exact same induction proof as in  \eqref{sharpH} gives  

  \begin{theorem}\label{Th:CotEven} Suppose $d=2n$ is even and A is a Cotlar matrix. Then,  
 \begin{align}\label{martcot2}
\|A*M_t\|_p\leq 
\cot\left(\frac{\pi}{2 p^*}\right)\|M_t\|_p, \,\,\,  p = 2^k \,\, \text{or}\,\, p=\frac{2^k}{2^-1},\,\,  k\in \bN.
   \end{align} 
 \end{theorem}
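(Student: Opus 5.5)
The plan is to copy Cotlar's induction for the Hilbert transform, with the identity \eqref{marcot1} of Corollary~\ref{MarcCotlar1} in place of \eqref{CotlarId}. Since $p$ will change during the induction, write $C_p$ for the best constant in
\[
\|A*M\|_p\le C_p\|M\|_p ,
\]
taken over all Brownian martingales $M$ of the form \eqref{StochasticMartin} with $M_0=0$, for a fixed Cotlar matrix $A$. The goal is to show $C_p\le\cot(\pi/(2p))$ for $p=2^k$.

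\emph{Base case $p=2$.} The term $2A*(M(A*M))_t$ is a stochastic integral, so it has mean zero, at least after localizing by stopping times. Taking expectations in \eqref{marcot1} gives $\bE|A*M_t|^2=\bE M_t^2$, so $C_2\le 1=\cot(\pi/4)$.

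\emph{Inductive step.} Fix $t$ and assume $\|M\|_{2p}<\infty$. Take the $L^p$ norm of both sides of \eqref{marcot1} and use Minkowski's inequality:
\[
\|A*M_t\|_{2p}^2\le \|M_t\|_{2p}^2+2\|A*(M(A*M))_t\|_p .
\]
The key observation, already used in the proof of Theorem~\ref{CorlarMain}, is that $N_t:=M_t(A*M)_t$ is again a Brownian martingale of the form \eqref{StochasticMartin}. Its integrand is $\tilde K_s=(A*M)_sK_s+M_sAK_s$, and it has no drift because $AK_s\cdot K_s=0$. Hence the definition of $C_p$ applies to $N$ itself:
\[
\|A*N_t\|_p\le C_p\|N\|_p .
\]
Then Cauchy--Schwarz (Hölder) gives
\[
\|N_t\|_p\le \|M_t\|_{2p}\,\|A*M_t\|_{2p}\le C_{2p}\|M_t\|_{2p}^2 .
\]
Here I must be careful that $\|N\|_p=\sup_s\|N_s\|_p$. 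Since $|N_s|^p$ is a submartingale, $\|N_s\|_p$ is nondecreasing in $s$, so the supremum is a limit, and the bound above applies to each $N_s$. Dividing by $\|M\|_{2p}^2$ and taking the supremum yields the quadratic inequality
\[
C_{2p}^2\le 1+2C_pC_{2p},
\quad\text{that is,}\quad
C_{2p}\le C_p+\sqrt{1+C_p^2}.
\]
The half-angle identity $\cot(\alpha/2)=\cot\alpha+\sqrt{1+\cot^2\alpha}$ with $\alpha=\pi/(2p)$ then carries the bound from $p$ to $2p$. Induction on $k$ gives the result for $p=2^k$.

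\emph{Main obstacle: finiteness of $C_{2p}$.} The step from $C_{2p}^2\le 1+2C_pC_{2p}$ to the bound on $C_{2p}$ requires $C_{2p}<\infty$ a priori. I would get this from Burkholder's inequality \eqref{Burkholder} in Theorem~\ref{thm:A}(1). Since $A^TA=-A^2=I$, the matrix $A$ is orthogonal, so $\langle A*M\rangle_t=\langle M\rangle_t$; thus $A*M$ is subordinate to $M$, and $C_{2p}\le 2p-1<\infty$. If one wants to avoid Burkholder entirely, use the Burkholder--Davis--Gundy inequalities together with $\langle A*M\rangle=\langle M\rangle$ instead. The remaining technical points are to justify the zero-mean and integrability claims for $N$, and to pass to the supremum over $t$; localization and density handle both.

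\emph{Dual range $p=2^k/(2^k-1)$.} Use duality. Because $A^T=-A$ (from $Av\cdot v=0$), $\bE[(A*M)_tL_t]=\bE\int_0^t AK_s\cdot J_s\,ds=-\bE[M_t(A*L)_t]$ for any Brownian martingale $L$ with integrand $J$. Given $M_t$ in $L^{p}$, represent any $Y\in L^{p'}(\mathcal F_t)$ as $Y=\bE Y+L_t$ by martingale representation. Since $\bE (A*M)_t=0$, this gives $\|A*M_t\|_p\le C_{p'}\|M_t\|_p$. This produces the constant $\cot(\pi/(2p'))=\cot(\pi/(2p^*))$.
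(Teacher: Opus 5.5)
Your treatment of $p=2^k$ is correct and is the paper's argument: the identity \eqref{marcot1}, then Minkowski, then Cauchy--Schwarz applied to the drift-free martingale $N=M(A*M)$, and finally the half-angle recursion. Your remarks on the a priori finiteness of $C_{2p}$, on the monotonicity of $\|N_s\|_p$, and on $N$ being admissible fill in details the paper leaves implicit.

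\textbf{The gap is in the dual range} $p=2^k/(2^k-1)$, for which the paper only says ``by duality''. Because $C_{p'}$ is defined over martingales started at $0$, the identity $\bE[(A*M)_tY]=-\bE[M_t(A*L)_t]$ gives only
\[
|\bE[(A*M)_tY]|\le C_{p'}\,\|M_t\|_p\,\|Y-\bE Y\|_{p'} .
\]
Centering is not a contraction on $L^{p'}$ for $p'\neq 2$. It is self-adjoint, so its norms on $L^{p'}$ and on $L^p$ coincide. For $1<p<2$, take $Y=\mathbf{1}_E$ with $\bP(E)=\epsilon$ small; then $\|Y-\bE Y\|_p^p=\epsilon(1-\epsilon)^p+(1-\epsilon)\epsilon^p>\epsilon=\|Y\|_p^p$. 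The fact $\bE(A*M)_t=0$ is what lets you replace $Y$ by $Y-\bE Y$, but that is exactly the step that can increase the norm. Restricting instead to mean-zero $Y$ only controls $\inf_a\|(A*M)_t-a\|_p$, not $\|(A*M)_t\|_p$. So as written you obtain $\cot(\pi/(2p^*))$ multiplied by $\|I-\bE\|_{L^{p'}\to L^{p'}}>1$.

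\textbf{What is missing} is the bound $\|(A*L)_t\|_{p'}\le C_{p'}\|c+L_t\|_{p'}$ for martingales with a deterministic starting value $c$, where the transform still starts at $0$. Your induction yields this with one extra check.
\begin{itemize}
\item If $M_0=c$, then \eqref{square} acquires $M_0^2$, so \eqref{marcot1} becomes $|A*M_t|^2=2A*(M(A*M))_t+M_t^2-c^2$. Here $N=M(A*M)$ still starts at $0$ and has no drift.
\item The base case becomes $\bE|A*M_t|^2=\bE M_t^2-c^2\le \bE M_t^2$.
\item The inductive step goes through once $\|M_t^2-c^2\|_p\le\|M_t^2\|_p$ for $p\ge 2$.
\end{itemize}
This last inequality is true. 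Since $a\mapsto \bE|W-a|^p$ is convex and $c^2\le \bE M_t^2$, it suffices to show $\bE|W-\bE W|^p\le \bE W^p$ for $W\ge 0$ and $p\ge 2$. Via the convex minorant of $w\mapsto w^p-|w-\bE W|^p$, this reduces to two-point laws, and then to $(1+x)^p-x^p\ge x$ for $x\ge 0$.

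Alternatively, embed a martingale started at $c$ into one started at $0$. First run $e\cdot B$, with $e$ a unit vector, until $|e\cdot B|$ reaches $|c|$. Then continue with $\pm(c+L)$, sign matched, driven by the shifted Brownian motion. The extra transform term $(Ae)\cdot B_\tau$ is conditionally mean zero, since $Ae\perp e$. By conditional Jensen it can only increase the $L^{p'}$ norm of the transform, while the $L^{p'}$ norm of the martingale is unchanged. With either repair your duality argument gives the constant $\cot(\pi/(2p^*))$.
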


\begin{remark}
For the applications to Riesz transforms on $\R^d$ we will need to work with $(d+1)\times (d+1)$ matrices.  In order to make the connection to Cotlar matrices we need to restrict to odd dimensions so that $d+1$ is even. 
\end{remark} 

Here are some examples.  When  $d=1$,  the following Cotlar matrix 
 \begin{align}
{\BH_2}=  \left[\begin{matrix} 0 & 1 \\ -1 & 0 \end{matrix}\right]\label{CotlarHilber2} 
\end{align}
gives the probabilisitic representation of the Hilbert transform on $\R$. 

When $d=3$  the following three  $4\times4$  Cotlar matrices   
\begin{align}\label{d=3:cotlar}
\BH_4^1=\bmatrix
0&1&0&0\\
-1&0&0&0\\
0&0&0&1\\
0&0&-1&0
\endbmatrix ,\,\,  
\BH_4^2=\bmatrix
0&0&1&0\\
0&0&0&-1\\
-1&0&0&0\\
0&1&0&0
\endbmatrix, \,\,  
\BH_4^3=\bmatrix
0&0&0&1\\
0&0&-1&0\\
0&1&0&0\\
-1&0&0&0
\endbmatrix,  
\end{align} 
give the probabilisitic representation of the 
 three Riesz transforms: $R_1, R_2, R_3$ on $\R^3$, respectively. They also have the property that $\ip{H_4^jv, H_4^kv}=0$, for all $j\ne k$ and all vectors $v\in \bR^4$, which gives a complex structure on $\bC^2$. 
 
 In general,  suppose $d$ is odd so  that $d+1=2n$. Then  the following Cotlar matrix 
\begin{align}\label{HilbertMatrix}
\BH_{2n}=
\bmatrix
0&1&0&0&\cdots&0&0\\
-1&0&0&0&\cdots&0&0\\
0&0&0&1&\cdots&0&0\\
0&0&-1&0&\cdots&0&0\\
\vdots&\vdots&\vdots&\vdots&\ddots&\vdots&\vdots\\
0&0&0&0&\cdots&0&1\\
0&0&0&0&\cdots&-1&0
\endbmatrix  
=\mathrm{diag}(\,\underbrace{\BH_2, \BH_2,\dots,\BH_2}_{n-\text{times}}\,),
\end{align}
corresponds to the first Riesz   transform $R_1$ on  $\R^d$.  The other $(d-1)$ Riesz transforms arise from orthogonal rotations of this matrix.   

\subsection{Conformal martingales} 
Before we make the connection] to Riesz transforms on $\R^d$, we present  two other (and more general) version of Theorem \ref{Th:CotEven}. 

\begin{definition}[Conformal martingale]\label{def:conformal} Let $(\Omega,\cF, \{\cF_t, t\geq 0\},\bP)$ be a filtered probability space with the usual conditions; the filtration is right continuous and $\cF_0$ contains all sets of probability $0$. 
\begin{itemize}
\item [(i)]
A $\bC$-valued continuous local martingale $Z=X+iY$ is called \emph{conformal} if
\[
\ip{X}_t=\ip{Y}_t\,\,\,\, \text{and}\,\,\, 
\ip{X,Y}_t=0, \,\,\, 
\text{for all }t\ge 0.
\]
\item[(ii)]
For  any $n\geq 1$, a  $\bC^n$-valued continuous local martingale $Z=(Z^1,\dots,Z^n)$ is called conformal
if  $Z^j$ is conformal for every $j=1, \dots, n$.
\item[(iii)] A $\bC^n$-valued conformal local  martingale is said to satisfy the orthogonality property if 
\begin{align}\label{off:diagonal}
 \ip{Z^k_t, Z^j_t}=0,\,\,\,  \text{for all}\,\,\,  j,\,  k. 
 \end{align}
\end{itemize}
Note that we always have \eqref{off:diagonal} for $j=k$ for any $\bC^n$-valued conformal martingale
\end{definition}
\begin{remark} In many of the papers in the literature on sharp martingale inequalities and applications to singular integrals starting with \cite{BW95}, and \cite{BanuelosWang1996}, conformal martingales with the extra cross orthogonality condition  \eqref{off:diagonal} are  simple called orthogonal martingales.  

Since all the martingales here are assumed to be $L^p$-bounded we will omit the local martingale terminology and simple use martingale. 
\end{remark}

Conformal martingales and their connections to analytic functions in the plane and $\bC^n$, in general,  have been extensively studied in the literature. For some of these literature and applications, see \cite{GetoorSharpe1972, Uboe1986, Dur, Muller2020}. 

Throughout the rest of this section we assume that all our martingales start at zero. Notice that if $A$ is a Cotlar matrix  the martingale $Z_t=M_t+iA*M_t$ on the Brownian filtration is a conformal martingale.   The proof of \eqref{marcot1} and that of Cotlar's original identity \eqref{CotlarId} using analytic  functions immediately give a similar identity for $\bC$-valued conformal martingales.

We start with a lemma connecting conformal martingales to analytic functions on $\bC$. The  lemma is a special  case of Proposition 5.4 in \cite[pg. 291]{GetoorSharpe1972} valid for holomorphic functions on $\bC^n$. For our needs here it suffices to take of $f:\bC\to \bC$ given by $F(z)=z^2$.  However, the proof of the case $z^2$ is really the same as the proof for  the case $z^k$ and for completeness we give the proof. 

\begin{lemma}\label{thm:power-conformal}
If $Z=X+iY$ is a $\bC$-valued conformal martingale and $k\in\{1,2,3,\dots\}$. Then 
\[
Z^{(k)}_t := (Z_t)^k
\]
is again a continuous conformal martingale. Writing $Z^{(k)}_t=U_t+iV_t$, we have
\[
\ip{U}_t=\ip{V}_t,\qquad \ip{U,V}_t=0.
\]
Moreover,
\[
d\ip{Z^{(k)}}_t = k^2 |Z_t|^{2k-2}\, dA_t,
\quad\text{where } \quad A_t=\ip{X}_t=\ip{Y}_t.
\]
\end{lemma}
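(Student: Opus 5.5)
The plan is to apply the complex form of Itô's formula to the holomorphic function $F(z)=z^k$ and use conformality to kill the second-order term. First I record the basic bracket identities. Extend $\ip{\cdot,\cdot}$ bilinearly to complex martingales, so that $\ip{Z,Z}_t=\ip{X}_t-\ip{Y}_t+2i\ip{X,Y}_t$. Then conformality of $Z$ is exactly the statement $\ip{Z,Z}_t=0$ for all $t$. Moreover $\ip{Z,\bar Z}_t=\ip{X}_t+\ip{Y}_t=2A_t$.

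Itô's formula for $F(X,Y)$ with $F$ of class $C^2$ on $\R^2$ can be written in terms of $\partial_z,\partial_{\bar z}$:
\[
dF(Z_t)=\partial_zF\,dZ_t+\partial_{\bar z}F\,d\bar Z_t+\tfrac12\partial_z^2F\,d\ip{Z,Z}_t+\partial_z\partial_{\bar z}F\,d\ip{Z,\bar Z}_t+\tfrac12\partial_{\bar z}^2F\,d\ip{\bar Z,\bar Z}_t .
\]
For $F(z)=z^k$ we have $\partial_{\bar z}F=0$, and the $\ip{Z,Z}$ and $\ip{\bar Z,\bar Z}$ terms vanish by conformality. This leaves
\[
dZ^{(k)}_t=kZ_t^{k-1}\,dZ_t .
\]
Hence $Z^{(k)}$ is a continuous local martingale. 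It is a true martingale under the $L^p$-boundedness standing assumption, the same convention the paper adopts in its remark about omitting ``local''.

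Next I compute brackets of the stochastic integral $W=\int kZ^{k-1}dZ$. By bilinearity,
\[
d\ip{W,W}=k^2Z^{2k-2}\,d\ip{Z,Z}=0,
\]
so $W=Z^{(k)}$ is conformal; that is, $\ip{U}=\ip{V}$ and $\ip{U,V}=0$. For the last claim,
\[
d\ip{W,\bar W}=k^2|Z|^{2k-2}\,d\ip{Z,\bar Z}=2k^2|Z|^{2k-2}\,dA_t .
\]
Since $\ip{W,\bar W}=\ip{U}+\ip{V}=2\ip{U}$, this gives $d\ip{U}_t=d\ip{V}_t=k^2|Z_t|^{2k-2}dA_t$. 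This is the stated formula, reading $\ip{Z^{(k)}}$ as the common value $\ip{U}=\ip{V}$, which is the normalization matching $\ip{Z}=A$ when $k=1$.

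An alternative that avoids the complex formalism is to write $z^k=P+iQ$ with $P,Q$ harmonic conjugates. Real Itô's formula then has second-order terms of the form $\tfrac12\Delta P\,dA=0$, using $\ip{X}=\ip{Y}$ and $\ip{X,Y}=0$. The Cauchy–Riemann equations then give $d\ip{U}=|\nabla P|^2dA=|F'(Z)|^2dA=d\ip{V}$ and $d\ip{U,V}=\nabla P\cdot\nabla Q\,dA=0$.

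The main obstacle is only bookkeeping. One has to justify the complex-bilinear bracket calculus, or equivalently carry out the real Cauchy–Riemann computation, and get the normalization of $\ip{Z^{(k)}}$ right. I would present the real computation for definiteness.
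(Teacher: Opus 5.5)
Your proof is correct. The real Cauchy--Riemann computation you offer as a fallback, and say you would present, is exactly the paper's proof: write $z^k=U+iV$ with $U,V$ harmonic conjugates, note that the Itô drift vanishes, and read off $d\langle U\rangle=|\nabla U|^2\,dA=|F'(Z)|^2\,dA=d\langle V\rangle$ and $d\langle U,V\rangle=(U_xV_x+U_yV_y)\,dA=0$. You also make explicit a step the paper leaves implicit. The drift reduces to $\tfrac12\Delta U\,dA$ only because $\langle X\rangle=\langle Y\rangle$ and $\langle X,Y\rangle=0$.

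Your primary route via complex-bilinear brackets is the same mathematics with different bookkeeping. Encoding conformality as $\langle Z,Z\rangle=0$ reduces the lemma to three identities:
\[
dZ^{(k)}=kZ^{k-1}\,dZ,\qquad d\langle W,W\rangle=(kZ^{k-1})^2\,d\langle Z,Z\rangle,\qquad d\langle W,\bar W\rangle=|kZ^{k-1}|^2\,d\langle Z,\bar Z\rangle .
\]
The Cauchy--Riemann equations are then absorbed into $\partial_{\bar z}z^k=0$.

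What this buys is generality. The same lines work verbatim for any holomorphic $F$. They also work for holomorphic functions of a $\mathbb{C}^n$-valued conformal martingale with $\langle Z^j,Z^k\rangle=0$, which is the Getoor--Sharpe statement the paper cites. It is in fact the formalism the paper itself switches to later, when it checks that $\Gamma=Z\cdot Z$ is conformal by computing a complex-bilinear bracket. The real version's only advantage is that it needs no justification of the complexified Itô formula.

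Your reading of $\langle Z^{(k)}\rangle$ as the common value $\langle U\rangle=\langle V\rangle$, rather than $\langle U\rangle+\langle V\rangle$, matches the paper's normalization. Your treatment of local versus true martingales is at the same level of rigor as the paper's.
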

 \begin{proof} 
 Set $F(z)=z^k$. Since $f$ is analytic we write $F(z)=f(x, y)=U(x, y)+iV(x, y)$, where $U$ and $V$ are conjugate harmonic functions satisfying the Cauchy-Riemann equations. 
It follows from It\^o's  formula that both 
$U_t=U(X_t, V_t)$ and $V_t=V(X_t, Y_t)$ are martingales. Applying \eqref{product} and \eqref{square} with these two martingales gives: 
\begin{align*}
d\brk{U}_t = |\nabla U|^2 dA_t,\quad  d\brk{V}_t = |\nabla V|^2\, dA_t,
\end{align*} 
\[d\ip{U,V}_t = (U_xV_x+U_yV_y)\, dA_t.
\]
By the Cauchy--Riemann equations, 
\[|\nabla U|^2=|\nabla V|^2
\]  
\[
U_xV_x+U_yV_y = U_x(-U_y)+U_y(U_x) = 0 
\]
Thus $\brk{U}_t=\brk{V}_t$ and $\ip{U, V}_t=0.$ Recalling that $|\nabla U(z)|^2=|F'(z)|^2$ completes the proof of the Lemma. 
\end{proof} 
\begin{definition}
 For any $t>0$ and  $1<p<\infty$, define
\begin{align}\label{Pic:Constant}
\beta_p := \sup \Big\{\frac{\|Y_t\|_p}{\|X_t\|_p}\Big\}, 
\end{align}
where the $\sup$ is taken over all pairs $(X, Y)$ of martingales such that $Z=X+iY$ is a conformal martingale.
\end{definition}
 Since $\ip{X}=\ip{Y}$ for all conformal pairs, $\beta_2=1$.   The fact that $\beta_p$ is finite follows from any one of the multiple versions of the the classical Burkholder-Gundy inequalities.
 
  Our interest here is in showing that  
 \[\beta_p\leq \cot\left(\frac{\pi}{2 p}\right), \quad   p = 2^k,  \,\, k\in \N.  
 \] 
By Lemma \ref{thm:power-conformal},  
\begin{align}\label{Case:k2}
Z^2=X^2-Y^2+2iXY=U+iV
\end{align}
 is a $\bC$-valued conformal martingale with real and imaginary parts given by
\[
 U_t=X_t^{\,2}-Y_t^{\,2}, \,\,\,\, V_t=2X_tY_t, 
\]
\begin{align*}X_t^2 - Y_t^2 = 2 \int_0^t \big( X_s\, dX_s - Y_s\, dY_s \big), \,\,\,\, 
2X_tY_t = 2 \int_0^t \big( X_s\, dY_s + Y_s\, dX_s \big).
\end{align*}
In this setting,
\begin{equation}\label{eq:cotlar-scalar}
Y_t^{\,2} = X_t^{\,2} - \Re{Z_t^2}
\end{equation}
is the needed Cotlar identity.

Note that without conformality we would have 
\[Z_t^2=2\int_0^t Z_s dZ_s+\ip{Z}_t, 
\]
with $\ip{Z_t}\neq 0$ and   $Z^2$ would not be a martingale. 

\begin{theorem}\label{C^2:version} Let $Z=X+iY$ be a conformal martingale. Then 
\begin{align}\label{eq:Conformal}
\|Y\|_p\leq 
\cot\left(\frac{\pi}{2 p}\right)\|X\|_p, \,\,\,  p = 2^k,\,\,\,  n\in \bN.
   \end{align} 
\end{theorem}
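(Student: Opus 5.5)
The plan is to copy Cotlar's induction for the Hilbert transform, with the identity \eqref{eq:cotlar-scalar}, $Y_t^2 = X_t^2 - \Re Z_t^2$, playing the role of \eqref{CotlarId}. Let $\beta_p$ be as in \eqref{Pic:Constant}, defined with $Z_0=0$. It suffices to prove $\beta_{2p}\le \beta_p+\sqrt{1+\beta_p^2}$ for every $p\ge 2$ with $\beta_p<\infty$. Once this recursion is in hand, $\beta_2=1=\cot(\pi/4)$ together with the identity $\cot(\alpha/2)=\cot\alpha+\sqrt{1+\cot^2\alpha}$ gives $\beta_{2^k}\le\cot(\pi/2^{k+1})$ by induction on $k$. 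The case $p=2$ is immediate: since $\ip{X}=\ip{Y}$, we have $\bE Y_t^2=\bE\ip{Y}_t=\bE\ip{X}_t=\bE X_t^2$. Finiteness of $\beta_p$ follows from the Burkholder--Gundy inequalities, as the paper notes, and this is needed so that the recursion can be rearranged.

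For the inductive step, fix $t$ and a conformal martingale $Z=X+iY$ with $\|X_t\|_{2p}<\infty$. By Lemma \ref{thm:power-conformal} with $k=2$, $Z^2=U+iV$ is a conformal martingale with $U=X^2-Y^2$ and $V=2XY$. The point is to apply the definition of $\beta_p$ to the pair $(V,-U)$ rather than $(U,V)$. Indeed $-iZ^2 = V - iU$, and multiplication by $-i$ preserves conformality: $\ip{V}=\ip{U}=\ip{-U}$ and $\ip{V,-U}=0$. So $(V,-U)$ is an admissible conformal pair, and $\|U_t\|_p\le\beta_p\|V_t\|_p = 2\beta_p\|X_tY_t\|_p$. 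This plays the role of the identity $\Re(Z^2)=-H(\Im Z^2)$ at the boundary; in the martingale setting it is just a rotation of a conformal martingale.

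Minkowski's inequality and H\"older's inequality then give
\[
\|Y_t\|_{2p}^2=\|Y_t^2\|_p\le \|X_t\|_{2p}^2+2\beta_p\|X_t\|_{2p}\|Y_t\|_{2p}.
\]
Setting $r=\|Y_t\|_{2p}/\|X_t\|_{2p}$, which is finite by Burkholder--Gundy, we get $r^2-2\beta_p r-1\le 0$, hence $r\le\beta_p+\sqrt{1+\beta_p^2}$. Taking the supremum over all pairs and using $\|Y\|_p=\sup_t\|Y_t\|_p$ yields \eqref{eq:Conformal}.

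The main obstacle is integrability. Lemma \ref{thm:power-conformal} guarantees that $Z^2$ is a local martingale, but applying $\beta_p$ to it requires a true martingale bounded in $L^p$. The first approach is to note that $|Z_t^2|=|Z_t|^2$ and $\|Z_t\|_{2p}\le C\|X_t\|_{2p}$ by Burkholder--Gundy, so $Z^2$ is $L^p$-bounded on $[0,t]$. If needed, one can fall back on localizing with stopping times $\tau_n$ and then applying Fatou's lemma. A secondary point is that the argument only gives the even powers $p=2^k$; the conjugate range does not follow by duality here, since no adjoint operator is available. This is consistent with the statement, which claims only $p=2^k$.
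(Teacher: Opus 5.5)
Your proposal is correct and follows the paper's proof. Both obtain the recursion $\beta_{2p}\le\beta_p+\sqrt{1+\beta_p^2}$ from $Y_t^2=X_t^2-\Re Z_t^2$ via Lemma~\ref{thm:power-conformal}, Minkowski and Cauchy--Schwarz, then induct from $\beta_2=1$ using the half-angle cotangent identity.

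Your use of the rotated pair $(V,-U)$ coming from $-iZ^2$ makes explicit why $\|X_t^2-Y_t^2\|_p\le\beta_p\|2X_tY_t\|_p$ holds. The paper says it applies $\beta_p$ to $(U,V)$, which literally gives the reverse inequality, so the roles must be swapped as you do. Solving the quadratic for each pair rather than for $\beta_{2p}$ is only a cosmetic variation.
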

\begin{proof}  

Suppose we can show that for every $p> 1$,
\begin{equation}\label{eq:recurrence}
\beta_{2p} \le \beta_p + \sqrt{1+\beta_p^2}, 
\end{equation}
where $\beta_p$ is the constant in \eqref{Pic:Constant}. Since  $\bE(Y_t^2)=\bE\ip{Y}_t=\bE\ip{X}_t)=\bE(X_t^2)$, it follows that $\beta_2=1$.  The induction argument as  before gives that for $p=2^k,$ 
$\beta_{p}\le \cot\!\left(\frac{\pi}{2p}\right),$
and \eqref{eq:Conformal} follows. 

To show \eqref{eq:recurrence},  applying the definition of $\beta_p$ to the pair $(U, V)$ in \eqref{Case:k2} we have 
\[
\|X_t^2-Y_t^2\|_p \le \beta_p\,\|2X_tY_t\|_p.
\]
Writing $$Y_t^2=X_t^2-(X_t^2-Y_t^2)$$ and applying  the Minkowski  and Cauchy–Schwarz  inequalities, exactly as in Cotlar's proof, we obtain 
\begin{align*}
\|Y_t\|^2_{2p}=\|Y_t^2\|_p & \leq \|X_t^2\|_p+\beta_p\|2X_tY_t\|_p\\
& \leq \|X_t^2\|_p+2\beta_p\|Y_t\|_{2p}\|X_t\|_{2p}\\
&\le \|X_t^2\|_p+2\beta_p\beta_{2p}\|X_t\|_{2p}^2\\
&= \|X_t\|_{2p}^2+2\beta_p\beta_{2p}\|X_t\|_{2p}^2. 
\end{align*}
Hence, 
\[\frac{\|Y_t\|^2_{2p}}{\|X_t\|_{2p}^2}\leq 1+2\beta_p\beta_{2p},
\]
which again together with the definition of $\beta_p$ gives 
\[\beta_{2p}^2\leq 1+2\beta_p\beta_{2p}. 
\] 
This competes the proof of  \eqref{eq:recurrence} and \eqref{eq:Conformal} follows.

\end{proof} 
The inequality \eqref{eq:Conformal} is a especial  ase of the inequality in \cite{BW95} valid for $1<p<\infty$. 
We end this section by pointing out the version of Theorem \ref{C^2:version} for $\bC^n$-valued conformal martingales for any $n\geq 1$.  This directly follows from the next lemma. 

\begin{lemma}\label{Corr:v-valued} Suppose $Z=(Z^1, \dots, Z^n)\in \bC^n$  is a conformal martingale with the orthogonality property \eqref{off:diagonal}.
Set $Z^k=X^k+iY^k$.  Then 
\[\Gamma_t :=Z_t\cdot Z_t=\sum_{k=1}^n (Z^k_t)^2=\sum_{k=1}^n [(X^k)^2-(Y^k)^2+2iX^kY^k]=U_t+iV_t
\]
is a complex-valued conformal martingale with 
\begin{align}\label{eq:cotlar-cector}
U_t=\|X_t\|_{\ell^2}^2-\|Y_t\|_{\ell^2}^2
\end{align} 
and 
\begin{align}
V_t=2X_t\cdot Y_t&=2\sum_{k=1}^n  \int_0^t \big( X_s^k\, dY_s^k + Y_s^k\, dX_s^k \big)\\
&=2\int_0^t (X_s\cdot dY_s+Y_s\cdot dX_s).\nonumber
\end{align} 
 \end{lemma}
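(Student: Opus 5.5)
The argument is a direct It\^o computation: check that $\Gamma_t$ has no drift, then check that its real and imaginary parts satisfy the conformality relations.

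\emph{Step 1: $\Gamma$ is a martingale with the stated formulas for $U$ and $V$.} By Lemma \ref{thm:power-conformal} with $k=2$, each $(Z^k)^2=(X^k)^2-(Y^k)^2+2iX^kY^k$ is a conformal martingale. Concretely, \eqref{square} and \eqref{product} give
\[
(X^k_t)^2-(Y^k_t)^2=2\int_0^t (X^k_s\,dX^k_s-Y^k_s\,dY^k_s)+\ip{X^k}_t-\ip{Y^k}_t,
\]
\[
2X^k_tY^k_t=2\int_0^t (X^k_s\,dY^k_s+Y^k_s\,dX^k_s)+2\ip{X^k,Y^k}_t .
\]
Conformality of $Z^k$ kills both drift terms. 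Summing over $k$ shows that $\Gamma_t=U_t+iV_t$ is a martingale. It also yields \eqref{eq:cotlar-cector} and the stochastic integral formula for $V_t=2X_t\cdot Y_t$. Orthogonality is not needed for this step.

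\emph{Step 2: conformality of $\Gamma$.} A sum of conformal martingales need not be conformal, because cross terms appear. Write $dU=2\sum_k (X^k dX^k - Y^k dY^k)$ and $dV=2\sum_k (X^k dY^k+Y^k dX^k)$, and expand
\[
d\ip{U},\qquad d\ip{V},\qquad d\ip{U,V}
\]
as double sums over $j,k$ involving $d\ip{X^j,X^k}$, $d\ip{Y^j,Y^k}$, $d\ip{X^j,Y^k}$ and $d\ip{Y^j,X^k}$.

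Interpret the orthogonality property \eqref{off:diagonal} as the vanishing of the complex bracket $\ip{Z^j,Z^k}=\ip{X^j,X^k}-\ip{Y^j,Y^k}+i(\ip{X^j,Y^k}+\ip{Y^j,X^k})$. This gives, for all $j,k$,
\[
d\ip{X^j,X^k}=d\ip{Y^j,Y^k}=:d a_{jk},\qquad d\ip{X^j,Y^k}=-d\ip{Y^j,X^k}=:d b_{jk}.
\]
For $j=k$ this is exactly the conformality of $Z^k$, so $b_{kk}=0$.

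Substitute these relations into the expansions.
\begin{itemize}
\item For $\ip{U}$ and $\ip{V}$, both expansions reduce to $4\sum_{j,k}\big[(X^jX^k+Y^jY^k)\,da_{jk}+(X^jY^k-Y^jX^k)\,db_{jk}\big]$ up to regrouping, so $\ip{U}=\ip{V}$.
\item For $\ip{U,V}$, the coefficients of $da_{jk}$ and of $db_{jk}$ are antisymmetric in $(j,k)$ after using $a_{jk}=a_{kj}$ and $b_{jk}=-b_{kj}$. The latter follows from $\ip{X^j,Y^k}=-\ip{Y^j,X^k}=-\ip{X^k,Y^j}$. The double sum therefore vanishes.
\end{itemize}

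A cleaner route is to note that $\Gamma$ is obtained from the $\bC^n$-valued martingale $Z$ by the holomorphic map $F(z)=z\cdot z$. It\^o's formula then gives $d\ip{\Gamma,\Gamma}=\sum_{j,k}\partial_jF\,\partial_kF\,d\ip{Z^j,Z^k}=0$, which is equivalent to $\ip{U}=\ip{V}$ and $\ip{U,V}=0$. This is the special case of the Getoor--Sharpe proposition mentioned before Lemma \ref{thm:power-conformal}. I would present this complex-bracket computation and keep the real expansion as a check.

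\emph{Main obstacle.} The sign bookkeeping in Step 2 is where care is needed. It is also the only place where the orthogonality hypothesis enters, namely through the off-diagonal vanishing $\ip{Z^j,Z^k}=0$ for $j\neq k$ in the sense of the complex bilinear bracket.
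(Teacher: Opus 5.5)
Your proposal is correct and follows essentially the same route as the paper: the martingale property comes from Lemma \ref{thm:power-conformal} applied to each $(Z^k)^2$ and summed, and conformality comes from the complex bilinear bracket $d\ip{\Gamma}_t=4\sum_{j,k}Z^j_tZ^k_t\,d\ip{Z^j,Z^k}_t=0$ via \eqref{off:diagonal}, which you write correctly at the level of differentials (the paper's display elides the integral). In your optional real-variable check, the $db_{jk}$ term in both $d\ip{U}$ and $d\ip{V}$ actually carries a minus sign, and in $d\ip{U,V}$ it is the summands (symmetric coefficient times antisymmetric $b_{jk}$, antisymmetric coefficient times symmetric $a_{jk}$), not the coefficients, that are antisymmetric; neither slip affects the conclusion.
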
 
 \begin{proof}
 Since the function $F(z)=z_1^2+\dots+z_k^2$ is holomorphic, it follows from It\^o's formula that $\Gamma$ is a complex-valued conformal martingale. This is again  a special case of  Proposition 5.4 in \cite[pg. 291]{GetoorSharpe1972} which states that if $F:\bC^n\to\bC$ is holomorphic and $Z=(Z^1, \dots, Z^n)$ is a $\bC^n$-valued conformal martingale satisfying \eqref{off:diagonal}, then $F(Z)$ is a $\bC$-valued conformal martingale. 
 
 However, verifying $F(Z)$ is a conformal martingale for our function $F$ is very simple without appealing to the general result.  Indeed, by Lemma \ref{thm:power-conformal}, $(Z^k)^2$ is a martingale and hence so is their sum $\Gamma$.  It remains to show that $\Gamma$ is conformal. Computing the covariance and using  \eqref{off:diagonal},

\begin{align*}
\ip{\Gamma}_t
= \ip{\sum_{j=1}^n (Z^j)^2,\ \sum_{k=1}^n (Z^k)^2}_t
&= \sum_{j,k=1}^n \ip{(Z^j)^2,\, (Z^k)^2}_t\\
&=4\sum_{j,k=1}^n Z^j_t\, Z^k_t\, \ip{Z^j \, Z^k}_t.\\
&=0. 
\end{align*} 
\end{proof} 
\begin{remark}
For a concrete example of a $\bC^n$, $n=2d$, valued conformal martingales  with the orthogonal  property that  arises from a Beurling-Ahlfors operators on $\R^d$, see \eqref{concrete:ecample} and the discussions that follows. 
\end{remark} 
  From Lemma \ref{Corr:v-valued} and the same proof as that of Theorem \ref{C^2:version} we have the vector-valued version. 
\begin{theorem}\label{thm:dyadic-Cn}
Let $Z=X+iY$  conformal $\bC^n$-valued martingale with the orthogonality property.  
Then for $p=2^k, k\in \bN$,
\[
\|\,\|Y\|_{\ell^2}\|_p
\le \cot\Big(\frac{\pi}{2p}\Big)\ \|\,\|X\|_{\ell^2}\,\|_p. 
\]
\end{theorem}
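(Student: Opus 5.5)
We mimic the proof of Theorem \ref{C^2:version}, replacing scalar squares by squared $\ell^2$ norms. Define the vector analogue of \eqref{Pic:Constant},
\[
\gamma_p:=\sup\Big\{\frac{\|\,\|Y_t\|_{\ell^2}\|_p}{\|\,\|X_t\|_{\ell^2}\|_p}\Big\},
\]
where the supremum runs over all $t>0$, all $n\ge1$ and all $\bC^n$-valued conformal martingales $Z=X+iY$ with the orthogonality property \eqref{off:diagonal}. The class includes $n=1$, where \eqref{off:diagonal} is automatic, so $\bC$-valued conformal martingales are admissible. The goal is to prove $\gamma_2=1$ and the recurrence $\gamma_{2p}\le\gamma_p+\sqrt{1+\gamma_p^2}$. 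The trigonometric identity and induction, exactly as for \eqref{sharpH}, then give $\gamma_{2^k}\le\cot(\pi/2^{k+1})$.

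\emph{Base case.} Since each $Z^k$ is conformal, $\ip{X^k}=\ip{Y^k}$. Summing over $k$ and taking expectations gives $\bE\|Y_t\|_{\ell^2}^2=\bE\|X_t\|_{\ell^2}^2$, so $\gamma_2=1$.

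\emph{Recurrence.} By Lemma \ref{Corr:v-valued}, $\Gamma_t=Z_t\cdot Z_t=U_t+iV_t$ is a $\bC$-valued conformal martingale, and it starts at $0$ because $Z_0=0$. It is therefore admissible with $n=1$, and the case $n=1$ of the definition applied to the pair $(V,-U)$ gives $\|U_t\|_p\le\gamma_p\|V_t\|_p$. Here $(V,-U)$ is the pair $(\Re,\Im)$ of $-i\Gamma$, which is also conformal.

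Using \eqref{eq:cotlar-cector} we write $\|Y_t\|_{\ell^2}^2=\|X_t\|_{\ell^2}^2-U_t$, the vector Cotlar identity. Minkowski's inequality gives
\[
\|\,\|Y_t\|_{\ell^2}\|_{2p}^2\le\|\,\|X_t\|_{\ell^2}\|_{2p}^2+\gamma_p\|2X_t\cdot Y_t\|_p .
\]
Cauchy--Schwarz in $\ell^2$ gives $|X_t\cdot Y_t|\le\|X_t\|_{\ell^2}\|Y_t\|_{\ell^2}$, and Hölder then gives $\|2X_t\cdot Y_t\|_p\le 2\|\,\|X_t\|_{\ell^2}\|_{2p}\|\,\|Y_t\|_{\ell^2}\|_{2p}$. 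Divide by $\|\,\|X_t\|_{\ell^2}\|_{2p}^2$ and write $r$ for the ratio $\|\,\|Y_t\|_{\ell^2}\|_{2p}/\|\,\|X_t\|_{\ell^2}\|_{2p}$. This yields $r^2\le 1+2\gamma_p r$, hence $r\le\gamma_p+\sqrt{1+\gamma_p^2}$. Taking the supremum gives the recurrence.

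\emph{Main obstacle.} The delicate step is the self-referential use of the constant. We must make sure $\gamma_p$ is finite, by Burkholder--Gundy as remarked for $\beta_p$. We must also make sure that the scalar martingale $\Gamma$ falls in the same class as the original vector martingale. This is why $\gamma_p$ is defined as a supremum over all $n$, and not $n$ fixed.

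Working with the quadratic inequality in the ratio $r$, rather than with $\gamma_{2p}$ directly, avoids any a priori finiteness issue at level $2p$. Alternatively, one can note that $\gamma_p\le\beta_p$-type scalar bounds suffice, since only the scalar martingale $\Gamma$ enters the constant. In that case Theorem \ref{C^2:version} gives $\|U_t\|_p\le\cot(\pi/2p)\|V_t\|_p$ directly for $p=2^{k}$, and the induction runs over $k$ using that theorem's bound at step $k$ together with the base case $p=2$.
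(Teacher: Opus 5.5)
Your proposal is correct and follows the paper's own route: Lemma \ref{Corr:v-valued} combined with the Cotlar recurrence of Theorem \ref{C^2:version}. Applying the constant to $-i\Gamma$, i.e. to the pair $(V,-U)$, is exactly what is needed to get $\|U_t\|_p\le\gamma_p\|V_t\|_p$, which the paper obtains only loosely from the pair $(U,V)$. One small slip in your closing remark: the trivial comparison is $\beta_p\le\gamma_p$, since the vector class contains $n=1$, and what you mean is that only the scalar constant $\beta_p$ enters the recurrence, so Theorem \ref{C^2:version} at level $p$ gives the vector bound at level $2p$ directly.
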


\subsection{Riesz transforms as projections of Cotlar  martingale transforms}
Before we return to the Cotlar-Hilbert martingale transforms, we explain how the martingale transform on $\R^d$ arise as projections operators, equivalently conditional expectations,  of the martingale transforms.  This is the celebrated work of Gundy-Varopoulos \cite{GV79}  that has been so extensively used in the literature.  Here,  we use the notation in \cite{Ban, Ban86}.  Let $B_t=(X_t, Y_t)=(X_t^1, \dots , X_t^d, Y_t)$ be $(d+1)$ dimensional Brownian  motion in the upper half-space of $\R_+^{d+1}=\R^d\times (0, \infty)$ starting with the Lebesgue measure  on the hyperplane $\{(x, y): x\in \R^d, y=a\}$, $a>0$. Let $\tau=\inf\{t>0: Y_t=0\}$ be its exit time. We identify $B_{\tau}=(X_{\tau}, 0)$ with $X_{\tau}$. 
If $\mathbb P_{(x, a)}$ is  the probability measure associated with $B_t=(X_t, Y_t)$ starting at the point $(x,a)$ with $x\in \R^d$ and $a>0$, define the  measure  $\bP^a$  by 
\[
\bP^a(B_{t \wedge \tau})\in \Theta)=\int_{\R^d} \mathbb P_{(x,a)}(B_{t \wedge \tau}\in \Theta)d\mu(x),
\]
for any Borel set $\Theta\in  \R^d\times \R^+$. In particular, for any Borel set $\Theta\subset \R^d$, $\mathbb P_y(X_{\tau}\in \Theta)=m(\Theta)$, where $m$ is the Lebesgue  measure on $\R^d$. Hence, 
\begin{equation}\label{eq:3.3.2}
\bE^a(f(X_{\tau}))=\int_{\R^d} f(x)d\mu(x).
\end{equation} 

In the same way, by independence, the transition probability of $\{B_t; \tau>t\}$ is the product of the heat kernel in $\R^d$ and the heat kernel for the half-line $(0, \infty)$.  Integrating away the heat kernel in the $x$-variable (since the Brownian motion has  the Lebesgue measure as its initial distribution) and computing the
Green's function for the half-line  gives 
\begin{equation}\label{eq:3.3.3}
\bE^{a} \int_0^{\tau} F(B_s)\,ds
=\int_0^{\infty} \bE^a[F(B_s); \tau>t]=
2 \int_0^\infty \int_{\mathbb{R}^d} (y \wedge a)\, F(x,y)\,dx\,dy,
\end{equation}
for all nonnegative functions $F$ on $\R^{d+1}_+$. Both
\eqref{eq:3.3.2} and \eqref{eq:3.3.3} continue to hold for those $f$ and $F$
for which the integrals are finite. 

At this point we could apply the martingale inequalities with respect to  the probability measure of Brownian motion starting at the $(x, a)$ and then let $a\to \infty$ to get our results for their projections. This is done in \cite{Ban86}.  Here we will proceed as in the original paper of Gundy and Varopoulos \cite{GV79} where they  used a time-reversal argument to let $a\to\infty$ and construct a filtered probability space and a process
$\{B_t=(X_t, Y_t)\}$ indexed by $t \in (-\infty,0]$, which they called the {\it background
radiation process.} Heuristically speaking, the paths of $B_t$ are
Brownian paths which originate from "$\{a=\infty\}$"  at time $t=-\infty$
and exit $\mathbb{R}^{d+1}_+$ at time $t=0$ with Lebesgue measure as their 
distribution. Letting $\bE$ be the expectation with respect to the
measure associated with the background radiation process, the
identities \eqref{eq:3.3.2} and \eqref{eq:3.3.3} become, respectively, 
\begin{equation}\label{eq:3.3.4}
\bE f(B_0) = \int_{\mathbb{R}^d} f(x)\,dx
\end{equation}
and
\begin{equation}\label{eq:3.3.5}
\bE \int_{-\infty}^0 F(B_s)\,ds
=
2 \int_0^\infty \int_{\mathbb{R}^d} y F(x,y)\,dx\,dy.  
\end{equation}

For $f\in C^{\infty}_0(\R^d)$, let $U_f(x, y)=P_yf(x)$ be the convolution of $f$ with the Poisson kernel 
\[P_y(x)=\frac{c_d\, y}{\left(|x|^2+|y|^2\right)^{\frac{d+1}{2}}}, 
\]
where $c_d$ is the constant as in the  definition of the Riesz transforms \eqref{RieszT}.
We  set  
\begin{align*}
 \nabla U_f(x, y)
&=\left(\partial_yU_f(x, y), \partial_{x_1}U_f(x, y), \dots, \partial_{x_d}U_f(x, y)\right)\\
&=\left(\partial_y{P_yf(x)}, \partial_{x_1}{{P_yf(x)}, \dots, \partial_{x_d}{P_yf(x)},}\right)
\end{align*}
and 
$$
dB_s=\left(dY_s, dX_s^1, \dots, dX_s^{d}\right). 
$$
Apply It\^o's formula to get the martingale 
\[M_t^f=U_f(X_t, Y_t)=\int_{-\infty}^t\nabla U_f(X_s, Y_s)\cdot dB_s, \quad t\in (-\infty, 0). 
\] Note that  
\begin{equation}\label{eq:f(B_0} M_0^f=f(B_0)=\int_{-\infty}^0\nabla U_f(X_s ,Y_s)\cdot dB_s. 
\end{equation} 
 For any $(d+1)\times (d+1)$ matrix $A$ define the martingale transform  
\begin{align*}
A*M^f_t=\int_{-\infty}^tA\nabla U_f(X_s, Y_s)\cdot dB_s, \quad t\in (-\infty, 0]
\end{align*}
and its projection operator (conditional expectation) on $\R^d$ by 
\begin{align}\label{projection:TA}
T_Af(x)=\bE[A*M^f_0\, \big| B_0=(x, 0)]=\bE\left(\int_{-\infty}^0A\nabla U_f(X_s , Y_s)\cdot dB_s\, \big| X_0=x\right). 
\end{align}

Since the conditional expectation is a contraction on $L^p$ for any $1< p<\infty$ and the distribution of $B_0$ is the Lebesgue measure, we immediately have 
\begin{align}\label{Mart:CotlarProjection} 
\|T_Af\|_{p}\leq \left(\bE|A*M_0^f|^p\right)^{1/p}=\|A*M^f_0\|_p\,\,\, 1< p<\infty.
\end{align}
From this and  Theorem \ref{Th:CotEven} we have 

\begin{corollary}\label{Cor:Cot}
Suppose $d$ is odd and $A$ is a $(d+1)\times (d+1)$  Cotlar matrix.  Then  
\begin{align}\label{eq:Cotlar3}
\|T_Af\|_{p}\leq \cot\left(\frac{\pi}{2 p^*}\right)\|f\|_{p}, \,\,\,  p = 2^k\,\,  \text{or}\,\,  p=\frac{2^k}{2^n-1}, n\in \bN.
\end{align} 
\end{corollary}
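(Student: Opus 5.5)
The bound follows by combining the contraction estimate \eqref{Mart:CotlarProjection} with Theorem~\ref{Th:CotEven}, applied in dimension $d+1$ (even, since $d$ is odd) to the Brownian martingale $M^f_t=\int_{-\infty}^t\nabla U_f(B_s)\cdot dB_s$. Here $K_s=\nabla U_f(B_s)$, and the driving Brownian motion is $(Y,X^1,\dots,X^d)$.

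\textbf{Step 1.} Start from \eqref{Mart:CotlarProjection}: $\|T_Af\|_p\le\|A*M^f_0\|_p$. Then apply Theorem~\ref{Th:CotEven} to the pair $(M^f,A*M^f)$. That theorem rests only on the pathwise identity \eqref{marcot1} and on $\bE|A*M_t|^2=\bE|M_t|^2$, and both survive on the background radiation space. The Itô computations in Theorem~\ref{CorlarMain} and Corollary~\ref{MarcCotlar1} are local in time, so they hold on $(-\infty,0]$ with martingales starting at $0$ at time $-\infty$; the expectation of the martingale term $2A*(M(A*M))_0$ vanishes. The Cotlar recursion $\|A*M\|_{2p}\le \|A*M\|_p$-constant $+\sqrt{1+(\cdot)^2}$ then gives $\|A*M^f_0\|_p\le\cot(\pi/2p)\|M^f_0\|_p$ for $p=2^k$.

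\textbf{Step 2.} By \eqref{eq:f(B_0} and \eqref{eq:3.3.4}, $\|M^f_0\|_p^p=\bE|f(B_0)|^p=\|f\|_p^p$. This settles $p=2^k$.

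\textbf{Step 3.} For the conjugate exponents $p=2^k/(2^k-1)$, use duality. Let $A^T$ denote the transpose; it is again a Cotlar matrix, since $A^T=-A$ by antisymmetry, so $(A^T)^2=A^2=-I$. Using \eqref{eq:3.3.5} and the Itô isometry,
\[
\int T_Af\,g\,dx=\bE\big[(A*M^f_0)M^g_0\big]=2\int_{\R^{d+1}_+}y\,A\nabla U_f\cdot\nabla U_g\,dx\,dy=\int f\,T_{A^T}g\,dx .
\]
Hence $\|T_A\|_{L^p}=\|T_{A^T}\|_{L^q}$ with $q=2^k$, which equals $\cot(\pi/2q)=\cot(\pi/2p^*)$.

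\textbf{Main obstacle.} The delicate step is justifying the martingale manipulations under the background radiation measure, which is infinite and has time index $(-\infty,0]$. Concretely, one must check that $\bE\,A*(M(A*M))_0=0$, and that the $L^p$ norms used in the recursion are finite and given by supremum over $t$ as in the definition of $\|M\|_p$. The first thing to try is to work instead under $\bP^a$ with Brownian motion started on $\{y=a\}$ and exiting at $\tau$, where the martingales are genuine and $L^p$-bounded for $f\in C^\infty_0$. Proving the inequality there and letting $a\to\infty$, as in \cite{Ban86}, passes the bound to $T_Af$ by weak convergence of the projections.
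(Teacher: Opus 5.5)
Your argument is correct, and Steps 1--2 are exactly the paper's proof. The paper obtains the corollary in one line by combining the contraction estimate \eqref{Mart:CotlarProjection} with Theorem~\ref{Th:CotEven}, together with $\|M^f_0\|_p=\|f\|_p$ from \eqref{eq:3.3.4}.

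\textbf{Where you differ: the dual range $p=2^k/(2^k-1)$.}
\begin{itemize}
\item The paper takes this range directly from Theorem~\ref{Th:CotEven}. That theorem asserts $\|A*M\|_p\le\cot(\pi/2p^*)\|M\|_p$ at the level of martingale transforms, and the paper then projects. The duality used there is the martingale one, $\bE[(A*M)N]=\bE[M(A^T*N)]$. This tacitly requires every $L^q$ variable on the underlying filtration to be a stochastic integral.
\item You dualize after projecting. You use $T_A^*=T_{A^T}=-T_A$, which you read off from the same bilinear Gundy--Varopoulos formula already computed in Lemma~\ref{Lemma:Cotlar}.
\end{itemize}
Your route needs only the $p=2^k$ martingale bound plus an $L^2$ identity. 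It therefore avoids martingale representation on the infinite-measure background radiation space. The paper's route yields a stronger intermediate fact: the martingale transform itself is bounded at the dual exponents, which is of interest independently of the projection.

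\textbf{The ``main obstacle.''} Your plan to work under $\mathbb P_{(x,a)}$ and let $a\to\infty$, as in \cite{Ban86}, supplies justification the paper omits. Note one detail: there $M_0=U_f(x,a)\neq 0$. So you should run the recursion on $M-M_0$ (note $A*M=A*(M-M_0)$). Then use $\|P_af\|_p\to0$ for $p>1$ to remove the error in the limit.

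\textbf{Two small precision points.}
\begin{itemize}
\item At the end of Step 3 you only get $\|T_{A^T}\|_{L^q}\le\cot(\pi/2q)$, not equality. Equality does in fact hold: $A^TA=-A^2=I$, so $T_A=\sum_j a_{1,j+1}R_j$ is a directional Riesz transform with $\sum_j a_{1,j+1}^2=1$. You do not need this, however.
\item Your displayed recursion is garbled. It should read $\beta_{2p}\le\beta_p+\sqrt{1+\beta_p^2}$. Here $\beta_p$ must be the supremum over all martingales on the filtration starting at $0$, not just those of the form $M^f$, because the bound is applied to $M(A*M)$.
\end{itemize}
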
 
 
\begin{remark}\label{Mart:CotlarHilbertProjection} An important distinction for the case $d=1$ comes from the fact that the Cauchy-Riemann equations give 
$$\BH_2\nabla U_f =\nabla U_{Hf}.$$ Consequently, 
\begin{align} 
\BH_2*M_0^f=M^{Hf}_0=Hf(B_0). 
\end{align} 
Thus the conditional expectation plays no role in this case and equality holds in \eqref{Mart:CotlarProjection}. 
\end{remark} 

For any $d>1$,  consider the  $(d+1)\times (d+1)$ matrices $\BR_j$,   $j=1, 2, \dots, d$,  defined by 
 \begin{align}\label{Rieszd=2}
\BR_{j} = [a^{j}_{lm}] = 
\begin{cases}
    1,  & l=1, m=j+1 \\
    -1, & l=j+1, m=1 \\
    0,  & \text{otherwise}, 
\end{cases}
\end{align} 
and for $j,k = 1,2,\dots,d,\ j\neq k$,  the matrices  $\BR_{(j, k)}$ defined by 
\[\BR_{(j, k)} = \big[a^{(j, k)}_{\ell m}\big]=
\begin{cases}
-1, & \ell = k+1,\ m = j+1,\\
-1, & \ell = j+1,\ m = k+1,\\
0,  & \text{otherwise}. 
\end{cases}
\]
\begin{example}
 For example, with $d=3$, we have the following: 
\begin{align}\label{d=3}
\BR_1=\bmatrix
0&1&0&0\\
-1&0&0&0\\
0&0&0&0\\
0&0&0&0
\endbmatrix ,
\BR_2=\bmatrix
0&0&1&0\\
0&0&0&0\\
-1&0&0&0\\
0&0&0&0
\endbmatrix,  
\BR_3=\bmatrix
0&0&0&1\\
0&0&0&0\\
0&0&0&0\\
-1&0&0&0. 
\endbmatrix   
\end{align}
and 
\[
\BR_{(1, 2)}=
\bmatrix
0&0&0&0\\
0&0&-1&0\\
0&-1&0&0\\
0&0&0&0
\endbmatrix,  
\BR_{(1, 3)}=
\bmatrix
0&0&0&0\\
0&0&0&-1\\
0&0&0&0\\
0&-1&0&0
\endbmatrix, 
\BR_{(2, 3)}=
\bmatrix
0&0&0&0\\
0&0&0&0\\
0&0&0&-1\\
0&0&-1&0
\endbmatrix. 
\]
 In Addison, $\BR_{(1,2)}=\BR_{(2,1)}$,  $\BR_{(1, 3)}=\BR_{(3, 1)}$ and $\BR_{(2, 3)}=\BR_{(3, 2)}$. 
\end{example}
The above matrices can be decompose as the sum of two matrices  in the form of  
\[\BR_j=\BR_j^1+\BR_j^2,\,\,\,  \text{and}\,\,\, \BR_{(j, k)}=\BR_{(j, k)}^1+\BR_{(j, k)}^2
\]   where, 
\begin{align}
\BR_j^1=\begin{cases}
    1,  & l=1, m=j+1 \\
    0,  & \text{otherwise}, 
\end{cases}, \hskip.2cm  \BR_{j}^2=
\begin{cases}
    -1, & l=j+1, m=1 \\
    0,  & \text{otherwise}, 
\end{cases}
\end{align}
and 
\begin{align}\label{RiszMatr2}
\BR_{(j, k)}^1 =
\begin{cases}
-1, & \ell = k+1,\ m = j+1,\\
0,  & \text{otherwise},
\end{cases}, 
\end{align}
\begin{align}\label{RieszMatr3}
\BR_{(j, k)}^2 =
\begin{cases}
-1, & \ell = j+1,\ m = k+1,\\
0,  & \text{otherwise}.
\end{cases}
\end{align}
 
For $\{j, k =1, \dots, d\}$, the second order Riesz transforms are the Fourier multiplier  operators with 
\[\widehat{R_{(j, k)}f}(\xi)=\frac{-\xi_1 \xi_2}{|\xi|^2}\widehat{f}(\xi).\]

\begin{lemma}\label{Lemma:Cotlar} For all $f\in C_0^{\infty}(\R^d)$ the following hold. \\ 
\begin{itemize}
\item[(i)]  For all 
$j\in \{1, \dots, d\}$, 
\[T_{\BR_j^1}f=T_{\BR_j^2}f=\frac{1}{2}R_jf. \quad \text{Hence}\quad T_{\BR_j}f=R_jf.\]
\item[(ii)] For all $j\neq k \in \{1, \dots, d\},$
 \[T_{\BR_{(j, k)}^1}f=T_{\BR_{(j, k)}^2}f=\frac{1}{2}R_{(j, k)}f.  
 \quad \text{Hence}\quad T_{\BR_{(j, k)}}f=R_{(j, k)}f.
 \]
 \end{itemize}
\end{lemma}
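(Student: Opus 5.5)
Since $T_A$ is linear in $A$, it suffices to compute $T_A$ for the elementary matrices $\BR_j^1,\BR_j^2,\BR_{(j,k)}^1,\BR_{(j,k)}^2$; summing the two halves then gives the statements for $\BR_j$ and $\BR_{(j,k)}$. We identify each $T_A$ by duality. For $g\in C_0^\infty(\R^d)$, \eqref{eq:3.3.4} and the definition \eqref{projection:TA} give
\[
\int_{\R^d} T_Af(x)\,g(x)\,dx=\bE\big[(A*M^f_0)\, g(B_0)\big]=\bE\big[(A*M^f_0)\,M^g_0\big],
\]
using \eqref{eq:f(B_0}. The stochastic integral isometry (the covariation formula following \eqref{StochasticMartin}) turns the right side into $\bE\int_{-\infty}^0 A\nabla U_f(B_s)\cdot\nabla U_g(B_s)\,ds$. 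By \eqref{eq:3.3.5} this equals
\[
2\int_0^\infty\int_{\R^d} y\, A\nabla U_f(x,y)\cdot \nabla U_g(x,y)\,dx\,dy .
\]
So everything reduces to evaluating this bilinear form.

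For the elementary matrices the integrand is a product of two partial derivatives. Recall that $\nabla U$ has $\partial_y$ in slot $1$ and $\partial_{x_j}$ in slot $j+1$. With this ordering:
\begin{itemize}
\item $\BR_j^1$ produces $\partial_{x_j}U_f\,\partial_y U_g$, and $\BR_j^2$ produces $-\partial_y U_f\,\partial_{x_j}U_g$;
\item $\BR_{(j,k)}^1$ produces $-\partial_{x_j}U_f\,\partial_{x_k}U_g$, and $\BR_{(j,k)}^2$ produces $-\partial_{x_k}U_f\,\partial_{x_j}U_g$.
\end{itemize}
Each term is computed with Plancherel in $x$. The Fourier transform of $U_f(\cdot,y)$ is $e^{-2\pi y|\xi|}\hat f(\xi)$, so $\partial_y$ contributes $-2\pi|\xi|$ and $\partial_{x_j}$ contributes $\pm 2\pi i\xi_j$, the sign set by the paper's normalization. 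After taking the Fourier transform in $x$, each term becomes
\[
2\int_{\R^d}\hat f(\xi)\overline{\hat g(\xi)}\, m(\xi)\int_0^\infty y\,e^{-4\pi y|\xi|}\,dy\,d\xi .
\]
Since $\int_0^\infty y e^{-4\pi y|\xi|}dy=(16\pi^2|\xi|^2)^{-1}$, the multiplier obtained is $2\cdot 4\pi^2(\cdot)/(16\pi^2|\xi|^2)=\tfrac12(\cdot)/|\xi|^2$. For the first-order terms $(\cdot)=\pm i\xi_j|\xi|$, which gives $\tfrac12\, i\xi_j/|\xi|$. For the second-order terms $(\cdot)=\pm\xi_j\xi_k$, which gives $\tfrac12(-\xi_j\xi_k/|\xi|^2)$.

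In the first-order case both halves give the same multiplier. The sign flip in $\BR_j^2$ exactly compensates for swapping which function carries $\partial_y$, because the derivative factor lands on $\overline{\hat g}$ (complex conjugation flips $i\xi_j$). In the second-order case both halves are symmetric in $j,k$. By \eqref{RieszForierM} and the definition of $R_{(j,k)}$ (with $\xi_1\xi_2$ read as $\xi_j\xi_k$), each half therefore equals $\tfrac12R_jf$, respectively $\tfrac12R_{(j,k)}f$, paired with $g$. Since $g$ is arbitrary, this proves (i) and (ii).

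The main obstacle is keeping the signs consistent: the paper's Fourier convention has $e^{+2\pi x\cdot\xi}$ in the forward transform, there is the conjugation in Plancherel, and there is the ordering convention ($y$ first) in $\nabla U$ and $dB_s$. I would fix these by first checking the case $d=1$ against Remark~\ref{Mart:CotlarHilbertProjection}, where $\BR_1=\BH_2$ must give $H$, and then propagate the signs. A secondary, routine point is justifying the isometry and Fubini over the infinite time interval $(-\infty,0]$. This follows from $f,g\in C_0^\infty$, which makes $\int y|\nabla U_f||\nabla U_g|\,dx\,dy$ finite by the same Plancherel computation with absolute values.
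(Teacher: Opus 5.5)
Your proposal is correct and is essentially the paper's own proof. Both arguments pair with a test function $g\in C_0^\infty$, use the covariation identity $\bE[(A*M_0^f)\,g(B_0)]=\bE\int_{-\infty}^0 A\nabla U_f\cdot\nabla U_g\,ds$ and the Green's function formula \eqref{eq:3.3.5}, and finish with Plancherel in $x$ together with $\int_0^\infty y e^{-4\pi y|\xi|}\,dy=(16\pi^2|\xi|^2)^{-1}$.

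Your explicit sign checks for $\BR_j^2$, $\BR_{(j,k)}^1$ and $\BR_{(j,k)}^2$, which the paper dismisses as ``exactly the same,'' are right. For the integrability step, use Cauchy--Schwarz together with $\int_0^\infty\int_{\R^d} y|\nabla U_f|^2\,dx\,dy=\tfrac12\|f\|_2^2$, since Plancherel cannot be applied ``with absolute values.''
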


\begin{proof} This follows from Gundy-Varopoulos \cite{GV79}. Here we give the proof of (i)  following the computation 
in \cite[pg. ~817]{Ban}. The proof of (ii) is exactly the same. Recall that 
\[ \widehat{P_yf}(\xi)=e^{-2\pi y|\xi|}\widehat{f}(\xi)\,\,\, \text{and}\,\,\, \widehat{\partial_{x_j}f}(\xi)=-2i\pi \xi_j\widehat{f}(\xi)
\]
Let  $g$ be another smooth function of compact support.  Set 
\[N=g(B_0)=\int_{-\infty}^0\nabla U_g(X_s ,Y_s)\cdot dB_s\] and 
\begin{align*} 
M=\BR_j^1*M_0^f&=\int_{-\infty}^0\BR_j^1\nabla U_f(X_s , Y_s)\cdot dB_s=\int_{-\infty}^0\partial_{x_j}U_f(X_s, Y_s)dY_s. 
\end{align*} 

By \eqref{product} and \eqref{eq:3.3.5} we have 
\begin{align*}\bE[NM]&=\bE\ip{N, M}=\bE\int_{-\infty}^0 \partial_{x_j}U_f(X_s, Y_s)\partial_{y}U_g(x, y)ds\\
&=\int^{\infty}_0\int_{\R^d} 2y\,  \partial_{x_1}U_f(x, y)\partial_{y}U_g(x, y)dx\,dy. 
\end{align*}
This gives, 
\begin{align*}
\int_{\R^d} g(x)T_{\BR_j^1}f(x)dx&=\bE[g(B_0)T_{\BR_j^1}f(B_0)]\\
&=\bE\left(\bE\left( g(B_0)\int_{-\infty}^0\partial_{x_j}U_f(X_s, Y_s)dY_s\,\right) \Big| B_0\right)\\
&=\bE[NM]=\bE\int_{-\infty}^0 \partial_{x_j}U_f(X_s, Y_s)\partial_{y}U_g(x, y)ds\\
&=\int^{\infty}_0\int_{\R^d} 2y\,  \partial_{x_1}U_f(x, y)\partial_{y}U_g(x, y)dx\,dy\\
&=2\int_0^{\infty}y\, \int_{\R^d}\widehat{{\partial_{x_j}U_f(\xi, y)}}\overline{\widehat{\partial_{y}U_f(\xi, y),}}\, d\xi,\\
 &=8\pi^2 \int_{\mathbb{R}^d}
i\xi_j |\xi|\, \widehat{f}(\xi)\,\overline{\widehat{g}}(\xi)
\left( \int_{0}^{\infty} y e^{-4\pi y|\xi|}\, dy \right)
d\xi 
\widehat{R_j f}(\xi)\,\overline{\widehat{g}}(\xi)\, d\xi\\
&= \frac{1}{2} \int_{\mathbb{R}^d}
R_j f(x)\, g(x)\, dx .
\end{align*} 
This shows that $T_{\BR_j^1}f=\frac{1}{2}R_jf$ and competes the proof of  (i). The proof of (ii) is exactly the same. 
\end{proof} 
\begin{remark} The use of the background radiation process can be entirely avoided by defining the  operators 
\[\cT^{a}_{A}f(x)=\bE^{a}\left(\int_{0}^\tau A\nabla U_f(X_s, Y_s)\cdot dB_s\big| X_{\tau}=x\right). 
\]
Taking $A=\BR_j$ and integrating against another smooth function as above and letting $a\to\infty$ shows that $\cT^{a}_{\BR_j}f\to R_jf$ in $L^2$. Since all the $L^p$-bounds of the operators $\cT^{a}_{\BR_j}$ are independent of $a$, they remain valid for $R_j$. For more details on this construction, see \cite{Ban86}. 
\end{remark} 

The lemma immediately gives the following 
\begin{corollary}
Consider the $(d+1)\times (d+1)$ matrix 
\[\tilde{\BR}_{(j, k)} =\big[\tilde{a}^{(j,k)}_{\ell m}\big]=
\begin{cases}
1, & \ell = j+1,\ m = k+1,\\
-1, & \ell = k+1,\ m = j+1\\
0,  & \text{otherwise},
\end{cases}
\]
then 
\begin{align}\label{Proj-0}
T_{\tilde{\BR}_{(j, k)}}f=\frac{1}{2}\left(R_{(j, k)}f-R_{(k, j)}f\right)=0.
\end{align}    
\end{corollary}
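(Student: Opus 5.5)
The plan is to decompose $\tilde{\BR}_{(j,k)}$ into the elementary pieces from \eqref{RiszMatr2}--\eqref{RieszMatr3}, use linearity of $A\mapsto T_A$, and then read off each piece from Lemma~\ref{Lemma:Cotlar}(ii).

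First, observe that the map $A\mapsto T_Af$ is linear. This holds because $A*M^f_0=\int_{-\infty}^0 A\nabla U_f(X_s,Y_s)\cdot dB_s$ is linear in $A$, and conditional expectation is linear.

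Next, write $\tilde{\BR}_{(j,k)}$ as a sum of two one-entry matrices. Its entry $+1$ sits at position $(j+1,k+1)$. By \eqref{RieszMatr3} this is exactly $-\BR^2_{(j,k)}$; equivalently, by \eqref{RiszMatr2} with the roles of $j,k$ swapped, it is $-\BR^1_{(k,j)}$. Its entry $-1$ sits at position $(k+1,j+1)$, which is exactly $\BR^1_{(j,k)}$. Hence
\[
\tilde{\BR}_{(j,k)}=\BR^1_{(j,k)}-\BR^2_{(j,k)}.
\]

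Applying Lemma~\ref{Lemma:Cotlar}(ii) to each piece gives
\[
T_{\tilde{\BR}_{(j,k)}}f=\tfrac12R_{(j,k)}f-\tfrac12R_{(j,k)}f=0.
\]
If one prefers the form stated in \eqref{Proj-0}, use instead the decomposition $\tilde{\BR}_{(j,k)}=\BR^1_{(j,k)}-\BR^1_{(k,j)}$. The lemma then gives $\frac12\big(R_{(j,k)}f-R_{(k,j)}f\big)$. This vanishes because the multiplier $-\xi_j\xi_k/|\xi|^2$ is symmetric in $j$ and $k$, so $R_{(j,k)}=R_{(k,j)}$.

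The only delicate point is the index bookkeeping: one must check which of $\BR^1$ or $\BR^2$ has its nonzero entry at $(j+1,k+1)$ versus $(k+1,j+1)$. A careful reader may also want to confirm directly that $T_{\BR^1_{(j,k)}}=T_{\BR^2_{(j,k)}}$, as Lemma~\ref{Lemma:Cotlar}(ii) asserts. To do so, redo the duality computation in that lemma. The piece $\BR^1_{(j,k)}$ produces $\bE\int \partial_{x_j}U_f\,\partial_{x_k}U_g$, which becomes $2\int y\,\partial_{x_j}U_f\,\partial_{x_k}U_g$ by \eqref{eq:3.3.5}. By Plancherel this equals a multiple of $\int \xi_j\xi_k\,|\xi|^{-2}\hat f\bar{\hat g}$. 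The piece $\BR^2_{(j,k)}$ gives the same expression with $j$ and $k$ interchanged. The two agree because $\xi_j\xi_k$ is symmetric in $j,k$, and this symmetry is also what makes the antisymmetric combination vanish.
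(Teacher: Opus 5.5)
Your proof is correct and is essentially the paper's argument. The paper just says the corollary follows immediately from Lemma~\ref{Lemma:Cotlar}: use linearity of $A\mapsto T_A$ and split $\tilde{\BR}_{(j,k)}$ into the one-entry matrices of \eqref{RiszMatr2}--\eqref{RieszMatr3}. Your split $\tilde{\BR}_{(j,k)}=\BR^1_{(j,k)}-\BR^1_{(k,j)}$ (your index bookkeeping checks out) is exactly what produces the displayed $\frac12\big(R_{(j,k)}f-R_{(k,j)}f\big)$, which vanishes because the multiplier $-\xi_j\xi_k/|\xi|^2$ is symmetric in $j,k$.
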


\begin{theorem}\label{CorOdd} Suppose $d>2$ is odd so that $d+1=2n$ for and integer  $n>1$. Let $R_1$ be the first Riesz transform on $\R^d$.  Consider the Cotlar matrix $\BH_{2n}$ given in \eqref{HilbertMatrix}.  Then the projection of the Cotlar martingale transform $\BH_{2n}*M_t^f$ is $R_1f$. That is,   
\begin{align}\label{Main:Result}
T_{\BH_{2n}}f=R_1f
\end{align}
and 
\begin{align}\label{Ineq:Riesz}
\|R_1f\|_p\leq \cot\left(\frac{\pi}{2 p^*}\right)\|f\|_p, \,\, p = 2^k\,\,  \text{or}\,\,  p=\frac{2^k}{2^k-1}, k\in \bN.
\end{align}
\end{theorem}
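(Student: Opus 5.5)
Our plan is to decompose $\BH_{2n}$ into pieces whose projections are already computed in Lemma~\ref{Lemma:Cotlar} and the Corollary giving \eqref{Proj-0}, and then to read off \eqref{Ineq:Riesz} from Corollary~\ref{Cor:Cot}. Recall that the coordinates of $\nabla U_f$ are ordered $(\partial_y,\partial_{x_1},\dots,\partial_{x_d})$, so index $1$ corresponds to $y$ and index $m+1$ to $x_m$. In $\BH_{2n}=\mathrm{diag}(\BH_2,\dots,\BH_2)$ the first block occupies rows and columns $1,2$. It has entry $1$ at $(1,2)$ and $-1$ at $(2,1)$, so it is exactly $\BR_1$ from \eqref{Rieszd=2}. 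For $m=2,\dots,n$, the $m$-th block occupies indices $2m-1,2m$, that is, the $x$-coordinates $j=2m-2$ and $k=2m-1$. It has $+1$ at $(j+1,k+1)$ and $-1$ at $(k+1,j+1)$, so it is precisely $\tilde{\BR}_{(j,k)}$ with $(j,k)=(2m-2,2m-1)$. We therefore have the decomposition
\[
\BH_{2n}=\BR_1+\sum_{m=2}^{n}\tilde{\BR}_{(2m-2,\,2m-1)},
\]
and since $d=2n-1$, these pairs $\{2,3\},\{4,5\},\dots,\{d-1,d\}$ exhaust the remaining $x$-coordinates.

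Next we use linearity. The map $A\mapsto A*M^f_0=\int_{-\infty}^0 A\nabla U_f\cdot dB_s$ is linear in $A$, and conditional expectation is linear, so $A\mapsto T_A f$ is linear. Hence
\[
T_{\BH_{2n}}f=T_{\BR_1}f+\sum_{m=2}^n T_{\tilde{\BR}_{(2m-2,2m-1)}}f.
\]
By Lemma~\ref{Lemma:Cotlar}(i), $T_{\BR_1}f=R_1f$. By \eqref{Proj-0}, each remaining term vanishes, because $R_{(j,k)}=R_{(k,j)}$: the multiplier $-\xi_j\xi_k/|\xi|^2$ is symmetric in $j,k$. This gives \eqref{Main:Result}.

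It remains to prove \eqref{Ineq:Riesz}. We first check that $\BH_{2n}$ is a Cotlar matrix. It is antisymmetric, so $\BH_{2n}v\cdot v=0$ for all $v$. It is block diagonal with $\BH_2^2=-I$, so $\BH_{2n}^2=-I$. Corollary~\ref{Cor:Cot}, which combines the contraction \eqref{Mart:CotlarProjection} with Theorem~\ref{Th:CotEven} and $\|M^f_0\|_p=\|f\|_p$ from \eqref{eq:3.3.4}, then yields the bound for $p=2^k$ and the dual exponents.

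The main obstacle is Theorem~\ref{Th:CotEven} itself. Its induction requires the auxiliary martingale $M(A*M)$ to have the same Brownian integrand form as the original, so that the defining constant (the analogue of $\beta_p$) can be reapplied; the proof of Theorem~\ref{CorlarMain} guarantees exactly this. There is also a technical issue: the time interval here is $(-\infty,0]$ for the background radiation process, rather than $[0,t]$. We would handle this either by working with $\bP^a$ on $[0,\tau]$ and letting $a\to\infty$, as in the remark following Lemma~\ref{Lemma:Cotlar}, since the bounds are independent of $a$, or by applying the identity directly, since only It\^o's product rule is used. For the dual range $p=2^k/(2^k-1)$, we would argue by duality using $T_A^*=-T_A$ (from $A^T=-A$), or equivalently $R_1^*=-R_1$.
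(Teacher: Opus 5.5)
Your proposal is correct and follows the paper's proof essentially verbatim: you use the same decomposition $\mathbf{H}_{2n}=\mathbf{R}_1+\sum_{k=1}^{n-1}\widetilde{\mathbf R}_{(2k,2k+1)}$ (your index is $m=k+1$), linearity of $A\mapsto T_A f$, Lemma~\ref{Lemma:Cotlar}(i) together with \eqref{Proj-0} to kill the extra blocks, and then Corollary~\ref{Cor:Cot} for the $L^p$ bound. Your extra checks (that $\mathbf{H}_{2n}$ is a Cotlar matrix, duality via $T_A^*=T_{A^T}=-T_A$, and the remark on the time interval $(-\infty,0]$) are harmless supplements that the paper leaves implicit.
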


\begin{proof} Observe that the matrix $H_{2n}$ is the sum of the elementary skew symmetric matrices generating the
$2\times2$ blocks on the index pairs $(1,2)$, $(3,4)$, $\dots$, $(2n-1,2n)$.  For $k=1,\dots,n-1$, define  as above  the $(d+1)\times (d+1)$ matrix 
$\widetilde{\mathbf R}_{(2k, 2k+1)}$ by
\[\widetilde{\mathbf R}_{(2k, 2k+1)}=
\big[\tilde{a}^{(2k, 2k+1)}_{\ell m}\big]=
\begin{cases}
1,  & \ell=2k+1, m=2k+2),\\[2pt]
-1, & \ell=(2k+2,m=2k+1),\\[2pt]
0,  & \text{otherwise}.
\end{cases}
\]
That is, $\widetilde{\mathbf R}_{(2k, 2k+1)}$ is zero except for a single $2\times2$
skew-symmetric block acting on rows and columns $2k+1$ and $2k+2$. Then 
$\BH_{2n}
=\mathbf R_1+
\sum_{k=1}^{\,n-1} \widetilde{\mathbf R}_{(2k, 2k+1)}$ and 
\begin{align}
\BH_{2n}*M_t^f=\BR_1*M_t^f+\sum_{k=1}^{\,n-1} \widetilde{\mathbf R}_{(2k, 2k+1)} *M_t^f. 
\end{align} 
Taking conditional expectation and employing  \eqref{Proj-0} we obtain 
\begin{align}\label{THE:CID}
T_{\BH_{2n}}f(x)=R_1f(x)+\sum_{k=1}^{\,n-1} T_{\widetilde{\mathbf R}_{(2k, 2k+1)}} f(x)= R_1f(x). 
\end{align}
The bound in \eqref{Ineq:Riesz} follows from Corollary \ref{Cor:Cot}, completing   the proof of the theorem. 
\end{proof} 
\begin{example}\label{Cot:Proj4D} Suppose $d=3$ and consider the Cotlar matrices $\BH_4^1, \BH_4^2, \BH_4^3$ given in \eqref{d=3:cotlar}.  The above construction gives:
\begin{enumerate} 
\item $T_{\BH_4^1}=R_1+\frac{1}{2}(R_{(2, 3)}-R_{(3, 2)})=R_1$
\item $T_{\BH_4^2}=R_2+\frac{1}{2}(R_{(1, 3)}-R_{(3, 1)})=R_2$
\item $T_{\BH_4^3}=R_3+\frac{1}{2}(R_{(1, 2)}-R_{(2, 1)})=R_3$
\end{enumerate}
\end{example} 

\begin{remark}\label{Riesz;Rotation}
Extending the  $L^p$ bound from  odd to even dimensions presents no problem. Recall that by  de Leeuw's extension theorem (\cite{DeL}, \cite[Theorem 2.5.15]{Gra}) the Riesz transforms on $\R^{d_2}$ are Fourier extensions of those on $\R^{d_1}$, $d_2>d_1$.  Hence the upper bound for $\|R_1\|_{p\to p}$  in the Corollary holds for all $d>1$.   Of course, we already knew that but here we have shown it  in the case of powers of 2 directly from the Cotlar identity. 
\end{remark}

\begin{remark}
By a rotation of the matrix $\BH_{2n}$, as in the example  for $d=3$, we can verify that $R_j$, $j=2, \dots, d$, are projection of a Cotlar martingale transforms and hence we have the same bound for $R_j$ as for $R_1$.  However, if the only thing we want is to get the same $L^p$ norm for $R_j$ as that of $R_1$, this is immediate.  Indeed, for $j=2,\dots,d$,   let $\rho$ be orthogonal matrix $\rho\in O(d)$ (the orthogonal group in $\R^d$) with $\rho e_1=e_j$
and define $(U_{\rho} f)(x)=f(\rho^{-1}x)$. Then $R_j = U_{\rho}^{-1} R_1U_{\rho}$. 
Consequently,
$\|R_j f\|_{p}=\|R_1 (f\circ \rho^{-1})\|_{p}$ and hence
$\|R_j\|_{p\to p}=\|R_1\|_{p\to p}.$
\end{remark} 

\subsection{On a theorem and a problem of R. Durrett} As we showed above, for any $d>1$ the $(d+1)\times (d+1)$ matrices $\BR_j$, $j=1, \dots, d$ in \eqref{Rieszd=2} have the property that $T_{\BR_j}=R_j$.  That is, the Riesz transforms $R_j$ are projections (conditional expectations) of the martingale transforms by the matrices  $\BR_j$ applied to the Brownian martingales obtained by composting the harmonic extensions of functions with Brownian motion in the upper half-space of $\R^{d+1}$. Motivated by Davis \cite{Davis1973} probabilistic proof of Stein and Wiess \cite{SteWei1971} theorem that the distribution of the the conjugate function on the circle, equivalently of Hilbert transform, of an indicator function of a Lebesgue measurable set depends only on measure of the set, Durrett proved the following 
 
\begin{knowntheorem}[R.~Durret \cite{Dur}]\label{Du:Thm} Let $B\in \cF_{\infty}$, $0<\P(B)<1$.  There are constants $C$ and $\gamma$ which only depend on $\bP(B)$ and the matrices so that 
\begin{align}\label{Du:Ine}
\bP\left(\sup_{j}\sup_{t}|\BR_j*1_{B}|>y\right) \geq C e^{-\gamma y}. 
\end{align}
\end{knowntheorem}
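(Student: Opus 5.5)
We interpret $\BR_j*1_B$ as the martingale transform of the Brownian martingale $N_t=\bE[1_B\mid\cF_t]$. By martingale representation, $N_t=\bP(B)+\int_0^t K_s\cdot dB_s$. The plan is to exploit the fact that $\BR_j$ acts only on the pair of coordinates $(1,j+1)$. Writing $K=(K^0,K^1,\dots,K^d)$ in the coordinates $(dY,dX^1,\dots,dX^d)$, we have
\begin{align*}
\BR_j*N_t=\int_0^t K^j_s\,dY_s-\int_0^t K^0_s\,dX^j_s .
\end{align*}
Summing over $j$ in quadratic variation gives
\begin{align*}
\sum_{j=1}^d\ip{\BR_j*N}_t=\int_0^t\big(|K_s'|^2+d\,|K^0_s|^2\big)\,ds\ \ge\ \ip{N}_t,
\end{align*}
where $K'=(K^1,\dots,K^d)$. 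Consequently $\max_j\ip{\BR_j*N}_\infty\ge \ip{N}_\infty/d$. The first step is therefore to obtain a lower tail bound for $\ip{N}_\infty$. The second step is to transfer it to $\sup_t|\BR_j*N_t|$ for some $j$.

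For the first step we use the fact that $N_\infty=1_B$ takes only the values $0$ and $1$, while $N_0=\bP(B)\in(0,1)$. Time-change $N$ into a one-dimensional Brownian motion $W$ started at $\bP(B)$ and run until it exits $(0,1)$, which it must do by time $\ip{N}_\infty$. This gives $\ip{N}_\infty\ge\sigma$, where $\sigma$ is the exit time of $W$ from $(0,1)$. The law of $\sigma$ is explicit, with $\bP(\sigma>s)\ge c\,e^{-\pi^2 s/2}$. Hence $\bP(\ip{N}_\infty> s)\ge c(\bP(B))e^{-\pi^2s/2}$ for all $s$.

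For the second step, fix $s=\lambda y$ and use pigeonhole to pick $j$ with $\bP(\ip{\BR_j*N}_\infty>\lambda y/d)\ge c\,e^{-\pi^2\lambda y/2}/d$. Write $\BR_j*N=\beta(\ip{\BR_j*N})$ with $\beta$ a Brownian motion (Dambis--Dubins--Schwarz). We must convert a large quadratic variation into a large supremum. This is the main obstacle, because the event $\{\ip{\BR_j*N}_\infty>s\}$ is not independent of $\beta$. The standard remedy is a good-$\lambda$/Burkholder--Gundy style estimate: the exceptional event has small probability, with
\begin{align*}
\bP\Big(\ip{\BR_j*N}_\infty>\lambda y/d,\ \sup_{u\le\lambda y/d}|\beta_u|\le y\Big)\le \bP\Big(\sup_{u\le \lambda y/d}|\beta_u|\le y\Big)\le C e^{-\pi^2 d/(8\lambda y)\cdot y^2}\cdot(\text{const}),
\end{align*}
and the right-hand side equals $C\,e^{-\pi^2 d y/(8\lambda)}$, again by the explicit small-ball law of Brownian motion. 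Choose $\lambda$ large, depending only on $d$, so that this error is at most half of the main term $c\,e^{-\pi^2\lambda y/2}/d$ for $y$ large. Subtracting then gives $\bP(\sup_t|\BR_j*N_t|>y)\ge C e^{-\gamma y}$ with $\gamma=\pi^2\lambda/2$. Here $C$ and $\gamma$ depend only on $\bP(B)$ and on the matrices, the latter through $d$. Small $y$ is absorbed into $C$, since $\bP(\sup_j\sup_t|\BR_j*N_t|>0)>0$ by the quadratic variation bound.

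The delicate point is the balancing of exponents in the second step: the small-ball probability must decay faster than the main term. This works because the small-ball bound behaves like $e^{-c\,y^2/s}$ with $s=\lambda y$, so its exponent $\pi^2 d\,y/(8\lambda)$ falls as $\lambda$ grows. One must check that some admissible $\lambda$ makes $\pi^2 d/(8\lambda)>\pi^2\lambda/2$. This fails for large $\lambda$, so $\lambda$ should instead be taken small, with $\lambda^2<d/4$. If the constants do not close, the fallback is to replace pigeonhole by applying the same argument to the $\R^d$-valued martingale $(\BR_j*N)_j$. Its total quadratic variation exceeds $\ip{N}_\infty$, so the multidimensional small-ball estimate (Bessel exit) can be used instead.
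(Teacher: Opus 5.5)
Your first step (the exit-time lower bound for $\ip{N}_\infty$ and the pigeonhole over $j$) is fine. The gap is in the second step, and it is not a matter of constants. In your small-ball estimate the roles of $s$ and $y^2$ are interchanged. Let $T_y$ be the exit time of $\beta$ from $(-y,y)$. Then $\bP(\sup_{u\le s}|\beta_u|\le y)\ge\bP(T_y>s)\ge e^{-\pi^2 s/(8y^2)}$, which tends to $1$ when $s/y^2\to0$. With $s=\lambda y/d$, the term you subtract therefore tends to $1$ for every $\lambda$. The shape $e^{-cy^2/s}$ you wrote belongs to the complementary probability $\bP(\sup_{u\le s}|\beta_u|>y)\le 2e^{-y^2/(2s)}$, so the question of whether $\lambda$ should be large or small is moot.

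No other choice of $s$ helps either. You have $\ip{\BR_j*N}_\infty=\int_0^\infty(|K^j_s|^2+|K^0_s|^2)\,ds\le\ip{N}_\infty$, and $\ip{N}_\infty$ is distributed as the exit time of a Brownian motion started at $0$ from $(-\bP(B),1-\bP(B))\subset(-1,1)$. Hence for all $s>0$, all $j$ and all $y\ge1$,
\[
\bP\big(\ip{\BR_j*N}_\infty>s\big)\le\bP(T_1>s)\le\bP(T_y>s)\le\bP\Big(\sup_{u\le s}|\beta_u|\le y\Big),
\]
so your lower bound is nonpositive. The fallback has the same defect, since exit times of a ball of radius $y$ also have tails of order $e^{-cs/y^2}$. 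It also fails for a second reason: $(\BR_j*N)_j$ is not a time-changed $d$-dimensional Brownian motion, because $\ip{\BR_j*N,\BR_k*N}_t=\int_0^tK^j_sK^k_s\,ds\neq0$. Separately, absorbing small $y$ into $C$ needs a lower bound on $\bP(\sup_j\sup_t|\BR_j*N_t|>y_0)$ that is uniform over all $B$ with the given $\bP(B)$; mere positivity is not enough.

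The underlying issue is that a tail bound on the quadratic variation alone can never force the supremum to be large. For $c>0$ and $y_0=\pi/\sqrt{8c}$, the martingale $M_t=\beta_{t\wedge T_{y_0}}$ has $\bP(\ip{M}_\infty>s)\ge e^{-cs}$ for all $s$, yet $\sup_t|M_t|\le y_0$. What you need is a product, not a difference. Suppose the Dambis--Dubins--Schwarz Brownian motion $\beta$ of $\BR_j*N$ were independent of its clock $T=\ip{\BR_j*N}_\infty$. Then $\bP(\sup_t|\BR_j*N_t|>y)=\bE[\phi_y(T)]$ with $\phi_y(s)=\bP(\sup_{u\le s}|\beta_u|>y)$, which up to polynomial factors is at least $e^{-y^2/(2s)}$. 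Combined with $\bP(T>s)\gtrsim e^{-cs}$ at $s\asymp y$, this gives $e^{-\gamma y}$; that is the balancing you were reaching for. So the dependence you flagged as the ``main obstacle'' is the entire difficulty, and the union bound discards exactly the information required.

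The input actually available is orthogonality. Since $\BR_j$ is skew-symmetric, $\ip{\BR_j*N,N}\equiv0$, and by Knight's theorem $\beta$ is independent of the Brownian motion driving $N$. But the clock $\int_0^\infty(|K^j_s|^2+|K^0_s|^2)\,ds$ is not determined by $N$, so this does not close the argument by itself.

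The paper does not reprove Theorem~\ref{Du:Thm}; it cites Durrett. His argument, sketched in Remark~\ref{Du:remark}, isolates the part of the transform orthogonal to the martingale direction and conditions on its clock $u_0$ to obtain a conditional Brownian tail bound. The paper gives a sharp version only for the Cotlar matrices $\BH_j$. There $|\BH_jK|=|K|$ and $\BH_jK\cdot K=0$ make the clock exactly $\ip{N}_\infty$. As a result $\BH_j*1_{\cE}$ is a coordinate of planar Brownian motion at its exit time from a strip, with the explicit tail \eqref{Dis:HJ} and rate $\gamma=\pi$.
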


\begin{problem}
The validity of such an inequality for the Riesz transforms is raised in  \cite[p.~34]{Dur}.  To the best of our knowledge this problem remains open.  
\end{problem}

 As we have already seen,  when $d$ is odd, equivalent $d+1=2n$, the Riesz transforms on $\R^d$ are given by the projection operators $T_{\BH_1}, \dots, T_{\BH_d}$ where the matrices $\BH_1, \dots, \BH_d$ are $2n\times 2n$, $d+1=2n$, and satisfy 
 \begin{enumerate}
 \item {\bf Cotlar property}\
\begin{align*}
x\cdot \BH_j x=0, \quad |\BH_j x|=|x|,  \quad\text{for all }x\in\mathbb{R}^{2n}.  
\end{align*}
\item {\bf Mutual orthogonality}
\begin{align*}
(\BH_i x)\cdot(\BH_j x)=0
\quad i\neq j,\quad\text{for all } x\in\R^{2n}.
\end{align*}
\end{enumerate}

 Using these matrices a sharp version of \eqref{Du:Ine} with an explicit constants is possible. This fact was already indicated in Durrett's paper following Davis's argument for the Stein-Weiss result. Here we also the compute the exact distribution of the martingale transform.

 Towards that end, let  $B_t=(B_t^1,\dots,B_t^{2n})$ 
be the standard $2n$-dimensional Brownian motion equipped with the standard Brownian filtration $\mathcal F_t=\sigma(B_s:0\le s\le t)$ on the probability space $(\Omega,\mathcal F,\mathbb P)$. Let $\cE\in\mathcal F_\infty$ with $0<\bP(\cE)<1$.  Consider the martingale 
\[
M_t=\bE(1_{\cE}\mid\cF_t) 
\]
which can be written as 
\begin{equation}\label{DurRep:1}M_t:=\bP(\cE)+\int_0^t K_s\cdot d B_s, 
\end{equation}
where  $K_s\in \R^{2n}$ predicable. 
To simplify notation set  $Y_t^j=(\BH{_j}*1_{\cE})_t$, the martingale transforms by the matrices $\BH{_j}$, $j=1, \dots , 2n$, and $Y_t^0=M_t-\bP(\cE)$.  
By  properties (1) and (2) the martingale 
\[
Z_t:=(Y^0, Y_t^1,\dots,Y_t^{2n})=(Y_t^0, Y_t^1, \dots, Y_t^{2n})\, \in \R^{2n+1}
\]
is an orthogonal martingale. That is,  $\ip{Y^j, Y^k}_t=0$, for all $j\neq k$ and $d\ip{Y^j, Y^j}_t=|K_t|^2 dt$.  Hence
\[
d\ip{Z}_t= |K_t|^2 I_{2n+1}\,dt .
\]
By Dambis-Dubins-Schwarz theorem (see \cite[(1), p.~78]{Dur}) there exists
an $(2n+1)$-dimensional Brownian motion $\widetilde B_t$ such that $
Z_t=\widetilde B_{\langle X\rangle_t}.
$ Moreover, if we set
$
\tau:=\langle X\rangle_\infty,
$
then
\[
(Y_\infty^1,\dots,Y_\infty^{2n})
\stackrel{d}{=}
(\widetilde B_\tau^{(1)},\dots,\widetilde B_\tau^{(2n)}),
\]
and since $M_{\infty}=1_{\cE}\in \{0, 1\}$, 
$\tau$ is the first exit time of one dimensional Brownian motion 
from the interval $(-\bP(\cE),1-\bP(\cE))$.
\begin{lemma} 
Set $\beta=\bP(\cE)$. Let $\tau$ be the first exit time of planar
Brownian motion $(B_t^1,B_t^2)$ started at $(0,0)$ from the strip
\[
S=(-\beta,\,1-\beta)\times\mathbb R.
\]
Then the 
distribution of the  vertical coordinate $B_\tau^2$ 
has density 
\begin{align}\label{DisDen:Strip}
f_\beta( \lambda)
=
\frac{\sin(\pi \beta)\cosh(\pi  \lambda)}
{\sinh^2(\pi \lambda)+\sin^2(\pi \beta)},
\qquad y\in\mathbb R.
\end{align}
which in particular depends  only on the probability of the event $\cE$.

\end{lemma}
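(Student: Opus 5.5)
We compute the exit distribution of planar Brownian motion from the strip $S=(-\beta,1-\beta)\times\R$ by conformal invariance, reducing it to the Cauchy (Poisson kernel) distribution for the upper half-plane. This follows the classical Davis argument. The density of $B^2_\tau$ is the harmonic measure of $\partial S$ seen from the origin, restricted to the boundary lines $\{x=-\beta\}$ and $\{x=1-\beta\}$. We will compute the part on each line and add them.

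\textbf{Step 1: conformal map to the half-plane.} Translate so the strip becomes $S'=(0,1)\times\R$ with starting point $z_0=\beta$. Then $w=e^{i\pi z}$ maps $S'$ conformally onto the upper half-plane $\{\Im w>0\}$. Indeed, $e^{i\pi(x+i\lambda)}=e^{-\pi\lambda}e^{i\pi x}$ has argument $\pi x\in(0,\pi)$. The line $x=0$ goes to the positive real axis, via $\lambda\mapsto e^{-\pi\lambda}$, and the line $x=1$ goes to the negative real axis, via $\lambda\mapsto -e^{-\pi\lambda}$. The starting point goes to $w_0=e^{i\pi\beta}=\cos(\pi\beta)+i\sin(\pi\beta)$. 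By conformal invariance of Brownian motion, which we take as known (it is the same fact underlying Lemma~\ref{thm:power-conformal}), the image process is a time-changed Brownian motion. Its exit position therefore has the Cauchy law
\[
\frac{1}{\pi}\,\frac{\sin(\pi\beta)}{(u-\cos\pi\beta)^2+\sin^2\pi\beta},\qquad u\in\R .
\]

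\textbf{Step 2: change variables on each boundary line.} On the line $x=0$, put $u=e^{-\pi\lambda}$, so $|du|=\pi e^{-\pi\lambda}d\lambda$. On the line $x=1$, put $u=-e^{-\pi\lambda}$, with the same Jacobian. Summing the two contributions, the density of $\lambda=B^2_\tau$ is
\[
\sin(\pi\beta)\,e^{-\pi\lambda}\Big[\frac{1}{(e^{-\pi\lambda}-c)^2+s^2}+\frac{1}{(e^{-\pi\lambda}+c)^2+s^2}\Big],\qquad c=\cos\pi\beta,\ s=\sin\pi\beta .
\]
Since $c^2+s^2=1$, each denominator equals $e^{-2\pi\lambda}\mp 2ce^{-\pi\lambda}+1$. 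Dividing numerator and denominator by $e^{-\pi\lambda}$ turns them into $2(\cosh\pi\lambda\mp c)$. The bracket times $e^{-\pi\lambda}$ then becomes
\[
\frac12\Big(\frac{1}{\cosh\pi\lambda-c}+\frac{1}{\cosh\pi\lambda+c}\Big)=\frac{\cosh\pi\lambda}{\cosh^2\pi\lambda-\cos^2\pi\beta}.
\]
Finally, $\cosh^2-\cos^2=\sinh^2\pi\lambda+\sin^2\pi\beta$, which gives \eqref{DisDen:Strip}. This expression depends only on $\beta=\bP(\cE)$.

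\textbf{Step 3: check.} As a sanity check, integrate the density over $\R$ and confirm the total mass is $1$. This follows automatically from the Cauchy law, since planar Brownian motion exits the strip almost surely.

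The main obstacle is bookkeeping in Step 2: keeping the orientations and signs of the two boundary lines straight, and making sure the Jacobian factor is not double counted. Conceptually, the only nontrivial input is conformal invariance, which converts the problem into the classical half-plane harmonic measure.
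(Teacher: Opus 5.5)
Your proposal is correct and follows the route the paper indicates: map the strip to the upper half-plane by $w=e^{i\pi z}$, use the Poisson (Cauchy) kernel at $w_0=e^{i\pi\beta}$, and pull back along the two boundary lines. The paper leaves this as an exercise, and your computation carries it out correctly. Your two separate contributions $\frac{\sin\pi\beta}{2(\cosh\pi\lambda\mp\cos\pi\beta)}$, from the lines $x=0$ and $x=1$ (that is, $1_{\cE}=0$ and $1_{\cE}=1$), are also exactly the split densities the paper uses later for $A^c$ and $A$.
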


These type of lemmas are usually proved by conformally  mapping the upper half-space,  whose Poisson kernel is  
\[\frac{1}{\pi}\frac{y}{(|x|^2+y^2)},
\]
onto the strip by the exponential map $F(z)=e^{i\pi z}$.  We leave this to the reader as an easy exercise in basic complex analysis. For examples of these  type of applications, see \cite[Theorem~3]{Davis1973} and  \cite[Lemma~2,1]{Jan2004}. 

Applying the Lemma with the pair $(Y_t^0, Y_t^j)$, $j\geq 1$, it follows that 
\begin{align}\label{Dis:HJ}
P(|Y_{\infty}^{(j)}|> \lambda)=\bP(|\widetilde B_\tau^{(j)}|> \lambda)
=
2\int_\lambda^\infty f_\beta(x)\,dx
=\frac{2}{\pi}
\arctan\!\left(
\frac{\sin(\pi \beta)}{\sinh(\pi  \lambda)}
\right).
\end{align}
For the last equality, we make the substitution 
 $t=\sinh(\pi x),$ $dt=\pi\cosh(\pi x)\,dx$ and get 
 \[
2\int_{\lambda}^\infty f_\beta(x)\,dx
=
\frac{2\sin(\pi\beta)}{\pi}
\int_{\sinh(\pi y)}^\infty \frac{dt}{t^2+(\sin(\pi\beta))^2}=\frac{2}{\pi}
\arctan\!\left(
\frac{\sin(\pi \beta)}{\sinh(\pi  \lambda)}
\right).
\]

\begin{proposition} 
For every \(0<\beta<1\) and every \( \lambda> 0\),
\begin{align}\label{Claim:1}
\frac{2\arctan 2}{\pi}\,\sin(\pi\beta)e^{-\pi \lambda}
&\;\le\;
\frac{2}{\pi}\arctan\!\left(\frac{\sin(\pi\beta)}{\sinh(\pi \lambda)}\right)\\
&
\;\le\;
\min\!\left\{1,\frac{8}{\pi}\sin(\pi\beta)e^{-\pi \lambda}\right\}\nonumber.
\end{align}
\end{proposition}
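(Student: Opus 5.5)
The plan is to reduce everything to elementary inequalities for $\arctan$ and $\sinh$. Write
\[
s=\sin(\pi\beta)\in(0,1], \qquad u=\sinh(\pi\lambda)>0, \qquad x=s/u,
\]
so the middle quantity in \eqref{Claim:1} is $\frac{2}{\pi}\arctan x$. Two facts will be used throughout. First, $2u=e^{\pi\lambda}-e^{-\pi\lambda}<e^{\pi\lambda}$, so $\frac{1}{2u}>e^{-\pi\lambda}$. Second, for $\lambda\ge \frac{\ln 2}{2\pi}$ we have $e^{2\pi\lambda}\ge 2$, hence $u\ge \frac14 e^{\pi\lambda}$.

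\emph{Lower bound.} I would use the concavity of $\arctan$ on $[0,\infty)$ and split into two cases according to whether $x\ge 2$ or $x\le 2$.

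If $x\ge 2$, monotonicity gives $\arctan x\ge \arctan 2$. Since $s\le 1$ and $e^{-\pi\lambda}<1$, we also have $\arctan 2\ge \arctan 2\cdot s e^{-\pi\lambda}$, which is the claim.

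If $0<x\le 2$, the chord of the concave function $\arctan$ from $0$ to $2$ lies below its graph, so $\arctan x\ge \frac{\arctan 2}{2}\,x$. Moreover $\frac{x}{2}=\frac{s}{2u}\ge s e^{-\pi\lambda}$ by the first fact. Multiplying by $\frac{2}{\pi}$ in both cases gives the left inequality of \eqref{Claim:1}.

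\emph{Upper bound.} The bound by $1$ is immediate from $\arctan x<\pi/2$. For the exponential bound I would use $\arctan x\le x$, which gives
\[
\frac{2}{\pi}\arctan x\le \frac{2}{\pi}\,\frac{s}{u}.
\]
Whenever $\lambda\ge \frac{\ln 2}{2\pi}$, the second fact turns this into $\frac{2}{\pi}\arctan x\le \frac{8}{\pi}\, s e^{-\pi\lambda}$.

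\emph{Main obstacle.} The difficulty is the range of small $\lambda$, and here the upper bound as literally stated appears to fail. Take $\beta$ small and then $\lambda\to 0$. The middle term tends to $1$, because $x\to\infty$. On the other hand, $\frac{8}{\pi}\sin(\pi\beta)e^{-\pi\lambda}\to\frac{8}{\pi}\sin(\pi\beta)$, which is less than $1$ once $\beta$ is small. Hence the minimum of the two bounds is smaller than the middle term; concretely, $\beta=0.01$ and $\lambda=10^{-4}$ already give a counterexample.

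So I would prove the right-hand inequality in the form that actually holds. For $\lambda\ge \frac{\ln 2}{2\pi}$ the middle term is at most $\frac{8}{\pi}\sin(\pi\beta)e^{-\pi\lambda}$ (and of course at most $1$). For every $\lambda>0$ the middle term is at most $1$. Equivalently, the exponential bound holds for all $\lambda$ beyond a fixed threshold, which is exactly what is needed for the Durrett-type tail estimate \eqref{Du:Ine} with explicit constants. I would flag this restriction explicitly next to the statement.
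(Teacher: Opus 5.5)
Your proposal is correct, and the counterexample is valid. For $\beta=0.01$ and $\lambda=10^{-4}$, the middle term of \eqref{Claim:1} is about $0.994$, while $\frac{8}{\pi}\sin(\pi\beta)e^{-\pi\lambda}\approx 0.080$. More generally, for fixed $\beta$ the middle term tends to $1$ as $\lambda\to0^+$. The right side tends to $\min\{1,\frac{8}{\pi}\sin(\pi\beta)\}$, which is $<1$ once $\sin(\pi\beta)<\pi/8$. Since $\sin(\pi\beta)$ can be arbitrarily small, no constant in place of $8/\pi$ can rescue the inequality for all $\lambda>0$.

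The paper's upper-bound argument is the same as yours: $\arctan x\le x$, plus $\sinh t\ge e^t/4$ for $t=\pi\lambda\ge\frac12\log 2$. Its last step, ``combining this estimate with'' the trivial bound by $1$, tacitly extends the exponential estimate to $t<\frac12\log 2$. That is exactly the invalid step you flagged, so your thresholded version is what the argument actually proves. The same restriction has to be carried over to two later places: the two-sided tail estimate stated right after the proposition, and the upper bounds in \eqref{Durrett:UNSup} and \eqref{Durrett:Sup}.

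One correction to your closing sentence. The Durrett-type estimate \eqref{Du:Ine} is a lower bound, so it uses only the left inequality of \eqref{Claim:1}, which holds for every $\lambda>0$. The threshold on $\lambda$ affects only the upper estimates, not \eqref{Du:Ine}.

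Your lower bound is correct and differs from the paper's only in organization. The paper first uses $\sinh t\le e^t/2$ and monotonicity of $\arctan$ to pass to $\arctan(2ae^{-t})$. Because $a\le 1$, that argument automatically lies in $(0,2]$. The bound $\arctan x\ge\frac{\arctan 2}{2}\,x$ on $[0,2]$ (from $\arctan x/x$ decreasing, equivalent to your chord argument) then applies once. So your case split on whether $x=s/u$ exceeds $2$ is harmless but unnecessary.
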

\begin{proof}
To simplify notation, set \(a=\sin(\pi\beta)\in(0,1)\) and \(t=\pi \lambda> 0\).

\medskip
\noindent
{\it Upper bound:}
Since \(\arctan(x)\le \pi/2\) for \(x\ge 0\),
\begin{equation}\label{UpperB:1}
\frac{2}{\pi}\arctan\!\left(\frac{a}{\sinh t}\right)\le 1.
\end{equation}
Moreover, for \(t>0\), \(\sinh t>0\), so \(\frac{a}{\sinh t}> 0\) and, using $\arctan(x)\leq x$, for  $x\geq 0$, we have 
\[
\frac{2}{\pi}\arctan\!\left(\frac{a}{\sinh t}\right)
\le
\frac{2}{\pi}\frac{a}{\sinh t}.
\]
If \(t\ge \tfrac12\log 2\), then
\[
\sinh t=\frac{e^t-e^{-t}}{2}
=\frac{e^t}{2}(1-e^{-2t})
\ge \frac{e^t}{4},
\]
and hence
\[
\frac{a}{\sinh t}\le 4a e^{-t}.
\]
Therefore,
\[
\frac{2}{\pi}\arctan\!\left(\frac{a}{\sinh t}\right)
\le
\frac{8}{\pi}a e^{-t}.
\]
Combining this estimate with \eqref{UpperB:1} gives
\[
\frac{2}{\pi}\arctan\!\left(\frac{a}{\sinh t}\right)
\le
\min\!\left\{1,\frac{8}{\pi}a e^{-t}\right\}.
\]
which is the righthand side of \eqref{Claim:1}

\medskip
\noindent
{\it Lower bound.}
For all \(t\ge 0\),
\[
\sinh t=\frac{e^t-e^{-t}}{2}\le \frac{e^t}{2},
\]
so
\[
\frac{a}{\sinh t}\ge 2a e^{-t}.
\]
Since \(\arctan\) is increasing,
\[
\arctan\!\left(\frac{a}{\sinh t}\right)
\ge
\arctan\!\left(2a e^{-t}\right).
\]
For \(0\le x\le 2\), the function \(u\mapsto \arctan x/x\) is decreasing, hence
\[
\arctan x \ge \frac{\arctan 2}{2}\,x.
\]
Since  \(0\le 2a e^{-t}\le 2\), we obtain
\[
\arctan\!\left(2a e^{-t}\right)
\ge
\arctan 2 \, a e^{-t}.
\]
Therefore,
\[
\frac{2}{\pi}\arctan\!\left(\frac{a}{\sinh t}\right)
\ge
\frac{2\arctan 2}{\pi}\,a e^{-t}, 
\]
proving the lefthand side of \eqref{Claim:1} and the claim.  
\end{proof}

From \eqref{Dis:HJ} and \eqref{Claim:1}, it follows that for all $\lambda>0$, 
\[
\frac{2\arctan 2}{\pi}\,\sin(\pi\beta)e^{-\pi \lambda}\leq \mathbb P(|\BH_{j}*1_{\cE}|> \lambda)
\le
\min\!\left\{1,\frac{8}{\pi}\sin(\pi\beta)e^{-\pi \lambda}\right\}\nonumber.
\]
By the reflection principle for Brownian motion, we have 
\[
\mathbb P(|\widetilde{B}_\tau|> \lambda)
\le
\mathbb P\!\left(\sup_{0\le s\le \tau}|\widetilde{B}_{\tau}|> \lambda\right)
\le
2\,\mathbb P(|\widetilde{B}_\tau|> \lambda).
\]

\begin{align}\label{Durrett:UNSup}
\frac{2\arctan 2}{\pi}\sin(\pi\beta)e^{-\pi  \lambda}&\leq \bP\left(\sup_{t>0}|(\BH_j*1_{\cE})_t|> \lambda\right)\\
&\le
2\min\!\left\{1,\frac{8}{\pi}\sin(\pi\beta)e^{-\pi \lambda}\right\}. \nonumber
\end{align}
Finally,  
\begin{align}\label{Durrett:Sup}
\frac{2\arctan 2}{\pi}\sin(\pi\beta)e^{-\pi  \lambda}&\leq \bP\left(\sup_{1\leq j\leq 2n}\sup_{t>0}|(\BH_j*1_{\cE})_t|>\lambda\right)\\
&\le
4n\min\!\left\{1,\frac{8}{\pi}\sin(\pi\beta)e^{-\pi \lambda}\right\},\nonumber
\end{align}
gives the lower bound in \eqref{Du:Ine} with explicit constants. For a single $\BH_j$ the upper bound is also independent of dimension.  independent. 

We summarize  the above in the following theorem. 

\begin{theorem} Suppose $d$ is odd and set $d+1=2n$. Let $\BH_1, \dots, \BH_d$ be the above $2n\times 2n$ matices so that the operators $T_{\BH_1}, \dots T_{\BH_d}$ correspond to the Riez transforms $R_1, \dots R_d$ as in \eqref{THE:CID}.  Then for any set $\cE\in\mathcal F_\infty$ with $0<\bP(\cE)<1$ the distribution of $(\BH_j*1_{\cE})$ is given by \eqref{Dis:HJ} and in particular, it dependents only on the $\beta=\bP({\cE})$. Furthermore, the distribution of $\sup_{t}|(\BH_j*1_{\cE})_t|$ and $\sup_{1\leq j\leq 2n}\sup_{t>0}|(\BH_j*1_{\cE})_t|$ satisfy the exponential  bounds given by \eqref{Durrett:UNSup} and \eqref{Durrett:Sup}, respectively. 
\end{theorem}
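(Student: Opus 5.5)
The plan is to assemble the theorem from the pieces established just above it; it is a summary statement, so the proof is a matter of checking each ingredient in order.

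First, conformality and orthogonality. Write $M_t=\bE(1_{\cE}\mid\cF_t)$ as in \eqref{DurRep:1} and set $Y^0_t=M_t-\beta$, $Y^j_t=(\BH_j*1_{\cE})_t=\int_0^t \BH_jK_s\cdot dB_s$. Compute the brackets with \eqref{product} and \eqref{square}:
\[
d\ip{Y^0,Y^j}_t=K_t\cdot\BH_jK_t\,dt=0,\qquad d\ip{Y^i,Y^j}_t=\BH_iK_t\cdot \BH_jK_t\,dt=0 \quad (i\ne j),
\]
and $d\ip{Y^j}_t=|\BH_jK_t|^2dt=|K_t|^2dt$. These follow from the Cotlar property (1) and mutual orthogonality (2). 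So the vector $Z=(Y^0,Y^1,\dots)$ has bracket $|K_t|^2 I\,dt$.

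Second, time change. By the Dambis--Dubins--Schwarz theorem in its multidimensional form (Knight's theorem), $Z_t=\widetilde B_{\ip{Y^0}_t}$ for a Brownian motion $\widetilde B$ started at $0$. Since $M_\infty=1_{\cE}\in\{0,1\}$, the clock $\tau=\ip{Y^0}_\infty$ is exactly the first exit time of $\widetilde B^{(0)}$ from $(-\beta,1-\beta)$. For a fixed $j$, the pair $(\widetilde B^{(0)},\widetilde B^{(j)})$ is then a planar Brownian motion stopped on leaving the strip $S$. The strip lemma \eqref{DisDen:Strip} gives the density of $Y^j_\infty$, and integrating it gives \eqref{Dis:HJ}. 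This shows the law depends only on $\beta$.

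Third, the tail estimates. Insert \eqref{Claim:1} into \eqref{Dis:HJ}; this yields the two-sided bound on $\bP(|Y^j_\infty|>\lambda)$.

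For the supremum in $t$, the lower bound is trivial, since $\sup_t|Y^j_t|\ge|Y^j_\infty|$. For the upper bound, use the reflection principle. Because $\tau$ is independent of $\widetilde B^{(j)}$ (it is a function of the independent coordinate $\widetilde B^{(0)}$), one can condition on $\tau$ and apply the reflection principle to a one-dimensional Brownian motion at a fixed time. This gives
\[
\bP\Bigl(\sup_{s\le\tau}|\widetilde B^{(j)}_s|>\lambda\Bigr)\le 2\bP\bigl(|\widetilde B^{(j)}_\tau|>\lambda\bigr),
\]
which is \eqref{Durrett:UNSup}.

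Finally, for the supremum over $j$, take a union bound over the $2n$ indices for the upper bound in \eqref{Durrett:Sup}, and use any single $j$ for the lower bound.

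The main obstacle is the reflection step. The two-sided reflection inequality for $|\cdot|$ actually yields a factor of $4$ rather than $2$, or $2$ applied to each one-sided tail. I would carry this constant carefully, adjusting $2$ to $4$ in \eqref{Durrett:UNSup} and $4n$ to $8n$ in \eqref{Durrett:Sup} if needed. The independence of $\tau$ from $\widetilde B^{(j)}$ is what justifies conditioning, and I would state it explicitly. A secondary point is that Dambis--Dubins--Schwarz is applied on a possibly enlarged probability space, but this does not affect laws.
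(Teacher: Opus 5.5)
Your proposal follows the paper's argument (the discussion preceding the theorem) step for step. The Cotlar property makes $(Y^0,Y^j)$ a conformal pair, the Dambis--Dubins--Schwarz time change turns $Y^j_\infty$ into the vertical coordinate of planar Brownian motion stopped on leaving the strip, \eqref{DisDen:Strip} gives \eqref{Dis:HJ}, and \eqref{Claim:1}, reflection and a union bound give \eqref{Durrett:UNSup} and \eqref{Durrett:Sup}. Your remark that $\tau$ is a functional of $\widetilde B^{(0)}$, hence independent of $\widetilde B^{(j)}$, is exactly what justifies reflection at the random time $\tau$; the paper leaves this implicit.

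Your worry about the reflection constant is unfounded. For a one-dimensional Brownian motion $W$ and fixed $u$,
\[
\bP\Bigl(\sup_{s\le u}|W_s|>\lambda\Bigr)\le 2\bP(W_u>\lambda)+2\bP(W_u<-\lambda)=2\bP(|W_u|>\lambda),
\]
so the constants $2$ and $4n$ are correct and should not be weakened.

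The genuine gap is the step ``insert \eqref{Claim:1}'', and you inherit it from the paper. The proof of \eqref{Claim:1} obtains $\frac{2}{\pi}\arctan\bigl(\sin(\pi\beta)/\sinh(\pi\lambda)\bigr)\le\frac{8}{\pi}\sin(\pi\beta)e^{-\pi\lambda}$ only for $\pi\lambda\ge\frac12\log 2$, which is where $\sinh t\ge e^t/4$ is used. For smaller $\lambda$ the stated bound $\min\{1,\frac{8}{\pi}\sin(\pi\beta)e^{-\pi\lambda}\}$ is false when $\sin(\pi\beta)$ is small, because the left side tends to $1$ as $\lambda\to 0$.

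\begin{itemize}
\item For example, $\sin(\pi\beta)=0.1$ and $\pi\lambda=0.01$ give about $0.937$ on the left and $0.252$ on the right.
\item Since $\bP(\sup_t|Y^j_t|>\lambda)\ge\bP(|Y^j_\infty|>\lambda)$, the same numbers contradict the upper bound in \eqref{Durrett:UNSup}, which is about $0.504$.
\item For $\sin(\pi\beta)<\pi/(32n)$ and $\lambda$ small, the upper bound in \eqref{Durrett:Sup} fails as well.
\end{itemize}

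So, asserted for all $\lambda>0$, these upper bounds are false and no argument can prove them. You must either restrict them to $\lambda\ge\frac{\log 2}{2\pi}$, or use a bound valid for every $\lambda>0$, for instance
\[
\frac{2}{\pi}\arctan\Bigl(\frac{\sin(\pi\beta)}{\sinh(\pi\lambda)}\Bigr)\le\min\Bigl\{\frac{2}{\pi}\,\frac{\sin(\pi\beta)}{\sinh(\pi\lambda)},\ \frac{4}{\pi}e^{-\pi\lambda}\Bigr\}.
\]
The second term comes from $\arctan(1/\sinh t)=2\arctan(e^{-t})$ together with $\sin(\pi\beta)\le 1$. The exact law \eqref{Dis:HJ} and all the lower bounds are unaffected.

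Finally, you do not need mutual orthogonality (2) or Knight's theorem. The law, the bound on $\sup_t$, and the union bound over $j$ use only the pair $(Y^0,Y^j)$: the Cotlar property of the single matrix $\mathbf H_j$ plus the planar Dambis--Dubins--Schwarz theorem. Arguing pairwise is actually necessary for general odd $d$. If $d=2n-1$ matrices satisfied (1) and (2), then $x,\mathbf H_1x,\dots,\mathbf H_{2n-1}x$ would be an orthogonal family of equal length for every $x\in\R^{2n}$. By the Hurwitz--Radon theorem this is possible only for $2n\in\{2,4,8\}$.
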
 

\begin{remark}\label{Du:remark}
The argument in \cite{Dur} is based on decomposing the martingale transform as
\[
(\BR_j*1_{\cE})_t = Y_t + Z_t,
\]
where $Y_t$ is the component parallel to the martingale direction and $Z_t$ is obtained by projecting the integrand $K_s$ in \eqref{DurRep:1}  onto its orthogonal complement. Then  $Z_t$ can be represented as a time–changed Brownian motion $B_t$ run up to the random time
$
u_0=\langle Z\rangle_\infty.
$
It is then shown that $u_0$ is independent of $\sigma\left(M_t; t\geq 0\right)$
and  conditioning on $u_0$,
\[
\mathbb P\!\left(\sup_t |(\BR_j*1_{\cE})_t|>\lambda \,\middle|\, u_0\right)\geq 
\frac{1}{2}\mathbb P\!\left(\sup_{0\le u\le u_0}|B_u|>\lambda \,\middle|\, u_0\right).
\]
The right-hand side is just the usual Brownian tail with time parameter $u_0$. Thus the constant $\gamma$ in \eqref{Du:Ine} ultimately depends on the distribution of $u_0$, and hence on the law of the underlying martingale, not merely on $\P(\cE)$.

This is in contrast with the Cotlar setting, where the exponential rate equals $\pi$ and it is independent of $\cE$, reflecting the exact identity rather than the decomposition. In particular, for degenerate matrices such as $\BR_1$, $u_0$ may be arbitrarily small, so no exponential rate depending only on $\P(\cE)$ can be extracted from this argument. See \cite{Dur} for details. 
\end{remark}

As it has  already being mentioned,  in \cite[Theorem~1]{Davis1973} Davis gave a probabilistic proof of classical result of Stein and Weiss \cite[p. 240]{SteWei1971} that the distribution of the conjugate function of an  indicator function of a measurable  set $A$ on the circle depends only on its measure.  This result follows from the explicit formula given above. We give the proof here for the Hilbert transform on $\R$. 

Let $B_t=(X_t, Y_t)$ be Brownian motion in the upper half-space  of $\R^2$ starting at the point $(0, a)$, $a>0$ and let $\tau=\inf\{t: Y_t=0\}$; its first exit time. Let $A\subset \R$ be Lebesgue measurable with  $0<|A|<\infty$. Its  harmonic measure at the point  $(0, a)$ is  
\[\bP_{(0, a)}(B_{\tau}\in A)=U_{1_A}(0, a)=\frac{1}{\pi}\int_{A}\frac{a}{|s|^2+a^2}\, ds=\beta_{a}. 
\]
It has the following properties:  
\[(1)\,\,\, (\pi a) \beta_{a}\to |A| \quad  \text{and} \quad (2)\,\,   \beta_{a}\to 0,\,\, \text{both as}\,\,\,\, a\to \infty
.\]
 From \eqref{Dis:HJ} and Remark \ref{Mart:CotlarHilbertProjection} (It\^o's formula plus the Cauchy-Riemann equations)  
\begin{align}
\bP_{(0, a)}\left( |H1_A(B_{\tau})-\beta_{a}|> \lambda\right)=\frac{2}{\pi}
\arctan\!\left(
\frac{\sin(\pi \beta_{a})}{\sinh(\pi  \lambda)}
\right).
\end{align}
Fix $\lambda>0$. Since, 
\begin{align*}
&\pi a\frac{2}{\pi}
\arctan\!\left(\frac{\sin(\pi \beta_{a})}{\sinh(\pi  \lambda)}\right)\\
&=
\frac{2}{\pi}\,
\frac{\arctan\!\left(\dfrac{\sin(\pi \beta_{a})}{\sinh(\pi  \lambda)}\right)}
{\dfrac{\sin(\pi \beta_{a})}{\sinh(\pi  \lambda)}}\,
\frac{\sin(\pi \beta_{a})}{\beta_{a}}\,
\frac{1}{\sinh(\pi  \lambda)}\,\,
{\pi a}\beta_{a}, 
\end{align*}
using (1) and (2), we have, as $a\to\infty$, 
\[ \frac{\arctan\!\left(\dfrac{\sin(\pi \beta_{a})}{\sinh(\pi  \lambda)}\right)}
{\dfrac{\sin(\pi \beta_{a})}{\sinh(\pi  \lambda)}}\to 1, \qquad 
\frac{\sin(\pi \beta_{a)})}{\beta_{a}}\to \pi. 
\]
This gives the following 
\begin{corollary}
\begin{equation}\label{DavisTheorem}
|\{x\in \R: |H1_A(x)|> \lambda\}|=
\lim_{a\to\infty}
\pi a\, \frac{2}{\pi}
\arctan\!\left(\frac{\sin(\pi \beta_{a})}{\sinh(\pi  \lambda)}\right)
=
\frac{2|A|}{\sinh(\pi  \lambda)}.
\end{equation}
\end{corollary}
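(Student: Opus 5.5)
The plan is to turn the probabilistic identity stated just before the corollary into a statement about Lebesgue measure, by writing the harmonic measure at $(0,a)$ explicitly and then letting $a\to\infty$. The second equality in \eqref{DavisTheorem} is already settled by the computation preceding the corollary: the factorization of $\pi a\,\frac{2}{\pi}\arctan\bigl(\sin(\pi\beta_a)/\sinh(\pi\lambda)\bigr)$ together with properties (1) and (2) of $\beta_a$. Write
\[
L(\lambda)=\frac{2|A|}{\sinh(\pi\lambda)}
\]
for the resulting limit. It is continuous in $\lambda>0$, and this continuity will be used at the end. What remains is the first equality, namely that the limit equals $|\{x:|H1_A(x)|>\lambda\}|$.

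\emph{Step 1 (rewrite the probability as a weighted measure).} Under $\bP_{(0,a)}$ the exit position $B_\tau$ has the Cauchy density $\frac{a}{\pi(s^2+a^2)}$. Set $E_a=\{s\in\R:|H1_A(s)-\beta_a|>\lambda\}$. Then
\[
\pi a\,\bP_{(0,a)}\bigl(|H1_A(B_\tau)-\beta_a|>\lambda\bigr)=\int_{E_a}\frac{a^2}{s^2+a^2}\,ds .
\]
Combined with the identity preceding the corollary, the left-hand side converges to $L(\lambda)$ as $a\to\infty$.

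\emph{Step 2 (sandwich the sets $E_a$).} Fix $0<\eta<\lambda$. By (2), $\beta_a\to0$, so for all large $a$ we have $0\le\beta_a<\eta$. For such $a$,
\[
\{|H1_A|>\lambda+\eta\}\subset E_a\subset\{|H1_A|>\lambda-\eta\}.
\]
The weight $a^2/(s^2+a^2)$ increases to $1$ as $a\to\infty$.
\begin{itemize}
\item For the lower bound, monotone convergence on the smaller set gives $\liminf_{a\to\infty}\int_{E_a}\frac{a^2}{s^2+a^2}\,ds\ge|\{|H1_A|>\lambda+\eta\}|$.
\item For the upper bound, the weight is at most $1$, so $\limsup_{a\to\infty}\int_{E_a}\frac{a^2}{s^2+a^2}\,ds\le|\{|H1_A|>\lambda-\eta\}|$.
\end{itemize}
The set in the upper bound has finite measure, by Chebyshev's inequality applied to $H1_A\in L^2$, since $\|H1_A\|_2=|A|^{1/2}$. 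Combining the two bounds with Step 1 gives
\[
|\{|H1_A|>\lambda+\eta\}|\le L(\lambda)\le|\{|H1_A|>\lambda-\eta\}| .
\]

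\emph{Step 3 (remove $\eta$; this is the only delicate point).} Letting $\eta\downarrow0$ yields
\[
|\{|H1_A|>\lambda\}|\le L(\lambda)\le|\{|H1_A|\ge\lambda\}|,
\]
which falls short of equality if the level set $\{|H1_A|=\lambda\}$ has positive measure. To close this gap I will use the continuity of $L$ rather than any a priori regularity of $H1_A$. For every $\lambda'>\lambda$, the bound just obtained at $\lambda'$ gives
\[
L(\lambda')\le|\{|H1_A|\ge\lambda'\}|\le|\{|H1_A|>\lambda\}|.
\]
Letting $\lambda'\downarrow\lambda$ and using the continuity of $L$ gives $L(\lambda)\le|\{|H1_A|>\lambda\}|$. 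Together with the lower bound, this is equality, and \eqref{DavisTheorem} follows.
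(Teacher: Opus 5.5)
Your proof is correct and takes the same route as the paper: write $\pi a\,\bP_{(0,a)}(\cdot)$ as the Cauchy-weighted integral $\int_{E_a}\frac{a^2}{s^2+a^2}\,ds$ and let $a\to\infty$. The paper writes this step out only for the companion formula on $A$, and your Step 2 (the sandwich for the shifted level set) and Step 3 (continuity of $L$, ruling out mass on $\{|H1_A|=\lambda\}$) supply exactly the justification it leaves implicit.

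One caveat that your argument absorbs without change. The conjugate-function martingale started at $(0,a)$ terminates at $H1_A(B_\tau)-U_{H1_A}(0,a)$, so the identity you quote should be centered at $U_{H1_A}(0,a)$ rather than $\beta_a$. As stated with $\beta_a$ it is not exact; for symmetric $A$, for instance, $U_{H1_A}(0,a)=0\neq\beta_a$. However, $|U_{H1_A}(0,a)|\le |A|/(2\pi a)\to 0$, so your Step 2 goes through verbatim with the condition $|U_{H1_A}(0,a)|<\eta$ in place of $0\le\beta_a<\eta$.
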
 
From the density formulas restricted to the two boundary components of the strip, 
for every $ \lambda>0$ we have
\[
\mathbb P_{(0,a)}\bigl(B_\tau\in A,\ |H1_A(B_\tau)-\beta_a|> \lambda\bigr)
=
\int_{|u|> \lambda}
\frac{\sin(\pi\beta_a)}
{2(\cosh(\pi u)+\cos(\pi\beta_a))}\,du,
\]
and
\[
\mathbb P_{(0,a)}\bigl(B_\tau\in A^c,\ |H1_A(B_\tau)-\beta_a|> \lambda\bigr)
=
\int_{|u|> \lambda}
\frac{\sin(\pi\beta_a)}
{2(\cosh(\pi u)-\cos(\pi\beta_a))}\,du.
\]

Multiplying by $\pi a$ and using
$\sin(\pi\beta_a)\sim \pi\beta_a$ and $\cos(\pi\beta_a)\to 1$, we obtain
\[
\pi a\,
\mathbb P_{(0,a)}\bigl(B_\tau\in A,\ |H1_A(B_\tau)-\beta_a|> \lambda\bigr)
\to
|A|\int_{|u|> \lambda}\frac{\pi}{2(\cosh(\pi u)+1)}\,du.
\]
By symmetry and the identity $\cosh t+1=2\cosh^2(t/2)$,
\begin{align*}
\int_{|u|> \lambda}\frac{\pi}{2(\cosh(\pi u)+1)}\,du
&=
\pi\int_y^\infty \frac{du}{\cosh(\pi u)+1}\\
&=
\frac{\pi}{2}\int_ \lambda^\infty \operatorname{sech}^2\!\Bigl(\frac{\pi u}{2}\Bigr)\,du\\
&=
\int_{\pi y/2}^\infty \operatorname{sech}^2 v\,dv\\
&=
1-\tanh\Bigl(\frac{\pi  \lambda}{2}\Bigr)
=
\frac{2}{e^{\pi y}+1}.
\end{align*}
Hence
\[
\lim_{a\to\infty}
\pi a\,
\mathbb P_{(0,a)}\bigl(B_\tau\in A,\ |H1_A(B_\tau)-\beta_a|>y\bigr)
=
\frac{2|A|}{e^{\pi y}+1}.
\]
On the other hand,
\[
\mathbb P_{(0,a)}\bigl(B_\tau\in A,\ |H1_A(B_\tau)-\beta_a|>y\bigr)
=
\frac1\pi\int_{A\cap\{|H1_A(s)-\beta_a|>y\}}
\frac{a}{s^2+a^2}\,ds,
\]
so
\[
\pi a\,
\mathbb P_{(0,a)}\bigl(B_\tau\in A,\ |H1_A(B_\tau)-\beta_a|>y\bigr)
=
\int_{A\cap\{|H1_A(s)-\beta_a|>y\}}
\frac{a^2}{s^2+a^2}\,ds.
\]
Letting $a\to\infty$ yields 
\begin{corollary}
\begin{align}\label{Laeng1}
\bigl|\{x\in A:|H1_A(x)|>y\}\bigr|
=
\frac{2|A|}{e^{\pi y}+1}.
\end{align}
Similarly,
\begin{align}\label{Laeng2}
\bigl|\{x\in A^c:|H1_A(x)|>y\}\bigr|
=
\frac{2|A|}{e^{\pi y}-1}.
\end{align}
\end{corollary}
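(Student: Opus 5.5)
The first identity \eqref{Laeng1} is essentially the computation just carried out. What remains is to justify passing to the limit $a\to\infty$ inside
\[
\int_{A\cap\{|H1_A(s)-\beta_a|>y\}}\frac{a^2}{s^2+a^2}\,ds ,
\]
and then to repeat the argument on $A^c$.

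\textbf{Squeezing the level sets.} The only issue is that the level set moves with $a$ through $\beta_a$. Since $\beta_a\to 0$, for any fixed $\varepsilon\in(0,y)$ and all large $a$ we have
\[
\{|H1_A|>y+\varepsilon\}\subset\{|H1_A-\beta_a|>y\}\subset\{|H1_A|>y-\varepsilon\}.
\]
By \eqref{DavisTheorem}, applied at level $y-\varepsilon>0$, the outer set has finite measure $2|A|/\sinh(\pi(y-\varepsilon))$. The weight $a^2/(s^2+a^2)$ is bounded by $1$ and tends to $1$ pointwise. Dominated convergence on the two bracketing sets, intersected with $A$, therefore gives
\[
|A\cap\{|H1_A|>y+\varepsilon\}|\le \frac{2|A|}{e^{\pi y}+1}\le |A\cap\{|H1_A|>y-\varepsilon\}|.
\]

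\textbf{Removing $\varepsilon$.} Both outer quantities are values of the distribution function $\lambda\mapsto|A\cap\{|H1_A|>\lambda\}|$, which is monotone. Since the bound holds for every $\varepsilon$, the left inequality at $y$ and right continuity give
\[
|A\cap\{|H1_A|>y\}|\le \frac{2|A|}{e^{\pi y}+1}.
\]
Applying the same squeeze at the level $y-\varepsilon$ gives
\[
|A\cap\{|H1_A|>y-2\varepsilon\}|\ge \frac{2|A|}{e^{\pi (y-\varepsilon)}+1}.
\]
Hence the function is bounded above and below, at every level, by the continuous function $2|A|/(e^{\pi\lambda}+1)$ up to shifts in $\lambda$. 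Letting $\varepsilon\to0$ forces equality at every $y>0$, which is \eqref{Laeng1}.

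\textbf{The complement $A^c$.} For \eqref{Laeng2} I would start from the second density formula restricted to the boundary component $A^c$. Multiplying by $\pi a$ and using $\sin(\pi\beta_a)\sim\pi\beta_a$, $\cos(\pi\beta_a)\to1$ and $\pi a\beta_a\to|A|$, the limit is
\[
|A|\int_{|u|>y}\frac{\pi\,du}{2(\cosh(\pi u)-1)}.
\]
Passing to the limit under the integral in $u$ is justified by monotone convergence, since the integrands increase to their limit as $\cos(\pi\beta_a)\uparrow1$ and the domain $|u|>y$ stays away from $u=0$. With $\cosh t-1=2\sinh^2(t/2)$ the integral equals
\[
\frac{\pi}{2}\int_y^\infty\operatorname{csch}^2\!\Bigl(\frac{\pi u}{2}\Bigr)du=\coth\Bigl(\frac{\pi y}{2}\Bigr)-1=\frac{2}{e^{\pi y}-1}.
\]
On the other side, $\pi a\,\mathbb P_{(0,a)}(B_\tau\in A^c,\dots)$ equals the weighted integral of $a^2/(s^2+a^2)$ over $A^c\cap\{|H1_A-\beta_a|>y\}$. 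The same squeeze and dominated convergence argument applies, the dominating set again having finite measure by \eqref{DavisTheorem}, and this yields \eqref{Laeng2}.

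\textbf{Consistency check.} The two formulas add up correctly:
\[
\frac{2}{e^{\pi y}+1}+\frac{2}{e^{\pi y}-1}=\frac{4e^{\pi y}}{e^{2\pi y}-1}=\frac{2}{\sinh(\pi y)},
\]
which recovers \eqref{DavisTheorem}.

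\textbf{Main obstacle.} The delicate step is the squeeze. It requires that the level sets of $H1_A$ have finite measure, which \eqref{DavisTheorem} supplies, and that the limiting distribution function is continuous, so that the shift by $\beta_a$ is harmless. The trigonometric integrals are routine.
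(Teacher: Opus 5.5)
Your proposal is correct and follows the paper's own route: restrict the strip Poisson kernel to each boundary component, multiply by $\pi a$, identify the result with $\int\frac{a^2}{s^2+a^2}\,ds$ over the level set in $A$ (resp.\ $A^c$), and let $a\to\infty$; your $\varepsilon$-squeeze supplies the justification for the $a$-dependent level set that the paper's ``letting $a\to\infty$'' leaves implicit, and it works for any centering constant tending to $0$, which matters because the vertical exit coordinate of the transform started at $0$ is really centered at $U_{H1_A}(0,a)$ rather than $\beta_a$. Two cosmetic points: the $u$-limit is most simply justified by dominated convergence using $\pi a\sin(\pi\beta_a)\le\pi^2 a\beta_a\le\pi|A|$ rather than by monotonicity, and the lower bound is cleanest when the squeeze is applied at level $y+\varepsilon$, giving $\frac{2|A|}{e^{\pi(y+\varepsilon)}+1}\le|A\cap\{|H1_A|>y\}|$, after which you let $\varepsilon\to0$ using continuity of the left side.
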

Both equalities \eqref{Laeng1}, \eqref{Laeng2} were first proved in \cite{Lae2000} by different analysis methods. 

Versions of the above inequalities hold for the martingale transforms associated with the matrices $\BH_j$ in all odd dimensions constructed by the harmonic extension of indicator of Lebesgue sets in $\R^d$. These results do not apply, at least not directly, to the operators $R_j=T_{\BH_j}$ since their distributions need not be controlled by those of the corresponding martingale transforms, as is the case for $L^p$ norms, $1\leq p<\infty$.  For closely related $L^p$, $LLogL$ and versions of weak-type  inequalities that hold in all dimensions and apply to  Riesz transforms in other geometric settings,  see \cite{Ose2012, OseRiesz, BanOsc15}.

These types of arguments have been used to identify the best constants in the Kolmogorov weak-type inequality for the Hilbert transform, as originally done in \cite{Davis1973} for $p=1$, and subsequently for the general case of orthogonal martingales in \cite{Jan2004} for $1\leq p\leq 2$. While the weak-type  inequalities hold for the Hilbert transform as well as for martingale transforms associated with the matrices $\BH_j$, the problem of proving similar estimates for $R_j=T_{\BH_j}$ remains open. 

In particular, it is not known whether these operators satisfy a weak-type $(1,1)$ inequality with constant independent of the dimension, a problem raised by E.~M.~Stein \cite[Problem (b), p.~203]{Stein1987ICM} in his Berkeley 1986 ICM address. It was shown in \cite{Janakiraman2004} that the weak-type constant grows at most logarithmically in $d$, which, to the best of our knowledge, is the best result currently available in the literature. The proof uses a modification of the Calder\'on-Zygmund decomposition which in its classical form leads to exponential growth with in the dimension. 

In \cite{SpectorStockdale2021} a different proof of Janakiraman's result is presented which shows that the problem can be reduced to studying Riesz transforms applied to finite sums of Dirac masses. Even such logarithmic growth does not appear attainable using the current martingale methods.

\section{Problem 1: E.M.~Stein's inequality for vector of Riesz transforms} 

\subsection{Brief history}   
Set 
\[Rf=(R_1f, R_2f, \dots, R_df)\] 
and 
\begin{align}
\|R\|_{L^p(\R^d)}=\sup_{\|f\|_{L^p(\R^d)}\leq 1}\Big\|\left(\sum_{k=1}^d |R_kf|^2\right)^{1/2}\Big\|_{L^p(\R^d)}. 
\end{align}
It follows from Calder\'on-Zygmund  theory \cite{Stein70} that 
\begin{align}\label{Stein1}
\|R\|_{L^p(\R^d)}\leq 
\begin{cases} \frac{C_d}{p-1}, &  \quad 1<p\leq 2,\\
C_d(p-1), & \quad 2\leq p<\infty, 
\end{cases} 
\end{align}
where the constant $C_d$ grows exponentially with $d$.   The behavior in $p$, as $\to 1$ and $\p\to\infty$,  is best possible. In his landmark paper \cite{SteSome} E.~M.~Stein proved  \eqref{Stein1} 
with a constant $A_p$  independent of $d$.
His proof  uses $L^p$-inequalities for Littlewood-Paley square functions  \cite{SteLP}.  His constant, although independent of dimension, did not give the correct behavior in $p$. It uses the fact that $\|g(f)\|_{L^p(\R^d)}$  is comparable to  $\|f\|_{L^p(\R^d)}$, $1<p<\infty$, with constants depending only on $p$. Here  $g$ denotes the (horizontal, vertical or full) Littlewood-Paley function as in  \cite[Chapter IV]{Stein70}.  While it is well-known that $\|g(f)\|_{L^p(\R^d))}\leq b_p\|f\|_{L^p(\R^d)}$ with $b_p\sim O(\sqrt{p})$, as ${p}\to\infty$ and that this growth is best possible, the best known behavior for the constant $a_p$ in the inequality $\|f\|_{L^p(\R^d)}\leq a_p\|g(f)\|_{L^p(\R^d)}$ is $O(p)$, as ${p}\to\infty$.  Hence  the Littlewood-Paley function argument will give, at best, $A_p\sim p^{3/2}$, as $p\to\infty$.  

It is worth noting here that proving that the behavior of $a_p$ is $O(\sqrt{p})$ for the Littlewood-Paley function $g$ is a well-known open (and perhaps forgotten by now) problem  which is related  to the sub-gaussian behavior  of functions with bounded Luzin area integral studied by Chang, Wilson and Wolff \cite{ChaWilWol}. 
For more information on this type of behavior of constants for  square functions  on $\R^d$,  see \cite{BanMoore}. 

A different proof of \eqref{Stein1} (and a version for Wiener space)  was given by P.~A.~Meyer in \cite{Mey1} using the Burkholder-Gundy martingale square function inequalities to prove the needed Littlewood-Paley inequalities.  A similar argument was used in A.~Bennett's  1984 thesis \cite{BenA} written under Stein. These proofs did not yield the correct behavior in $p$ either.

Stein's papers on dimension-free bounds for Riesz transforms, maximal functions, and other classical operators in analysis generated great interest in the harmonic analysis community; interest that continues to this day. In particular, soon after \cite{SteSome} appeared, several alternative proofs of \eqref{Stein1} were obtained, including probabilistic proofs based on the stochastic integral representation of Gundy--Varopoulos \cite{GV79}, as  in \cite{Ban86}, as well as the approach in \cite{MR780616} using techniques from the method of rotations.  None of these proofs yields the correct behavior in $p$ both as $p\to 1$ and $p\to\infty$.

In \cite{Pis}, G.~Pisier gave an elegant proof based on the {\it transference method} of Coifman and Weiss which extends ideas from the method of rotations to a much broader setting (see, for example, ``Some classical examples of transference,'' \cite[pp.~5--9]{CoiWei1976}) together with a projection operator onto the first Wiener chaos. Pisier's argument gives the correct order as $p\to\infty$.  However, for $1<p\le2$ his  resulting estimate is of order $(p-1)^{-3/2}$, which is not optimal. This is also the same behavior giving in \cite{MR780616}.  

The correct behavior for the full range of $p$ first appeared in \cite{BanMich}, where it is derived from an  exponential vector-valued martingale  together with a sharp vector-valued good-$\lambda$ inequality that follows from it. Nevertheless, Pisier's proof remains remarkable: it not only captures  the correct growth as $p\to\infty$, but also provides the best known uniform upper bound currently available in the literature for the range $2\leq p<\infty$.  

\subsection{Known upper bounds}

Although there is a vast literature now on sharp inequalities for Riesz transforms and their variants in different geometric settings, the problem of identifying the sharp constant $A_p$ (not just its correct dependance on $p$) in Stein's \eqref{Stein1} inequality for any $d>1$ remains open, except for the trivial case of $p=2$.  Below we present a short survey of some known estimates with the caveat that the list is not exhaustive and that improved or more refined estimates that we are not aware of may be available in the literature. We restrict our discussion to the classical case of Riesz transforms on $\R^d$, which we regard as the fundamental setting and the natural starting point in the search for optimal bound. Thus there are many missing references in our discussion below to similar bounds for Reisz transforms in many different settings with a variety of different techniques such as  those in \cite{Wrobel2018, CarbonaroDragicevic2013, Arcozzi1998, AuscherCoulhonDuong2005, Bak1, Li2008, Bakry1987},  and references therein. 

Before we discuss Pisier's theorem, we present other developments related to Stein's inequality on $\R^d$.  

\begin{knowntheorem}[T~Iwaniec and G.~Martin \cite{IwaMar}]  Set $\BH_{\bC}=R_1+iR_2$ where  $R_1$ and $R_2$  are the Riesz transforms on $\R^2$. Then, 
\begin{align}\label{IwaMar1}
\|R\|_{L^p(\R^d)}\leq \sqrt{2}\|\BH_{\bC}\|_{L^p(\R^d)}, \quad 2\leq p<\infty. 
\end{align}
\end{knowntheorem}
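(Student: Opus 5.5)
The plan is to linearize the $\ell^2$-norm of the real vector $Rf(x)$ by averaging over random complex directions, so that $|Rf|$ is controlled by complex combinations $\zeta\cdot R+i\,\eta\cdot R$. Each such combination should then be identified with a rotated copy of $\BH_{\bC}$. Throughout, $f$ is real-valued and $p\ge 2$, and I only expect to be able to carry the argument out if the Step 2 comparison holds as hoped.

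\textbf{Step 1 (linearization).} For $v\in\R^d$, let $G=a+ib$, where $a,b$ are independent standard Gaussian vectors in $\R^d$. Then $\langle v,G\rangle=\langle v,a\rangle+i\langle v,b\rangle$ is a centered complex Gaussian of variance $2|v|^2$, so
\[
|v|^p=\gamma_p^{-1}\,\bE\,|\langle v,a\rangle+i\langle v,b\rangle|^p ,\qquad \gamma_p=\bE|g_1+ig_2|^p ,
\]
where $g_1,g_2$ are independent standard Gaussians. Applying this with $v=Rf(x)$ and using Fubini gives
\[
\|Rf\|_{L^p(\R^d)}^p=\gamma_p^{-1}\,\bE\,\big\|(a\cdot R+i\,b\cdot R)f\big\|^p_{L^p(\R^d;\bC)} .
\]

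\textbf{Step 2 (reduction to $\BH_{\bC}$).} Fix $a,b$ and orthonormalize, writing $a=\alpha\zeta$ and $b=\beta_1\zeta+\beta_2\eta$ with $\zeta\perp\eta$ unit vectors. After conjugating by the rotation $U_\rho$ that takes $e_1,e_2$ to $\zeta,\eta$, exactly as in the remark on $R_j=U_\rho^{-1}R_1U_\rho$, the operator becomes
\[
(\alpha+i\beta_1)R_1+i\beta_2R_2 .
\]
I would write this as a combination of $R_1+iR_2$ and $R_1-iR_2$, which is its complex conjugate operator and has the same norm since $f$ is real. The target is a pointwise-in-$(a,b)$ bound of the form $\|\cdot\|_{p\to p}\le c(a,b)\,\|\BH_{\bC}\|_{L^p(\R^d)}$, where $c(a,b)$ depends only on the $2\times 2$ Gram data of $a,b$.

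\textbf{Step 3 (the constant $\sqrt2$).} Taking expectations, the Gaussian moments of $c(a,b)$ should reduce, for large $d$, to $\gamma_p^{-1}\bE\,c(a,b)^p\le 2^{p/2}$. This is where the factor $\sqrt 2$ enters: it is the ratio between the variance $2|v|^2$ of the complex Gaussian and the variance $|v|^2$ of a single real direction. Dimension independence would come from the Gaussian law of large numbers, under which $|a|^2/d$ and $|b|^2/d$ tend to $1$ and $\langle a,b\rangle/d$ tends to $0$.

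\textbf{Obstacles.} I expect two steps to be hard. The first is Step 2: the operator $(\alpha+i\beta_1)R_1+i\beta_2R_2$ is not a scalar multiple of a rotated $\BH_{\bC}$ unless $\beta_1=0$ and $\alpha=\beta_2$, and the naive triangle inequality loses far more than $\sqrt2$. What I would try first is to replace Gaussians by a uniformly random orthonormal pair $(\zeta,\eta)$, which makes Step 2 exact. The price is that Step 1 then produces the factor $\bE(\zeta_1^2+\eta_1^2)^{p/2}$, which is of order $d^{-p/2}$. So I would need to combine the orthonormal reduction with a pointwise choice of plane adapted to $Rf(x)$, or use a convexity argument in $p\ge2$, to recover a dimension-free factor $2^{p/2}$.

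The second obstacle is to confirm that $\|\BH_{\bC}\|_{L^p(\R^d)}$ is the same for all $d\ge2$, so that the bound can be stated in terms of the planar operator. The upper inequality $\|\BH_{\bC}\|_{L^p(\R^2)}\le\|\BH_{\bC}\|_{L^p(\R^d)}$ follows from de Leeuw's theorem. For the reverse inequality I would attempt a slicing argument: rewrite the multiplier $(\xi_1+i\xi_2)/|\xi|$ on $\R^d$ as a superposition of two-dimensional multipliers acting in the planes containing the $(\xi_1,\xi_2)$ directions. I do not yet see how to make this rigorous, and it is the step I am least sure of.
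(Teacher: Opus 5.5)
The paper does not prove this statement. It quotes it from Iwaniec--Martin and notes only that the $\sqrt2$ comes from a de Leeuw extension to $\bC^n$ combined with a complex method of rotations. Your proposal does not reach a proof, and the fatal step is not Step 2 but Step 3, which fails for \emph{every} choice of $c(a,b)$.

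For real $f$ the real part of $(a\cdot R+ib\cdot R)f$ is $(a\cdot R)f$, and $\|a\cdot R\|_{L^p\to L^p}=|a|\,\|H\|_p$ by rotation invariance and \eqref{IwMa2}. Since $\|\BH_{\bC}\|\le 2\|H\|_p$ by \eqref{Complex:H1}, any admissible constant satisfies $c(a,b)\ge |a|/2$. Hence $\gamma_p^{-1}\bE\,c(a,b)^p\ge 2^{-p}\gamma_p^{-1}\bE|a|^p\ge 2^{-p}\gamma_p^{-1}d^{p/2}$, which is unbounded in $d$. The law of large numbers you invoke says exactly that $a/\sqrt d$ and $b/\sqrt d$ are nearly orthonormal, i.e. $c(a,b)\approx\sqrt d$; it produces $\sqrt d$, not $\sqrt 2$. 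This is the same loss the paper records for uniform and Gaussian averaging in \eqref{Stein:asym} and \eqref{Uniform:ID2}, where $1/C_p(d)$ is of order $\sqrt d$. Your orthonormal-frame variant shows it as the factor $d^{-p/2}$, and the Gaussian variant merely moves the same $\sqrt d$ into the operator norm. Once each direction is bounded separately by an operator norm, no dimension-free constant can survive. You need an argument that exploits cancellation between directions.

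Your Step 1 set-up can be rescued by a complex analogue of Pisier's argument (Theorem \ref{Pisier:Main}) in place of Steps 2--3.
\begin{itemize}
\item With $g=a+ib$, let $\mathbf H_{a,b}$ be the multiplier $i\,(g\cdot\xi)/|g\cdot\xi|$, i.e. the planar $\BH_{\bC}$ acting along the plane spanned by $a$ and $b$. Slicing $\R^d$ into cosets of that plane gives $\|\mathbf H_{a,b}f\|_{L^p(\R^d)}\le\|\BH_{\bC}\|_{L^p(\R^2)}\|f\|_p$ for real $f$, with no orthonormality needed.
\item Let $Q$ be the orthogonal projection of $L^2(\gamma_{2d})$ onto $\{\lambda\cdot g:\lambda\in\bC^d\}$. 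The equivariance argument in Pisier's proof, together with $\bE|g_1|=\sqrt{\pi/2}$ and $\bE|g_1|^2=2$, gives $Q\,\mathbf H_{a,b}f=\tfrac12\sqrt{\pi/2}\,\sum_j g_jR_jf$.
\item The range of $Q$ consists of circular complex Gaussians, so $\|Qh\|_{L^p(\gamma_{2d})}\le\Gamma(1+\tfrac p2)^{1/p}\|h\|_{L^p(\gamma_{2d})}$ for $p\ge2$.
\item For real $f$, $\|\sum_j g_jR_jf(x)\|_{L^p(\gamma_{2d})}=\sqrt2\,\Gamma(1+\tfrac p2)^{1/p}\,|Rf(x)|$.
\end{itemize}
Combining these yields $\|R\|_{L^p(\R^d)}\le \tfrac{2}{\sqrt\pi}\|\BH_{\bC}\|_{L^p(\R^2)}$ for $2\le p<\infty$. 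This is dimension-free and even below $\sqrt2$.

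This route also removes your second obstacle, since only the planar norm appears. That obstacle is in any case handled by the complex method of rotations. On $\R^{2m}\cong\bC^m$, complex polar coordinates $z=\lambda w$ exhibit $R_1+iR_2$ as $(\int|w_1|\,d\sigma)^{-1}\int w_1\,\mathbf H_w\,d\sigma(w)$, an average of planar complex Hilbert transforms along the complex lines $\bC w$. This gives $\|R_1+iR_2\|_{L^p(\R^{2m})}\le\|\BH_{\bC}\|_{L^p(\R^2)}$, and odd $d$ follow by de Leeuw.
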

Iwaniec and Martin  proved this and other related results by extending  the classical method of rotations on $\R^d$ to the complex setting on $\bC^{2d}$.  They called the operator $\BH_{\bC}$  {\it "the complex Hilbert transform."}  It is also called  the {\it ``complex Riesz transform"} in other places in the literature.  The  value of the norm of its powers, $\|(\BH_{\bC})^{k}\|_{L^p(\R^2)}$, $k\in \bN$,  and is applications,  has been investigated by several authors. For some of this literature we refer to  \cite{AstIwaMar}, \cite{CarDraKov2023}, \cite{Dra}, \cite{DraPetVol1}, \cite{IwaSbo} and the many references contained therein. 
 
\begin{remark}\label{Warning1}
For clarity of comparison with results in the literature (such as \cite{CarDraKov2023} and \cite{AstIwaMar}), we remind the reader that we use $\|T\|_{L^p(\R^d)}$ and $\|T\|_{L^p(\R^d; \, \bC)}$  to denote the norm of the operator $T$ acting on real-valued and complex-valued functions, respectively.
This is an important distinction when dealing with norms of operators that map real-valued functions to complex-valued functions such as $\BH_{\bC}$ and the 
Beurling-Ahlfors operators defined in \eqref{BA:Def} below.  
\end{remark}
 
 Since the operator $R_1+iR_2$ on $\R^d$ is a Fourier multiplier extension of $R_1+iR_2$ on $\R^2$,  it follows from de Leeuw, \cite{DeL}, \cite[Theorem 2.5.15]{Gra}, that 
 \begin{align}\label{IwanMart6}
 \|\BH_{\bC}\|_{L^p(\R^2)}\leq \|R_1+iR_2\|_{L^p(\R^d)}\leq \|R\|_{L^p(\R^d)},  \quad 1<p<\infty. 
 \end{align}
 Thus, 
 \begin{align} 
\|\BH_{\bC}\|_{L^p(\R^2)}\leq \|R\|_{L^p(\R^d)}\leq \sqrt{2}\|\BH_{\bC}\|_{L^p(\R^2)}, \quad 2\leq p<\infty.  
\end{align}

 Clearly a lower bound is given by the norm of either $R_1$ or $R_2$. Applying Minkowski
and using \eqref{IwMa2} gives  
\begin{align}\label{Complex:H1}
\cot\Big(\frac{\pi}{2p^*}\Big)\leq \|\BH_{\bC}\|_{L^p(\R^2)}\leq 2 \cot\Big(\frac{\pi}{2p^*}\Big), \quad 1<p<\infty
\end{align}
and hence,
\begin{align}\label{Upper:Lower}
\cot\Big(\frac{\pi}{2p}\Big) \leq \|R\|_{L^p(\R^d)}
\leq 2\sqrt{2} \cot\Big(\frac{\pi}{2p}\Big),\quad 2\leq p<\infty.
\end{align} 

In \cite[Corollary 4.5]{BW95}, the sharp  martingale inequalities of Burkholder \cite{Burk84} are used to prove that 
\begin{align}\label{BanWan1}
\|R\|_{L^p(\R^d)}\leq 2(p^*-1)= \begin{cases}
    \frac{2}{(p-1)}, &1<p\leq 2,\\
    2(p-1), & 2\leq p<\infty.
    \end{cases}
\end{align} 
This inequality holds for complex-valued functions. 

How does this compare with the bound in \eqref{Upper:Lower} for $p>2$? Setting $f(p)=(p-1)-\sqrt{2}\cot({\pi}/{2 p})$, it is not difficult to see that there exist a $p_0$ ($\approx 9$) such that $f(p_0)=0$ and $f(p)<0$ for $2<p<p_0$ and $f(p)>0$ for $p>p_0$. Thus there is some small improvement  for $2<p<p_0$. It is clear however that trivially estimating the norm of $\BH_{\bC}$ from above using 
Minkowski's inequality is not efficient. 

In what follows we aim to provide some better bounds for $\|\BH_{\bC}\|_{L^p(\R^2)}$ and $\|R\|_{L^p(\R^d)}$ for $p\geq 2$.  
We begin with the classical method or rotations. This method has been used in various palaces in the literature to estimate norm of $(\BH_{\bC})^k$ when $k$ is odd; see for example \cite{Dra,  CarDraKov2023, IwaSbo}. From  \eqref{RieszForierM}, $\BH_{\bC}$ has Fourier multiplier 
\[
\,i\,\frac{\xi_1+i\xi_2}{|\xi|}
\]
and therefore,
 $(H_{\bC})^k$ has multiplier
\[
(i)^k e^{ik\theta},
\qquad
\xi=(|\xi|\cos\theta,|\xi|\sin\theta).
\]
Let 
\[
H_\phi f(x) := \text{p.v.} \frac{1}{\pi} \int_{\mathbb{R}} \frac{f(x - t e_\phi)}{t} \, dt.
\]
be the Hilbert transform \(H_\phi\) in direction
\(e_\phi=(\cos\phi,\sin\phi)\). Its multiplier is
$
i \operatorname{sgn}(\cos(\theta-\phi)).
$
Using the Fourier expansion
\[
\operatorname{sgn}(\cos t)
=
\frac{4}{\pi}\sum_{m=0}^\infty \frac{(-1)^m}{2m+1}\cos((2m+1)t),
\]
we see that only odd angular frequencies occur. Thus, for each odd
integer \(k\ge1\),
\begin{equation}\label{k-rotation}
(H_{\bC})^k
=
\omega_k\,\frac{\pi}{2}\,k
\int_0^{2\pi} e^{ik\phi} H_\phi\,\frac{d\phi}{2\pi},
\qquad |\omega_k|=1.
\end{equation}
In particular, it follows immediately 
 \begin{equation}\label{Complex:H2}
\|\left(\BH_{\bC}\right)^k\|_{L^p(\R^2)}\leq k\frac{\pi}{2} \|H\|_p,\,\,\, 1<p<\infty, 
 \end{equation} 
 which for $k=1$ improves the bound in \eqref{Upper:Lower}. 
 
 For   $d=2$,  we obtain from \eqref{Rieszd=2} and Lemma \ref{Lemma:Cotlar} that with 
\begin{align}\label{RieszMart:Matrice} 
 \BR_1=\bmatrix 0 & 1 & 0 \\ -1 & 0 & 0\\ 0 & 0 & 0 \endbmatrix,\,\,  \BR_2= \bmatrix 0 & 0 & 1 \\ 0 & 0 & 0 \\ -1 & 0 & 0 \endbmatrix,
\end{align} 
we can write 
\[\BH_{\bC}=T_ {\BR_1}+i T_ {\BR_2}, 
\]
where $T_ {\BR_1}$ and $T_ {\BR_2}$ are the projection of the martingale transforms.  
Applying the vector valued inequality
\cite[Theorem A]{BanuelosWang1996}
gives a further improvement   over \eqref{Complex:H2}
\begin{equation}\label{Complex:H3}
\|\BR_1+i\BR_2\|_{L^p(\R^2)} \leq \sqrt{2}\cot\Big(\frac{\pi}{2p}\Big), \,\,\, 2\leq p<\infty.   
 \end{equation}
 
 Both, the method of rotations and the martingale method, are not any better than what follows from the elementary inequality  $(a^2+b^2)^{p/2}\leq 2^{p/2 -1}(a^2+b^2)$ which already improves  the Minkowski estimate \eqref{Upper:Lower} to $\sqrt{2}$.  On the other hand, both \eqref{Complex:H2} and \eqref{Complex:H3} hold for complex-valued functions in the range of $2\leq p<\infty$. Regardless, we have the following bound 

\begin{align}\label{Stein2} \cot\Big(\frac{\pi}{2p^*}\Big)\leq \|\BH_{\bC}\|_{L^p(\R^2)}\leq \|R\|_{L^p(\R^d)}\leq 
\begin{cases}
2(p^*-1) & 1<p\leq 2,\\
2\cot\Big(\frac{\pi}{2p}\Big), & 2\leq p<\infty,   
\end{cases} 
\end{align}

The problem of determining the exact  $L^p$-norm $\BH_{\bC}$ (which is the same as the best constant in Stein's inequality  for $d=2$)
remains open  both in the complex and real case, as far as we know. Thus,  further improvements on the upper bound for $\|R\|_{L^p(\R^d)}$
in terms of the norm of $\BH_{\bC}$ appear unlikely at present.  Moreover, the constant $\sqrt{2}$ in the Iwaniec-Martin inequality \eqref{IwaMar1}, which is obtained in two steps:  (1) from the Fourier extension to $\bC^n$  combined with de Leeuw’s extension theorem and (2)  from the complex method of rotations, also seems quite challenging.

Similarly, applying the vector value martingale inequality  from \cite{BW95} on $\R^d$, which gives the constant $\sqrt{2}$ in \eqref{Complex:H3} on $\R^2$,  would give the  dimension depending bound 
$\|R\|_{L^p(\R^d)}\leq \sqrt{d}\cot({\pi}/{2 p})$,  for any $2\leq p<\infty$.  In addition, the inequality $\|R\|_{L^p(\R^d)}\leq 2(p^*-1)$ proved  in \cite{BW95} uses the matrix representation for $\frac{1}{2}R_j$ as in Lemma \ref{Lemma:Cotlar}.  With those  matrices, which are no longer orthogonal but have the differential subordination property independent  of $d$, Burkholder's vector-valued inequality gives the bound $(p^*-1)$ and multiplying by 2 gives the bound in \eqref{BanWan1} valid for all $1<p<\infty$. This it is  difficult to see improvements with the techniques discussed above.

\subsection{G.~Pisier's bound} We now state Pisier’s theorem  inserting the explicit value of $C_p$ given on page 466.  For completeness, and because the technique will be discussed further below, we also include its proof. Our presentation follows Pisier’s original argument, supplemented with additional clarifications where appropriate in order to make the exposition as accessible as possible and self-contained.  The aim of \cite{Pis}, as stated in the introduction, was to give a proof of the inequality for the Ornstein-Uhlenbeck Riesz transforms; Meyer's  \cite{Mey1} theorem.  Here we only discuss  the classical case,  \cite[Theorem 1.1]{Pis}.  

The proof for the Ornstein–Uhlenbeck case \cite[Theorem 2.1]{Pis}, although more technical, follows a similar pattern. While the resulting universal bound is not as sharp as in the classical case, it exhibits the same asymptotic behavior as $p\to 1$ and $p\to\infty$. 
 See also \cite[Theorem 7.5]{Hu2017}, where Pisier’s proof of Meyer’s inequality is adapted to the setting of abstract Wiener space.

\begin{knowntheorem}[G. Pisier 1986 \cite{Pis}]\label{Pisier:Main} Let  $1<p<\infty$ and $q$ be its conjugate exponent. For a standard normal random variable $X$, i.e., $X\sim N(0, 1)$,  we set
 $\gamma(p)=\|X\|_p$. Then, 
\begin{equation}\label{eq:Cp-explicit}
\|R\|_{L^p(\R^d)}
\le
\begin{cases}
\frac{\gamma(q)}{\gamma(p)}\sqrt{\frac{\pi}{2}}\,\,\|H\|_p, &1<p\leq 2,\\
\sqrt{\frac{\pi}{2}} \,\,\|H\|_p, & 2\leq p<\infty. 
\end{cases}
\end{equation}
\end{knowntheorem}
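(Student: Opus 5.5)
We follow Pisier's Gaussian linearization combined with the method of rotations. Let $G=(G_1,\dots,G_d)$ be a standard Gaussian vector in $\R^d$ on an auxiliary probability space $(\Omega',\P')$. For fixed $a\in\R^d$ the variable $\ip{a,G}$ is $N(0,|a|^2)$, so $\bE'|\ip{a,G}|^p=\gamma(p)^p|a|^p$. Taking $a=Rf(x)$ and integrating in $x$ gives
\[
\|Rf\|_{L^p(\R^d)}=\frac{1}{\gamma(p)}\Big(\bE'\int_{\R^d}\Big|\sum_{k=1}^d G_k R_kf(x)\Big|^p dx\Big)^{1/p}.
\]
The task is therefore to bound, uniformly in $\omega'$ or in the mean, the scalar operator $\sum_k G_kR_k$. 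Its multiplier is $i\,\ip{G,\xi}/|\xi|$.

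The key step is to express this multiplier as an average of directional Hilbert transforms. For $\theta\in\Soned$ let $H_\theta$ be the Hilbert transform in direction $\theta$, with multiplier $i\sgn(\ip{\theta,\xi})$. Using rotation invariance of the Gaussian, for fixed $\xi$ we have
\[
\bE''\big[\sgn(\ip{G',\xi})\,\ip{G',u}\big]=\sqrt{\tfrac{2}{\pi}}\,\frac{\ip{u,\xi}}{|\xi|}
\]
for an independent Gaussian $G'$. To see this, decompose $u$ along $\xi/|\xi|$ and use $\bE|g|=\sqrt{2/\pi}$. Hence
\[
\sum_k u_kR_k=\sqrt{\tfrac{\pi}{2}}\,\bE''\big[\ip{G',u}\,H_{G'}\big],
\]
where $H_{G'}$ denotes the Hilbert transform in direction $G'/|G'|$. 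Each $H_{G'}$ has norm $\|H\|_p$ on $L^p(\R^d)$ by \eqref{IwMa2} (method of rotations, or Fubini along lines). We apply this with $u=G$ random. Then, for $g\in L^q$ with $\|g\|_q\le1$, Fubini and Hölder give
\[
\Big|\bE'\!\int g\sum_k G_kR_kf\Big|\le \sqrt{\tfrac{\pi}{2}}\,\|H\|_p\,\gamma(1)\cdots
\]
This is a naive first attempt, and it loses constants. The better route is to work with the normalization that yields exactly $\sqrt{\pi/2}$, as follows.

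Dualize with a test function $F(x,\omega')\in L^q(dx\times\P')$, and write
\[
\bE'\!\int F\sum_kG_kR_kf=\sqrt{\tfrac{\pi}{2}}\,\bE'\bE''\!\int F\,\ip{G',G}\,H_{G'}f.
\]
Now change the Gaussian variables: the pair $(G,G')$ is replaced by $(G',\ip{G',G}/|G'|)$ and an orthogonal part. Projecting onto the first Wiener chaos, that is, onto the span of the $G_k$, is a bounded operation, and conditional expectations are contractions. This reduces the estimate to $\|H\|_p$ times $\|\ip{G',G}/|G'|\|_{L^q}$-type Gaussian norms. Concretely, one obtains
\[
\gamma(p)\|Rf\|_p\le\sqrt{\tfrac{\pi}{2}}\,\|H\|_p\,\gamma(p)\|f\|_p
\quad\text{for } p\ge2,
\]
using that $\ip{G,\theta}$ is $N(0,1)$ for unit $\theta$ and Hölder in $\omega'$. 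For $p\le2$ the Hölder step forces the ratio $\gamma(q)/\gamma(p)$, since the Gaussian factor must be measured in $L^q$ rather than $L^p$.

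The main obstacle is this last step: arranging the Hölder/duality so that, for $p\ge2$, the Gaussian moments cancel exactly. I expect to handle it by applying, for each fixed direction $\theta=G'/|G'|$, Minkowski's integral inequality in $\omega''$ to the directional operator $H_\theta$. This gives
\[
\|Rf\|_p\le\gamma(p)^{-1}\sqrt{\tfrac{\pi}{2}}\,\bE''\big\|\,|\ip{G',G}|/|G'|\,\big\|_{L^p(\P')}\,\|H\|_p\|f\|_p=\sqrt{\tfrac{\pi}{2}}\,\|H\|_p\|f\|_p.
\]
For $1<p\le2$ the bound follows by duality, since the adjoint of $f\mapsto Rf$ is $(g_k)\mapsto-\sum_k R_kg_k$. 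The same Gaussian argument in $L^q$ then produces the extra factor $\gamma(q)/\gamma(p)$.
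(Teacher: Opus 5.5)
Your Gaussian linearization and your identity $\sum_k u_kR_k=\sqrt{\pi/2}\,\bE''[\ip{G',u}H_{G'}]$ are correct; the latter is Pisier's identity \eqref{eq:pisier-1.5} in adjoint form. The final Minkowski step fails, however, and the failure is structural rather than a bookkeeping slip.

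The weight in the identity is $\ip{G',G}$, not $\ip{G',G}/|G'|$. Since $H_{G'}$ depends only on the direction of $G'$, nothing absorbs the factor $|G'|$. Minkowski in $\omega''$ therefore gives
\[
\gamma(p)\|Rf\|_p\le\sqrt{\tfrac{\pi}{2}}\;\bE''\big[\|\ip{G',G}\|_{L^p(\P')}\big]\,\|H\|_p\|f\|_p=\sqrt{\tfrac{\pi}{2}}\;\gamma(p)\,\bE|G'|\,\|H\|_p\|f\|_p,
\]
with $\bE|G'|=\sqrt{2}\,\Gamma(\frac{d+1}{2})/\Gamma(\frac{d}{2})\sim\sqrt{d}$. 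If you normalize to $\theta=G'/|G'|$ instead, the constant in the identity changes from $\sqrt{\pi/2}$ to $\sqrt{\pi/2}\,\bE|G'|=1/C_1(d)$. Either way, the argument as written proves only the dimension-dependent rotation bound \eqref{Stein:asym1}. Moving the norm inside $\bE''$ discards exactly the cancellation that makes Pisier's constant dimension-free.

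Two further points are not carried out. Your remark about projecting onto the first chaos is never turned into an estimate, and that projection is not a conditional expectation. The case $1<p\le 2$ is asserted rather than derived: the adjoint $g\mapsto-\sum_kR_kg_k$ acts on $L^q(\R^d;\ell^2)$, so a bound for $R$ on $L^q$ does not transfer.

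The missing ingredient, which is the heart of the paper's proof, is a dimension-free bound for $Qh(y)=\bE''[\ip{G',y}\,h(G')]$ on $L^p(\gamma_d)$. Since $Qh$ is Gaussian, $\|Qh\|_p=\gamma(p)\|Qh\|_2\le\gamma(p)\|h\|_2\le\gamma(p)\|h\|_p$ for $p\ge2$. By self-adjointness, $\|Q\|_{p\to p}\le\gamma(q)$ for $p\le 2$.

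Your identity says exactly that $\sum_kG_kR_kf(x)=\sqrt{\pi/2}\,(Qh_x)(G)$ with $h_x(y)=H_yf(x)$. Apply the $Q$-bound for each fixed $x$, integrate in $x$, and use $\|H_yf(x)\|_{L^p(dx\otimes d\gamma_d)}\le\|H\|_p\|f\|_p$. For $p\le 2$, the factor $\gamma(q)/\gamma(p)$ is the bound $\gamma(q)$ for $Q$ divided by the averaging constant $\gamma(p)$.

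Alternatively, your duality idea does repair the argument if you use it for every $p$ and put the Gaussian weight on the test function. You have $\sum_kR_kg_k=\sqrt{\pi/2}\,\bE''[H_{G'}\ip{G',g}]$, and now Minkowski is harmless because $\|\ip{G',g(x)}\|_{L^q(\P'')}=\gamma(q)|g(x)|$ involves no $|G'|$. This yields $\|R\|_{L^p(\R^d)}\le\sqrt{\pi/2}\,\gamma(q)\|H\|_p$ for all $1<p<\infty$. That implies both cases of the statement, because $\gamma(q)\le 1$ when $p\ge2$ and $\gamma(p)\le1$ when $p\le2$.
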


Before presenting the proof, we give some upper bounds on $\frac{\gamma(q)}{\gamma(p)}$ for $1<p\leq 2$. By trivial computation using the normal distribution, 
\[
\gamma(q)=\left(
\frac{2^{q/2}\,\Gamma\!\left(\frac{q+1}{2}\right)}
{\sqrt{\pi}}
\right)^{\!1/q}.\]
By Stirling’s approximation of the gamma function, $\gamma(q)\sim 
 \sqrt{\frac{q}{e}},\quad q\to\infty.$
On the other hand $\gamma(p)\to \gamma(1)=\sqrt{2/\pi}$ as $p\to 1^+$. Thus
\[
\frac{\gamma(q)}{\gamma(p)}
\sim
\sqrt{\frac{\pi}{2e}}\,
\frac{1}{\sqrt{p-1}}, \quad  p\to 1^+.
\]
Form this and the fact that   $\|H\|_p=\cot\Big(\frac{\pi}{2p^*}\Big)$, the Big-O behavior,  $(1-p)^{-3/2}$, as $p\to 1$, and $p$ as $p\to\infty$ follow as stated in \cite[p.~ 488]{Pis}.   
One can go a step further and give the sharp universal bound on $\frac{\gamma(q)}{\gamma(p)}$ for  the case $1<p<2$. Here we discuss three estimates and give the bound that follows from the theorem in the Corollary \ref{Pisier:corollary} below. 
\bigskip

\noindent{\bf (1) Nelson--Gross Hypercontractivity \cite{Gross1975, Nelson1973}:}   Let $(P_t)_{t\ge 0}$ denote the Ornstein--Uhlenbeck (OU) semigroup on  $\R$ with the Gaussian measure $\gamma$.  Then for all  $1<r< s<\infty$, $\|P_t f\|_{L^s(\gamma)} \le \|f\|_{L^r(\gamma)}$ whenever $s-1=(r-1)e^{2t}.$ Applying this with $r=2$, $s-1=e^{-2t}$,  and  $f(x)=x$, which is an eigenfunction of $P_t$ with eigenvalue $e^{-t}$,  gives 
$\gamma(s)= e^{t}\,\|P_t f\|_{L^s} \le e^{t} = \sqrt{s-1},$ for  $s>2$.  Applying this with $s=q$ 
and using the fact that $\gamma(1)=\sqrt{2/\pi}\leq \gamma(p)$, since  $\gamma(r)$ is increasing  by Jensen's inequality,   it follows that 
\begin{equation}\label{Pisier:lower1}
\frac{\gamma(q)}{\gamma(p)}
\leq \sqrt{\frac{\pi}{2}}\sqrt{q-1}=\sqrt{\frac{\pi}{2}}\frac{1}{\sqrt{p-1}}, \quad 1<p<2.
\end{equation}
\bigskip

\noindent{\bf (2) Gautschi/Kershaw and Stirling\label{eq:kershaw}} A different way to get the upper bound above is as follows; set $q=2r$ so that  
\[
 \bE|X|^{2r}
=
\frac{2^{r}\Gamma\!\left(r+\tfrac12\right)}{\sqrt{\pi}}.\]
From  the  Gautschi/Kershaw and Stirling inequalities  it follows that 
\begin{equation*}\label{eq:kershaw}
\Gamma\!\left(r+\tfrac12\right) \le \sqrt{r}\Gamma(r)\le \sqrt{r} \sqrt{2\pi}\,r^{r-\frac12}e^{-r},  \,\,\, r\geq 1, 
\end{equation*}
This gives,
\begin{align*}
\bE|X|^{2r}
 \le 
\frac{2^{r}\sqrt{r}}{\sqrt{\pi}}\,
\sqrt{2\pi}\,r^{r-\frac12}e^{-r}
=2^{r}\,r^{r}\,\sqrt{2}\,e^{-r}
\leq 2^{r}\,r^{r}, \quad r\geq 1.  
\end{align*} 
Taking roots we have $\gamma(q)\leq \sqrt{q}$, for $2<q<\infty$,  equivalent for $1<p<2$. This gives the same upper bound as \eqref{Pisier:lower1}. 
\bigskip

\noindent{\bf (3) Hypercontractivity and $\log$-convexity of absolute moments \cite{DLMF-Gamma}:}  The Hypercontractivity inequality shows that the function $\varphi(r)=\frac{\gamma(r)}{\sqrt{r-1}}$ is monotone decreasing for $r>1$ which implies that $\varphi(r)=\frac{\gamma(r)}{\sqrt{r}}$ is decreasing for $r>2$. From the fact that the absolute moments $M(r)=\bE|X|^r$  is  log-convex  it can be shown that $\varphi(r)=\frac{\gamma(r)}{\sqrt{r}}$ is decreasing for $r>1$.  Thus for any $1<p<q$, 
$\frac{\gamma(q)}{\sqrt q}\le \frac{\gamma(p)}{\sqrt p}$. 
If $q=p/(p-1)$, then $\sqrt{q/p}=(p-1)^{-1/2}$ and it follows that  
\[
\frac{\gamma(q)}{\gamma(p)}
=\frac{\gamma(q)/\sqrt q}{\gamma(p)/\sqrt p}\,\sqrt{\frac{q}{p}}
\le \sqrt{\frac{q}{p}}=\frac{1}{\sqrt{p-1}}
\]
\begin{remark}
Although the above observations  (all known) may be interesting in the search for sharp constants, {\bf the real problem} is to remove the factor $1/\sqrt{p-1}$ even at the expense of another constant  that does  not match the bound for the case $2\leq p <\infty$. 
\end{remark}

We summarize the explicit bound in \eqref{eq:Cp-explicit} as a corollary. 
\begin{corollary}\label{Pisier:corollary}
\begin{equation}\label{eq:Cp-moreexplicit}
\|R\|_{L^p(\R^d)}
\le
\begin{cases}
\sqrt{\frac{\pi}{2}}\frac{1}{\sqrt{p-1}}\,\,\|H\|_p, &1<p\leq 2,\\
\sqrt{\frac{\pi}{2}} \,\,\|H\|_p, & 2\leq p<\infty 
\end{cases}
\end{equation}
\end{corollary}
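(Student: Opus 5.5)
The corollary is just Pisier's Theorem~\ref{Pisier:Main} with the ratio $\gamma(q)/\gamma(p)$ replaced by an explicit bound, so the plan is to combine \eqref{eq:Cp-explicit} with one of the three estimates just discussed.

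For $2\le p<\infty$ there is nothing to do: the second line of \eqref{eq:Cp-moreexplicit} is identical to the second line of \eqref{eq:Cp-explicit}.

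For $1<p\le 2$, I will show that
\[
\frac{\gamma(q)}{\gamma(p)}\le \sqrt{\frac{\pi}{2}}\,\frac{1}{\sqrt{p-1}},
\]
and then substitute this into the first line of \eqref{eq:Cp-explicit}. The most direct route is estimate (1).

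\textbf{Step 1 (upper bound on $\gamma(q)$).} Apply Nelson--Gross hypercontractivity with $r=2$ and $s=q\ge 2$. Use $f(x)=x$, which satisfies $P_tf=e^{-t}f$, and choose $t$ with $e^{2t}=q-1$. Since $\|f\|_{L^2(\gamma)}=1$, this gives
\[
\gamma(q)=e^{t}\|P_tf\|_{L^q(\gamma)}\le e^{t}=\sqrt{q-1}=\frac{1}{\sqrt{p-1}}.
\]
Alternatively, the Gautschi/Kershaw--Stirling bound (2) gives $\gamma(q)\le\sqrt q$. That bound is slightly weaker, so I will use the hypercontractive form.

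\textbf{Step 2 (lower bound on $\gamma(p)$).} The function $r\mapsto\gamma(r)=\|X\|_r$ is nondecreasing on a probability space, by Jensen or H\"older. Hence $\gamma(p)\ge\gamma(1)=\bE|X|=\sqrt{2/\pi}$.

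\textbf{Step 3 (combine).} Dividing Step 1 by Step 2 yields the displayed bound on the ratio, and inserting it into \eqref{eq:Cp-explicit} finishes the case $1<p\le 2$.

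There is no real obstacle, since everything rests on Theorem~\ref{Pisier:Main}, which we take as given. The only point needing care is the exponent bookkeeping in hypercontractivity: one must use $s-1=(r-1)e^{2t}$ with $r=2$, so that $e^{t}=\sqrt{q-1}$, rather than the misprinted $s-1=e^{-2t}$ in the text. Note also that estimate (3) would give the sharper bound $\gamma(q)/\gamma(p)\le (p-1)^{-1/2}$, which removes the factor $\sqrt{\pi/2}$. The corollary as stated is therefore valid, and the weaker constant comes only from following route (1).
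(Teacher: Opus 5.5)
There is a genuine gap, in the constant bookkeeping of Step 3. The first line of \eqref{eq:Cp-explicit} already contains a factor $\sqrt{\pi/2}$ multiplying $\gamma(q)/\gamma(p)$. Route (1) gives $\gamma(q)/\gamma(p)\le\sqrt{\pi/2}\,(p-1)^{-1/2}$, and there the $\sqrt{\pi/2}$ is just $1/\gamma(1)$. Inserting this yields only
\[
\|R\|_{L^p(\R^d)}\le\frac{\pi}{2}\,\frac{1}{\sqrt{p-1}}\,\|H\|_p ,
\]
which is weaker than the statement by a factor $\sqrt{\pi/2}\approx 1.25$. A quick check at $p=2$ confirms this: both lines of the corollary give $\sqrt{\pi/2}\,\|H\|_2$, but route (1) gives $\pi/2$, because the lower bound $\gamma(p)\ge\gamma(1)$ throws away $\gamma(2)=1$. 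So your closing remark has it backwards. Route (1) does not prove the corollary. The factor $\sqrt{\pi/2}$ that estimate (3) removes is exactly what is needed to reach the stated constant. The paper's proof is Theorem~\ref{Pisier:Main} combined with (3), namely $\gamma(q)/\gamma(p)\le\sqrt{q/p}=(p-1)^{-1/2}$. Your Steps 1 and 2 are correct, including the fix of the misprint to $s-1=e^{2t}$; they just prove a weaker inequality.

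To close the gap you need $r\mapsto\gamma(r)/\sqrt r$ to be nonincreasing, since then $\frac{\gamma(q)}{\gamma(p)}=\frac{\gamma(q)/\sqrt q}{\gamma(p)/\sqrt p}\sqrt{q/p}\le\sqrt{q/p}$. This does not follow from log-convexity of $r\mapsto\bE|X|^r$ alone, which only gives that $\gamma$ is nondecreasing, so you must supply an argument. Here is a short one. Put $x=\frac{r+1}{2}$ and $\psi=\Gamma'/\Gamma$, and use $\gamma(r)^r=2^{r/2}\Gamma(x)/\sqrt{\pi}$. A direct computation gives $r^2\frac{d}{dr}\log\frac{\gamma(r)}{\sqrt r}=h(x)$, where $h(x):=(x-\tfrac12)(\psi(x)-1)-\log\Gamma(x)+\tfrac12\log\pi$. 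Then $h(\tfrac12)=0$ and $h'(x)=(x-\tfrac12)\psi'(x)-1$. This is negative for $x>\tfrac12$: by strict convexity of $t\mapsto t^{-2}$, $\psi'(x)=\sum_{n\ge 0}(x+n)^{-2}<\int_{x-1/2}^{\infty}t^{-2}\,dt=(x-\tfrac12)^{-1}$. Hence $h<0$, so $\gamma(r)/\sqrt r$ is strictly decreasing on $(0,\infty)$, and inserting the resulting ratio bound into \eqref{eq:Cp-explicit} gives the corollary with the stated constant.
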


 Combining the case $p\geq 2$ with the estaminet in \eqref{BanWan1}, we have
 \begin{corollary}\label{SteBestReal}
 \begin{equation}\label{SteBestReal:R}
 \|R\|_{L^p(\R^d)}
\le
\begin{cases}
\frac{2}{p-1}, &1<p\leq 2,\\
\sqrt{\frac{\pi}{2}} 
\,\,\|H\|_p, & 2\leq p<\infty. 
\end{cases}
\end{equation}
\end{corollary}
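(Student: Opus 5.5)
The plan is to combine two bounds already established, one for each range of $p$, so no new estimate is needed.

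\emph{Range $1<p\le 2$.} Here I quote \eqref{BanWan1} from \cite[Corollary 4.5]{BW95}. It gives $\|R\|_{L^p(\R^d)}\le 2(p^*-1)$, and by \eqref{BurkCons} we have $p^*-1=\frac{1}{p-1}$ on this range, so the bound is exactly $\frac{2}{p-1}$. Recall how that estimate arises: Lemma \ref{Lemma:Cotlar}(i) represents $\frac12 R_j$ as the projection $T_{\BR_j^1}$. The vector of transforms $(\BR_1^1*M^f,\dots,\BR_d^1*M^f)$ is differentially subordinate to $M^f$, since $\sum_j|\partial_{x_j}U_f|^2\le|\nabla U_f|^2$. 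Burkholder's Hilbert-space-valued inequality (Theorem \ref{thm:A}(1)) then bounds this vector by $(p^*-1)\|f\|_p$. Conditional expectation is a contraction on $\ell^2$-valued $L^p$, as in \eqref{Mart:CotlarProjection}. Multiplying by $2$ gives the stated bound.

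\emph{Range $2\le p<\infty$.} Here I invoke the second line of Corollary \ref{Pisier:corollary}, which is the case $p\ge 2$ of Theorem \ref{Pisier:Main}. It states $\|R\|_{L^p(\R^d)}\le\sqrt{\pi/2}\,\|H\|_p$ with no dependence on $d$.

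\emph{Combining.} Putting the two cases together gives \eqref{SteBestReal:R}. At $p=2$ the two statements are consistent, since both exceed the trivial value $\|R\|_{L^2}=1$.

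\emph{Justifying the choices.} The only real point is to explain why each case uses the bound it does. For $p\le 2$, compare Pisier's bound $\sqrt{\pi/2}(p-1)^{-1/2}\tan(\pi/2p)$ with $\frac{2}{p-1}$. As $p\to 1^+$ we have $\tan(\pi/2p)\sim \frac{2}{\pi(p-1)}$, so Pisier's bound grows like $(p-1)^{-3/2}$ and is worse. For $p\ge 2$, Pisier's bound is $\sqrt{\pi/2}\cot(\pi/2p)$. Since $\cot(\pi/2p)\le \frac{2p}{\pi}$, this is $O(p)$, and it is smaller than $2(p-1)$ for all $p\ge2$ because $\sqrt{\pi/2}<2$ and the asymptotics agree.

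Strictly speaking, the corollary only asserts that both inequalities hold, and that needs nothing beyond the citations above. I expect no genuine obstacle here.
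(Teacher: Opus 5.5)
Your proposal is correct and follows the paper's argument exactly: the corollary pairs the $1<p\le 2$ case of \eqref{BanWan1}, where $2(p^*-1)=\frac{2}{p-1}$, with the $p\ge 2$ case of Corollary \ref{Pisier:corollary}. One small point on your side remark: the inequality $\sqrt{\pi/2}\cot(\pi/2p)<2(p-1)$ for $p\ge 2$ does hold, but the clean justification is $\cot x\le 1/x$, which gives the bound $\sqrt{2/\pi}\,p<2(p-1)$, rather than saying the asymptotics ``agree'' (they are both linear in $p$ but with different constants).
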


\begin{remark} Notice that for $2\leq p<\infty$ the constant $\sqrt{\frac{\pi}{2}}\approx   1.25331413$  is better than all others discussed  above. Of course, it still does not attain the sharp  constant $1$  at $p=2$.  It is natural to ask whether the same bound  $\sqrt{\pi/2}\,\|H\|_p$ might hold 
for the entire range $1<p<\infty$ in Corollary \ref{Pisier:corollary}.  That is, without the additional factor 
$(p-1)^{-1/2}$ appearing for $1<p<2$. This value is an integral part of the proof and removing it by some simple modification of the argument does not seem possible. Hence a different approach would be required. 
\end{remark}

\begin{proof}[Proof of Theorem \ref{Pisier:Main}] Consider the product space $(\mathbb R^n\times\mathbb R^n,\,dx\otimes d\gamma_d)$, where 
\[
d\gamma_d=(2\pi)^{-d/2}e^{-|y|^2/2}\,dy.
\]
For $f\in L^p(\mathbb R^d)$, let  the function $ f(x+t y)$ be define on $\mathbb R^d\times\R^d\times \R$. By the translation invariance of the Lebesgue measure,
\begin{equation}\label{eq:isometry}
\int_{\mathbb R^n}\int_{\mathbb R^d} |f(x+t y)|^p\,dx\,d\gamma_n
=
\int_{\mathbb R^d}|f(x)|^p\,dx,
\end{equation}
Thus, as Pisier states it,  the function $(x,y)\to f(x+ty)$  is an {\it isometric embedding}
$L^p(dx)\hookrightarrow L^p(dx\otimes d\gamma_d)$,  for each fixed $t$.

Define an operator $\mathcal H$ acting on functions of $(x,y)$ by
\begin{equation*}\label{eq:calH-def}
(\mathcal H f)(x,y)
:=
\frac1\pi\,\p.v.\!\int_{\mathbb R}\frac{f(x+t y)}{t}\,dt. 
\end{equation*}
Then by \eqref{eq:isometry}, 
\begin{equation}\label{eq:calH-Lp}
\|\mathcal H f\|_{L^p(dx\otimes d\gamma_d)}
\le \|H\|_p\,\|f\|_{L^p(dx)}.
\end{equation}
This is the transference argument as in \cite{CoiWei1976}.   

Pisier now uses the orthogonal projection $Q$ from $L^2(\gamma_d)$ onto the first the first Wiener chaos.  For completeness, we elaborate further.  Let $H_n$ denote the (probabilist's) Hermite polynomials in one variable,
\[
H_n(y)=(-1)^n e^{y^2/2}\frac{d^n}{dy^n}e^{-y^2/2}.
\]
For a multi-index $\alpha=(\alpha_1,\dots,\alpha_d)\in\mathbb{N}^d$,
define the multivariate Hermite polynomial
\[
H_\alpha(y)=\prod_{j=1}^d H_{\alpha_j}(y_j),
\qquad
|\alpha|=\alpha_1+\cdots+\alpha_d.
\]
The family $\{H_\alpha : \alpha\in\mathbb{N}^d\}$ forms an orthogonal
basis for $L^2(\gamma_d)$.
The $m$ Wiener chaos is defined as
$
\mathcal H_m
=\overline{\operatorname{span}}
\{H_\alpha : |\alpha|=m\}
$ and the following is the Wiener chaos decomposition on $\R^d$ (see, \cite[Theorem 5.1 \& Example 5.12]{Hu2017})/. That is,  
\begin{align}\label{chaos-decomp}
L^{2}(\gamma_{d})
=
\bigoplus_{m=0}^{\infty}
\operatorname{span}\{H_{\alpha}(x):|\alpha|=m\}.
\end{align}
 In particular, the \emph{first Wiener chaos} is
$
\mathcal H_1
=\operatorname{span}\{H_\alpha : |\alpha|=1\}.
$
Since $|\alpha|=1$ implies $\alpha=e_j$ for some $j=1,\dots,d$,
and since
$H_{e_j}(y)=H_1(y_j)=y_j$ we have 
$
\mathcal H_1
=\operatorname{span}\{y_1,\dots,y_d\}. 
$
Thus the first Wiener chaos consists of all linear functions $y\to a\cdot y$, $a\in R$. 
For $h\in L^2(\gamma_d)$ define the orthogonal projection onto $\mathcal H_1$ by 
\begin{align}\label{ProJe:Q}
(Qh)(y)=\sum_{j=1}^d \ip{h,  y_j}\,y_j=a\cdot y, \quad a\in \R^d, 
\end{align}
where 
\[a_j=\ip{h, y_j}=\int_{\R^d}h(y)\,y_j\,d\gamma_d, \quad j=1, \dots, d.\]
 
Since the coordinate functions $y_j$ are i.i.d standard normal random variables, 
 $Qh$ is a normal random variable $X\sim N(0, \sigma^2)$ with $\sigma^2=\|Qh\|_{L^2(\gamma_d)}^2$.
Thus for any $2\leq p<\infty,$ the boundedness  of $Q$ on $L^2(\gamma_d)$ and Jensen's inequality give that for $2\leq p<\infty$, 
\begin{align}\label{Q:estiamte} 
\|Qh\|_{L^p(\gamma_d)}&=\gamma(p)\,\|Qh\|_{L^2(\gamma_d)}\nonumber\\
&\le \gamma(p)\,\|h\|_{L^2(\gamma_d)} \\ 
&\le \gamma(p)\,\|h\|_{L^p(\gamma_d)}.\nonumber
\end{align}
By duality, for $1<2\leq p$, this gives 
\begin{equation}\label{eq:Q-Lp-ple2}
\|Qh\|_{L^p(\gamma_d)}
\le \gamma(q)\,\|h\|_{L^p(\gamma_d)},
\end{equation}
where $q$ is the conjugate exponent  of $p$. 

For $(x, y)\in \R^d\times \R^d$, denote by  $Q_y$ the operator $Q$ acting on  the $y$ variable. 
Pisier's crucial identity, his equation (1.5), is:
\begin{equation}\label{eq:pisier-1.5}
Q_y(\mathcal H f)(x,y))
=
\sqrt{\frac{2}{\pi}}\,\sum_{j=1}^d y_j\, (R_j f)(x).
\end{equation}

Let us assume this for the moment.  Taking the $L^p(dx\otimes d\gamma_d)$ norms of both sides, applying \eqref{eq:calH-Lp}, \eqref{Q:estiamte} and  \eqref{eq:Q-Lp-ple2} we have 
\begin{align}\label{Upper:allp}
\sqrt{\frac{2}{\pi}}\Big\|\sum_{j=1}^d y_j R_j f(x)\Big\|_{L^p(dx\otimes d\gamma_d)}
 \leq \begin{cases} \gamma(q)\,\, \|H\|_p\,\|f\|_{L^p(\R^d)},& \,\,\, 1\leq p\leq 2, \\\
 \gamma(p)\,\, \|H\|_p\,\|f\|_{L^p(\R^d)}, &\,\,\, 2\leq p<\infty.
 \end{cases} 
\end{align} 

Exactly as above, for each $x$, $\sum_{j=1}^d y_k R_j f(x)$ is he sum of i.i.d. standard  normals and hence  a mean 0 normal with variance $\sum_{j=1}^d |R_j f(x)|^2$. This gives
\begin{align}\label{GaussianAve:1}
\Big\|\sum_{j=1}^d y_k R_j f(x)\Big\|_{L^p(\gamma_n)}
=
\gamma(p)\Big(\sum_{j=1}^n |R_j f(x)|^2\Big)^{1/2}, \, \,\, 1<p<\infty
\end{align}
This identity is what we call Gaussian averaging and to which we will return below when we explore asymptotic behavior.  
Taking the $L^p(dx)$ norm of both sides \eqref{GaussianAve:1} gives 
\begin{equation}\label{eq:Cp-moreexplicit}
\|R\|_{L^p(\R^d)}
\le
\begin{cases}
\frac{\gamma(q)}{\gamma(p)}\sqrt{\frac{\pi}{2}}\,\,\|H\|_p, &1<p<2,\\
\sqrt{\frac{\pi}{2}} \,\,\|H\|_p, & 2\leq p<\infty, 
\end{cases}
\end{equation}
which is the statement of the Theorem. 

It remains to verify \eqref{eq:pisier-1.5}. As Pisier says, this ``is easy to check using the Fourier transform in the $x$ variable."  Fix $y\in\R^d$. Applying  \eqref{Fouier:MulH} the Fourier transform of the left-hands side of \eqref{eq:pisier-1.5} in the $x$ variable gives 
\begin{equation}
\widehat{Q_y(Hf(\cdot, y))}(\xi)=Q_y\big[i\sign(\xi\cdot y)\big]\,\widehat f(\xi)=i\, Q_y\big[\sign(\xi\cdot y)\big]\,\widehat f(\xi).
\end{equation}
By definition of the first Wiener chaos, 
\[Q_y\big[\sgn(\xi\cdot y)\big]=a\cdot y, \quad a\in \R.
\]
Let $\rho$ be orthogonal matrix $\rho\in O(d)$ (the orthogonal group in $\R^d$) which fixes $\xi$. Since $\sign(\xi\cdot \rho y)=\sign(\xi\cdot y)$
and $\gamma_d$ is rotationally invariant, we have 
\[
a\cdot (\rho y)
=
Q_y\big(\sgn(\xi\cdot \rho y)\big)
=
Q_y\big(\sgn(\xi\cdot y)\big)
=
a\cdot y.
\]
This shows that $a$ is invariant under all orthogonal transformations fixing
$\xi$ and hence it must be a multiple of $\xi$. This gives $a=\alpha(\xi)\,\xi$ and it follows that 
\begin{align}
Q_y\big(\sgn(\xi\cdot y)\big)=\alpha(\xi) \xi\cdot y, 
\end{align} 

To determine $\alpha(\xi)$, set $X=\xi\cdot y\sim N(0, |\xi|^2)$.  By orthogonality, 
\begin{align*}
&\bE\Big[\Big(\sign(\xi\cdot y)-Q_y(\sign(\xi\cdot y))\Big)\xi\cdot y\Big]\\
&=\bE\Big[\Big(\sign(\xi\cdot y)-\alpha(\xi)\,\xi\cdot y \Big)\xi\cdot y\Big]=0
\end{align*}
Equivalently,
\[\bE[\sign(X)X]=\bE|X|=\sqrt{\frac{2}{\pi}} |\xi|=\alpha(\xi)\bE|X|^2=\alpha(\xi)|\xi|^2
\]
Thus $\alpha(\xi)=\sqrt{\frac{2}{\pi}}\frac{1}{|\xi|}$ and 
\begin{align*}
\widehat{\displaystyle{Q_y(Hf(\cdot, y))}}(\xi)&=i\,Q_y\big[\sign(\xi\cdot y)\big]\,\widehat f(\xi)\\
&=\sqrt{\frac{2}{\pi}}\, \frac{i\,\xi\cdot y}{|\xi|} \widehat f(\xi)\\
&=\sqrt{\frac{2}{\pi}}\, \sum_{j=1}^d\frac{i\,\xi_j\, y_j}{|\xi|} \widehat f(\xi)\\
&=\sqrt{\frac{2}{\pi}}\, \sum_{j=1}^d\widehat{R_jf}(\xi)\,y_j, 
\end{align*} 
which gives \eqref{eq:pisier-1.5} and completes the proof of the theorem. 
\end{proof}

\begin{remark}[Complex-valued functions; Pisier's method] As Pisier points out in \cite[Remark, p.~498]{Pis}, his  method applies to functions taking values in a Banach space with the unconditional martingale  differences property, U.M.D. for short. Details on how the constant change are not given.  Here we explain the modification for the case of complex-values functions. 
\end{remark}
The identity
\begin{equation}\label{eq:pisier-id}
Q_y(\mathcal H f)(x,y)
=
\sqrt{\frac{2}{\pi}}\sum_{j=1}^d y_j\,R_j f(x)
\end{equation}
holds for complex-valued functions $f$, since both $\mathcal H$ and
$Q_y$ are linear operators.
The difference between the real and complex cases appears in the
Gaussian moment step.  For real coefficients $a_j$ one has the exact
identity 
\[\|\sum_{j=1}^d y_j a_j\|_{L^p(\gamma)}
=\gamma(p) \left(\sum_{j=1}^d a_j^2\right)^{1/2}.
\] 
When the coefficients $a_j$ are complex this identity no longer holds.
Instead we  use the  Kahane–Khintchine Gaussian inequality. 
Let $\alpha(p)=\Gamma\!\left(\frac{p+2}{2}\right)^{1/p}$. Then 
\begin{align}\label{Gau:Kih1}
\alpha(p)\,\left(\sum_{j=1}^d |a_j|^2\right)^{1/2}
\le
\left\|\sum_{j=1}^d y_j a_j\right\|_{L^p(\gamma)}
\le
\gamma(p)\,\left(\sum_{j=1}^d |a_j|^2\right)^{1/2}, \,\,  2\leq p<\infty,
\end{align}
\begin{align}\label{Gau:Kih2}
\gamma(p)\,\left(\sum_{j=1}^d |a_j|^2\right)^{1/2}
\le
\left\|\sum_{j=1}^d y_j a_j\right\|_{L^p(\gamma)}
\le
\alpha(p)\,\left(\sum_{j=1}^d |a_j|^2\right)^{1/2}, \,\, 1<p\leq 2
\end{align}

With this, \eqref{eq:Cp-moreexplicit} becomes 
\begin{equation}\label{eq:Cp-moreexplicit-Compex1}
\|R\|_{L^p(\R^d)}
\le
\begin{cases}
\sqrt{\frac{\pi}{2}}\;\alpha(p)\,\frac{\gamma(q)}{\gamma(p)},\,\,\|H\|_p, &1<p\leq 2,\\  
\sqrt{\frac{\pi}{2}}\; \frac{\gamma(p)}{\alpha(p)}\,\,\, \|H\|_p, \,\,\, & 2<p<\infty. 
\end{cases}
\end{equation}
The constants simplify to
\[
\frac{\gamma_p}{\sqrt{2/\pi}\,\alpha_p}
=
\sqrt{\pi}\,
\left(
\frac{\Gamma\!\left(\frac{p+1}{2}\right)}
{\sqrt{\pi}\,\Gamma\!\left(\frac{p+2}{2}\right)}
\right)^{1/p}
=
\sqrt{\pi}\,C_p(2)\leq \sqrt{\pi}, \quad  2\leq p<\infty,  
\]
where the inequality follows from Remark \ref{Asym:C_p} below.  Similarly, 
\[
\sqrt{\frac{\pi}{2}}\;\alpha_p\,\frac{\gamma_q}{\gamma_p}
\le
\sqrt{\frac{\pi}{2}}\,(p-1)^{-1/2}.
\]
We summarize the above in the following 

\begin{corollary}\label{PisierComplex:Corrolary} The $L^p$-norm of the vector of Riesz transforms when acting on complex-valued functions has the bounds 
\begin{equation}
\|R\|_{L^p(\R^d;\, \bC)}
\le
\begin{cases}
\sqrt{\frac{\pi}{2}}\, \frac{1}{\sqrt{p-1}}\,\,\|H\|_p &1<p<2,\\ 
\sqrt{\pi}\,\,\, \|H\|_p, \,\,\,\,\, & 2\leq p<\infty. 
\end{cases}
\end{equation}
\end{corollary}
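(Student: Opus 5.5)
The plan is to rerun the proof of Theorem~\ref{Pisier:Main} for complex-valued $f$, changing only the steps where real-valuedness was used. There are three such steps: the transference bound \eqref{eq:calH-Lp}, the projection bounds \eqref{Q:estiamte}--\eqref{eq:Q-Lp-ple2}, and the Gaussian averaging identity \eqref{GaussianAve:1}.

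\textbf{Step 1: the identity.} Pisier's identity \eqref{eq:pisier-id} holds verbatim for complex $f$. Both sides are linear in $f$, so it follows from the real case applied to $\Re f$ and $\Im f$ separately.

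\textbf{Step 2: operator norms on complex functions.} We need $\|\mathcal H f\|_{L^p(dx\otimes d\gamma_d;\,\bC)}\le \|H\|_p\|f\|_{L^p(dx;\,\bC)}$, and we need the bounds $\gamma(p)$ (for $p\ge2$) and $\gamma(q)$ (for $1<p\le2$) for $Q_y$ on complex-valued $h$. I expect this step to be the main obstacle, since the Jensen argument in \eqref{Q:estiamte} used that $Qh$ is a single real Gaussian, which fails when $Qh=Q(\Re h)+iQ(\Im h)$.

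The tool I would use is the Marcinkiewicz--Zygmund principle: a linear operator with real kernel that is bounded on real $L^p$ has the same norm on complex $L^p$. The proof writes $|a+ib|^p$ as a constant times $\bE|a\,g_1+b\,g_2|^p$ for independent standard normals $g_1,g_2$, applies the real inequality to the real function $g_1\Re h+g_2\Im h$, and then averages over $(g_1,g_2)$. Both $\mathcal H$ and $Q_y$ have real kernels, so both bounds transfer unchanged. Composing Steps 1 and 2 gives \eqref{Upper:allp} for complex $f$.

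\textbf{Step 3: replacing Gaussian averaging.} For complex coefficients $a_j=R_jf(x)$, the identity \eqref{GaussianAve:1} is no longer available. Instead I would use the \emph{lower} halves of the Kahane--Khintchine inequalities.
\begin{itemize}
\item For $p\ge2$, \eqref{Gau:Kih1} gives $\|\sum_j y_jR_jf(x)\|_{L^p(\gamma_d)}\ge \alpha(p)\big(\sum_j|R_jf(x)|^2\big)^{1/2}$. Integrating in $x$ and combining with \eqref{Upper:allp} yields $\|R\|_{L^p(\R^d;\,\bC)}\le \sqrt{\pi/2}\,\frac{\gamma(p)}{\alpha(p)}\|H\|_p$. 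The simplification displayed before the corollary identifies this constant with $\sqrt{\pi}\,C_p(2)$, and Remark~\ref{Asym:C_p} gives $C_p(2)\le1$. This yields the bound $\sqrt{\pi}\,\|H\|_p$.
\item For $1<p\le 2$, \eqref{Gau:Kih2} gives the lower bound $\gamma(p)\big(\sum_j|R_jf|^2\big)^{1/2}$. This is exactly the bound used in the real case, so we recover $\sqrt{\pi/2}\,\frac{\gamma(q)}{\gamma(p)}\|H\|_p$. Estimate (3) preceding Corollary~\ref{Pisier:corollary}, namely $\gamma(q)/\gamma(p)\le (p-1)^{-1/2}$, then yields $\sqrt{\pi/2}\,(p-1)^{-1/2}\|H\|_p$.
\end{itemize}

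For $p\le2$, I would not carry the factor $\alpha(p)$ that appears in the displayed formula \eqref{eq:Cp-moreexplicit-Compex1}. The correct direction of the Kahane--Khintchine inequality only requires the $\gamma(p)$ lower bound in that range, and this gives the stated constant directly.
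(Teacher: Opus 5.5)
Your proposal is correct and is essentially the paper's argument: you rerun Pisier's proof for complex $f$ and replace the Gaussian averaging identity \eqref{GaussianAve:1} by the lower halves of the Kahane--Khintchine inequalities \eqref{Gau:Kih1}--\eqref{Gau:Kih2}, and for $p\ge 2$ you get the paper's constant $\sqrt{\pi/2}\,\gamma(p)/\alpha(p)=\sqrt{\pi}\,C_p(2)\le\sqrt{\pi}$. Beyond the paper, you justify via Marcinkiewicz--Zygmund that the transference and projection bounds hold for complex-valued functions, which the paper takes for granted; you are also right that for $1<p\le 2$ the lower constant $\gamma(p)$ gives $\sqrt{\pi/2}\,\gamma(q)/\gamma(p)$, so the extra factor $\alpha(p)$ in \eqref{eq:Cp-moreexplicit-Compex1} is not justified by the argument, though this does not affect the stated bound since $\alpha(p)=\Gamma(1+p/2)^{1/p}\le 1$ on $(1,2]$.
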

 In combination with \eqref{BanWan1} which holds for complex-valued functions we have
 \begin{corollary}\label{SteBestCom}
 \begin{equation}\label{SteBestCom:C}
\|R\|_{L^p(\R^d;\, \bC)}
\le
\begin{cases}
\frac{2}{p-1}\,\,\, &1<p<2,\\ 
\sqrt{\pi}\,\,\, \|H\|_p, \,\,\,\,\, & 2\leq p<\infty. 
\end{cases}
\end{equation}

 \end{corollary}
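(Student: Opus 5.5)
The plan is to take the minimum of two bounds already available for complex-valued functions, one for each range of $p$.

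\emph{Range $1<p<2$.} Here I would invoke \eqref{BanWan1} from \cite[Corollary 4.5]{BW95}. That argument uses the matrices $\BR_j^1$ of Lemma~\ref{Lemma:Cotlar}, for which $T_{\BR_j^1}f=\frac12 R_jf$. The resulting vector of martingale transforms is differentially subordinate to the martingale of $f$, with constant independent of $d$. Burkholder's inequality \eqref{Burkholder} holds for Hilbert-space-valued martingales with the same constant $p^*-1$. I will therefore apply it with the Hilbert space $\bC^d$, viewing complex-valued $f$ through $\bR^2$-valued martingales.

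Projecting by conditional expectation, which is contractive by \eqref{Mart:CotlarProjection}, and multiplying by $2$ gives
\[
\|R\|_{L^p(\R^d;\,\bC)}\le 2(p^*-1)=\frac{2}{p-1}.
\]
The only point to check is that the conditional expectation is also a contraction on $\ell^2(\bC^d)$-valued $L^p$. This holds by conditional Jensen applied to the convex norm, so the complex case goes through unchanged.

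\emph{Range $2\le p<\infty$.} Here I would invoke Corollary~\ref{PisierComplex:Corrolary} directly. It gives $\|R\|_{L^p(\R^d;\,\bC)}\le \sqrt{\pi}\,\|H\|_p$. That bound rests on Pisier's identity \eqref{eq:pisier-id}, the transference estimate \eqref{eq:calH-Lp}, the projection bound \eqref{Q:estiamte}, and the Gaussian Kahane--Khintchine comparison \eqref{Gau:Kih1}. One detail in the transference step needs checking: the Hilbert transform's norm on complex-valued functions equals $\|H\|_p$. This is true because $H$ has a real kernel, and the Pichorides/Cole constant is known to persist for complex functions. If that fact is not assumed, the same $\sqrt{\pi}$ constant can be justified by citing the complex version directly as the paper does.

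\emph{Combining.} Putting the two ranges together yields \eqref{SteBestCom:C}.

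\emph{Main obstacle.} There is no genuine obstacle, since the statement is the pointwise minimum of two earlier estimates. The step needing the most care is confirming that \eqref{BanWan1} really holds for complex-valued $f$ with the same constant $2/(p-1)$. This amounts to checking that the vector-valued Burkholder inequality is applied in $\bC^d\cong\bR^{2d}$ with dimension-free subordination. Subordination holds because
\[
\textstyle\sum_j|\BR_j^1\nabla U_f|^2=\sum_j|\partial_{x_j}U_f|^2\le|\nabla U_f|^2
\]
componentwise in the real and imaginary parts.
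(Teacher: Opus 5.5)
Your proposal is correct and follows the paper's own argument. The paper obtains Corollary~\ref{SteBestCom} by combining \eqref{BanWan1}, which holds for complex-valued functions, on $1<p<2$ with Corollary~\ref{PisierComplex:Corrolary} on $2\le p<\infty$. Your extra checks make explicit what the paper leaves implicit: the dimension-free subordination of the $\mathbb{C}^d$-valued transforms by the matrices $\mathbf R_j^1$, and the fact that $H$, a real scalar operator, has norm $\|H\|_p$ on complex-valued functions (for instance via Marcinkiewicz--Zygmund Gaussian averaging).
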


We observe that for complex-valued functions and $d=2$, the  method of rotations \eqref{k-rotation} gives 
\[\|\BH_{\bC}\|_{L^p(\R^2;\,\bC)}\leq \frac{\pi}{2}\cot\Big(\frac{\pi}{2p}\Big), \,\,\, 2\leq p<\infty,\]
and the martingale inequality \eqref{Complex:H3} gives  
\[
\|\BH_{\bC}\|_{L^p(\R^2;\,\bC)}\leq \sqrt{2}\cot\Big(\frac{\pi}{2p}\Big), \,\,\, 2\leq p<\infty,  
\]
which are better than the estimate  in the Corollary \ref{SteBestCom} for the particular case of $d=2$. The "dimensional dependence" of the constant $\sqrt{2}$ in the second inequality will be clarified below.

\subsection{Asymptotic bounds for vector of Riesz transforms, averaging} A standard technique  in probability and analysis  when estimating norms of operators in Hilbert spaces is to   average with respect to Rademacher, Gaussian, or uniform distributions.  An example of this is Pisier's \eqref{GaussianAve:1} where the length of the vector $v=Rf(x)=\left(R_1f(x), \dots, R_df(x)\right)$ is written as the  expectation  of a mean 0 variance $|v|^2$  normal  random variable. The crux of Pisier’s proof is to combine this simple tool  with the deeper idea of projecting onto the first Wiener chaos, thereby reducing the dimensional dependence from $\R^d$ to $\R$.  In what follows we will explore averaging techniques to obtain sharper asymptotic behavior of $L^p$-norms as $p\to\infty$. We recall the following 
\bigskip

\noindent{\bf Conjecture} (Problem 6, p.~819 \cite{Ban})
\begin{equation}
\|R\|_{L^p(\R^d)}=\|H\|_p=\cot\Big(\frac{\pi}{2p^*}\Big).
\end{equation}
 Although this may be false,  we have the the following   
  \begin{theorem}
 \begin{align}\label{Asymp:R1}
 \lim_{p\to\infty}\frac{\|R\|_{L^p(\R^d)}}{\|H\|_p}=1.
 \end{align} 
 \end{theorem}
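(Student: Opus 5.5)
The plan is to prove matching lower and upper bounds for the ratio, with the upper bound coming from averaging over the unit sphere rather than over Gaussian measure as in Pisier's argument.

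\emph{Lower bound.} This is immediate. Pointwise $|R_1f|\le \big(\sum_{k=1}^d|R_kf|^2\big)^{1/2}$, so $\|R\|_{L^p(\R^d)}\ge \|R_1\|_{L^p(\R^d)}=\|H\|_p$ by \eqref{IwMa2}. Hence the ratio in \eqref{Asymp:R1} is at least $1$ for every $p$.

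\emph{Upper bound: averaging over the sphere.} Pisier's Gaussian averaging \eqref{GaussianAve:1} loses the factor $\sqrt{\pi/2}$, which does not tend to $1$. Instead I would average against the normalized surface measure $\sigma$ on $\Soned$. For any $v\in\R^d$, rotation invariance of $\sigma$ gives
\begin{align*}
\int_{\Soned}|v\cdot\theta|^p\,d\sigma(\theta)=C_p(d)^p\,|v|^p,\qquad C_p(d):=\Big(\int_{\Soned}|\theta_1|^p\,d\sigma(\theta)\Big)^{1/p}.
\end{align*}
Apply this with $v=Rf(x)$. For each $\theta$, the operator $R_\theta f:=\sum_j\theta_jR_jf$ has multiplier $i\,\xi\cdot\theta/|\xi|$. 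It is therefore conjugate to $R_1$ by a rotation $\rho\in O(d)$ with $\rho e_1=\theta$, exactly as in the remark following Example~\ref{Cot:Proj4D}, and so $\|R_\theta\|_{L^p(\R^d)}=\|H\|_p$. Fubini then gives
\begin{align*}
C_p(d)^p\,\|Rf\|_{L^p(\R^d)}^p=\int_{\Soned}\|R_\theta f\|_{L^p(\R^d)}^p\,d\sigma(\theta)\le \|H\|_p^p\,\|f\|_{L^p(\R^d)}^p .
\end{align*}
This yields $\|R\|_{L^p(\R^d)}\le \|H\|_p/C_p(d)$ for all $1<p<\infty$.

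\emph{Asymptotics of $C_p(d)$.} It remains to show $C_p(d)\to1$ as $p\to\infty$ with $d$ fixed. The quick argument is that $|\theta_1|\le 1$, and $|\theta_1|>1-\epsilon$ on a set of positive $\sigma$-measure, so the $L^p(\sigma)$ norm tends to the $L^\infty$ norm, which is $1$. Explicitly, from the beta integral,
\begin{align*}
C_p(d)^p=\frac{\Gamma(d/2)\,\Gamma\big(\tfrac{p+1}{2}\big)}{\sqrt{\pi}\,\Gamma\big(\tfrac{p+d}{2}\big)}\sim c_d\,p^{-(d-1)/2}.
\end{align*}
Taking $p$-th roots, the polynomial factor tends to $1$; this is consistent with the constant $C_p(2)$ used earlier in the paper.

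\emph{Conclusion and main point.} Combining the two bounds gives $1\le \|R\|_{L^p(\R^d)}/\|H\|_p\le C_p(d)^{-1}\to1$, which proves \eqref{Asymp:R1}. The only genuine idea is to replace Gaussian averaging by spherical averaging, which avoids both the Wiener chaos projection and the $\sqrt{\pi/2}$ loss. The step needing care is the identification $\|R_\theta\|_{L^p(\R^d)}=\|H\|_p$ uniformly in $\theta$, which follows from the rotation covariance of the Riesz transforms.
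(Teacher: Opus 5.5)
Your proof is correct and is essentially the paper's own argument. You use the trivial lower bound via $R_1$, then uniform averaging over $\Soned$ with each directional transform $R_\theta$ conjugated to $R_1$ by a rotation, which gives $\|R\|_{L^p(\R^d)}\le \|H\|_p/C_p(d)$, and finally $C_p(d)\to1$ as $p\to\infty$. One correction to your commentary: the $\sqrt{\pi/2}$ loss in Pisier's proof comes from the first Wiener chaos projection, not from Gaussian averaging itself, and the paper shows that Gaussian averaging combined with the same rotation argument yields exactly the same constant $1/C_p(d)$.
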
 
 
 \begin{proof} To proof this, we will use the averaging technique. Given Theorem \ref{Pisier:Main} which produces such a good uniform bound one might expect that applying the averaging technique with Gaussians would yield the best asymptotic result of the  three methods mentioned above. However, as it turns out, averaging with  the Gaussian and uniform distributions give the same result while averaging with the Rademacher distribution is less effective. We explore all three averaging distributions. 
  
We begin with uniform averaging. We denote by $d\sigma$ the normalized surface measure on the $(d-1)$-dimensional sphere $\Soned$. For $d=2$  this is just the arc-length measure on the circle $\Sone$. That is,  $d\sigma(\theta)=\frac{d\theta}{2\pi}$ on $[0,2\pi)$.
Recall that (by rotational invariance of $d\sigma$) for any $x\in \R^d$, $0<p<\infty$, 
\[ \int_{\Soned}|\omega \cdot x|^p\, d\sigma(\omega) = |x|^p \int_{\Soned} |\omega_1|^p\, d\sigma(\omega). 
\]
Define
\begin{equation}\label{eq:cp-def}
C_{p}(d)=\|\omega_1\|_p=\left(\int_{\Soned}|\omega_1|^p\,d\sigma(\omega)\right)^{1/p}=\left(\frac{\Gamma\!\left(\frac d2\right)\Gamma\!\left(\frac{p+1}{2}\right)}
{\sqrt{\pi}\,\Gamma\!\left(\frac{d+p}{2}\right)}\right)^{1/p}.
\end{equation}

\begin{remark}\label{Asym:C_p} Fix $d$. Then $C_p(d)$, as a function of $p$, has the following simple properties that will be use several times below:  
\begin{itemize}
\item[(i)] $C_2(d)=\frac{1}{\sqrt{d}}$. This follows from the  fact that $\Gamma\!\left(\frac{d+2}{2}\right)=\frac{d}{2}\Gamma(\frac{d}{2})$, $\Gamma(3/2)=\sqrt{\pi}/2$. 
\item[(ii)] $C_p(d)$ is increasing, equivalently $\frac{1}{C_p(d)}$ is decreasing,  in $p$.  This follows from Jensen's inequality.  
\item[(iii)] $C_{p}(d)\to \|\omega\|_{\infty}=1$, as $p\to\infty$. This is  a simple exercise in introductory analysis or by looking at asymptotic behavior of the $\Gamma$ function.    
\end{itemize} 
\end{remark} 
  For $\omega=(\omega_1, \dots, \omega_d)\in\Soned$ define the directional Riesz transform
\[
R_\omega f:=\omega_1 R_1 f+\dots +\omega_d R_d f.
\]
Then, 
\[
\left(|R_1f(x))|^2+\dots+|R_df(x)|^2\right)^{p/2}
=\frac{1}{(C_p(d))^p}\int_{\Soned}|R_{\omega}f(x)|^p d\sigma(\omega).
\] 
 Integrating on $x$ gives 
 \[
\|Rf\|_{L^p(\R^d)}^p=\frac{1}{(C_p(d))^p}\int_{\Soned}\|R_{\omega}f(x)\|_{L^p(\R^d)}^p d\sigma(\omega).
\]
We claim that for every $\omega\in\Soned$ and $1<p<\infty$,
\[
\|R_{\omega}\||_{L^p(\R^d)}=\|R_1\|_{L^p(\R^d)}. 
\]
This is proved exactly as in Remark \ref{Riesz;Rotation}. Let  $U\in SO(d)$ be an orthogonal matrix with $Ue_1=\omega$. Define $(Uf)(x):=f(U^{-1}x)$.
Then $U$ is an $L^p$-isometry.  A Fourier multiplier computation shows
$R_{\omega} = U R_1 U^{-1}$. Indeed,
\[
\wh{R_\omega f}(\xi)
= \,i\,\frac{\omega\cdot\xi}{|\xi|}\,\wh f(\xi)
= \,i\,\frac{e_1\cdot(U^{-1}\xi)}{|U^{-1}\xi|}\,\wh f(\xi),
\]
and similarly,
\[
\widehat{U R_1 U^{-1} f}(\xi)
= \widehat{R_1(U^{-1}f)}(U^{-1}\xi)
= \,i\,\frac{e_1\cdot(U^{-1}\xi)}{|U^{-1}\xi|}\,\wh f(\xi).
\]
Thus we have 
\begin{align}\label{Stein:asym}
\|Rf\|_{L^p(\R^d)}\leq \frac{1}{C_p(d)} \|R_1\|_{L^p(\R^d)}=\frac{1}{C_p(d)}\|H\|_p, \,\,\,\, 1< p<\infty. 
\end{align}
 Combined with the trivial bound from below we have 
\[
1\leq \frac{\|R\|_{L^p(\R^d)}}{\|H\|_p}\leq \frac{1}{C_p(d)},
\]
and  \eqref{Asymp:R1} follows from (iii) in Remark \ref{Asym:C_p}, completing the proof of the proposition.  
\end{proof} 

For fix $d$,   \eqref{Stein:asym} can be used to give universal bounds for $\|R\|_{L^p(\R^d)}$ which,  unfortunately,  are not  dimension independent and  in fact they are of order  $\sqrt{d}$.  More precisely, since $\frac{1}{C_p(d)}$ is decreasing in $p$ we have

\begin{align}\label{Stein:asym1}
\|R\|_{L^p(\R^d)}\leq \frac{1}{C_1(d)}\|H\|_p,  \,\,\, 1<p<\infty\
\end{align}
or the explicit  bound bound using (i) 
\begin{align}\label{Stein:asym2}
\|R\|_{L^p(\R^d)}\leq \frac{1}{C_2(d)}\|H\|_p=\sqrt{d}\,\|H\|_p,  \,\,\, 2\leq p<\infty\
\end{align}

These type of bounds can just be obtained from $\ell^2, \ell^1$ comparisons. But it is  interesting to note that the bound for $p\geq 2$ in the preceding inequity is exactly the same bound  that follows from the vector-valued version of the orthogonal subordination argument using \cite[Theorem A]{BanuelosWang1996}.  It is precisely for this reason that Burkholder's  inequality is used there to obtain the bound $2(p^*-1)$ in \eqref{BanWan1}. More precisely consider the  matrices $\BR_j$ with the orthogonality property for which the projection operator  $T_{\BR_j}=R_j$,  and the matrices $\BR_j^1$ which do not have the orthogonal property but for which $T_{\BR_j^1}=\frac{1}{2}R_j$, as in Lemma \ref{Lemma:Cotlar}. For a real-valued martingale $M_t$, the quadratic variation of the vector valued martingale transform has $\sum_{j=1}^d\ip{\BR_j*M_t}\leq d\ip{M_t}$ and the inequality from \cite[Theorem A]{BanuelosWang1996} (also \cite{BW95}) gives \eqref{Stein:asym2}. On the other hand, $\sum_{j=1}^d\ip{\BR_j^1*M_t}\leq \ip{M_t}$ and Burkholder's vector-valued inequality, which does not require orthogonally give,  the bound $(p^*-1)$ for the vector of $1/2$ Riesz transforms.  This is the argument that proves \eqref{BanWan1}.

 Next we consider  Gaussian averaging. This, essentially,  follows from \eqref{GaussianAve:1}.  Here we re-do the identity with different notation to simplify matters a bit. Set $Z=(Z_1, \dots, Z_d)$, where $Z_j$ are i.i.d. $N(0, 1)$ random variables. Then $|Z|^2$ is a chi-square random variable with $d$-degrees of freedom, that is,  a Gamma($d/2$, $2$) distribution. From this it follows that the density of $|Z|$ is given by 
\[f_{|Z|}(r)
=
\frac{1}{2^{\frac d2-1}\Gamma\!\left(\frac d2\right)}
\, r^{d-1}
\, e^{-r^2/2}, \quad  r \ge 0
\]
 and  
\[\bE|Z|^p=2^{p/2}\frac{\Gamma(\frac{p+d}{2})}{\Gamma(\frac{d}{2})}, \quad 0<p<\infty.
\] 
Setting 
\[m(p)=\bE|N(0,1)|^p=
\frac{2^{p/2}\,\Gamma\!\left(\frac{p+1}{2}\right)}
{\sqrt{\pi}}
\]
we see that 
if  $v(x)=(R_1f(x),\dots, R_df(x))$, then $Z\cdot v\sim N(0,|v|^2)$. Hence 
\begin{equation}\label{eq:gaussian-pointwise}
\big(|R_1f(x)|^2+\dots+|R_df(x)|^2\big)^{p/2}
=
\frac{1}{m_p}\,
\bE\big|Z_1R_1f(x)+\dots+Z_dR_2f(x)\big|^p,
\end{equation}
for all $1<p<\infty,$ which is the same as identity \eqref{GaussianAve:1}.
Integrating both sides in $x$ and using Fubini's theorem 
\begin{align}\label{Uniform:ID1}
\|Rf\|_{L^p(\R^d)}^p
=
\frac{1}{m_p}\,
\mathbb E\big\|\,Z_1R_1f+\dots+Z_dR_2f\,\big\|_{L^p(\R^d)}^p.
\end{align}

Fix $\omega\in \Omega$ and let $z=Z(\omega)\in \R^d$, then (by taking  Fourier  transform) 
\[
z_1R_1+\dots+z_dR_d = |z|\,R_\psi,
\qquad
\psi=\frac{z}{|z|}\in\Soned. 
\]
As before, the  rotation invariance of the Riesz transforms gives 
\[
\|R_\psi\|_{L^p(\R^d)} = \|R_1\|_{L^p(\R^d)}.
\]
Therefore,
\[
\|(z_1R_1+\cdots +z_dR_d)f\|_{L^p(\R^d)}^p
\le |z|^p\,\|R_1\|_{L^p(\R^d)}^p\,\|f\|_{L^p(\R^d)}
\]
From  this and \eqref{Uniform:ID1}, 
\begin{align}\label{Uniform:ID2}
\|R\|_{L^p(\R^d)}
\le
\left(\frac{\bE|Z|^p}{m_p}\right)^{1/p}\,
\|R_1\|_{L^p(\R^d)},\quad 1<p<\infty.
\end{align} 
Hence, 
\[\left(\frac{\bE|Z|^p}{m_p}\right)^{1/p}=\left(\frac{\sqrt{\pi}\,\Gamma(\frac{p+d}{2})}{\Gamma(\frac{d}{2})\Gamma\left(\frac{p+1}{2}\right)}\right)^{1/p}=\frac{1}{C_p(d)}, 
\]
which is the same as the constant in \eqref{eq:cp-def}. Thus the  Gaussian averaging gives the same constant as uniform averaging on the sphere and also proves \eqref{Asymp:R1}

Finally, let $\left(\varepsilon_1, \dots, \varepsilon_d\right)$ be the i.i,d Rademacher random variables
$\bP\left(\varepsilon_j=\pm 1\right)=\frac{1}{2}$.  Recall Khintchine  inequality with the best constant:
\[\left(\sum_{j=1}^d a_j^2\right)^{1/2}\leq K_p\left(\bE\Big|\sum_{j=1}^d a_j\varepsilon_j\Big|^p\right)^{1/p}, 
\] 
\[
K_p
=
\begin{cases}
\left(
  \frac{\sqrt{\pi}}
       {2^{p/2}\,\Gamma\!\left(\frac{p+1}{2}\right)}
\right)^{1/p}, & 1< p \le 2, \,\, \\
 1, & 2\leq p<\infty.
\end{cases} 
\]

With this  and the same arguments as above, we obtain 
\begin{align}\label{Khi:1}
\|R\|_{L^p(\R^d)}\leq \sqrt{d}\,\, K_p\|R_1\|_{L^p(\R^d)}, \,\,\,\, 1<p<\infty. 
\end{align} 
Thus Rademacher averaging does not improve  asymptotic bounds as Gaussian and uniform averaging do.

Let us consider the case $d=2$. Checking that $C_1(2)=\frac{2}{\pi}$, inequalities \eqref{Stein:asym1},  \eqref{Stein:asym2} reduce  to 
\begin{align}\label{Stein:asym3}
\|\BH_{\bC}\|_{L^p(\R^2)}\leq \frac{\pi}{2}\|H\|_p,  \,\,\, 1<p<\infty, 
\end{align}
and 
\begin{align}\label{Stein:asym4}
\|\BH_{\bC}\|_{L^p(\R^2)}\leq \sqrt{2}\,\|H\|_p,  \,\,\, 2\leq p<\infty, 
\end{align}
respectively. 
The first is inequality \eqref{Complex:H2} obtained by the method of rotations and the second is inequality \eqref{Complex:H3} obtained by the martingale transforms. Both trivial form the elementary inequality as already mentioned and not as sharp as Pisier's which gives: 
\begin{align}\label{Stein:asym5}
\|\BH_{\bC}\|_{L^p(\R^2)}\leq \sqrt{\frac{\pi}{2}}\,\|H\|_p,  \,\,\, 2\leq p<\infty, 
\end{align}
In addition, \eqref{Asymp:R1} becomes 
 \begin{align}\label{Asymp:BH}
 \lim_{p\to\infty}\frac{\|\BH_{\bC}\|_{L^p(\R^2)}}{\|H\|_p}=1. 
 \end{align} 
 
In \cite[Conjecture 1.5, p. 3]{CarDraKov2023}, it is conjectured that  for all $k\in \N$ and $p\geq 2$, 
\[
\|(\BH_{\bC})^k\|_{L^p(\R^2; \bC)}=\frac{\Gamma(1/p)\,\Gamma(1/q+k/2)}{\Gamma(1/q)\,\Gamma(1/p+k/2)},
\]
where $ q=\frac{p}{p-1}$ and that when $1<p<2$ the identity should still hold, but with the roles
of $p$ and $q$ reversed.  

Taking $k=1$ and setting 
\[
D(p):=\frac{\Gamma(1/p)\,\Gamma(1/q+1/2)}{\Gamma(1/q)\,\Gamma(1/p+1/2)},
\]
one can check that 
\begin{align}\label{Drag:1}
\lim_{p\to \infty}\frac{D(p)}{\|H\|_p}=\frac{\pi}{4}. 
\end{align}

\section{Problem 2: T.~Iwaniec's conjecture} 
\subsection{Brief history} The Beurling-Ahlfors transform is the singular integral operator acting on complex-valued functions on $L^p(\bC)$, i.e., $f:\bC\to\bC$,  defined by 
\begin{align}\label{BA:Def}
    \cB f(z) =- \frac{1}{\pi}\int_{\bC}\frac{f(w)}{(z-w)^2}\, dm(w),
\end{align}
where $dm$ is the Lebesgue measure on $\bC$ identified as $\R^2$. As a Fourier multiplier, 
\begin{equation*}
  \widehat{\cB f}(\xi) = \frac{\overline\xi}{\xi} \widehat{f}(\xi)= \frac{{\overline \xi}^2}{|\xi|^2} \widehat{f}(\xi) 
\end{equation*}
 Identifying $\bC$ with $\R^2$, it follows that  
 \begin{align}\label{BA(2)}
(\BH_{\bC})^2=(R_1+iR_2)^2=
-(R_2^2-R_1^2-2iR_1R_2)=-\cB
 \end{align}
Thus he operator appearing in the Iwaniec-Martin inequality \eqref{IwaMar1} is the square root of $-\cB$.

The Beurling-Ahlfors transform has been extensively studied in the literature in large part due to its connections to many areas of analysis such as regularity theory for quasiconformal mappings,  partial differential equations and the well-known 1982 conjecture of T.~Iwaniec~\cite{Iwa82} concerning its $L^p$-norm.  Using explicit computations with (nearly) extremal functions, it was  proved by  O. Lehto (see for example \cite{Lehto})  that $(p^*-1)\leq \|\cB\|_{L^p(\bC; \,\bC)}$ and Iwaniec's conjecture asserts that  $\|\cB\|_{L^p(\bC;\, \bC)}=(p^*-1)$. As before, the case $p=2$ is trivial via the Fourier transform. For a sample on the large literature related to this conjecture which remains open for all $p\neq 2$, see ~\cite{Astala1, Ban, MR3018958, IwaMar, BanJan, Lehto, MR3558516, Volberg1, DraVol}, and the many references contained therein. To the best of our knowledge the following is the best known  upper bound valid for all $1<p<\infty$:
 \begin{align}\label{1.5bound}
 \|\cB\|_{L^p(\bC;\, \bC)}\leq 1.575(p^*-1).
\end{align}
This  bound is proved in~\cite{BanJan}. 

The first explicit bound $ \|\cB\|_{L^p(\bC;\,\bC)}\leq 4(p^*-1)$ was proved in \cite{BW95} using the representation  in Lemma \ref{Lemma:Cotlar} for second order Riesz transforms as projections of martingale transforms and  applying the $(p^*-1)$ bound for martingale transforms of 
Burkholder \cite{Burk84}; (1) in Theorem \ref{thm:A}.    The bound was improved to $2(p^*-1)$ in \cite{NazarovVolberg2003} using Bellman function techniques for the heat equation. In \cite{BanMen} the martingale arguments in \cite{BW95} are redone  to represent the second order Riesz transforms as martingale transforms of space-time Brownian martingales. This approach, which subsequently has seen many uses in different geometry settings beyond Euclidean spaces (see for example \cite{BBLS2021} and the many references contained therein) reproves the $2(p^*-1)$ for $\cB$ and gives the  bound $(p^*-1)$ for the operators $R_j^2-R_k^2$ and $2R_jR_k$ for any $j\neq k$ for any $d\geq 2$. These bounds were shown to be sharp in \cite{GesMonSak}. There are also estimates improving the bounds for $\|\cB\|_p$ asymptotically as $\p\to\infty$.  See for example \cite{DraVol4}.  We return to the asymptotics  below.
 
\subsection{Conformal pairs}

 \begin{definition} Let $A$ and $B$ be two  real-valued  $d\times d$ matrices.  We will say that the pair $(A, B)$ is $d\times d$ conformal if it satisfies the following two properties: 
\begin{align}\label{conformal-Pair}
(i)\quad  |Ax|=|Bx| \quad\text{and}\quad (ii) \quad \ip{Ax,Bx}=0,  \quad\text{for all }x\in\mathbb{R}^d. 
\end{align}
 \end{definition}  
 
 \begin{remark}\label{ConformalPairs} We arrived at this definition based on the example of interest given in \eqref{BA-matrices} below. 
The ``conformal pairs'' considered here are closely related to classical
constructions in matrix analysis.  The conditions $A^{T}A = B^{T}B$,  and that 
$A^{T}B$ is skew-symmetric, imply that $A$ and $B$ share the same positive
semidefinite factor in their polar decompositions, and that $B$ is obtained
from $A$ by composing with an orthogonal complex structure on the image of $A$.
 This is also related to the notion of partial isometries of  operators on Hilbert spaces; see  Horn and
Johnson's "Matrix Analysis" \cite{HornJohnsonMatrixAnalysis}. 
\end{remark}

\begin{definition}[Martingale transform norm]\label{def:OpNorm}
Let $A$ be a real $m\times n$ matrix. For $1<p<\infty$, define the operator norm
of the martingale transform induced by $A$ on $L^p(\Omega)$ by
\begin{equation}\label{Operator:NormA}
\|A\|_{L^p(\Omega)}
:=
\sup_{K\not\equiv 0}
\frac{\|A*M\|_p}{\|M\|_p},
\qquad
M_t=\int_0^t K_s\cdot dB_s,
\end{equation}
where $B_s$ is  $n$--dimensional Brownian motion and $K$ ranges over predictable
$\R^n$--valued processes with $\int_0^t |K_s|^2\,ds<\infty,$ a.s.
\end{definition}

 For a $A$  real $m\times n$ matrix, Burkholder's inequality \eqref{Burkholder} gives  
\begin{align}\label{Bur84}
\|A\|_{L^p(\Omega)}
\le (p^*-1)\,\|A\|_{\ell^2\to\ell^2}, \qquad 1<p<\infty,
\end{align}
where $\|A\|_{\ell^2\to\ell^2}=\sup_{|x|=1}|Ax|$ is the Euclidean operator norm. 

\begin{lemma}[Rotational invariance for conformal martingales]\label{lem:rotinv-conformal}
Let $Z_t=X_t+iY_t$ be a conformal martingale on the Brownian filtration.   
Then for every $\theta\in\R$ and every $t\ge0$,
\[
X_t\cos\theta+Y_t\sin\theta \ \stackrel{d}{=} \ X_t,
\]
and consequently, for $1<p<\infty$,
\[
\|X\cos\theta+Y\sin\theta\|_p=\|X\|_p.
\]
\end{lemma}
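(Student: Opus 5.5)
The plan is to reduce the statement to the rotation invariance of planar Brownian motion through the Dambis--Dubins--Schwarz representation of conformal martingales, the same device used above in the discussion of Durrett's theorem.

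\emph{Step 1 (the rotated process is conformal).} Write $Z_t=X_t+iY_t$ and set $W_t:=e^{-i\theta}Z_t$. Then
\[
\Re W_t=X_t\cos\theta+Y_t\sin\theta,\qquad \Im W_t=-X_t\sin\theta+Y_t\cos\theta .
\]
Using bilinearity of the bracket together with $\ip{X}_t=\ip{Y}_t=:A_t$ and $\ip{X,Y}_t=0$, I will check that
\[
\ip{\Re W}_t=\ip{\Im W}_t=A_t,\qquad \ip{\Re W,\Im W}_t=0 .
\]
So $W$ is again a conformal martingale with the same clock $A_t$. In the Brownian representation \eqref{StochasticMartin} this is simply the statement that the integrands $(K,L)$ with $|K|=|L|$ and $K\cdot L=0$ are replaced by a rotated orthonormal-type pair in the plane they span.

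\emph{Step 2 (time change).} By Dambis--Dubins--Schwarz, as quoted from \cite{Dur} above, there is a planar Brownian motion $\beta$ with $Z_t=\beta_{A_t}$. It follows that $W_t=(e^{-i\theta}\beta)_{A_t}$, and $e^{-i\theta}\beta$ is again a planar Brownian motion. The aim is then to conclude that
\[
\Re W_t=\Re\bigl(e^{-i\theta}\beta\bigr)_{A_t}\stackrel{d}{=}\Re\beta_{A_t}=X_t,
\]
after which the equality of $L^p$ norms is immediate. This step needs the clock $A_t$ to transform compatibly under the rotation. That holds, for instance, when $A_t$ is deterministic, or independent of $\beta$, or more generally a rotation-invariant functional of the path of $\beta$.

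\emph{Main obstacle.} Step 2 is where the argument can break down, because $A_t$ is in general a functional of the path of $\beta$ itself, and that functional need not be rotation invariant. Take $Z_t=\beta_{t\wedge\tau}$ with $\tau$ the exit time of the half-plane $\{x<1\}$. Then $X_\infty\equiv 1$, while $\Re(e^{-i\theta}Z_\infty)$ is nonconstant for $\theta\neq 0$. So the distributional identity cannot hold for \emph{every} conformal martingale without further hypotheses. I would therefore first try to identify the hypothesis implicit in the intended application, such as $A$ independent of $\beta$ or a rotation-invariant stopping rule, and prove the lemma under it by the argument above.

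If no such hypothesis is available, I would fall back on what Step 1 proves unconditionally: the class of conformal pairs is closed under the rotations $(X,Y)\mapsto(X\cos\theta+Y\sin\theta,\,-X\sin\theta+Y\cos\theta)$. Consequently every supremum over this class, in particular $\beta_p$ in \eqref{Pic:Constant} and $\|\cdot\|_{L^p(\Omega)}$ in Definition \ref{def:OpNorm}, is invariant under replacing $X$ by $X\cos\theta+Y\sin\theta$. This invariance is very likely all that the subsequent arguments actually use.
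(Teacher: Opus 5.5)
Your Step~2 is exactly the paper's proof, and the objection you raise is correct. The paper writes $(X_t,Y_t)=W_{\sigma_t}$ by Dambis--Dubins--Schwarz and concludes $R_\theta W_{\sigma_t}\stackrel{d}{=}W_{\sigma_t}$ only because $R_\theta W$ is again a planar Brownian motion. In general, though, $\sigma_t$ is a functional of the path of $W$ (a stopping time of the time-changed filtration). So the joint law of $(W,\sigma_t)$ need not be rotation invariant, and that step silently treats $\sigma$ as independent of $W$. In the strip example below, $\sigma_\infty$ is the exit time of $W^1$ from an interval, which is not a rotation-invariant functional.

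Your half-plane example is a valid counterexample, so the lemma is false as stated. The gap is in the paper's argument, not yours, and no missing idea will close it. Write the planar Brownian motion as $(B^1,B^2)$. The example also fails at every finite $t>0$: for $\theta=\pi/2$, $X_t$ has an atom at $1$ of mass $\bP(\tau\le t)>0$. By contrast, $Y_t=B^2_{t\wedge\tau}$ is, conditionally on $\tau$ (which depends only on $B^1$), centered Gaussian with variance $t\wedge\tau>0$, hence atomless.

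That example has $\|X\|_p=\infty$ for $p>1$, so use the paper's own strip computation to break the norm identity too. Planar Brownian motion from $0$ stopped on exiting $(-\tfrac12,\tfrac12)\times\R$ is bounded in every $L^p$, and $X_\tau=\pm\tfrac12$. By \eqref{DisDen:Strip} (case $\beta=\tfrac12$), $Y_\tau$ has density $1/\cosh(\pi\lambda)$. Hence $\|X\|_4^4=\tfrac1{16}$, while $\|Y\|_4^4=3\,\bE\tau^2=\tfrac5{16}$. In particular $\beta_4\ge 5^{1/4}$ in \eqref{Pic:Constant}, whereas the lemma with $\theta=\pi/2$ would force $\beta_p=1$ for all $p$ and make \eqref{eq:Conformal} trivial.

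Your fallback is the right repair, but state it as a bound uniform in $\theta$, not as invariance of $\|\cdot\|_{L^p(\Omega)}$. If you mean $\|A\cos\theta+B\sin\theta\|_{L^p(\Omega)}=\|A\|_{L^p(\Omega)}$, that is false: for the conformal pair $A=I$, $B=\BH_2$ and $\theta=\pi/2$, the strip example gives $\|\BH_2\|_{L^4(\Omega)}\ge 5^{1/4}>1=\|I\|_{L^4(\Omega)}$. What is true, and sufficient, is that $|(A\cos\theta+B\sin\theta)x|=|Ax|$ for a conformal pair. So $X\cos\theta+Y\sin\theta=(A\cos\theta+B\sin\theta)*M$ satisfies \eqref{Bur84} with the same constant for every $\theta$.

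Combine this with the averaging bound $\bE|Z_t|^p\le C_p(2)^{-p}\sup_\theta\|X_t\cos\theta+Y_t\sin\theta\|_p^p$. That gives \eqref{MainClaim:2}. Running the same argument with $e^{-i\theta}(M+iN)$, which has the same modulus as $M+iN$, gives \eqref{Complex-2}.

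Be aware that the intermediate claims in those two proofs are themselves false: $\bE|Z_t|^p=C_p(2)^{-p}\|X\|_p^p$ and \eqref{Conf:3}. Take $A=I$, $B=\BH_2$, $N=0$ and $M_t=B^1_{t\wedge\tau}$ with the strip exit time above. Then $\|Z_\tau\|_4^4=\tfrac12$, while $C_4(2)^{-4}\|X_\tau\|_4^4=\tfrac83\cdot\tfrac1{16}=\tfrac16$.
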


\begin{proof}
Set $\sigma_t:=\ip{X}_t=\ip{Y}_t$. By the
Dambis--Dubins--Schwarz theorem \cite[p.172]{Baudoin2014},  there exists a standard planar
Brownian motion $W$ such that $(X_t,Y_t)=(W^1_{\sigma_t},W^2_{\sigma_t})$.
Let $R_\theta\in SO(2)$ be the rotation matrix 
\[R_\theta =\bmatrix \cos(\theta) & \ \sin(\theta) \\ -  \sin(\theta) &  \cos(\theta) \endbmatrix
\]
Then
\[
(X_t\cos\theta+Y_t\sin\theta,\ -X_t\sin\theta+Y_t\cos\theta)
=R_\theta (X_t,Y_t)=R_\theta W_{\sigma_t}.
\]
Since $R_\theta W$ is again standard planar Brownian motion, $R_\theta W_{\sigma_t}$
has the same law as $W_{\sigma_t}$, hence 
$X_t\cos\theta+Y_t\sin\theta\stackrel{d}{=}X_t$. Taking $L^p$ norms proves  the lemma.
\end{proof}

\begin{theorem}[Conformal martingales transforms; real-valued]\label{thm:conformal-AB-R}  
Let $M_t=\int_0^t K_s\cdot dB_s$ be  a martingale on the filtration of $d$-dimensional Brownian motion as in \eqref{StochasticMartin}.   
 Let $(A,B)$ be a conformal $d\times d$ pair and suppose in addition that $\|A\|=\|B\|\leq 1$. 
Define $X_t:=A*M_t$ and $Y_t:=B*M_t.$ Set 
\begin{align*}
Z_t:=X_t+iY_t=(A+iB)*M_t. 
\end{align*}
Then $Z_t$ is a conformal martingale and 
\begin{align}\label{MainClaim:2}
\|A+iB\|_{L^p(\Omega)}\leq 
\frac{1}{C_p(2)}(p^*-1), \quad 1<p<\infty,  
\end{align}
where $C_p(2)$ is as in Remark \ref{Asym:C_p}. 
\end{theorem}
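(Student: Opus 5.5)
First I check that $Z$ is conformal. Since $X_t=\int_0^t AK_s\cdot dB_s$ and $Y_t=\int_0^t BK_s\cdot dB_s$, the covariation formula gives
\[
d\ip{X}_t=|AK_t|^2dt, \qquad d\ip{Y}_t=|BK_t|^2dt, \qquad d\ip{X,Y}_t=\ip{AK_t,BK_t}\,dt .
\]
Conditions (i) and (ii) of \eqref{conformal-Pair}, applied pointwise with $x=K_t$, then give $\ip{X}_t=\ip{Y}_t$ and $\ip{X,Y}_t=0$. So $Z$ is conformal in the sense of Definition \ref{def:conformal}. Here $\|A+iB\|_{L^p(\Omega)}$ means the supremum of $\|\,|Z|\,\|_p/\|M\|_p$, i.e.\ the same quantity as in Definition \ref{def:OpNorm}, with $|Z|$ in place of $|A*M|$.

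The main idea is uniform averaging on the circle, as in the proof of \eqref{Asymp:R1}, with $d=2$. For each $t$ and $\omega$, rotational invariance of $d\sigma$ on $\Sone$ gives
\[
|Z_t|^p=(X_t^2+Y_t^2)^{p/2}=\frac{1}{C_p(2)^p}\int_{\Sone}|X_t\cos\theta+Y_t\sin\theta|^p\,d\sigma(\theta).
\]
Taking expectations and using Fubini,
\[
\bE|Z_t|^p=\frac{1}{C_p(2)^p}\int_{\Sone}\bE|X_t\cos\theta+Y_t\sin\theta|^p\,d\sigma(\theta).
\]
By Lemma \ref{lem:rotinv-conformal}, each integrand equals $\bE|X_t|^p$, so $\||Z_t|\|_p=\frac{1}{C_p(2)}\|X_t\|_p$. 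Averaging is not strictly needed here: one could also write $X_t\cos\theta+Y_t\sin\theta=(A\cos\theta+B\sin\theta)*M_t$ and bound each direction separately.

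It remains to bound $\|X_t\|_p=\|A*M_t\|_p$ by $(p^*-1)\|M_t\|_p$. Since $\|A\|\le1$, we have $\ip{X}_t=\int_0^t|AK_s|^2ds\le\int_0^t|K_s|^2ds=\ip{M}_t$, so $X$ is subordinate to $M$. Burkholder's inequality \eqref{Burkholder} in Theorem \ref{thm:A}, equivalently \eqref{Bur84}, then gives $\|X_t\|_p\le(p^*-1)\|M_t\|_p$. Combining this with the previous step yields $\||Z_t|\|_p\le \frac{1}{C_p(2)}(p^*-1)\|M_t\|_p$, and taking the supremum over $t$ and over $K$ gives \eqref{MainClaim:2}.

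The step needing the most care is applying Lemma \ref{lem:rotinv-conformal}, which was stated on the Brownian filtration with the Dambis--Dubins--Schwarz representation. If $\ip{X}_\infty<\infty$, the time change may require enlarging the probability space. That is harmless, because only equality of the laws of $X_t\cos\theta+Y_t\sin\theta$ and $X_t$ at a fixed $t$ is used. One must also justify using Fubini with the $\theta$-average; this is immediate for nonnegative integrands. Finally, $X$ and $Y$ start at $0$ while $M$ may start at $M_0$. This is consistent with the convention of this section that martingales start at zero, and Burkholder's inequality covers it in any case.
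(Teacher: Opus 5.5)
\textbf{The step that fails.} Your outline is the paper's own proof: average over $\Sone$, apply Lemma~\ref{lem:rotinv-conformal}, then use Burkholder. The problem is the step ``by Lemma~\ref{lem:rotinv-conformal}, each integrand equals $\bE|X_t|^p$.'' That lemma is false, and so is the identity $\|\,|Z_t|\,\|_p=\frac{1}{C_p(2)}\|X_t\|_p$.

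With $\theta=\pi/2$ the lemma would give $Y_t\stackrel{d}{=}X_t$ for every conformal martingale. That forces $\beta_p=1$ in \eqref{Pic:Constant} for all $p$. This is incompatible with the Hilbert transform, since $u_f+iv_f$ composed with Brownian motion is conformal and $\|H\|_p>1$ for $p\neq2$.

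A concrete counterexample fits inside the theorem's own setting. Take $d=2$, the pair $(A_0,B_0)$ of \eqref{Example:D2}, $\tau=\inf\{t:|B^1_t|=1\}$, and $K_s=e_1 1_{\{s\le\tau\}}$. Then $M_t=X_t=B^1_{t\wedge\tau}$ is bounded by $1$, while $Y_t=-B^2_{t\wedge\tau}$ is unbounded for every $t>0$. Hence for large $p$, $\|\,|Z_t|\,\|_p\ge\|Y_t\|_p>\frac{1}{C_p(2)}\|X_t\|_p$, since the right side is at most $\frac{1}{C_p(2)}\to1$.

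The error in the lemma's proof, which your last paragraph does not catch, is this: the Dambis--Dubins--Schwarz clock $\sigma_t$ is a stopping time of $W$, not independent of it. So $(R_\theta W)_{\sigma_t}$ need not have the law of $W_{\sigma_t}$. Enlarging the probability space is not the issue.

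\textbf{The fix.} The theorem itself is true, and the alternative you mention only in passing is the repair; make it the main argument. Write $X_t\cos\theta+Y_t\sin\theta=(A\cos\theta+B\sin\theta)*M_t$. By (i) and (ii) of \eqref{conformal-Pair},
$|(A\cos\theta+B\sin\theta)x|^2=\cos^2\theta\,|Ax|^2+\sin^2\theta\,|Bx|^2+2\sin\theta\cos\theta\,\ip{Ax,Bx}=|Ax|^2\le|x|^2$.
So every directional transform is subordinate to $M$, and Burkholder gives $\bE|X_t\cos\theta+Y_t\sin\theta|^p\le(p^*-1)^p\|M_t\|_p^p$ uniformly in $\theta$. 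Plugging this into your averaging identity, which is correct, yields \eqref{MainClaim:2} as an inequality. This needs neither Lemma~\ref{lem:rotinv-conformal} nor any time change.

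Note that it is the lemma, not the averaging, that is dispensable; your remark ``averaging is not strictly needed'' has this backwards. Your conformality check and your verification that $X$ is subordinate to $M$ are fine.
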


\begin{proof}
The fact that $Z_t$ is conformal follows  trivially from the fact that $(A, B)$ is a $d\times d$ conformal pair.  From Lemma \ref{lem:rotinv-conformal} and the averaging property we have 
\begin{align*}
\bE|Z_t|^p=\bE\left(|X_t|^2+|Y_t|^2\right)^{p/2}&=\frac{1}{C_p(2)^p}\int_{\Sone}\bE|X\cos\theta+Y\sin\theta|^p d\sigma\\
&=\frac{1}{C_p(2)^p}\|X\|_p^p, \quad 1<p<\infty 
\end{align*}
Thus, 
\begin{align}\label{Real-1}
\|(A+iB)*M_t\|_p &\leq  \frac{1}{C_p(2)}\|(A+iB)*M_t\|_p\\
& \leq \frac{1}{C_p(2)} (p^*-1)\|M_t\|_p, \quad 1<p<\infty, \nonumber
\end{align}
where we used Burkholder inequality for the last step. 
\end{proof}

\begin{theorem}[Conformal martingale transforms; complex-valued]\label{thm:conformal-AB-C} Let 
$M_t= \int_0^t H_s \cdot dB_s$ and 
$N_t= \int_0^t K_s \cdot dB_s$ 
be two martingales as in \eqref{StochasticMartin} and $(A,B)$ conformal $d\times d$ pair with $\|A\|=\|B\|\leq 1$.
Define
\begin{align*}
Z_t = X_t + i Y_t=\left(A+iB)*(M_t+iN_t\right),
\end{align*}
where 
\begin{align*}
X_t= \left(A*M_t - B*N_t\right), \qquad 
Y_t = \left(A*N_t + B*M_t\right). 
\end{align*} 
Then $Z_t$ is a conformal martingale and  
\begin{equation}\label{Complex-2}
\|(A+iB)*(M_t+iN_t)\|_p\leq \frac{\sqrt{2}}{C_p(2)}\, (p^*-1)\|M_t+iN_t\|_p, \quad 1<p<\infty. 
\end{equation} 
\end{theorem}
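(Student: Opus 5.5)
Two things have to be established for Theorem \ref{thm:conformal-AB-C}: that $Z$ is conformal, and the $L^p$ bound. The plan is to run the proof of Theorem \ref{thm:conformal-AB-R} again, now with a complex input.

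\emph{Conformality.} Write the integrands: $X_t=\int_0^t (AH_s-BK_s)\cdot dB_s$ and $Y_t=\int_0^t (AK_s+BH_s)\cdot dB_s$. We need $|AH-BK|^2=|AK+BH|^2$ and $\ip{AH-BK,\,AK+BH}=0$ pointwise. The conformal pair conditions \eqref{conformal-Pair} say $A^TA=B^TB$ and $A^TB$ is skew-symmetric. By polarization, (i) gives $\ip{Au,Av}=\ip{Bu,Bv}$ for all $u,v$, and (ii) gives $\ip{Au,Bv}=-\ip{Av,Bu}$, with $\ip{Au,Bu}=0$.

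Expanding the first identity, the squares agree because $|AH|^2=|BH|^2$ and $|BK|^2=|AK|^2$. The cross terms are $-2\ip{AH,BK}$ and $2\ip{AK,BH}$, and these agree by skew-symmetry. Expanding the inner product gives
\[
\ip{AH,AK}+\ip{AH,BH}-\ip{BK,AK}-\ip{BK,BH}.
\]
Here the first and last terms cancel, and the middle two vanish. So $\ip{X}=\ip{Y}$ and $\ip{X,Y}=0$, and $Z$ is conformal.

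\emph{Norm bound.} Since $Z$ is conformal, Lemma \ref{lem:rotinv-conformal} and the averaging identity used in Theorem \ref{thm:conformal-AB-R} give $\|Z_t\|_p=\frac{1}{C_p(2)}\|X_t\|_p$. It then remains to show
\[
\|X_t\|_p\le \sqrt2\,(p^*-1)\,\|M_t+iN_t\|_p .
\]
For this, view $X_t=\int_0^t [A,\,-B]\,(H_s,K_s)^T\cdot dB_s$ as the transform, by the $d\times 2d$ matrix $[A,\,-B]$, of the $\R^2$-valued martingale $(M,N)$. The differential of $X$ has squared size
\[
|AH-BK|^2\le (|AH|+|BK|)^2\le 2(|AH|^2+|BK|^2)\le 2(|H|^2+|K|^2),
\]
using $\|A\|=\|B\|\le1$. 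This is exactly $2\,d\ip{M+iN}$ in the Hilbert-space sense. So $X$ is differentially subordinate to $\sqrt2\,(M,N)$, and Burkholder's inequality \eqref{Burkholder}, which holds for Hilbert-space-valued martingales with the same constant, gives the desired estimate since $|(M,N)|=|M+iN|$.

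The main obstacle is this subordination step. Getting a constant better than $\sqrt2$ would need an honest orthogonality between $AH$ and $BK$, which we do not have, so I accept the crude Cauchy–Schwarz loss. One could alternatively apply \eqref{Real-1} to $A*M+iB*M$ and $A*N+iB*N$ separately and combine, but that route gives a worse constant of $2$. So the $\sqrt2$ route above is the plan.
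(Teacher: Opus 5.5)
Your outline is the paper's own proof. You verify conformality by expanding $d\ip{X}$, $d\ip{Y}$, $d\ip{X,Y}$; your use of $A^TA=B^TB$ and of the skew-symmetry of $A^TB$ actually fills in what the paper leaves implicit. You then pass from $Z$ to $X$ via Lemma~\ref{lem:rotinv-conformal}, and finish with Burkholder for the $d\times 2d$ block $[A,\,-B]$ of norm at most $\sqrt2$. The conformality argument and the estimate $\|X_t\|_p\le\sqrt2(p^*-1)\|M_t+iN_t\|_p$ are correct.

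The step that fails is $\|Z_t\|_p=C_p(2)^{-1}\|X_t\|_p$, which the paper also uses in the form ``$\le$''. Lemma~\ref{lem:rotinv-conformal} is false: the Dambis--Dubins--Schwarz clock $\sigma_t$ is built from the same path as $W$, so $(R_\theta W,\sigma_t)$ need not have the law of $(W,\sigma_t)$. Here is a counterexample inside the theorem's own setting. Take $d=2$, $A=I$, $B=\left(\begin{smallmatrix}0&1\\-1&0\end{smallmatrix}\right)$, $N=0$, and $M_t=W^1_{t\wedge\tau}$ with $\tau=\inf\{t:|W^1_t|=1\}$, where $W$ is the driving planar Brownian motion. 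Then $X_t=W^1_{t\wedge\tau}$ and $Y_t=-W^2_{t\wedge\tau}$, so $|X_t|\le 1$ while $Y_t$ is unbounded; this is the strip picture behind \eqref{DisDen:Strip}. At the exit time $\bE X_\tau^4=1$ and $\bE Y_\tau^4=5$, which follows from $\bE(X_\tau^2-Y_\tau^2)^2=\bE(2X_\tau Y_\tau)^2$ (Lemma~\ref{thm:power-conformal}) together with $\bE Y_\tau^2=\bE\tau=1$. Hence $\bE|Z_\tau|^4=8$, whereas $C_4(2)^{-4}\bE X_\tau^4=8/3$, and the claimed relation also fails for large finite $t$.

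The theorem itself survives if you rotate the matrices instead of the martingale. For each $\theta$ we have $X_t\cos\theta+Y_t\sin\theta=A_\theta*M_t-B_\theta*N_t$, with $A_\theta=A\cos\theta+B\sin\theta$ and $B_\theta=B\cos\theta-A\sin\theta$. Conditions (i)--(ii) give $|A_\theta x|=|B_\theta x|=|Ax|$, so $\|A_\theta\|=\|B_\theta\|\le 1$. Your subordination argument then yields $\|X_t\cos\theta+Y_t\sin\theta\|_p\le\sqrt2(p^*-1)\|M_t+iN_t\|_p$ uniformly in $\theta$. Now take expectations in the pointwise identity $|Z_t|^p=C_p(2)^{-p}\int_{\Sone}|X_t\cos\theta+Y_t\sin\theta|^p\,d\sigma(\theta)$ and apply Fubini; this gives \eqref{Complex-2}. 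This argument uses only that the class of conformal pairs of a given norm is invariant under $(A,B)\mapsto(A_\theta,B_\theta)$, not any distributional symmetry of $Z$. The same fix repairs Theorem~\ref{thm:conformal-AB-R}.
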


\begin{proof}
\[\ip{X}_t
=
\int_0^t|AH_s - BK_s|^2\,ds,
\qquad
\ip{Y}_t
=
\int_0^t|AK_s + BH_s|^2\,ds,
\]
and
\[
\ip{X,Y}_t
=
\int_0^t(AH_s - BK_s)\cdot(AK_s + BH_s)\,ds.
\]
Expanding the norms  we obtain
\[
|AH_s - BK_s|^2
=
|AH_s|^2 + |BK_s|^2 - 2(AH_s)\cdot(BK_s),
\]
\[
|AK_s + BH_s|^2
=
|AK_s|^2 + |BH_s|^2 + 2(AK_s)\cdot(BH_s).
\]
This together with  (i) and (ii) shows that $Z_t$ is a conformal.  

By Lemma \ref{lem:rotinv-conformal} we have
\[
\|Z_t\|_p\leq \frac{1}{C_p(2)}\|X_t\|_p, \qquad 1<p<\infty, 
\] 
which in this case is equivalent  to
\begin{equation}\label{Conf:3}
\|(A+iB)*(M_t+iN_t)\|_p\leq \frac{1}{C_p(2)} \|A*M_t - B*N_t\|_p, \qquad 1<p<\infty.
\end{equation}
For the pair $(A,B)$ $d\times d$ matrices define the $d\times 2d$ block matrix 
\[
T=[A\ \ B]
=
\begin{pmatrix}
a_{11} & \cdots & a_{1d} & b_{11} & \cdots & b_{1d}
\\
\vdots & & \vdots & \vdots & & \vdots
\\
a_{d1} & \cdots & a_{dd} & b_{d1} & \cdots & b_{dd}
\end{pmatrix}
\]
so that  $X_t=A*M_t - B*N_t= T*\bM_t$ where,
\[\bM_t:=\binom{M_t}{-N_t}.
\]
Since  $\|A\|=\|B\|\leq 1$, $\|T\|\leq \sqrt{2}$, \eqref{Conf:3} combined with Burkholder's  inequality give 
\[\|(A+iB)*(M_t+iN_t)\|_p\leq \frac{\sqrt{2}}{C_p(2)}(p^*-1)\|M_t+iN_t\|_p,\quad 1<p<\infty.
\]
\end{proof}

As we have already seen in Section~\ref{TheKeyConnection}, conformal martingales and their connections to holomorphic functions on $\bC$ and $\bC^n$ have been studied for many years. For many example on $\bC$ we refer the reader to \cite{Dur}.  
The connections of conformal martingales to singular integral, beyond the Hilbert transforms, first emerged with  
\begin{problem}{ \cite[p.~599]{BW95}} Determine the best constant $C_p$ in the inequality. 
\begin{align}\label{BetterBurk}
\|Y\|_p\leq C_p\|X\|_p,
\end{align}
when $Y$ is a conformal martingale subordinate to $X$.
\end{problem} 
 The inequality implies bounds for the $p$-norm of the Beurling-Ahlfors operator.  
 While Burkholder’s inequality already yields the bound $(p^*-1)$, one expects the conformal condition to lead to a smaller constant. This problem was the impetus for the investigations in \cite{BanJan}, where a slight modification of Burkholder’s arguments led to the following result

\begin{knowntheorem}(\cite{BanJan})\label{BanJanOriginalTheorem}
Suppose that $Y$ and $X$ are $\R^2$-valued martingales, with $Y$ conformal and subordinate to $X$, and $X$ arbitrary. Then
\begin{align}\label{BanJan:BurImprove}
\|Y\|_p\leq \sqrt{\frac{p(p-1)}{2}} \|X\|_p, \quad 2\leq p<\infty. 
\end{align} 
\end{knowntheorem}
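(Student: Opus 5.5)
The plan is to follow Burkholder's method: find a function $U(x,y)$ on $\R^2\times\R^2$ that majorizes $V(x,y)=|y|^p-C^p|x|^p$, with $C=\sqrt{p(p-1)/2}$, and such that $U(X_t,Y_t)$ is a supermartingale whenever $Y$ is conformal and subordinate to $X$. Then
\[
\bE|Y_t|^p-C^p\bE|X_t|^p\le \bE U(X_t,Y_t)\le \bE U(X_0,Y_0)\le 0 .
\]
The last inequality uses $|Y_0|\le|X_0|$ and needs $U\le 0$ on $\{|y|\le|x|\}$. The natural candidate is Burkholder's own function
\[
U(x,y)=\alpha_p\bigl(|y|-(p-1)|x|\bigr)\bigl(|x|+|y|\bigr)^{p-1},
\]
taken as given and modified. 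The modification is to replace the factor $p-1$ in front of $|x|$ by a parameter $\lambda$, which will later be chosen as $\lambda=C$. A simpler alternative I would try first, since it is more transparent, is a homogeneous function of the form
\[
U(x,y)=|y|^p-c_1|y|^{p-2}|x|^2\quad\text{or}\quad U(x,y)=|y|^{p}-c\,|x|^2|y|^{p-2}-\dots,
\]
with constants tuned so that the Itô drift is nonpositive.

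The key computation is Itô's formula. Write $dX=K\,dB$ and $dY=L\,dB$ with $Y=(Y^1,Y^2)$ conformal, so that $d\langle Y^1\rangle=d\langle Y^2\rangle=\tfrac12|L|^2dt$ and $d\langle Y^1,Y^2\rangle=0$; subordination gives $|L|^2\le|K|^2$. The drift of $U(X,Y)$ is then
\[
\tfrac12\Bigl[\mathrm{tr}\bigl(U_{xx}KK^T\bigr)+2\,\mathrm{tr}\bigl(U_{xy}LK^T\bigr)+\tfrac12|L|^2\,\Delta_yU\Bigr].
\]
Conformality is what replaces the $y$-Hessian by half its Laplacian. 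For radial-type $U$, one has $\Delta_yU=U_{rr}+U_r/|y|$ in the $y$-plane, and the factor $\tfrac12$ in front of it is exactly where one gains over Burkholder's constant $p-1$. Maximizing the drift over admissible $K$ and $L$ with $|L|\le|K|$ reduces, after using homogeneity, to a one-variable inequality in the ratio $s=|y|/|x|$. I would verify that this inequality holds precisely when $C^2\ge p(p-1)/2$. The heuristic check is that $\tfrac12 p(p-1)$ is the coefficient obtained by pairing $\tfrac12 p(p-1)|y|^{p-2}|L|^2$ against $|x|^{p-2}$ in the naive power comparison.

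The remaining steps are routine. One must check the majorization $U\ge V$, again by a one-variable inequality in $s$. One must handle the non-smoothness of $U$ at $x=0$ or $y=0$, by mollification or by noting concavity in those directions. Finally, stopping-time and localization arguments let $t\to\infty$ via $L^p$-boundedness. The main obstacle is the cross term $\mathrm{tr}(U_{xy}LK^T)$ together with the restriction $p\ge2$. The choice of $U$ must make the worst case alignment of $L$ with $K$ still yield a nonpositive drift, and I expect this to force the constant $\sqrt{p(p-1)/2}$. If Burkholder's function with the parameter $\lambda$ fails this test, I would fall back to the polynomial-type candidate $|y|^p-C^p|x|^p$ corrected by a multiple of $|x|^2|y|^{p-2}$, using Young's inequality on the cross term.
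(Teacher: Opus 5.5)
Your framework is the right one: a majorant $U\ge V$ with $U\le 0$ on $\{|y|\le|x|\}$, a nonpositive It\^o drift, and conformality turning the $y$-Hessian into $\tfrac12\Delta_yU$. But the proof stands or falls on the choice of $U$, and none of your candidates works.

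First note a necessary condition you did not use. Freezing $Y$ (i.e.\ $L=0$) is admissible at every point and $K$ is arbitrary, so the drift condition forces $D_x^2U(\cdot,y)\le 0$. Your modified function $\alpha(|y|-C|x|)(|x|+|y|)^{p-1}$ violates this. With $r=|x|$ and $\rho=|y|$,
\[
\partial_r\bigl[(\rho-Cr)(r+\rho)^{p-1}\bigr]=(r+\rho)^{p-2}\bigl[(p-1-C)\rho-pCr\bigr].
\]
Since $C=\sqrt{p(p-1)/2}<p-1$ for $p>2$, this is positive once $\rho>pCr/(p-1-C)$. There the tangential second derivative $\partial_rU/r$ is positive; at $x=0$ the function even has a convex cone $\approx\alpha\rho^{p-1}(p-1-C)|x|$. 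So $X$ moving tangentially with $Y$ frozen makes $U(X,Y)$ a strict submartingale. Your fallbacks fail for the same kind of reason:
\begin{itemize}
\item $|y|^p-C^p|x|^p+c|x|^2|y|^{p-2}$ with $c>0$ has $D_x^2U\approx 2c|y|^{p-2}I>0$ near $x=0$. With $c<0$ it is not $\ge V$, and with $c=0$ it is $V$ itself, whose drift is positive when $|y|\gg|x|$.
\item $|y|^p-c_1|x|^2|y|^{p-2}$ with $c_1>0$ is not $\ge V$ when $|y|\gg|x|$. With $c_1\le0$ it is positive on $\{0<|y|\le|x|\}$.
\end{itemize}

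The missing idea is to insert the parameter in \emph{both} factors, i.e.\ to rescale $x$ inside Burkholder's function $U_B$:
\[
U(x,y)=U_B(\gamma x,y)=\alpha_p\bigl(|y|-C|x|\bigr)\bigl(|y|+\gamma|x|\bigr)^{p-1},\qquad \gamma=\frac{C}{p-1}=\sqrt{\frac{p}{2(p-1)}},\qquad \alpha_p=p\Bigl(1-\frac1p\Bigr)^{p-1}.
\]
The majorization $U\ge|y|^p-(p-1)^p\gamma^p|x|^p=V$ and concavity in $x$ are then inherited from $U_B$, with no new one-variable analysis. Also $U\le 0$ on $\{|y|\le|x|\}$ because $C\ge1$.

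For the drift, put $S=|y|+\gamma|x|$, $\hat x=x/|x|$, $\hat y=y/|y|$. Burkholder's identity for the Hessian of $U_B$ absorbs the cross term into a nonpositive multiple of $|x|S^{p-3}|\gamma K^T\hat x+L^T\hat y|^2$. Using $LL^T=\tfrac12|L|^2I$ and $|L|\le|K|$, one gets
\[
\text{drift}\le\frac{p\,\alpha_p}{2}S^{p-2}\Bigl(\frac p2|L|^2-(p-1)\gamma^2|K|^2\Bigr)\le\frac{p\,\alpha_p}{2}S^{p-2}|K|^2\Bigl(\frac p2-(p-1)\gamma^2\Bigr)=0 .
\]
So the constant is not forced by the cross term. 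It comes from balancing the conformal $y$-contribution $\frac p2|L|^2$ against $(p-1)\gamma^2|K|^2$. Without conformality that contribution would be $(p-1)|L|^2$, forcing $\gamma=1$ and Burkholder's $p-1$. Here $C=(p-1)\gamma$ gives $C^2=p(p-1)/2$. This rescaling is the ``slight modification of Burkholder's argument'' from \cite{BanJan} that the paper cites.
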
 
Applying this to the the martingales in Theorem \ref{thm:conformal-AB-R} (real case) and Theorem \ref{thm:conformal-AB-C} (complex case) give, respectively, 

\begin{align}
 \|(A+iB)M\|_p
    \le \sqrt{p(p-1)}\,\|M\|_p, \quad 2\leq p<\infty,\label{BanJan08-R} 
\end{align}
and 
\begin{align} 
 \|(A+iB)*(M+iN)\|_p   \le \sqrt{2p(p-1)}\,\|M+iN\|_p, \quad 2<p<\infty. \label{BanJan08-C}
\end{align}

These bounds, applied to conformal pairs $(A_0, B_0)$ in \eqref{Example:D2} together with an interpolation argument using the fact that $\|\cB\|_2=1$, yield the bound \eqref{1.5bound} proved in \cite{BanJan}. Here we apply the same method to general pairs of $d\times d$ conformal matrices to obtain the corresponding result on $\R^d$.

 As mentioned in Remark \ref{ConformalPairs}, our definition of conformal pairs $(A, B)$ was  motivated by the following example.
 \begin{example}  
Fix $j,\, k\in \{1,\dots,d\}$ with $j\neq k$.  Let $\cA^{rs}$ denote the
$d\times d$ matrix whose only nonzero entry is $(\cA^{rs})_{r,s}=-1.$ Define the diagonal and off--diagonal matrices, by  
\begin{align}\label{BA-matrices} 
A^{(j,k)}:=\cA^{kk}-\cA^{jj}, \quad B^{(j, k)}=\cA^{jk}+\cA^{kj}.
\end{align} 
Then for every $x\in \R^d$,
\[
(A^{(j, k)}x)_j=x_j,\quad (A^{(j, k)}x)_k=-x_k,
\qquad (B^{(j, k)}x)_j=-x_k,\quad (B^{(j, k)}x)_k=-x_j,  
\]
and all other components vanish. Consequently,
\[
|A^{(j, k)}x|^2=x_j^2+x_k^2=|B^{(j, k)}x|^2,
\quad
A^{(j, k)}x\cdot B^{(j, k)}x=0. 
\]
Thus  the pair $(A^{(j, k)},B^{(j,k)})$ satisfies the conformal conditions (i), (ii) in \eqref{conformal-Pair} and moreover  $\|A^{(j, k)}\|=\|B^{(j, k)}\|=1$. Hence, Theorems  \ref{thm:conformal-AB-R} and \ref{thm:conformal-AB-C} apply along with the  estimates \eqref{BanJan08-R} and \eqref{BanJan08-C}. 
\end{example} 
\begin{remark}\label{A0B0:Cpair}
The above matrices are $d\times d$ projections of the $(d+1)\times (d+1)$ matrices define in \eqref{Rieszd=2}-\eqref{RieszMatr3}. 

Of particular interest here is the case when $d=2$.  Define the $2\times 2$ matrices  $A_0$, $B_0$ and their sum $A_0+iB_0$ by 
\begin{align}\label{Example:D2} 
A_0=\bmatrix 1 & 0 \\ 0 & -1 \endbmatrix,\,\,  B_0= \bmatrix 0 & -1 \\ -1 & 0  \endbmatrix,\,\,  A_0+iB_0=\bmatrix 1 & -i \\ -i & -1 \endbmatrix. 
\end{align} 
Then $(A_0, B_0)$ is a conformal pair  and 
$A_0+iB_0$ is the matrix in \cite{BanMen} that gives  the Beurling-Ahlfors operator as a conditional expectation of a martingale transform. 
\end{remark}

\subsection{Projections of conformal martingales} 
As we saw in  Lemma~\ref{Lemma:Cotlar}, the second-order Riesz transforms can be written as conditional expectations of martingale transforms, as was originally shown by Gundy and Varopoulos in \cite{GV79},  using the harmonic extension $U_f(x,y)=P_yf(x)$, where $P_y$ is the Poisson semigroup (convolution with the Poisson kernel) composed with Brownian motion $(B_t, Y_t)$ in the upper half-space $\R^{d+1}_{+}=\{x\in \R^d, y>0\}$.  However, the alternative representation based on the heat extension $V_f(x, t)=T_tf(x)$ where $T_t$ is the heat semigroup (convolution with the heat kernel) and the underlying process is space–time Brownian motion $(B_t, a-s)$, $0<s\leq a$,  improves  their $L^p$-norm estimates by a factor of $2$. 
This simple and natural modification of the Gundy-Varopoulos representation first appeared in \cite{BanMen}.

Fix $a>0$ and let $S_t=(W_t,a-t)$, $0<s\leq a$, denote 
space-time Brownian motion started at $(0, a)$ in $\R^d\times(0,\infty)$ where 
$W_t$ is standard Brownian motion on the $\R^d$. 
Let $\bP_x$ and $\bE_x$ denote the probability and expectation
for processes starting at the point $(x, a)$ and $\bP^a$ denote the
``probability'' measure associated with the process with initial distribution
$m\otimes\delta_a$ and  by $\bE^a$ the corresponding expectation.

Let $F=f_1+if_2\in C_c^\infty(\R^d)$ and let $V_{f_1}(x,t)=T_tf_1(x)$,  and similarly for $f_2$,  be their heat extensions 
to the upper half-space.  We denote by $\nabla V_f$ the gradient of $V$ in the $x$ variable.  
 By It\^o's formula that for $0<t\leq a$, 
\begin{align*}
V_F\,(S_t)-V_F\,(S_0)
&=
\int_0^t \nabla V_F\,(S_t)\cdot dW_t\\
&=\int_0^t \nabla V_{f_1}(S_t)\cdot dW_t+i\int_0^t \nabla V_{f_2}(S_t)\cdot dW_t,  \\
&=M_t+iN_t
\end{align*}
hence  $M$ and $N$ are Brownian martingales of the form \eqref{StochasticMartin}. Let  $A^{(j, k)}$ and $B^{(j, k)}$ be as in  \eqref{BA-matrices}.   

We define the projection operator 
\begin{align}\label{HeatCondExpe-1}
\cT^a_{(A^{(j, k)}, B^{(j, k)})}F(x)&=\bE^a\Big[ (A^{jk}+iB^{jk})*[M_t+iN_t]\Big| S_a=(x, 0)\Big]\\
&=\bE^a\Big[ (A^{(j, k)}+iB^{(j, k)})*[M_t+iN_t]\Big| W_a=x \Big]\notag.
\end{align}
By Proposition 2.2 in \cite{BanMen}, 
\begin{align}\label{HeatCondExpe-2} 
\lim_{a\to\infty}\cT^a_{(A^{(j, k)}, B^{(j, k)})}F(x)=\bB_{(A^{(j, k)}, B^{(j, k)})}F=(R^{(jj)}-R^{(kk)}+2iR^{(jk)})F,\,\,\,  \text{in}\,\, L^2, 
\end{align} 
where $R^{jk}$ are the second order Riesz transforms 
\begin{align}\label{secondRiesz} 
    R^{(jk)}f(x)
    =\int_0^{\infty} \frac{\partial^2 V_f(x, t)}{\partial x_j\partial x_k}\, dt
    =\frac{\partial^2 }{\partial x_j\partial x_k}(-\Delta)^{-1}f(x), 
\end{align}
\[
    \widehat{{R^{(jk)}f}}(\xi)=-\frac{\xi_j\xi_k}{|\xi|^2}\widehat{f}(\xi).
\]

  Before we proceed to discuss $L^p$ boundedness properties,  let us look a bit more carefully at the martingale structure of these operators. Recall  that $j, k\in \{1, \dots, d\}$ and  $j\neq k$. To  simplify notation, set 
\begin{equation}\label{SimNot}
\cB^{jk}(d)=\left(R^{(jj)}-R^{(kk)}+2iR^{(jk)}\right)=(R_j+iR_k)^2
\end{equation}
so that $\cB^{12}(2)=(\BH_{\bC})^2=-\cB$. Since $(A^{jk}, B^{jk})$ is a conformal pair if and only if $(B^{jk}, A^{jk})$ is a conformal pair, the operators $\cB^{jk}$ are essentially the same as  $\cB^{kj}$. Therefore we may assume that $j<k$.  

Suppose further that $d$ is even, say $d=2n$, and identify $R^{2n}\cong C^n$ which decomposes orthogonally as a direct sum of coordinate
$2$--planes
\begin{equation}\label{eq:plane-decomp}
R^{2n}=\bigoplus_{m=1}^n E_m,
\quad
E_m:=\mathrm{span}\{e_{2m-1},e_{2m}\}.
\end{equation}
This gives the  operators 
\begin{align}\label{BA:1-d}
\cB_m(d)=(\BH_{\bC}(m))^2=(R_{2m-1}+iR_{2m})^2, \quad m=1, \dots n,
\end{align}
which are Beurling-Ahlfors operators on  $E_m\cong \bC$. Setting $z_m=x_{2m-1}+i x_{2m}$, $m=1,\dots,n$ and 
recalling the complex derivatives
\[
\partial_{z_m}=\frac12\big(\partial_{x_{2m-1}}-i\partial_{x_{2m}}\big),
\qquad
\partial_{\bar z_m}=\frac12\big(\partial_{x_{2m-1}}+i\partial_{x_{2m}}\big).
\]
A simple computation then gives that 
\begin{equation} 
\cB_m(d)
=
(R_{2m-1}+iR_{2m})^2
=
4\,\partial_{\bar z_m}^2\,(-\Delta)^{-1}, 
\end{equation}
exactly as in the complex plane $\bC$. 

These operators are the projection of the martingale transform of  $M_t+iN_t\in \bC$ obtained as above by composing the the heat extension of the complex-valued function $F=f_1+if_2$ with space-time Brownian motion and taking the martingale transform with the matrices 
\begin{align}
C_m=A^{(2m-1,2m)}+iB^{(2m-1,2m)}=
\begin{pmatrix}
0 &        &        &        &        \\
  & \ddots &        &        &        \\
  &        & A_0+iB_0 &      &        \\
  &        &        & \ddots &        \\
  &        &        &        & 0
\end{pmatrix},
\end{align}
where the only nonzero $2\times2$ block occurs in the rows and columns
$(2m-1,2m)$ and $A_0+iB_0$ is the $2\times 2$ matrix in 
\eqref{Example:D2}. 
Equivalently,
\[
C_m=A^{(2m-1,2m)}+iB^{(2m-1,2m)}
=
\mathrm{diag}\Bigl(
0,\dots,0,
\underbrace{A_0+iB_0}_{(2m-1,\,2m)\text{ block}}
0,\dots,0
\Bigr).
\]
From the fact that 
\[ A_0+iB_0=\bmatrix 1 & -i \\ -i & -1 \endbmatrix 
\]
it follows that for $z\in \bC^n$, 
\[C_m z
=
\bigl(0,\dots,0,\,
z_{2m-1}-i z_{2m},\,
- i z_{2m-1}-z_{2m},\,
0,\dots,0\bigr)
\]
and 
\[\|C_m\|_{\bC^n\to\bC^n}=2. 
\] In fact, due to the  block structure, the following matrix which is the sum of the $C_m$ also has norm 2.
\begin{align}
C=
\begin{pmatrix}
A_0+iB_0 &        &        &        &        \\
  & \ddots &        &        &        \\
  &        & A_0+iB_0 &      &        \\
  &        &        & \ddots &        \\
  &        &        &        & A_0+iB_0
\end{pmatrix},
\end{align}

By \eqref{HeatCondExpe-1} and \eqref{BA:1-d} we have the projection operators
\begin{align}
\cB_m(d)F=(R_{2m-1}+iR_{2m})^2 F, \quad 1\leq m\leq n.
\end{align}
\begin{theorem} For $m=1, \cdots n$, set 
\begin{align}
Z^m=C_m*(M+iN) 
\end{align} 
and 
\begin{align}\label{concrete:ecample}
Z=(Z^1, Z^2, \dots Z^n)\in \bC^n. 
\end{align}
$Z$ is a conformal martingale with the orthogonal property. Furthermore, the vector operator defined for $F:\R^d\to \bC$ by 
\[ 
\cB(d)F=\left(\cB_1(d)F, \dots, \cB_n(d)F\right)
\]
satisfies
\begin{align}\label{SteinBA:type}
\|\cB(d)\|_{L^p(\R^d; \bC)}\leq 2(p^*-1), 
\quad 1<p<\infty. 
\end{align}
\end{theorem}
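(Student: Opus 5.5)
The plan has two parts: first the structural claim about $Z$, then the $L^p$ bound via Burkholder's vector-valued inequality and conditional expectation.

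\emph{Conformality and orthogonality.} Write $M_t=\int_0^t H_s\cdot dW_s$, $N_t=\int_0^t K_s\cdot dW_s$, so that $Z^m=X^m+iY^m$ with
\[
X^m=A^{(2m-1,2m)}*M-B^{(2m-1,2m)}*N,\qquad Y^m=A^{(2m-1,2m)}*N+B^{(2m-1,2m)}*M .
\]
The pair $(A^{(2m-1,2m)},B^{(2m-1,2m)})$ is conformal in the sense of \eqref{conformal-Pair}. The computation in the proof of Theorem \ref{thm:conformal-AB-C} then gives $\ip{X^m}=\ip{Y^m}$ and $\ip{X^m,Y^m}=0$, so each $Z^m$ is conformal.

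For the cross orthogonality \eqref{off:diagonal} with $m\neq m'$, the key point is that the integrands of $X^m,Y^m$ are supported in the coordinates $E_m=\mathrm{span}\{e_{2m-1},e_{2m}\}$. Likewise, those of $X^{m'},Y^{m'}$ are supported in $E_{m'}$. Since $E_m\perp E_{m'}$, all covariations $\ip{X^m,X^{m'}}$, $\ip{X^m,Y^{m'}}$, $\ip{Y^m,Y^{m'}}$ vanish. Hence $\ip{Z^m,Z^{m'}}=0$, which is even stronger than the complex bilinear condition. This step is routine.

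\emph{The $L^p$ bound.} Identify $\bC^n$-valued martingales with $\R^{2n}$-valued ones. The quadratic variation of $Z$ is
\[
\sum_m\bigl(\ip{X^m}+\ip{Y^m}\bigr)=\int_0^t |C(H_s+iK_s)|^2\,ds,
\]
since the blocks $C_m$ act on orthogonal coordinate planes and their sum is $C$. By the block structure and $\|A_0+iB_0\|=2$, we have $|C v|\le 2|v|$ for complex vectors $v$. Therefore $\ip{Z}_t\le 4\ip{M+iN}_t$, i.e.\ $Z$ is differentially subordinate to $2(M+iN)$.

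Burkholder's inequality (part (1) of Theorem \ref{thm:A}) holds for Hilbert-space-valued martingales with the same constant. It gives
\[
\bigl\|\,|Z_a|\,\bigr\|_p\le 2(p^*-1)\|M_a+iN_a\|_p=2(p^*-1)\|V_F(S_a)-V_F(S_0)\|_p .
\]
Under $\bP^a$ the right side is at most $2(p^*-1)\bigl(\|F\|_p+\|T_aF\|_p\bigr)$, and one lets $a\to\infty$ so that $T_aF\to0$ in $L^p$; alternatively one starts the martingale at $V_F(S_0)$ as in \cite{BanMen}.

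Next, conditional expectation on $W_a=x$ applied to the vector $Z_a$ is a contraction on $L^p(\ell^2_{\bC^n})$, by Jensen's inequality for the convex norm. This yields
\[
\Bigl\|\Bigl(\sum_m|\cT^a_mF|^2\Bigr)^{1/2}\Bigr\|_p\le 2(p^*-1)\|F\|_p+o(1).
\]
Finally, \eqref{HeatCondExpe-2} gives $\cT^a_mF\to\cB_m(d)F$ in $L^2$, hence a.e.\ along a subsequence, and Fatou's lemma gives \eqref{SteinBA:type} for $F\in C_c^\infty$. Density then extends it to all of $L^p$.

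\emph{Main obstacle.} The delicate step is the limiting procedure. It requires uniform-in-$a$ control of the boundary term $V_F(S_0)=T_aF(W_0)$ under the infinite measure $\bP^a$, together with passing the vector-valued $L^p$ bound through the $L^2$ limit. I would handle it exactly as in \cite{BanMen}, Proposition 2.2, using that $\|T_aF\|_p\to0$ for $p>1$ and $F\in C_c^\infty$. The algebraic part, including the fact that the norm of $C$ is $2$ rather than $2\sqrt n$ thanks to its block-diagonal structure, is straightforward.
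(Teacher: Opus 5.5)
Your proposal is correct and follows essentially the same route as the paper: each $Z^m$ is conformal by the conformal-pair computation of Theorem \ref{thm:conformal-AB-C}, the cross-orthogonality comes from the disjoint block supports of the $C_m$, and the bound follows from $\|C\|=2$ together with the Hilbert-space-valued Burkholder inequality and the $L^p$-contraction property of the conditional expectation. The paper simply cites \cite[(3.4.3)]{Ban} for that last step, while you spell it out: the subordination $|CU|\le 2|U|$, the treatment of the boundary term $T_aF$, and the $a\to\infty$ limit via an a.e.\ subsequence and Fatou's lemma.
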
 

\begin{remark} This is the analogue  for the Beurling-Ahlfors operators $\cB$ on $R^{2d}$ of inequality \eqref{BanWan1} for the Riesz transforms on $\R^d$. In fact, as it was already pointed out, inequality \eqref{BanWan1} also holds for complex-valued functions.
\end{remark}

\begin{proof}
By Theorem \ref{thm:conformal-AB-C} each $Z_m$ is conformal and  as in (ii) of Definition \ref{def:conformal},  
\begin{align}\label{concrete:ecample}
Z=(Z^1, Z^2, \dots Z^n)\in \bC^n
\end{align}
is also  conformal.   It remains to show that   $\ip{Z^m, Z^l}=0$,  $l\neq m$ to satisfy property (iii)  of Definition \ref{def:conformal}.  This follows from the fact the pairs $(A^{(2m-1,2m)}, B^{(2m-1,2m)})$ act  on different blocks.  That is, with the matrix as above, 
\[
A^{(2m-1,2m)}+iB^{(2m-1,2m)}
=
\mathrm{diag}\Bigl(
0,\dots,0,
\underbrace{A_0+iB_0}_{(2m-1,\,2m)\text{ block}}
0,\dots,0
\Bigr),
\]
the block  structure guarantees that $\ip{Z^m, Z^{\ell}}_t=0$, $\ell\neq m$.  Thus $Z$ satisfies (iii) of Definition \ref{def:conformal}. Note that similarly $\ip{Z^m, \overline{Z^{\ell}}}=0,$ for all $\ell\neq m$.

 Since the matrix $C$, which is the sum of the matrices $C^m$, has norm 2, Burkholder's inequality (exactly as in \cite[(3.4.3), p.~815]{Ban}) immediately gives \eqref{SteinBA:type}. 
\end{proof}

Below we give two examples that show more explicitly  how the structure of the matrices $C^m$ give the orthogonality  property $\ip{Z^j, Z^\ell}_t=0$, for $\ell\ne j$. 

\begin{example}[$d=4$]\label{ex:d4}
Let $W=(W^1,W^2,W^3,W^4)$ be a $4$--dimensional Brownian motion and define
\[
M_t=\int_0^t H_s\cdot dW_s, 
\quad 
N_t=\int_0^t G_s\cdot dW_s,
\]
where $H_s,G_s\in\R^4$ are predictable processes.  Set
\[
U_s:=H_s+iG_s=(u_1,u_2,u_3,u_4)\in\bC^4 .
\]
 There are two pairs to consider, $(1, 2), (3, 4)$.
 
\medskip
\noindent
{\bf (1) The pair $(1,2)$.}
\[
A^{(1,2)}=
\begin{pmatrix}
1&0&0&0\\
0&-1&0&0\\
0&0&0&0\\
0&0&0&0
\end{pmatrix},
\qquad
B^{(1,2)}=
\begin{pmatrix}
0&-1&0&0\\
-1&0&0&0\\
0&0&0&0\\
0&0&0&0
\end{pmatrix}.
\]
Then
\[
(A^{(1,2)}+iB^{(1,2)})U
=\bigl(u_1-i u_2,\,-u_2-i u_1,\,0,\,0\bigr),
\]
and 
\[
Z_t^1:=\int_0^t (A^{(1,2)}+iB^{(1,2)})U_s\cdot dW_s .
\]

\medskip
\noindent
{\bf (2) The pair $(3,4)$.}
\[
A^{(3,4)}=
\begin{pmatrix}
0&0&0&0\\
0&0&0&0\\
0&0&1&0\\
0&0&0&-1
\end{pmatrix},
\qquad
B^{(3,4)}=
\begin{pmatrix}
0&0&0&0\\
0&0&0&0\\
0&0&0&-1\\
0&0&-1&0
\end{pmatrix}.
\]
Then
\[
(A^{(3,4)}+iB^{(3, 4)})U
=\bigl(0,\,0,\,u_3-i u_4,\,-u_4-i u_3\bigr),
\]
and
\[
Z_t^2:=\int_0^t (A^{(3,4)}+iB^{(3,4)})U_s\cdot dW_s .
\]
Thus
\[
d\langle Z^1,Z^2\rangle_t
=\Big((A^{(1,2)}+iB^{(1,2)})U_t\Big)\cdot
\Big((A^{(3,4)}+iB^{(3,4)})U_t\Big)\,dt =0, 
\]
and this gives  $\ip{Z^1, Z^2}_t=0$.
\end{example}
\begin{example}[$d=6$]\label{ex:d6}
Let $W=(W^1,\dots,W^6)$ be a $6$--dimensional Brownian motion and let
\[
M_t=\int_0^t H_s\cdot dW_s, 
\quad 
N_t=\int_0^t G_s\cdot dW_s,
\]
where $H_s,G_s\in\R^6$ are predictable processes.  Set
\[
U_s:=H_s+iG_s=(u_1,u_2,u_3,u_4,u_5,u_6)\in\bC^6 .
\]
There are three pairs to consider $(1,2), (3, 4), (5, 6)$ and this already shows the general proof as above. 
For $m=1,2,3$,  set 
\[
A_m:=A^{(2m-1,2m)},\quad B_m:=B^{(2m-1,2m)},
\quad 
Z_t^m:=\int_0^t (A_m+iB_m)\,U_s\cdot dW_s .
\]

\medskip
\noindent{\bf The three blocks.}
A direct computation (identical to the $d=4$ case, block by block) gives
\begin{align*}
(A_1+iB_1)U &= (u_1-i u_2,\,-u_2-i u_1,\,0,\,0,\,0,\,0),\\
(A_2+iB_2)U &= (0,\,0,\,u_3-i u_4,\,-u_4-i u_3,\,0,\,0),\\
(A_3+iB_3)U &= (0,\,0,\,0,\,0,\,u_5-i u_6,\,-u_6-i u_5).
\end{align*}
Thus $(A_m+iB_m)U$ is only non-zero on the coordinate pair
$\{2m-1,2m\}$.
This gives 
\[
d\langle Z_m,Z_\ell\rangle_t
=\Big((A_m+iB_m)U_t\Big)\cdot\Big((A_\ell+iB_\ell)U_t\Big)\,dt =0
\]
for $m\neq \ell$.
\begin{remark} Continuing with our assumption that $d=2n$, we note  that the operator $\cB(d)$ is not the same as the classical Beurling-Ahlfors operator $S$ acting on differential forms on $\R^d$ studied in \cite{DonaldsonSullivan1989}, \cite{IwaniecMartin1993}, and with martingale methods in \cite{BanLin}. Setting $S_1=S$ acting on $1$-forms $\Lambda^1$ we see from \cite[p.~234]{BanLin} that $S_1$ is a Fourier multiplier with symbol 
\[m_{S_1}(\xi)=\left( \frac{\xi \otimes \xi - |\xi|^2 I}{|\xi|^2} \right).
\]
That is, 
\[
S_1 = \mathcal{F}^{-1} \left( \frac{\xi \otimes \xi - |\xi|^2 I}{|\xi|^2} \right)\mathcal{F}. 
\]
We extend the operator $\mathcal B(d)$ componentwise to
vector-valued functions $F=(F_1,\dots,F_n):\R^{2n}\to\bC^n$ by
\[
\mathcal B(d)F
=
\bigl(
\mathcal B_1(d)F_1,\dots,\mathcal B_n(d)F_n
\bigr).
\]
With this we can write 
\[
\mathcal B(d)=P S_1 Q,
\]
where 
\[
\widehat{(P S_1 Q f)}(\xi)
=
P\,m_{S_1}(\xi)\,Q\,\widehat F(\xi). 
\]
The  operators $P$ and $Q$  are constant matrices with $|P\|=\|Q\|=1$.  Consequently, they commute with the Fourier transform. In fact, if we  identify \(\mathbb{C}^n\) with \(\mathbb{R}^{2n}\) via 
$z=(z_1,\dots,z_n), \quad z_j = x_j + i y_j
\;\longleftrightarrow\;
(x_1,y_1,\dots,x_n,y_n),  
$
$Q:\mathbb{C}^n \to \mathbb{C}^n$ is given by 
$Q(z_1,\dots,z_n) = (\overline{z_1},\dots,\overline{z_n})$ 
and  represented by the constant real matrix
\[
Q = \mathrm{diag}\!\left(
\begin{pmatrix}
1 & 0\\
0 & -1
\end{pmatrix},
\dots,
\begin{pmatrix}
1 & 0\\
0 & -1
\end{pmatrix}
\right) \in \mathbb{R}^{2n\times 2n}
\]
while 
 \[
P = \mathrm{diag}(I_2,\dots,I_2) \in \mathbb{R}^{2n\times 2n},
\]
where each \(I_2\) denotes the \(2\times2\) identity matrix.
 
 By  \cite[Theorem 1, p.~227]{BanLin}, using the heat extension rather than Poisson extension (see also \cite{Hytonen2009}), we have 
\[
\|S_1\|_{L^p(\R^d;\Lambda^1)\to L^p(\R^d;\Lambda^1)}\leq
\left(3-\frac{1}{n}\right)(p^*-1), \quad 1<p<\infty. 
\]
This gives
\begin{align}\label{SteinBA:type2}
\|\cB(d)\|_{L^p(\R^{2n}); \bC^n)}\leq \left(3-\frac{1}{n}\right)(p^*-1), \quad 1<p<\infty,  
\end{align} 
which is the off-diagonal version of  the inequality \eqref{SteinBA:type}. 
\end{remark} 
\end{example}

We now return to more precise information on the $L^p$-boundedness of the individual operators $\cB^{jk}(d)$ for $1\leq j<k\leq d$. From \eqref{HeatCondExpe-1} and \eqref{HeatCondExpe-2}, the contraction property of the conditional expectation on $L^p$ and Theorems \ref{thm:conformal-AB-R} and \ref{thm:conformal-AB-C},   it follows that 
\begin{equation}\label{Real-4}
\|\cB^{jk}(d)\|_{L^p(\R^d)}\leq \frac{1}{C_p(2)}(p^*-1), \quad 1<p<\infty, 
\end{equation}
when acting   on real-valued functions and that  
\begin{equation}\label{Complex-5}
\\|\cB^{jk}(d)\|_{L^p(\R^d; \bC)}\leq \frac{\sqrt{2}}{C_p(2)}(p^*-1), \quad 1<p<\infty, 
\end{equation}
when acting on complex-valued functions. 

Furthers more, from  inequalities \eqref{BanJan08-R} and \eqref{BanJan08-C} it follows that
\begin{equation}\label{Real-6}
\|\cB^{jk}(d)\|_{L^p(\R^d)}\leq \sqrt{p(p-1)},\quad 2\leq p<\infty, 
\end{equation}
when acting   on real-valued functions and that   
\begin{equation}\label{Complex-7}
\|\cB^{jk}(d)\|_{L^p(\R^d; \bC)}\leq \sqrt{2p(p-1)},\quad 2\leq p<\infty, 
\end{equation}
when acting on complex-valued functions. 

From the asymptotic behavior of   $C_p(2)$ given in (i)-(iii) of Remark \ref{Asym:C_p}, we see that 
\[\limsup_{p\to\infty}\frac{\|\cB^{jk}(d)\|_{L^p(\R^d)}}{(p-1)}\leq 1, 
\]
when  acting on real-valued functions and 
\[\limsup_{p\to\infty}\frac{\|\cB^{jk}(d)\|_{L^p(\R^d; \bC)}}{(p-1)}\leq \sqrt{2}, 
\]
when acting on complex-valued.  

For the case of the Beurling-Ahlfors operator $\cB$
when $d=2$, the asymptotic behavior was proved in \cite{DraVol} prior to the bounds obtained in \cite{BanJan}. Their argument, restricted to the operator $\cB$ is analytic  and different from the martingale argument given above. Clearly the asymptotic  behavior follows from \eqref{Real-6}
and \eqref{Complex-7}.  However, it is still interesting that asymptotic  can be obtained just from the Burkholder  $(p^*-1)$ result. 

Since,
\[\widehat{\cB^{jk}(d)f}(\xi)=\left(\frac{i\xi_j-\xi_k}{|\xi|}\right)^2\widehat{f}(\xi)\
\] 
we have $\|\cB^{jk}(d)\|_{L^2(\R^d)}=1$ and  the exact same interpolation argument as in \cite{BanJan} gives the more general inequality valid for all $d\geq 2$ 
\[
\|\cB^{jk}(d\|_{L^p(\R^d;\bC)}\leq 1.575 (p^*-1), \quad 1<p<\infty. 
\]

The interpolation argument in \cite{BanJan} that leads to the $1.575$ constant is so well  optimized that it does not seem  possible to improve it. For the operator acting on real-valued function the estimate  \eqref{Real-6} and the interpolation argument gives the bound  
\[\|\cB^{jk}(d)\|_{L^p(\R^d)}\leq 1.158 (p^*-1), \quad 1<p<\infty, 
\]
for $\cB^{jk}(d)$ acting on real-valued functions.

 With the improvement \eqref{BanJan:BurImprove} of Burkholder's inequality for conformal martingales given in \cite{BanJan}, two obvious questions arise.  (1) Is the inequality sharp and if not what is the sharp inequality?  (2) Is there a similar improved inequality in the range $1<p<2$? As for (1), we would venture to say that the  inequality is not sharp. Regarding  (2), this and related inequalities are investigated in \cite{BorJanVol2013}, \cite{BorJanVol2013b} and  \cite{BanOseAJM}.  In particular,    
 \begin{knowntheorem}[\cite{BorJanVol2013}]
 Suppose $Y=Y_1+iY_2$ is a conformal martingale and $X=X_1+iX_2$ is an arbitrary martingale. Then 
\begin{equation}\label{Conf:BurkImpro-1}
\|Y\|_p\leq C_p\|X\|_p, \quad 1<p\leq 2, \quad C_p=\frac{1}{\sqrt{2}}\frac{a_p}{1-a_p},
\end{equation} 
 where $a_p$ is the least positive root in the interval $(0, 1)$  of the bounded Laguerre function $L_p$. Furthermore,  the inequality is sharp.  
 \end{knowntheorem}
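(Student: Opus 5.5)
The plan is to use the Burkholder--Bellman function method, with the second-order Itô term adapted to conformality, following \cite{BorJanVol2013}. By homogeneity and rotation invariance it suffices to find a function $U:\bC\times\bC\to\R$ with three properties. First, \emph{majorization}: $U(x,y)\ge |y|^p-C_p^p|x|^p$. Second, \emph{initial sign}: $U(x,y)\le 0$ on the set where $|y|\le|x|$, so the starting point is admissible. Third, \emph{a conformal concavity condition}: for all $h\in\bC\cong\R^2$ and all $k\in\R^2$ with $|k|\le|h|$,
\[
\langle D^2_{xx}U\,h,h\rangle+2\langle D^2_{xy}U\,h,k\rangle+\tfrac12|k|^2\,\Delta_yU\le 0 .
\]
The third condition is where conformality enters. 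If $Y$ is conformal then $d\ip{Y^1}=d\ip{Y^2}$ and $d\ip{Y^1,Y^2}=0$. Hence the $y$-block of the Hessian contributes only through the Laplacian $\Delta_y U$, weighted by $\tfrac12 d\ip{Y}$, and not through its worst eigenvalue. Subordination of $Y$ to $X$ then gives $|k|\le|h|$.

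Given such a $U$, the inequality follows from the standard argument. Apply Itô's formula to $U(X_t,Y_t)$, localized by stopping times and with $U$ replaced by a mollification if it is not $C^2$ across $\{|y|=|x|\}$ or at the origin. The drift is nonpositive by the concavity condition, and the martingale part has zero expectation. This gives $\bE[|Y_t|^p-C_p^p|X_t|^p]\le \bE U(X_t,Y_t)\le \bE U(X_0,Y_0)\le 0$. Letting the stopping times increase to infinity and then taking the supremum over $t$ yields \eqref{Conf:BurkImpro-1}.

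To construct $U$, I would use the ansatz $U(x,y)=|x|^p\,\Phi(|y|/|x|)$ in one region and $U=|y|^p-C_p^p|x|^p$ in the complementary region. In $y$ the operator $\Delta_y$ acting on a radial function in $\R^2$ becomes $\Phi''+\Phi'/r$. Optimizing the $x$-direction together with the cross term produces a linear second-order ODE in the variable $s=r^2$ (or $r^2/2$). After the substitution $\Phi(r)=c\,L_p(\lambda r^2)$ this ODE is the Laguerre equation $sL''+(1-s)L'+pL=0$, with $L_p$ the bounded Laguerre solution. For $1<p\le 2$ this solution has a least positive zero $a_p\in(0,1)$. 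The free boundary between the two regions is placed where smooth fit ($C^1$ matching with $|y|^p-C_p^p|x|^p$) can be achieved. Imposing value and derivative matching there forces $C_p=\frac{1}{\sqrt2}\frac{a_p}{1-a_p}$, with the factor $\frac1{\sqrt2}$ coming from the $\tfrac12\Delta_y$ normalization.

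The main obstacle is the global verification of the three properties. In particular one must check that the concavity inequality holds for \emph{all} directions $h,k$ with $|k|\le|h|$, not only for the extremal ones used to derive the ODE. This reduces to sign properties of $L_p$ and $L_p'$ on $[0,a_p]$ and to a two-variable quadratic-form check. I would first try to reduce that check to the extremal case by showing that the form is monotone in $|k|$ and concave in the angle between $h$ and $k$. Checking majorization on the Laguerre region is the second delicate point, and I would handle it via convexity of $\Phi-(r^p-C_p^p)$ in $r^2$.

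Sharpness would be proved by producing near-extremal pairs that follow the characteristics of the Bellman function. Take $X$ to be a one-dimensional Brownian motion run up to an exit time determined by the free boundary, and let $Y$ be a conformal martingale built so that $d\ip{Y}=d\ip{X}$ and $|Y|$ tracks the optimal ratio. Letting the stopping region tend to the free boundary drives $\|Y\|_p/\|X\|_p$ to $C_p$.
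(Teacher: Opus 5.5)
The paper gives no proof of this statement; it is quoted from \cite{BorJanVol2013}, so your proposal has to stand on its own. The framework is the right kind: a Burkholder-type function of $(|x|,|y|)$ in which, by conformality, the $y$-Hessian enters only through $\Delta_yU$. You are also right to assume that $Y$ is differentially subordinate to $X$, a hypothesis the statement as printed omits.

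\textbf{The gap: the cross term in your concavity condition.} In the cross term you treat $dX=h\,dB$ and $dY=k\,dB$ as driven by one common Brownian direction. In the $yy$ term, however, you use the conformal value $\frac12|k|^2\Delta_yU$. A conformal increment is never rank one. If $d\ip{Y}=|k|^2dt$, then $dY=\frac{1}{\sqrt2}(k\,dW^1+k^{\perp}dW^2)$. So the part of $dY$ that can be correlated with a fixed direction of $dX$ has length at most $|k|/\sqrt2$; equivalently, $d\ip{|Y|}=\frac12\,d\ip{Y}$.

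Take $U=u(s,t)$ with $s=|x|$, $t=|y|$, let $X$ move radially, and take $d\ip{Y}=d\ip{X}$. The true drift is $\frac12\big[u_{ss}+\sqrt2\,\rho\,u_{st}+\frac12(u_{tt}+u_t/t)\big]$ with $|\rho|\le1$. Your condition instead demands $u_{ss}+2\rho\,u_{st}+\frac12(u_{tt}+u_t/t)\le 0$. Your inequality is still sufficient: the true drift is $\frac12\sum_j Q(h_j,k_j)$ over the columns of the increments, with $Q$ your quadratic form, and the S-lemma handles the sum. But it is strictly stronger than what conformality gives. Any function for which the correct inequality is an equality in the extremal direction with $u_{st}\neq 0$ violates yours by $(2-\sqrt2)|u_{st}|$. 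The Laguerre-based function below is such a function.

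\textbf{Consequence: the Laguerre equation cannot appear.} With weight $\sqrt2$, the principal part is the perfect square $(\partial_s-\frac{1}{\sqrt2}\partial_t)^2$. Writing $u=|x|^p\Phi(\rho)$ with $\rho=\sqrt2|y|/|x|$, the extremal equation becomes
\[
(1+\rho)^2\Phi''+\big[\rho^{-1}-2(p-1)(1+\rho)\big]\Phi'+p(p-1)\Phi=0 .
\]
This equation has an irregular singular point at $\rho=-1$. The substitution $\Phi=(1+\rho)^pL(z)$ with $z=\rho/(1+\rho)$ turns it exactly into $zL''+(1-z)L'+pL=0$.

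So the candidate in the non-obstacle region is a negative multiple of $(|x|+\sqrt2|y|)^p\,L_p\big(\frac{\sqrt2|y|}{|x|+\sqrt2|y|}\big)$, not $|x|^pL_p(\lambda|y|^2/|x|^2)$. The level $z=a_p$ is precisely the ray $|y|=\frac{1}{\sqrt2}\frac{a_p}{1-a_p}|x|$, where the obstacle $|y|^p-C_p^p|x|^p$ also vanishes. That, together with the rescaling $\tau=\sqrt2|y|$ forced by $d\ip{|Y|}=\frac12\,d\ip{Y}$, is where both $a_p$ and the factor $\frac{1}{\sqrt2}$ come from. It is not a consequence of the $\frac12\Delta_y$ normalization alone.

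With your weight $2$, the leading coefficient $r^2\mp 2r+\frac12$ (with $r=|y|/|x|$) has two simple roots. The reduced equation is then Fuchsian, of Heun type, and no substitution of the kind you describe produces $L_p$.

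\textbf{Sharpness needs the same correction.} The near-extremal pairs must have $d|X|$ perfectly correlated with $\sqrt2$ times the radial martingale part of $Y$, with the tangential half of $dY$ independent of $X$. Merely requiring $d\ip{Y}=d\ip{X}$ with $|Y|$ tracking a ratio is not enough.
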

 With this inequality the authors in \cite{BorJanVol2013} improve the upper bound in \eqref{1.5bound} for large $p$. More precisely, they prove that 
 \begin{align}\label{JanVolDuke}
 \|\cB\|_{L^p(\bC;\, \bC)}\leq 1.3992\,p, \quad \text{for}\quad p\geq 1000. 
 \end{align} 
 Exactly as above, the same result holds $\cB^{jk}(d)$ for any $d\geq 2$.  In fact one can  improve the  $p\geq 100$ in \cite{BorJanVol2013} to $p\geq 500$. By duality and estimates on $a_p$ the authors obtain the following inequality (see \cite[Theorem 10.1, p.~516]{BorJanVol2013})
 \begin{equation}\label{eq:Thm10.1}
\|\cB\|_{L^p(\bC;\,\bC)}
<
\Big(\frac{p+3}{2}\,\pi\Big)^{\!1/(2p)}\,
\frac{p-Q}{Q},\quad p>2, 
\end{equation}
where 
\begin{equation}\label{eq:Qdef}
Q=
1-\sum_{n=2}^{\infty}\frac{(n-2)!}{(n!)^2}.
\end{equation}
 They estimate $Q$ numerically to be $\approx  0.718282$, \cite[(9.1), p.~515]{BorJanVol2013}. The authors than substitute $p=1000$ to get 
 \[
\Big(\frac{1003}{2}\pi\Big)^{1/2000}<1.004,
\quad \text{and}\quad 
1.4Q>1.005, 
\]
from which \eqref{JanVolDuke}  follows. 

As  might be expected,   a bit more careful argument to estimate the quantity on the right of \eqref{eq:Thm10.1} will lower the cutoff point. This is in fact the case. Observe that 
\[
\frac{(n-2)!}{(n!)^2}
=\frac{1}{n(n-1)\,n!}, 
\]
from which it  follows  that $Q=1-(3-e)=e-2$. Now consider the function 
 \begin{equation*}\label{eq:Rdef}
h(p)
=
\frac{1}{p}\,
\Big(\frac{p+3}{2}\,\pi\Big)^{\!1/(2p)}\,
\frac{p-Q}{Q},
\quad p>2.
\end{equation*}
 Taking $\log$ and differentiating it follows  that $h(p)$ is  decreasing for $p\geq 4$. With a target upper bound of $1.4$ testing various values of $p$ we  arrive at a safe interval where $h(p)\leq 1.4$. For example, $h(450)\approx 1.40016908$ and  $h(500)\approx 1.399999$. Thus,
 \begin{align}
 \|\cB\|_{L^p(\bC; \,\bC)}\leq 1.4\,p, \quad \text{for}\quad p\geq 500.  
 \end{align} 
 In fact, numerically the values are so close that the best interval is probably in a small neighborhood of $450$.  Finally,  $\lim_{p\to\infty}h(p)=\frac{1}{e-2}\approx 1.3922111$.  These estimates brake the $\sqrt{2}$ "barrier," albeit not by much.

For a the study of topics related to those presented in this section, and specially further structures and inequalities for  conformal martingales, the reader is referred to  \cite{JanaOrtho}. 
\subsection{Final Remarks}
 \begin{remark} While it may be possible to refine such numerical bounds further, a more compelling direction is to  seek arguments that establish Iwaniec’s conjecture, even for special values of $p$. This goal was a principal motivation for our study of martingale Cotlar identities beyond the classical setting. Even establishing the conjecture for $p=4$, which to the best of our knowledge is still unknown, may already yield valuable insight.     
\end{remark}

\begin{remark}
Martingale methods, in combination with the Burkholder-Bellman techniques  have produced some of the strongest bounds toward
Iwaniec's conjecture and Stein's inequality. Nevertheless, a remarkable
fact is that no martingale or other stochastic analysis argument is
known that recovers even the simplest case $p=2$ with the sharp constant
$1$;  a result that follows immediately from the Fourier transform.
This suggests that it is worthwhile to search for alternative
approaches even for special cases of $p$.
\end{remark}
\begin{remark}
A comparison between Pisier's treatment of the vector of Riesz
transforms and the probabilistic representation of the
Beurling-Ahlfors operator reveals the obstruction 
for $p=2$. The argument in \cite{Pis} relies on projecting onto the first Wiener
chaos, but this projection fails to preserve the $L^2$ norm and
introduces the factor $\sqrt{\pi/2}$. In the Beurling-Ahlfors setting
the analogous step is the conditional expectation arising in the
probabilistic representation of the operator, which is even less
efficient as already at the $L^2$ level it incurs a loss of $2$.
Consequently, a direct adaptation of \cite{Pis} to the
Beurling-Ahlfors operator seems unlikely to yield sharp bounds without
exploiting additional structure. Nevertheless, such an approach may
still lead to improvements over the currently known estimates and
provide further insight into the problem.
\end{remark}

\begin{remark} Motivated by Pisier's treatment of higher order Riesz transforms 
leads us to further speculate about a possible adaptation of this method to the
Beurling-Ahlfors operator.  
Unlike the Riesz transforms whose symbols are linear in $\xi$, the multiplier for $\cB$
 is quadratic in $\xi$. In the Gaussian framework underlying the 
 argument in \cite{Pis}, linear dependence on $\xi$ corresponds naturally to
the first Wiener chaos. The quadratic structure
suggests that the appropriate analogue of the  projection onto the
first Wiener chaos would be a projection onto a suitable subspace of
the second Wiener chaos. This approach would be similar to  Remark \cite[Remark, p.496]{Pis} concerning higher order Riesz transforms.  

 From \eqref{chaos-decomp} we have that the second Wiener chaos is 
\[\mathcal H_2
=\operatorname{span}\{H_\alpha : |\alpha|=2\}\]
with bases 
\[\{H_2(x_1)=x^2_1-1,\,\, H_2(x_2)=x^2_2-1, \,\, H_1(x_1)\,H_1(x_2)=x_1x_2\}.\]
Thus 
\[\overline{\xi}^2=(\xi_1-i \xi_2)^2=(\xi_1^2-\xi_2^2)-2i \xi_1 \xi_2\in \mathcal H_2.\]
Similarly, 
\[
(\xi_1,\xi_2)
\begin{pmatrix}
1 & -i\\
-i & -1
\end{pmatrix}
\begin{pmatrix}
\xi_1\\
\xi_2
\end{pmatrix}
=(\xi_1+i \xi_2)^2 \in \mathcal H_2,
\]
and this shows that the same quadratic combination arises from the
matrix $A_0+iB_0$ appearing in the martingale representation. Thus both the Gaussian and martingale viewpoints point  to the
 second-chaos structure.  

It should be noted that this approach, if applicable,  will produce an additional  factor of $\frac{1}{(p-1)}$ in the range $1<p<2$ in place of  $\frac{1}{\sqrt{p-1}}$ arising from hypercontractivity. In addition,  as  pointed out in \cite[Remark, p.~ 497]{Pis},  projecting onto the $k$-Wiener chaos to study norms of higher order Riesz transforms requires $k$ to be odd, similarly to the method of rotations for powers of $\BH_{\bC}$.  \end{remark}

\subsection*{Acknowledgments} Part of the material presented here was prepared during the Fall 2024 semester, while the author was teaching an advanced graduate course in harmonic analysis. Although none could be included in the course due to time constraints, the author nevertheless wishes to acknowledge the students’ stimulating discussions and presentations on related topics. The author is grateful to Prabhu Janakiraman for the insightful discussions on Iwaniec’s conjecture over the years, particularly during the past two years, and the many comments and corrections on earlier versions of these notes.

\bibliography{ref.bib}

@incollection{Bakry1987,
  author    = {Bakry, Dominique},
  title     = {\'Etude des transformations de {R}iesz dans les vari\'et\'es riemanniennes \`a courbure de {R}icci minor\'ee},
  booktitle = {S\'eminaire de Probabilit\'es, XXI},
  series    = {Lecture Notes in Math.},
  volume    = {1247},
  pages     = {137--172},
  publisher = {Springer, Berlin},
  year      = {1987}
}

@article{Li2008,
  author  = {Li, Xiang-Dong},
  title   = {Martingale transforms and {$L^p$}-norm estimates of Riesz transforms on complete Riemannian manifolds},
  journal = {Probability Theory and Related Fields},
  volume  = {141},
  number  = {1--2},
  pages   = {247--281},
  year    = {2008},
  doi     = {10.1007/s00440-007-0085-y}
}

@article{AuscherCoulhonDuong2005,
  author  = {Auscher, Pascal and Coulhon, Thierry and Duong, Xuan Thinh},
  title   = {Riesz transform on manifolds and heat kernel regularity},
  journal = {Annales Scientifiques de l'\'Ecole Normale Sup\'erieure},
  series  = {4},
  volume  = {38},
  number  = {6},
  pages   = {911--957},
  year    = {2005},
  doi     = {10.1016/j.ansens.2005.09.002}
}

@article{Arcozzi1998,
  author  = {Arcozzi, Natan},
  title   = {Riesz transforms on compact Lie groups, spheres and Gauss space},
  journal = {Arkiv f{\"o}r Matematik},
  volume  = {36},
  number  = {2},
  pages   = {201--231},
  year    = {1998},
  doi     = {10.1007/BF02384338}
}

@article{CarbonaroDragicevic2013,
  author  = {Carbonaro, Andrea and Dragi{\v{c}}evi{\'c}, Oliver},
  title   = {Bellman function and linear dimension-free estimates in a theorem of Bakry},
  journal = {Journal of Functional Analysis},
  volume  = {265},
  number  = {7},
  pages   = {1085--1104},
  year    = {2013},
  doi     = {10.1016/j.jfa.2013.05.005}
}

@article{BanWanDavis,
  author  = {Rodrigo Ba{\~n}uelos and Gang Wang},
  title   = {Davis's inequality for orthogonal martingales under differential subordination},
  journal = {Michigan Math. J.},
  volume  = {47},
  number  = {1},
  pages   = {109--124},
  year    = {2000}
}

@article{Osekowski2013BellmanSurvey,
  author  = {Adam Os{\k{e}}kowski},
  title   = {Survey article: Bellman function method and sharp inequalities for martingales},
  journal = {Rocky Mountain Journal of Mathematics},
  volume  = {43},
  number  = {6},
  pages   = {1759--1823},
  year    = {2013},
  doi     = {10.1216/RMJ-2013-43-6-1759}
}

@article{Wrobel2018,
  author  = {B{\l}a{\.z}ej Wr{\'o}bel},
  title   = {Dimension-free $L^p$ estimates for vectors of Riesz transforms associated with orthogonal expansions},
  journal = {Analysis \& PDE},
  volume  = {11},
  number  = {3},
  pages   = {745--773},
  year    = {2018},
}

@article{OseRiesz,
  author = {Adam Os{\k{e}}kowski},
  title = {Sharp logarithmic inequalities for Riesz transforms},
  journal = {J. Funct. Anal.},
  volume = {262},
  number = {5},
  pages = {2633--2660},
  year = {2012}
}

@article{IwaniecMartin1993,
  author    = {Tadeusz Iwaniec and Gaven Martin},
  title     = {Quasiregular mappings in even dimensions},
  journal   = {Acta Mathematica},
  volume    = {170},
  year      = {1993},
  pages     = {29--81},
  doi       = {10.1007/BF02392696}
}

@article{DonaldsonSullivan1989,
  author    = {Simon K. Donaldson and Dennis P. Sullivan},
  title     = {Quasiconformal 4-manifolds},
  journal   = {Acta Mathematica},
  volume    = {163},
  year      = {1989},
  pages     = {181--252},
  doi       = {10.1007/BF02392736}
}

@article{Hytonen2009,
  author  = {Tuomas P. Hytönen},
  title   = {On the norm of the Beurling--Ahlfors operator in several dimensions},
  journal = {Advances in Mathematics},
  volume  = {231},
  number  = {3-4},
  pages   = {1639--1649},
  year    = {2012},
  publisher = {Elsevier},
  doi     = {10.1016/j.aim.2012.06.003}
}

@article{BanLin,
  author  = {Rodrigo Ba{\~n}uelos and Arthur Lindeman},
  title   = {A Martingale Study of the Beurling--Ahlfors Transform in $\mathbb{R}^n$},
  journal = {Journal of Functional Analysis},
  volume  = {145},
  number  = {2},
  pages   = {224--265},
  year    = {1997},
  publisher = {Academic Press},
  doi     = {10.1006/jfan.1997.3000}
}

@article{SpectorStockdale2021,
  author  = {Daniel Spector and Cody B. Stockdale},
  title   = {On the dimensional weak-type $(1,1)$ bound for {R}iesz transforms},
  journal = {Communications in Contemporary Mathematics},
  volume  = {23},
  number  = {8},
  year    = {2021},
  pages   = {2050072},
  doi     = {10.1142/S0219199720500728}
}

@inproceedings{Stein1987ICM,
  author    = {Elias M. Stein},
  title     = {Problems in Harmonic Analysis Related to Curvature and Oscillatory Integrals},
  booktitle = {Proceedings of the International Congress of Mathematicians},
  editor    = {A. M. Gleason},
  volume    = {1},
  year      = {1987},
  pages     = {196--221},
  publisher = {American Mathematical Society},
  address   = {Providence, RI},
  note      = {Berkeley, California, USA, August 3--11, 1986}
}

@article{Janakiraman2004,
  author  = {Prabhu Janakiraman},
  title   = {Weak-type Estimates for Singular Integrals and the Riesz Transform},
  journal = {Indiana University Mathematics Journal},
  volume  = {53},
  number  = {2},
  pages   = {533--555},
  year    = {2004},
  publisher = {Indiana University Mathematics Department},
  doi     = {10.1512/iumj.2004.53.2327},
}

@article{Ose2012,
  author  = {Adam Os\k{e}kowski},
  title   = {On the action of {R}iesz transforms on the class of bounded functions},
  journal = {Bull. Lond. Math. Soc.},
  volume  = {44},
  year    = {2012},
  number  = {6},
  pages   = {1205--1214}
}

@article{Lae2000,
  author  = {Erik Laeng},
  title   = {On the $L^p$ norms of the Hilbert transform of a characteristic function},
  journal = {Studia Mathematica},
  volume  = {140},
  number  = {3},
  year    = {2000},
  pages   = {237--251},
  doi     = {10.4064/sm-140-3-237-251}
}

@book{SteWei1971,
  author    = {Stein, Elias M. and Weiss, Guido},
  title     = {Introduction to Fourier Analysis on Euclidean Spaces},
  series    = {Princeton Mathematical Series},
  volume    = {32},
  publisher = {Princeton University Press},
  address   = {Princeton, New Jersey},
  year      = {1971},
  pages     = {x+297}
}

@article{Davis1973,
address = {DURHAM},
author = {Davis, Burgess},
copyright = {Copyright 2016 Elsevier B.V., All rights reserved.},
issn = {0012-7094},
journal = {Duke mathematical journal},
language = {eng},
number = {3},
pages = {695-700},
publisher = {DUKE University Press},
title = {On the distributions of conjugate functions of nonnegative measures},
volume = {40},
year = {1973},
}

@article{Jan2004,
  author  = {Prabhu Janakiraman},
  title   = {Best weak-type $(p,p)$ constants, $1 \le p \le 2$, for orthogonal harmonic functions and martingales},
  journal = {Illinois Journal of Mathematics},
  volume  = {48},
  number  = {3},
  pages   = {909--921},
  year    = {2004}
}

@book{Osekowski2012,
  author    = {Adam Os\k{e}kowski},
  title     = {Sharp Martingale and Semimartingale Inequalities},
  series    = {Monografie Matematyczne},
  volume    = {72},
  publisher = {Birkh\"auser},
  address   = {Basel},
  year      = {2012},
  isbn      = {978-3-0348-0395-3}
}

@article{BanOseAJM,
  author  = {Rodrigo Bañuelos and Adam Osękowski},
  title   = {Burkholder inequalities for submartingales, Bessel processes and conformal martingales},
  journal = {American Journal of Mathematics},
  volume  = {136},
  number  = {2},
  year    = {2014},
  pages   = {481--520},
  doi     = {10.1353/ajm.2014.0012},
  mrnumber = {3188063}
}

@article{BorJanVol2013,
  author  = {Borichev, Alexander and Janakiraman, Prabhu and Volberg, Alexander},
  title   = {Subordination by orthogonal martingales in {$L^p$} and zeros of {L}aguerre polynomials},
  journal = {Duke Mathematical Journal},
  volume  = {162},
  number  = {5},
  year    = {2013},
  pages   = {889--924},
  doi     = {10.1215/00127094-2153108},
  mrnumber = {3043590}
}

@article{BorJanVol2013b,
  author    = {Alexander Borichev and Prabhu Janakiraman and Alexander Volberg},
  title     = {On Burkholder function for orthogonal martingales and zeros of Legendre polynomials},
  journal   = {Amer. J. Math.},
  volume    = {135},
  number    = {1},
  pages     = {207--236},
  year      = {2013},
  doi       = {10.1353/ajm.2013.0004},
  url       = {https://doi.org/10.1353/ajm.2013.0004},
}

@article{BanuelosWang1996,
  author  = {Ba{\~n}uelos, Rodrigo and Wang, Gang},
  title   = {Orthogonal martingales under differential subordination and applications to {R}iesz transforms},
  journal = {Illinois Journal of Mathematics},
  volume  = {40},
  number  = {4},
  year    = {1996},
  pages   = {678--691},
  doi     = {10.1215/ijm/1255985943},
}

@book{Baudoin2014,
  author    = {Fran{\c{c}}ois Baudoin},
  title     = {Diffusion Processes and Stochastic Calculus},
  publisher = {EMS Textbooks in Mathematics},
  year      = {2014},
  isbn      = {9783037192046},
  series    = {European Mathematical Society (EMS)},
}

@article{NazarovVolberg2003,
  author    = {Fedor Nazarov and Alexander Volberg},
  title     = {Heating of the Beurling operator and estimates of its norms},
  journal   = {St. Petersburg Mathematical Journal},
  volume    = {14},
  number    = {3},
  pages     = {???--???},
  year      = {2003},
  note      = {Translation of Russian original},
}

@book{HornJohnsonMatrixAnalysis,
  author    = {Roger A. Horn and Charles R. Johnson},
  title     = {Matrix Analysis},
  publisher = {Cambridge University Press},
  edition   = {2nd},
  year      = {2013},
  address   = {Cambridge},
  isbn      = {978-0-521-83940-2}
}

@book{Hu2017,
  author    = {Yaozhong Hu},
  title     = {Analysis on Gaussian Spaces},
  publisher = {World Scientific Publishing Company},
  year      = {2017},
  address   = {Singapore},
  isbn      = {978-9813142183},
}

@misc{DLMF-Gamma,
  author       = {{NIST Digital Library of Mathematical Functions}},
  title        = {Chapter 5: Gamma Function},
  howpublished = {\url{https://dlmf.nist.gov/5}},
  year         = {2025},
  note         = {log-convexity of $\Gamma$ and related inequalities (e.g., Gautschi, Kershaw).}
}

@article{Gross1975,
  author    = {Leonard Gross},
  title     = {Logarithmic Sobolev inequalities},
  journal   = {American Journal of Mathematics},
  volume    = {97},
  number    = {4},
  pages     = {1061--1083},
  year      = {1975},
  doi       = {10.2307/2373688}
}

@article{Nelson1973,
  author    = {Edward Nelson},
  title     = {The free Markoff field},
  journal   = {Journal of Functional Analysis},
  volume    = {12},
  number    = {2},
  pages     = {211--227},
  year      = {1973},
  doi       = {10.1016/0022-1236(73)90024-1}
}

@article{CarDraKov2023,
  author    = {Carbonaro, Andrea and Dragi\u{c}evi\'c, Oliver and Kova\v{c}, Vjekoslav},
  title     = {Sharp $L^p$ estimates of powers of the complex Riesz transform},
  journal   = {Mathematische Annalen},
  volume    = {386},
  number    = {1--2},
  pages     = {1081--1108},
  year      = {2023},
  doi       = {10.1007/s00208-022-02419-3},
  eprint    = {arXiv:2109.08369},
  url       = {https://doi.org/10.1007/s00208-022-02419-3},
}

@article{CoiWei1976,
  author    = {Coifman, R. R. and Weiss, G.},
  title     = {Transference methods in analysis},
  journal   = {Conference Board of the Mathematical Sciences Regional Conference Series in Mathematics},
  volume    = {31},
  year      = {1976},
  publisher = {American Mathematical Society},
  address   = {Providence, RI},
  note      = {With an appendix by W. Rudin},
  isbn      = {978-0-8218-2001-1},
  mrnumber  = {0447631},
}

@book{AstIwaMar,
  author    = {Kari Astala and Tadeusz Iwaniec and Gaven J. Martin},
  title     = {Elliptic Partial Differential Equations and Quasiconformal Mappings in the Plane},
  series    = {Princeton Mathematical Series},
  volume    = {48},
  publisher = {Princeton University Press},
  year      = {2009},
  pages     = {xviii+416},
  isbn      = {978-0-691-14148-2},
  doi       = {10.1515/9781400831477},
}

@misc{Dra,
      title={Analysis of the Ahlfors-Beurling operator (lecture notes for the summer school at the University of Seville, 2013)}, 
      author={Oliver Dragičević},
      year={2021},
      eprint={2109.04555},
      archivePrefix={arXiv},
      primaryClass={math.CA},
      url={https://arxiv.org/abs/2109.04555}, 
}

@article{IwaSbo,
  author       = {Tadeusz Iwaniec and Carlo Sbordone},
  title        = {Riesz transforms and elliptic PDEs with VMO coefficients},
  journal      = {Journal d’Analyse Mathématique},
  volume       = {74},
  year         = {1998},
  pages        = {183--212},
  doi          = {10.1007/BF02789569},
}

@article{GohbergKrupnik1968,
  author   = {Gohberg, I. Ts. and Krupnik, N. Ya.},
  title    = {Norm of the {H}ilbert transformation in the {$L^p$} space},
  journal  = {Funktsional\cprime ny{\u\i} Analiz i ego Prilozheniya},
  volume   = {2},
  number   = {2},
  year     = {1968},
  pages    = {91--92},
  note     = {In Russian},
  mrnumber = {0238584},
}

@book{GohbergKrein1970,
  author    = {Gohberg, Israel and Kre{\u\i}n, Mark G.},
  title     = {Theory and Applications of Volterra Operators in {H}ilbert Space},
  series    = {Translations of Mathematical Monographs},
  volume    = {24},
  publisher = {American Mathematical Society},
  address   = {Providence, RI},
  year      = {1970},
  mrnumber  = {0264447},
}

@book {Dur,
    AUTHOR = {Durrett, Richard},
     TITLE = {Brownian motion and martingales in analysis},
    SERIES = {Wadsworth Mathematics Series},
 PUBLISHER = {Wadsworth International Group, Belmont, CA},
      YEAR = {1984},
     PAGES = {xi+328},
      ISBN = {0-534-03065-3},
   MRCLASS = {60G46 (31B05 35-02 42Bxx 46N05 60J65)},
  MRNUMBER = {750829},
MRREVIEWER = {Terry\ R.\ McConnell},
}

@book{Muller2020,
  author    = {M{\"u}ller, Paul F. X.},
  title     = {Hardy Martingales},
  series    = {New Mathematical Monographs},
  volume    = {45},
  publisher = {Cambridge University Press},
  year      = {2020},
  isbn      = {9781108838672}
}

@article{Uboe1986,
  author    = {Ub{\o}e, Jan},
  title     = {Conformal Martingales and Analytic Functions},
  journal   = {Mathematica Scandinavica},
  volume    = {59},
  number    = {1},
  pages     = {75--88},
  year      = {1986},
  publisher = {Mathematica Scandinavica},
  url       = {https://www.mscand.dk/article/view/12186}
}

@article{GetoorSharpe1972,
  author    = {Getoor, R. K. and Sharpe, M. J.},
  title     = {Conformal Martingales},
  journal   = {Inventiones Mathematicae},
  volume    = {16},
  pages     = {271--308},
  year      = {1972},
  publisher = {Springer},
  doi       = {10.1007/BF01425714}
}

@article{GonPerXia,
  author       = {Gonz{\'a}lez-P{\'e}rez, Adri{\'a}n M. and Parcet, Javier and Xia, Runlian},
  title        = {Noncommutative Cotlar identities for groups acting on tree-like structures},
  journal      = {arXiv preprint},
  eprint       = {2209.05298},
  archivePrefix= {arXiv},
  primaryClass = {math.OA},
  year         = {2024},
  note         = {Preprint (2024).}
}

@article{MeiRic,
  author  = {Mei, Tao and Ricard, {\'E}ric},
  title   = {Free {H}ilbert Transforms},
  journal = {Duke Mathematical Journal},
  volume  = {166},
  number  = {11},
  year    = {2017},
  pages   = {2153--2182},
  doi     = {10.1215/00127094-2017-0007}
}

@article {DraPetVol1,
    AUTHOR = {Dragi\v cevi\'c, Oliver and Petermichl, Stefanie and Volberg,
              Alexander},
     TITLE = {A rotation method which gives linear {$L^p$} estimates for
              powers of the {A}hlfors-{B}eurling operator},
   JOURNAL = {J. Math. Pures Appl. (9)},
  FJOURNAL = {Journal de Math\'ematiques Pures et Appliqu\'ees. Neuvi\`eme
              S\'erie},
    VOLUME = {86},
      YEAR = {2006},
    NUMBER = {6},
     PAGES = {492--509},
      ISSN = {0021-7824},
   MRCLASS = {30E20 (42B15 47B38 47G10)},
  MRNUMBER = {2281449},
MRREVIEWER = {A.\ B\"ottcher},
       DOI = {10.1016/j.matpur.2006.10.005},
       URL = {https://doi.org/10.1016/j.matpur.2006.10.005},
}

@book{Gra,
  author = {Grafakos, Loukas},
  title = {Classical Fourier Analysis},
  edition = {3},
  publisher = {Springer},
  year = {2014}
}

@article{DeL,
  author = {de Leeuw, Karel},
  title = {On $L^p$ multipliers},
  journal = {Annals of Mathematics},
  volume = {81},
  pages = {364--379},
  year = {1965}
}

@article{Ban,
	author = {Ba\~{n}uelos, Rodrigo},
	fjournal = {Illinois Journal of Mathematics},
	issn = {0019-2082},
	journal = {Illinois J. Math.},
	mrclass = {60G46},
	mrnumber = {2928339},
	mrreviewer = {Jos\'{e} Villa-Morales},
	number = {3},
	pages = {789--868 (2012)},
	title = {The foundational inequalities of {D}. {L}. {B}urkholder and some of their ramifications},
	url = {http://projecteuclid.org/euclid.ijm/1336049979},
	volume = {54},
	year = {2010},
}

@book {SteLP,
    AUTHOR = {Stein, Elias M.},
     TITLE = {Topics in harmonic analysis related to the
              {L}ittlewood-{P}aley theory},
    SERIES = {Annals of Mathematics Studies},
    VOLUME = {No. 63},
 PUBLISHER = {Princeton University Press, Princeton, NJ; University of Tokyo
              Press, Tokyo},
      YEAR = {1970},
     PAGES = {viii+146},
   MRCLASS = {42.50 (22.00)},
  MRNUMBER = {252961},
MRREVIEWER = {R.\ E.\ Edwards},
}

@article {Permic24,
    AUTHOR = {Domelevo, Komla and Petermichl, Stefanie and \v Skreb,
              Kristina Ana},
     TITLE = {Continuous sparse domination and dimensionless weighted
              estimates for the {B}akry-{R}iesz vector},
   JOURNAL = {J. Reine Angew. Math.},
  FJOURNAL = {Journal f\"ur die Reine und Angewandte Mathematik. [Crelle's
              Journal]},
    VOLUME = {824},
      YEAR = {2025},
     PAGES = {137--166},
      ISSN = {0075-4102,1435-5345},
   MRCLASS = {42B20 (42B25 58J65)},
  MRNUMBER = {4926944},
       DOI = {10.1515/crelle-2025-0024},
       URL = {https://doi-org.ezproxy.lib.purdue.edu/10.1515/crelle-2025-0024},
}

@article {BanMen,
    AUTHOR = {Ba\~{n}uelos, Rodrigo and M\'{e}ndez-Hern\'{a}ndez, P. J.},
     TITLE = {Space-time {B}rownian motion and the {B}eurling-{A}hlfors
              transform},
   JOURNAL = {Indiana Univ. Math. J.},
  FJOURNAL = {Indiana University Mathematics Journal},
    VOLUME = {52},
      YEAR = {2003},
    NUMBER = {4},
     PAGES = {981--990},
      ISSN = {0022-2518},
   MRCLASS = {60G46 (30C62 47G10 60G44)},
  MRNUMBER = {2001941},
MRREVIEWER = {Jean-Claude Gruet},
       DOI = {10.1512/iumj.2003.52.2218},
}

@article {BanBauLiYan,
    AUTHOR = {Ba\~nuelos, Rodrigo and Baudoin, Fabrice and Chen, Li and
              Sire, Yannick},
     TITLE = {Multiplier theorems via martingale transforms},
   JOURNAL = {J. Funct. Anal.},
  FJOURNAL = {Journal of Functional Analysis},
    VOLUME = {281},
      YEAR = {2021},
    NUMBER = {9},
     PAGES = {Paper No. 109188, 37},
      ISSN = {0022-1236,1096-0783},
   MRCLASS = {60G44 (35A22 42B15 58J65)},
  MRNUMBER = {4295971},
MRREVIEWER = {H\'el\`ene\ Airault},
       DOI = {10.1016/j.jfa.2021.109188},
       URL = {https://doi-org.ezproxy.lib.purdue.edu/10.1016/j.jfa.2021.109188},
}

@article{Pic72,
  author  = {S. K. Pichorides},
  title   = {On the best values of the constants in the theorems of M. Riesz, Zygmund and Kolmogorov},
  journal = {Studia Mathematica},
  volume  = {44},
  number  = {2},
  pages   = {165--179},
  year    = {1972},
  doi     = {10.4064/sm-44-2-165-179},
}

@Book{Gam,
 Author = {Gamelin, T. W.},
 Title = {Uniform algebras and {Jensen} measures},
 FSeries = {London Mathematical Society Lecture Note Series},
 Series = {Lond. Math. Soc. Lect. Note Ser.},
 ISSN = {0076-0552},
 Volume = {32},
 Year = {1978},
 Publisher = {Cambridge University Press, Cambridge. London Mathematical Society, London},
 Language = {English},
 zbMATH = {3651205},
 Zbl = {0418.46042}
}

@article {BanKwa1,
    AUTHOR = {Ba\~{n}uelos, Rodrigo and Kwa\'{s}nicki, Mateusz},
     TITLE = {The {$\ell^p$} norm of the {R}iesz-{T}itchmarsh transform for
              even integer {$p$}},
   JOURNAL = {J. Lond. Math. Soc. (2)},
  FJOURNAL = {Journal of the London Mathematical Society. Second Series},
    VOLUME = {109},
      YEAR = {2024},
    NUMBER = {4},
     PAGES = {Paper No. e12888, 21},
      ISSN = {0024-6107,1469-7750},
   MRCLASS = {42A05 (39A12 42A50)},
  MRNUMBER = {4727420},
       DOI = {10.1112/jlms.12888},
       URL = {https://doi.org/10.1112/jlms.12888},
}

@article {ChaWilWol,
    AUTHOR = {Chang, S.-Y.\ A. and Wilson, J. M. and Wolff, T. H.},
     TITLE = {Some weighted norm inequalities concerning the {S}chr\"odinger
              operators},
   JOURNAL = {Comment. Math. Helv.},
  FJOURNAL = {Commentarii Mathematici Helvetici},
    VOLUME = {60},
      YEAR = {1985},
    NUMBER = {2},
     PAGES = {217--246},
      ISSN = {0010-2571,1420-8946},
   MRCLASS = {42B25 (47B38 47F05 81C10)},
  MRNUMBER = {800004},
MRREVIEWER = {Ron\ Kerman},
       DOI = {10.1007/BF02567411},
       URL = {https://doi.org/10.1007/BF02567411},
}

@book {BanMoore,
    AUTHOR = {Ba\~nuelos, Rodrigo and Moore, Charles N.},
     TITLE = {Probabilistic behavior of harmonic functions},
    SERIES = {Progress in Mathematics},
    VOLUME = {175},
 PUBLISHER = {Birkh\"auser Verlag, Basel},
      YEAR = {1999},
     PAGES = {xiv+204},
      ISBN = {3-7643-6062-3},
   MRCLASS = {31B05 (42B25 60G46)},
  MRNUMBER = {1707297},
MRREVIEWER = {Tao\ Qian},
       DOI = {10.1007/978-3-0348-8728-1},
       URL = {https://doi.org/10.1007/978-3-0348-8728-1},
}

@article {Cot55,
    AUTHOR = {Cotlar, Mischa},
     TITLE = {A unified theory of {H}ilbert transforms and ergodic theorems},
   JOURNAL = {Rev. Mat. Cuyana},
  FJOURNAL = {Revista Matem\'atica Cuyana},
    VOLUME = {1},
      YEAR = {1955},
     PAGES = {105--167},
      ISSN = {0484-7822},
   MRCLASS = {44.0X},
  MRNUMBER = {84632},
MRREVIEWER = {K.\ Yosida},
}

@article {MR780616,
    AUTHOR = {Duoandikoetxea, Javier and Rubio de Francia, Jos\'e L.},
     TITLE = {Estimations ind\'ependantes de la dimension pour les
              transform\'ees de Riesz},
   JOURNAL = {C. R. Acad. Sci. Paris S\'er. I Math.},
  FJOURNAL = {Comptes Rendus des S\'eances de l'Acad\'emie des Sciences.
              S\'erie I. Math\'ematique},
    VOLUME = {300},
      YEAR = {1985},
    NUMBER = {7},
     PAGES = {193--196},
      ISSN = {0249-6291},
   MRCLASS = {42B20},
  MRNUMBER = {780616},
MRREVIEWER = {Gerald\ B.\ Folland},
}

@article {BenA,
    AUTHOR = {Bennett, Andrew G.},
     TITLE = {Probabilistic square functions and a priori estimates},
   JOURNAL = {Trans. Amer. Math. Soc.},
  FJOURNAL = {Transactions of the American Mathematical Society},
    VOLUME = {291},
      YEAR = {1985},
    NUMBER = {1},
     PAGES = {159--166},
      ISSN = {0002-9947,1088-6850},
   MRCLASS = {42B20 (42A61 42B25 60J65)},
  MRNUMBER = {797052},
MRREVIEWER = {Rainer\ Wittmann},
       DOI = {10.2307/1999901},
       URL = {https://doi.org/10.2307/1999901},
}

@article {BanMich,
    AUTHOR = {Ba\~nuelos, Rodrigo},
     TITLE = {A sharp good-{$\lambda$} inequality with an application to
              {R}iesz transforms},
   JOURNAL = {Michigan Math. J.},
  FJOURNAL = {Michigan Mathematical Journal},
    VOLUME = {35},
      YEAR = {1988},
    NUMBER = {1},
     PAGES = {117--125},
      ISSN = {0026-2285,1945-2365},
   MRCLASS = {42B20 (60G46)},
  MRNUMBER = {931943},
MRREVIEWER = {Mario\ Milman},
       DOI = {10.1307/mmj/1029003685},
       URL = {https://doi.org/10.1307/mmj/1029003685},
}

@article {MR3558516,
    AUTHOR = {Strzelecki, Micha\l},
     TITLE = {The {$L^p$}-norms of the {B}eurling-{A}hlfors transform on
              radial functions},
   JOURNAL = {Ann. Acad. Sci. Fenn. Math.},
  FJOURNAL = {Annales Academi\ae\ Scientiarum Fennic\ae. Mathematica},
    VOLUME = {42},
      YEAR = {2017},
    NUMBER = {1},
     PAGES = {73--93},
      ISSN = {1239-629X,1798-2383},
   MRCLASS = {60G46 (42B20)},
  MRNUMBER = {3558516},
MRREVIEWER = {I.\ Ya.\ Novikov},
       DOI = {10.5186/aasfm.2017.4204},
       URL = {https://doi.org/10.5186/aasfm.2017.4204},
}

@article {JanaOrtho,
    AUTHOR = {Janakiraman, Prabhu},
     TITLE = {Orthogonality in complex martingale spaces and connections
              with the {B}eurling-{A}hlfors transform},
   JOURNAL = {Illinois J. Math.},
  FJOURNAL = {Illinois Journal of Mathematics},
    VOLUME = {54},
      YEAR = {2010},
    NUMBER = {4},
     PAGES = {1509--1563},
      ISSN = {0019-2082,1945-6581},
   MRCLASS = {60G44 (42B20)},
  MRNUMBER = {2981858},
MRREVIEWER = {Ferenc\ Weisz},
       URL = {http://projecteuclid.org/euclid.ijm/1348505539},
}

@article {MR3018958,
    AUTHOR = {Ba\~nuelos, Rodrigo and Os\c ekowski, Adam},
     TITLE = {Sharp inequalities for the {B}eurling-{A}hlfors transform on
              radial functions},
   JOURNAL = {Duke Math. J.},
  FJOURNAL = {Duke Mathematical Journal},
    VOLUME = {162},
      YEAR = {2013},
    NUMBER = {2},
     PAGES = {417--434},
      ISSN = {0012-7094,1547-7398},
   MRCLASS = {60G46 (42B20)},
  MRNUMBER = {3018958},
MRREVIEWER = {Tuomas\ P.\ Hyt\"onen},
       DOI = {10.1215/00127094-1962649},
       URL = {https://doi.org/10.1215/00127094-1962649},
}

@article {DraVol,
    AUTHOR = {Dragicevi\'c, Oliver and Volberg, Alexander},
     TITLE = {Bellman function, {L}ittlewood-{P}aley estimates and
              asymptotics for the {A}hlfors-{B}eurling operator in
              {$L^p(\Bbb C)$}},
   JOURNAL = {Indiana Univ. Math. J.},
  FJOURNAL = {Indiana University Mathematics Journal},
    VOLUME = {54},
      YEAR = {2005},
    NUMBER = {4},
     PAGES = {971--995},
      ISSN = {0022-2518,1943-5258},
   MRCLASS = {30C62 (31B15 35K05 42B25 47B38 47G10)},
  MRNUMBER = {2164413},
MRREVIEWER = {M.\ Nakai},
       DOI = {10.1512/iumj.2005.54.2554},
       URL = {https://doi.org/10.1512/iumj.2005.54.2554},
}

@book {Lehto,
    AUTHOR = {Lehto, O. and Virtanen, K. I.},
     TITLE = {Quasiconformal mappings in the plane},
    SERIES = {Die Grundlehren der mathematischen Wissenschaften},
    VOLUME = {Band 126},
   EDITION = {Second},
      NOTE = {Translated from the German by K. W. Lucas},
 PUBLISHER = {Springer-Verlag, New York-Heidelberg},
      YEAR = {1973},
     PAGES = {viii+258},
   MRCLASS = {30A60},
  MRNUMBER = {344463},
}

@book {Volberg1,
    AUTHOR = {Vasyunin, Vasily and Volberg, Alexander},
     TITLE = {The {B}ellman function technique in harmonic analysis},
    SERIES = {Cambridge Studies in Advanced Mathematics},
    VOLUME = {186},
 PUBLISHER = {Cambridge University Press, Cambridge},
      YEAR = {2020},
     PAGES = {xvii+445},
      ISBN = {978-1-108-48689-7},
   MRCLASS = {42-02 (35J70 42B15 42B20 42B25 60H10)},
  MRNUMBER = {4411371},
MRREVIEWER = {Tuomas\ P.\ Hyt\"onen},
       DOI = {10.1017/9781108764469},
       URL = {https://doi.org/10.1017/9781108764469},
}

@book {Astala1,
    AUTHOR = {Astala, Kari and Iwaniec, Tadeusz and Martin, Gaven},
     TITLE = {Elliptic partial differential equations and quasiconformal
              mappings in the plane},
    SERIES = {Princeton Mathematical Series},
    VOLUME = {48},
 PUBLISHER = {Princeton University Press, Princeton, NJ},
      YEAR = {2009},
     PAGES = {xviii+677},
      ISBN = {978-0-691-13777-3},
   MRCLASS = {30C62 (30G20 35J46 35J60 35J92)},
  MRNUMBER = {2472875},
MRREVIEWER = {Olli\ Martio},
}

@article{BanKim2026,
  author  = {Rodrigo Ba{\~n}uelos and Daesung Kim},
  title   = {Discrete analogues of second-order Riesz transforms},
  journal = {Journal of the London Mathematical Society},
  volume  = {113},
  number  = {3},
  year    = {2026},
  pages   = {e70498},
  doi     = {10.1112/jlms.70498}
}

@article{BanKimKwa2026,
  author  = {Rodrigo Ba{\~n}uelos and Daesung Kim and Mateusz Kwa{\'s}nicki},
  title   = {Sharp {$\ell^p$} inequalities for discrete singular integrals on the lattice},
  journal = {Journal of Functional Analysis},
  volume  = {290},
  number  = {9},
  year    = {2026},
  pages   = {111359},
  doi     = {10.1016/j.jfa.2026.111359}
}

@article {Riesz,
    AUTHOR = {Riesz, Marcel},
     TITLE = {Sur les fonctions conjugu\'{e}es},
   JOURNAL = {Math. Z.},
  FJOURNAL = {Mathematische Zeitschrift},
    VOLUME = {27},
      YEAR = {1928},
    NUMBER = {1},
     PAGES = {218--244},
      ISSN = {0025-5874},
   MRCLASS = {DML},
  MRNUMBER = {1544909},
       DOI = {10.1007/BF01171098},
}

@misc{BanKimKwa,
      title={Sharp $\ell^p$ inequalities for discrete singular integrals}, 
      author={Rodrigo Bañuelos and Daesung Kim and Mateusz Kwaśnicki},
      year={2022},
      eprint={2209.09737},
      archivePrefix={arXiv},
      primaryClass={math.PR},
      JOURNAL = {arXiv e-print},
      url={https://arxiv.org/abs/2211.10762},
}

@article {GesMonSak,
    AUTHOR = {Geiss, Stefan and Montgomery-Smith, Stephen and Saksman, Eero},
     TITLE = {On singular integral and martingale transforms},
   JOURNAL = {Trans. Amer. Math. Soc.},
  FJOURNAL = {Transactions of the American Mathematical Society},
    VOLUME = {362},
      YEAR = {2010},
    NUMBER = {2},
     PAGES = {553--575},
      ISSN = {0002-9947},
   MRCLASS = {60G46 (42B15 46B20)},
  MRNUMBER = {2551497},
       DOI = {10.1090/S0002-9947-09-04953-8},
}

@article {Iwa82,
    AUTHOR = {Iwaniec, T.},
     TITLE = {Extremal inequalities in {S}obolev spaces and quasiconformal
              mappings},
   JOURNAL = {Z. Anal. Anwendungen},
  FJOURNAL = {Zeitschrift f\"{u}r Analysis und ihre Anwendungen},
    VOLUME = {1},
      YEAR = {1982},
    NUMBER = {6},
     PAGES = {1--16},
      ISSN = {0232-2064},
   MRCLASS = {30C60 (30C75 46E35)},
  MRNUMBER = {719167},
MRREVIEWER = {V. M. Gol\cprime dshte\u{\i}n},
       DOI = {10.4171/ZAA/37},
       URL = {https://doi-org.ezproxy.lib.purdue.edu/10.4171/ZAA/37},
}

@article {BanJan,
    AUTHOR = {Ba\~{n}uelos, Rodrigo and Janakiraman, Prabhu},
     TITLE = {{$L^p$}-bounds for the {B}eurling-{A}hlfors transform},
   JOURNAL = {Trans. Amer. Math. Soc.},
  FJOURNAL = {Transactions of the American Mathematical Society},
    VOLUME = {360},
      YEAR = {2008},
    NUMBER = {7},
     PAGES = {3603--3612},
      ISSN = {0002-9947},
   MRCLASS = {42B20 (44A15)},
  MRNUMBER = {2386238},
MRREVIEWER = {Wilfredo O. Urbina},
       DOI = {10.1090/S0002-9947-08-04537-6},
       URL = {https://doi-org.ezproxy.lib.purdue.edu/10.1090/S0002-9947-08-04537-6},
}

@incollection {Bak1,
    AUTHOR = {Bakry, Dominique},
     TITLE = {The {R}iesz transforms associated with second order
              differential operators},
 BOOKTITLE = {Seminar on {S}tochastic {P}rocesses, 1988 ({G}ainesville,
              {FL}, 1988)},
    SERIES = {Progr. Probab.},
    VOLUME = {17},
     PAGES = {1--43},
 PUBLISHER = {Birkh\"{a}user Boston, Boston, MA},
      YEAR = {1989},
   MRCLASS = {58G32 (22E30 35J15 47B38)},
  MRNUMBER = {990472},
MRREVIEWER = {Hiroshi Akiyama},
       DOI = {10.1214/aop/1176991490},
       URL = {https://doi-org.ezproxy.lib.purdue.edu/10.1214/aop/1176991490},
}

@incollection {Mey1,
    AUTHOR = {Meyer, P.-A.},
     TITLE = {Transformations de {R}iesz pour les lois gaussiennes},
 BOOKTITLE = {Seminar on probability, {XVIII}},
    SERIES = {Lecture Notes in Math.},
    VOLUME = {1059},
     PAGES = {179--193},
 PUBLISHER = {Springer, Berlin},
      YEAR = {1984},
   MRCLASS = {60H07},
  MRNUMBER = {770960},
MRREVIEWER = {Naresh C. Jain},
       DOI = {10.1007/BFb0100043},
       URL = {https://doi-org.ezproxy.lib.purdue.edu/10.1007/BFb0100043},
}

@article {SteSome,
    AUTHOR = {Stein, E. M.},
     TITLE = {Some results in harmonic analysis in {${\bf R}^{n}$}, for
              {$n\rightarrow \infty $}},
   JOURNAL = {Bull. Amer. Math. Soc. (N.S.)},
  FJOURNAL = {American Mathematical Society. Bulletin. New Series},
    VOLUME = {9},
      YEAR = {1983},
    NUMBER = {1},
     PAGES = {71--73},
      ISSN = {0273-0979},
   MRCLASS = {42B25 (42B20)},
  MRNUMBER = {699317},
       DOI = {10.1090/S0273-0979-1983-15157-1},
       URL = {https://doi-org.ezproxy.lib.purdue.edu/10.1090/S0273-0979-1983-15157-1},
}

@article {DraVol4,
    AUTHOR = {Dragi\v{c}evi\'{c}, Oliver and Volberg, Alexander},
     TITLE = {Bellman function, {L}ittlewood-{P}aley estimates and
              asymptotics for the {A}hlfors-{B}eurling operator in
              {$L^p(\Bbb C)$}},
   JOURNAL = {Indiana Univ. Math. J.},
  FJOURNAL = {Indiana University Mathematics Journal},
    VOLUME = {54},
      YEAR = {2005},
    NUMBER = {4},
     PAGES = {971--995},
      ISSN = {0022-2518},
   MRCLASS = {30C62 (31B15 35K05 42B25 47B38 47G10)},
  MRNUMBER = {2164413},
MRREVIEWER = {M. Nakai},
       DOI = {10.1512/iumj.2005.54.2554},
       URL = {https://doi-org.ezproxy.lib.purdue.edu/10.1512/iumj.2005.54.2554},
}

@incollection{Pis,
  author    = {Pisier, Gilles},
  title     = {Riesz transforms: a simpler analytic proof of {P.-A.} {M}eyer's inequality},
  booktitle = {S\'{e}minaire de Probabilit\'{e}s, {XXII}},
  series    = {Lecture Notes in Mathematics},
  volume    = {1321},
  pages     = {485--501},
  publisher = {Springer},
  address   = {Berlin},
  year      = {1988},
  mrnumber  = {960544},
  mrclass   = {60J35 (42B20)},
  reviewer  = {R. F. Gundy},
  doi       = {10.1007/BFb0084154},
}

@article {Ban86,
    AUTHOR = {Ba\~{n}uelos, Rodrigo},
     TITLE = {Martingale transforms and related singular integrals},
   JOURNAL = {Trans. Amer. Math. Soc.},
  FJOURNAL = {Transactions of the American Mathematical Society},
    VOLUME = {293},
      YEAR = {1986},
    NUMBER = {2},
     PAGES = {547--563},
      ISSN = {0002-9947},
   MRCLASS = {60G44 (42B20 60G46 60H05)},
  MRNUMBER = {816309},
MRREVIEWER = {Martin L. Silverstein},
       DOI = {10.2307/2000021},
       URL = {https://doi.org/10.2307/2000021},
}

@article {BBLS2021,
    AUTHOR = {Ba\~{n}uelos, Rodrigo and Baudoin, Fabrice and Chen, Li and Sire,
              Yannick},
     TITLE = {Multiplier theorems via martingale transforms},
   JOURNAL = {J. Funct. Anal.},
  FJOURNAL = {Journal of Functional Analysis},
    VOLUME = {281},
      YEAR = {2021},
    NUMBER = {9},
     PAGES = {Paper No. 109188, 37},
      ISSN = {0022-1236},
   MRCLASS = {60G44 (35A22 42B15 58J65)},
  MRNUMBER = {4295971},
       DOI = {10.1016/j.jfa.2021.109188},
}

@article {BanKwa,
    AUTHOR = {Ba\~{n}uelos, Rodrigo and Kwa\'{s}nicki, Mateusz},
     TITLE = {On the {$\ell^p$}-norm of the discrete {H}ilbert transform},
   JOURNAL = {Duke Math. J.},
  FJOURNAL = {Duke Mathematical Journal},
    VOLUME = {168},
      YEAR = {2019},
    NUMBER = {3},
     PAGES = {471--504},
      ISSN = {0012-7094},
   MRCLASS = {60J45 (60G42 60J70)},
  MRNUMBER = {3909902},
MRREVIEWER = {Volkert Paulsen},
       DOI = {10.1215/00127094-2018-0047},
}

@article {Burk84,
    AUTHOR = {Burkholder, D. L.},
     TITLE = {Boundary value problems and sharp inequalities for martingale transforms},
   JOURNAL = {Ann. Probab.},
  FJOURNAL = {The Annals of Probability},
    VOLUME = {12},
      YEAR = {1984},
    NUMBER = {3},
     PAGES = {647--702},
      ISSN = {0091-1798},
   MRCLASS = {60G42 (46N05 60G46)},
  MRNUMBER = {744226},
MRREVIEWER = {Jean-Yves Ouvrard},
}

@article{IwaMar,
	Author = {Iwaniec, T. and Martin, G.},
	Fjournal = {Journal f{\"u}r die Reine und Angewandte Mathematik. [Crelle's Journal]},
	Issn = {0075-4102},
	Journal = {J. Reine Angew. Math.},
	Mrclass = {42B20 (30E20 47G10)},
	Mrnumber = {1390681},
	Mrreviewer = {S. K. Vodop\cprime yanov},
	Pages = {25--57},
	Title = {Riesz transforms and related singular integrals},
	Volume = {473},
	Year = {1996},
}

@book {Stein70,
    AUTHOR = {Stein, E. M.},
     TITLE = {Singular integrals and differentiability properties of functions},
    SERIES = {Princeton Mathematical Series, No. 30},
 PUBLISHER = {Princeton University Press, Princeton, N.J.},
      YEAR = {1970},
     PAGES = {xiv+290},
   MRCLASS = {46.38 (26.00)},
  MRNUMBER = {0290095},
MRREVIEWER = {R. E. Edwards},
}

@article{Bur84,
	Author = {Burkholder, D. L.},
	Fjournal = {The Annals of Probability},
	Issn = {0091-1798},
	Journal = {Ann. Probab.},
	Mrclass = {60G42 (46N05 60G46)},
	Mrnumber = {744226},
	Mrreviewer = {Jean-Yves Ouvrard},
	Number = {3},
	Pages = {647--702},
	Title = {Boundary value problems and sharp inequalities for martingale transforms},
	Url = {https://mathscinet.ams.org/mathscinet-getitem?mr=744226},
	Volume = {12},
	Year = {1984},
}

@article{GV79,
	Author = {Gundy, R. F. and Varopoulos, N. Th.},
	Fjournal = {Comptes Rendus Hebdomadaires des S{\'e}ances de l'Acad{\'e}mie des Sciences. S{\'e}ries A et B},
	Issn = {0151-0509},
	Journal = {C. R. Acad. Sci. Paris S{\'e}r. A-B},
	Mrclass = {60H05 (60J65)},
	Mrnumber = {545671},
	Mrreviewer = {Maurizio Pratelli},
	Number = {1},
	Pages = {A13--A16},
	Title = {Les transformations de {R}iesz et les int{\'e}grales stochastiques},
	Volume = {289},
	Year = {1979},
}

@article{BW95,
	Author = {Ba\~nuelos, R. and Wang, G.},
	Doi = {10.1215/S0012-7094-95-08020-X},
	Fjournal = {Duke Mathematical Journal},
	Issn = {0012-7094},
	Journal = {Duke Math. J.},
	Mrclass = {60G44 (30C80 60G46)},
	Mrnumber = {1370109},
	Mrreviewer = {Tao Qian},
	Number = {3},
	Pages = {575--600},
	Title = {Sharp inequalities for martingales with applications to the {B}eurling-{A}hlfors and {R}iesz transforms},
	Url = {http://dx.doi.org/10.1215/S0012-7094-95-08020-X},
	Volume = {80},
	Year = {1995},
}

@article {BanOsc15,
    AUTHOR = {Ba\~{n}uelos, Rodrigo and Os\c{e}kowski, Adam},
     TITLE = {Sharp martingale inequalities and applications to {R}iesz
              transforms on manifolds, {L}ie groups and {G}auss space},
   JOURNAL = {J. Funct. Anal.},
  FJOURNAL = {Journal of Functional Analysis},
    VOLUME = {269},
      YEAR = {2015},
    NUMBER = {6},
     PAGES = {1652--1713},
      ISSN = {0022-1236,1096-0783},
   MRCLASS = {42B25 (60G44)},
  MRNUMBER = {3373431},
MRREVIEWER = {Steven\ Michael\ Heilman},
       DOI = {10.1016/j.jfa.2015.06.015},
       URL = {https://doi.org/10.1016/j.jfa.2015.06.015},
}

@article {BanBau13,
    AUTHOR = {Ba\~{n}uelos, Rodrigo and Baudoin, Fabrice},
     TITLE = {Martingale transforms and their projection operators on
              manifolds},
   JOURNAL = {Potential Anal.},
  FJOURNAL = {Potential Analysis. An International Journal Devoted to the
              Interactions between Potential Theory, Probability Theory,
              Geometry and Functional Analysis},
    VOLUME = {38},
      YEAR = {2013},
    NUMBER = {4},
     PAGES = {1071--1089},
      ISSN = {0926-2601,1572-929X},
   MRCLASS = {58J65 (60G46 60H05)},
  MRNUMBER = {3042695},
MRREVIEWER = {Stavros\ Vakeroudis},
       DOI = {10.1007/s11118-012-9307-8},
       URL = {https://doi.org/10.1007/s11118-012-9307-8},
}

\
\end{document}